\documentclass[11pt]{article}
\usepackage[a4paper]{geometry}

\usepackage{amsmath}
\usepackage{amssymb}
\usepackage{amsthm}
\usepackage{mathrsfs}
\usepackage{bbm}
\usepackage{empheq}
\usepackage[loose]{subfigure}
\usepackage{epsfig}
\usepackage{graphicx}
\usepackage{psfrag}
\usepackage[usenames,dvipsnames]{pstricks}
\usepackage{pst-plot}
\usepackage[colorlinks]{hyperref}
\usepackage{tabls}
\usepackage{paralist}
\usepackage{hyperref}

\DeclareSymbolFontAlphabet{\Bbb}{AMSb}

\newlength{\fixboxwidth}
\setlength{\fixboxwidth}{\marginparwidth}
\addtolength{\fixboxwidth}{-6.8pt}

\newcommand{\argmin}{\mathop{\mathrm{arg\,min}}}
\newcommand{\COMMENT}[1]{}
\newcommand{\E}{\mathbb{E}}

\newcommand{\Dmaps}{\mathfrak{D}}
\newcommand{\Dmap}{\mathbb{D}}
\newcommand{\Drv}{D}
\newcommand{\Dspace}{\mathcal{D}}
\newcommand{\Ddata}{d}
\DeclareMathOperator{\ext}{ext}

\newcommand{\eins}{\mathbbm{1}}

\newcommand{\one}{\mathbbm{1}}
\renewcommand{\P}{\mathbb{P}}

\newcommand{\R}{\mathbb{R}}

\newcommand{\quark}{\setbox0\hbox{$x$}\hbox to\wd0{\hss$\cdot$\hss}}
\newcommand{\smid}{\,\middle|\,}

\DeclareMathOperator{\supp}{supp}

\newtheorem{thm}{Theorem}[section]
\newtheorem{prop}[thm]{Proposition}
\newtheorem{lem}[thm]{Lemma}

\theoremstyle{definition}
\newtheorem{defn}[thm]{Definition}
\newtheorem{rmk}[thm]{Remark}
\newtheorem{eg}[thm]{Example}
\newtheorem{pb}{Problem}
\newtheorem{assumption}{Assumption}

\def \d         { \delta }
\hypersetup{
    linkcolor=blue,
}

\title{Brittleness of Bayesian Inference\\ Under Finite Information in a Continuous  World\footnotetext{\noindent	2010 Mathematics Subject Classification:
	62A01, 
	62E20, 
	62F12, 
	62F15, 
	62G20, 
	62G35. 
 \\Keywords:  Bayesian inference, misspecification, robustness, uncertainty quantification, optimal uncertainty quantification.\\
Houman Owhadi: Corresponding author, California Institute of Technology, owhadi@caltech.edu\\Clint Scovel: California Institute of Technology,  clintscovel@gmail.com\\ Tim Sullivan: Mathematics Institute, University of Warwick, Coventry, CV4 7AL, UK.  Tim.Sullivan@warwick.ac.uk
}
}

\author{Houman Owhadi, Clint Scovel, Tim Sullivan
}

\date{\today}

\makeatletter
\@addtoreset{equation}{section}
\@addtoreset{figure}{section}
\@addtoreset{table}{section}
\makeatother

\renewcommand{\thefigure}{\arabic{section}.\arabic{figure}}

\makeatletter
\renewcommand{\p@subfigure}{\thefigure}
\makeatother

\newcounter{mycount}

\begin{document}

\maketitle

\begin{abstract}
We derive, in the classical framework of Bayesian sensitivity analysis,
optimal lower and upper bounds on posterior values obtained from Bayesian models that exactly capture an arbitrarily large number of finite-dimensional marginals of the data-generating distribution and/or that are as close as desired to the data-generating distribution in the Prokhorov or total variation metrics; these bounds show that such models may still make the largest possible prediction error after conditioning on an arbitrarily large number of sample data measured at finite precision.
These results are obtained through the development of a reduction calculus for optimization problems over measures on  spaces of measures.
We use this calculus to investigate the mechanisms that generate brittleness/robustness
and, in particular, we
 observe that learning and robustness are antagonistic properties.
 It is now well understood that the numerical resolution of PDEs requires the satisfaction of specific stability
conditions. Is there a missing stability condition for using Bayesian inference
in a continuous world under finite information?
\end{abstract}

\tableofcontents

\section{Introduction}
\label{sec:Intro}
With the advent of high-performance computing, Bayesian methods are increasingly popular tools for the quantification of uncertainty throughout science and industry. Since these methods impact the making of sometimes critical decisions in increasingly complicated contexts, the sensitivity of their posterior conclusions with respect to the underlying models and prior beliefs is becoming a pressing question.

While it is known that Bayesian methods are robust and consistent when the number of possible outcomes is finite,  the exploration of Bayesian inference
in a continuous world has revealed both positive \cite{Bernstein:1964, CastilloNickl:2013, Doob:1949, KleijnVaart:2012, LeCam:1953, Stuart:2010, vonMises:1964}
and negative results \cite{Belot:2013, Belot:2013b, DiaconisFreedman:1986, Freedman:1999, Freedman:1963, Johnstone:2010, Leahu:2011}. One contribution of this paper is the development of a calculus for the elucidation of the mechanisms generating robustness or brittleness in Bayesian inference.
In particular, this paper
\begin{enumerate}
	\item shows that the process of Bayesian conditioning on data at fine
enough resolution is sensitive (as defined in \cite{TibshiraniWasserman:1988}, modulo a small technicality) with respect to the underlying distributions, under the total variation and Prokhorov metrics;
and
	\item raises the question of a missing stability condition for using Bayesian inference in a continuous world under finite information, somewhat akin to the CFL condition for the stability of a discrete numerical scheme used to approximate a continuous PDE.
\end{enumerate}
Point (1) is the source of negative results similar to those caused by tail properties in statistics \cite{BahadurSavage:1956, Donoho:1988}, and can be seen as an extreme occurrence of the dilation phenomenon from robust Bayesian inference \cite{WassermanSeidenfeld:1994}.

Let us now illustrate the main question explored in this paper with a simple example of Bayesian reasoning in action:

\begin{pb}
	\label{pb:1}
	There is a bag containing 102  coins, one of which always lands on heads, while the other 101 are perfectly fair.  One coin is picked uniformly at random from the bag, flipped 10 times, and 10 heads are obtained.  What is the probability that this coin is the unfair coin?
\end{pb}

The correct probability is given by applying Bayes' theorem:
\begin{equation}
	\label{eq:Bayes}
	\P[A|B]=\P[B|A]\frac{\P[A]}{\P[B]}=\frac{1}{1+101 \times 2^{-10}} \approx 0.91,
\end{equation}
where $A$ is the event ``the coin is the unfair coin'' and $B$ is the event ``10 heads are observed''.
If the number of coins is not known exactly and the supposedly fair coins are not exactly fair, then Bayes' theorem can be used to  produce a robust Bayesian inference in the following sense:  if
  the fair coins are slightly unbalanced and the probability of getting a tail is  $0.51$,
and an estimate of 100 coins is used and an estimate $\frac{1}{2}$ of the fairness of the fair coins is used,
 then the resulting estimate $\frac{1}{1+99 \times 2^{-10}}$ is still a good approximation of the correct answer.

Does this robustness hold when the underlying probability space is continuous or an approximation thereof?  For example, what if the random outcomes are decimal numbers --- perhaps given to finite precision --- rather than heads or tails?

\subsection{The General Question}

To investigate these questions in a general context let us now consider the situation in which the space  $\mathcal{X}$ where observations/samples take their values is no longer $\{\text{Head},\text{Tail}\}$ but an arbitrary Polish space (with the real line $\R$ as a prototypical example). Write $\mathcal{M}(\mathcal{X})$ for
the set of probability measures on $\mathcal{X}$ and let $\Phi \colon \mathcal{M}(\mathcal{X})\to \R$ be a function\footnote{All spaces will be topological spaces, the term ``function'' will mean Borel measurable function and ``measure'' will mean Borel measure.} defining a \emph{quantity of interest}.  When $\mathcal{X}$ is the real line $\R$,  a prototypical example is $\Phi(\mu):=\mu[X\geq a]$, the probability that the random variable $X$ distributed according to $\mu$ exceeds the threshold value $a$;  another typical example is $\Phi(\mu):=\E_{\mu}[X]$, the mean of $X$.

\begin{pb}
	\label{pb:2}
Let the \emph{data-generating distribution}  $\mu^\dagger \in \mathcal{M}(\mathcal{X})$ be an unknown or partially known probability measure on $\mathcal{X}$.  The objective is to estimate $\Phi(\mu^\dagger)$ from the observation of $n$ i.i.d.~samples from $\mu^\dagger$, which we denote by $d=(d_1,\ldots,d_n)\in \mathcal{X}^n$.
\end{pb}
For practical reasons (and to avoid problems associated with conditioning with respect to events of measure zero) we will assume that the data is observed up to resolution/precision $\delta>0$, i.e.\ what
we actually observe in Problem \ref{pb:2}
is the event $d\in B^n_\delta$, where $B^n_\delta:=\prod_{i=1}^n B_\delta(x_i)$, $(x_1,\ldots,x_n)$ is a fixed point of $\mathcal{X}^n$, and $B_\delta(x)$ is the open ball of radius $\delta$ and center $x$ (defined with respect to a consistent metric on the Polish space $\mathcal{X}$).

Now observe that the Bayesian answer to Problem  \ref{pb:2} is to assume that $\mu^\dagger$ is the realization of some random measure $\mu$ on $\mathcal{M}(\mathcal{X})$. This is done by choosing a \emph{model class} $\mathcal{A}\subseteq\mathcal{M}(\mathcal{X})$
 and a probability measure $\pi \in \mathcal{M}(\mathcal{A})$ which we call \emph{the prior}. This prior
determines the randomness with which a representative $\mu \in \mathcal{A}$ is selected, and for each such
$\mu \in \mathcal{A}$, the generation of $n$ i.i.d.~samples $d \in \mathcal{X}^n$ by randomly sampling from
$\mu^{n}$ naturally determines a product measure on $\mathcal{A} \times \mathcal{X}^n$.
In analogy to Problem \ref{pb:1},
$\mathcal{A}$ plays the role of the bag of coins (measures) and  each measure $\mu \in \mathcal{A}$ plays the role of a coin.

Now the  prior estimate of the quantity of interest is $\E_{\mu\sim \pi} [\Phi(\mu)]$ and the posterior estimate is defined as the conditional expectation \begin{equation}\label{eq:posterior}
\E_{\mu \sim \pi, d\sim \mu^{n}} [\Phi(\mu)| d\in B^n_\delta]
\end{equation}
 with respect to this product measure.

One response to the concern that the choice of prior $\pi$ is somewhat arbitrary is to explore classes of priors.
Indeed:
\begin{quotation}
	\noindent ``Most statisticians would acknowledge that an analysis is not complete unless the sensitivity of the conclusions to the assumptions is investigated.  Yet, in practice, such sensitivity analyses are rarely used.  This is because sensitivity analyses involve difficult computations that must often be tailored to the specific problem.  This is especially true in Bayesian inference where the computations are already quite difficult.'' \cite{WassermanEtAl:1993}
\end{quotation}
In this paper we will
investigate this approach, known as \emph{robust Bayesian inference} \cite{Berger:1984, Berger:1994, Box:1953, Wasserman:1990}  or \emph{Bayesian sensitivity analysis},
and examine the robustness of Bayesian inference by computing optimal bounds on prior and posterior values in terms of given sets of priors.  To do so, we need some definitions.

\begin{defn}
	\label{defn:robustness}
	For a  model class $\mathcal{A}\subseteq \mathcal{M}(\mathcal{X})$, a quantity of interest $\Phi \colon \mathcal{A} \to \R$,  and a set of priors $\Pi \subseteq \mathcal{M}(\mathcal{A})$, let
	\begin{align*}
		\mathcal{L}(\Pi)&:=\inf_{\pi \in \Pi}\E_{\mu\sim \pi}\bigl[\Phi(\mu)\bigr]\\
  		\mathcal{U}(\Pi)&:=\sup_{\pi \in \Pi}\E_{\mu\sim \pi}\bigl[\Phi(\mu)\bigr]
	\end{align*}
	denote the optimal lower and upper bounds on the prior values of the quantity of interest.  For $B \subseteq \mathcal{X}^{n}$ a non-empty open subset of the data space, let $\Pi_B\subseteq \Pi$ be the subset of priors $\pi$ such that the probability that $d\in B$ is nonzero, i.e.~$\mathbb{P}_{\mu \sim \pi, d\sim \mu^{n}}[d \in B]>0$, and let
	\begin{align*}
		\mathcal{L}(\Pi|B)&:=\inf_{\pi\in \Pi_B }\E_{\mu \sim \pi, d\sim \mu^{n}} \bigl[\Phi(\mu)\big| d\in B\bigr]\\
		\mathcal{U}(\Pi|B)&:=\sup_{\pi\in \Pi_B }\E_{\mu \sim \pi, d\sim \mu^{n}} \bigl[\Phi(\mu)\big| d\in B\bigr]
	\end{align*}
	denote the optimal lower and upper bounds on the posterior values of the quantity of interest, given that $d\in B$.
\end{defn}

\subsection{Example of Brittleness Under Finite Information}
\label{subsecex1}

As
illustrated in Problem \ref{pb:1}, it is already known from classical Bayesian sensitivity analysis that posterior values are robust
if the random outcomes live in a finite space (i.e.\ $\mathcal{X}$
is finite) or if the class of priors  $\Pi$ is finite-dimensional (i.e.\ if
what one does not know can be represented by a finite number of known parameters). One purpose of this paper is to investigate what the very same classical Bayesian sensitivity analysis framework would conclude in the presence of finite information (i.e.~if for instance $\Pi$ is finite codimensional).
To understand this question let us  consider
the following example:

\begin{eg}
Our purpose is to estimate the mean $\Phi(\mu^\dagger):=\E_{\mu^\dagger}[X]$ of some random variable $X$ with respect to some unknown distribution $\mu^\dagger$ on the interval $[0,1]$ based on the observation of $n$ i.i.d.~samples $d:=(d_1,\ldots,d_n)$,
given to finite resolution $\delta$ (i.e.\ we
observe $d\in B^n_\delta$, where $B^n_\delta$ is the product of $n$ open balls of radius $\delta$), from the unknown distribution $\mu^\dagger$.
\end{eg}

The Bayesian answer to that problem is to assume that $\mu^\dagger$ is the realization of some random measure distributed according to some prior $\pi$ (i.e.~$\mu\sim \pi$)  and then compute the posterior value of the mean by conditioning on the data, i.e.\ compute
\eqref{eq:posterior} with
$\Phi(\mu):=\E_{\mu}[X]$.  Observe that to specify the prior $\pi$ we need to specify the distribution of all the moments\footnote{In fact, this is a necessary but not a sufficient condition to determine $\pi$, since there are cases in which the moment problem is indeterminate.  See \cite{Akhiezer:1965} for a full discussion of such issues.}
of $\mu$ (i.e.~the distribution of the infinite-dimensional vector $(\E_{\mu}[X], \E_{\mu}[X^2], \E_{\mu}[X^3],\ldots)$).

It is known, from classical robust Bayesian inference, that the posterior value \eqref{eq:posterior} is robust with respect  to finite dimensional perturbations of the particular choice of the prior $\pi$. However,  rather than specifying a finite-dimensional class of priors $\Pi$ (i.e.\ assuming
infinite information), it appears epistemologically more reasonable to specify a finite-\emph{codimensional}
$\Pi$ (i.e.\ assume
finite information) and a natural way to do so is to specify the distribution $\mathbb{Q}$  of only a large, but finite, number of moments of $\mu$  (i.e.~to specify the distribution of $(\E_{\mu}[X], \E_{\mu}[X^2], \ldots, \E_{\mu}[X^k])$, where $k \in \mathbb{N}$
can be arbitrarily large). This defines a class of priors $\Pi$ on $\mathcal{M}([0,1])$ such that if $\pi\in \Pi$ and $\mu\sim \pi$ then
\[
(\E_{\mu}[X], \E_{\mu}[X^2], \ldots, \E_{\mu}[X^k]) \sim \mathbb{Q} .
\]
More precisely, writing $\Psi$ as the function mapping each measure $\mu$ on $[0,1]$ to its first $k$ moments $\Psi(\mu):=(\E_{\mu}[X], \E_{\mu}[X^2], \ldots, \E_{\mu}[X^k])$ and choosing a measure $\mathbb{Q}$ on $\Psi\big(\mathcal{M}([0,1])\big) \subset \R^k$, $\Pi$ is simply defined as the pullback of the measure $\mathbb{Q}$ under $\Psi$, i.e.\ writing $\mathcal{A}:=\mathcal{M}([0,1])$,
\[
\Pi:=\Psi^{-1} \mathbb{Q}=\big\{\pi \in \mathcal{M}(\mathcal{A}) \mid \Psi \pi=\mathbb{Q}\big\}.
\]
One consequence of one of the main results of this paper, Theorem \ref{thm:shiva}, is that no matter how large $k$ is, no matter how large the number of samples $n$ is, for any $\mathbb{Q}$ that has a density with respect to the uniform distribution on the first $k$ moments, if you observe the data at a fine enough resolution, then the minimum and maximum of the posterior value of the mean over the class of priors $\Pi$ are $0$ and $1$, i.e.\ the
following proposition holds.

\begin{prop}\label{eq:prop1}
For all $k \in \mathbb{N}$,
if $\mathbb{Q}$ is absolutely continuous with respect to the uniform distribution on $\Psi\big(\mathcal{M}([0,1])\big)$, then
\[
 \lim_{\delta \downarrow 0}\mathcal{L}(\Pi|B^n_\delta)=0 \text{ and } \lim_{\delta \downarrow 0}\mathcal{U}(\Pi|B^n_\delta)=1
\]
and the convergence holds uniformly in $n$.
\end{prop}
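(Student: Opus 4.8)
The plan is to show that both optimal posterior bounds collapse to the full range of the mean over $\mathcal{M}([0,1])$, namely $[0,1]$, and to isolate precisely where the absolute-continuity hypothesis is consumed.

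\emph{Step 1: reduction to a reweighting of the first moment.} Fix $\pi\in\Pi_{B^n_\delta}$ and write $W_\delta(\mu):=\mu^{n}(B^n_\delta)=\prod_{i=1}^n\mu(B_\delta(x_i))$ for the likelihood. Then the posterior value is
\[
\E_{\mu\sim\pi}\bigl[\Phi(\mu)\mid d\in B^n_\delta\bigr]=\frac{\int_{\mathcal{A}}\Phi(\mu)\,W_\delta(\mu)\,d\pi(\mu)}{\int_{\mathcal{A}}W_\delta(\mu)\,d\pi(\mu)}.
\]
The first thing I would record is that $\Phi(\mu)=\E_\mu[X]$ is exactly the first coordinate $m_1$ of $\Psi(\mu)=(m_1,\dots,m_k)$, so $\Phi$ is constant on each fibre $\Psi^{-1}(m)$. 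Disintegrating $\pi$ through $\Psi$ as $d\pi(\mu)=\int d\mathbb{Q}(m)\,dK_m(\mu)$ with $K_m$ carried by $\Psi^{-1}(m)$ (legitimate precisely because $\Psi\pi=\mathbb{Q}$), and putting $g(m):=\int W_\delta\,dK_m$, the posterior value becomes the reweighting $\int m_1\,g(m)\,d\mathbb{Q}(m)\big/\int g(m)\,d\mathbb{Q}(m)$. Since $g\ge 0$, this lies in $[\operatorname{ess\,inf}_{\mathbb{Q}}m_1,\operatorname{ess\,sup}_{\mathbb{Q}}m_1]\subseteq[0,1]$ for \emph{every} $\delta$ and $n$, which disposes of the trivial inequalities $\mathcal{L}(\Pi\mid B^n_\delta)\ge 0$ and $\mathcal{U}(\Pi\mid B^n_\delta)\le 1$.

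\emph{Step 2: achievability via moment indeterminacy.} The substance is to construct, for each small $\eta>0$, priors whose reweighting function concentrates $\mathbb{Q}$-mass on $\{m_1\ge 1-\eta\}$ (for $\mathcal{U}$) or $\{m_1\le\eta\}$ (for $\mathcal{L}$). The structural input is that, $\mathbb{Q}$ being absolutely continuous with respect to Lebesgue measure on the moment body $\Psi(\mathcal{M}([0,1]))$, it is carried by the interior of that body, where the truncated Hausdorff moment problem is indeterminate. Hence for $\mathbb{Q}$-a.e.\ $m$ the fibre $\Psi^{-1}(m)$ contains simultaneously (a) measures with genuine atoms at every data point $x_i$, for which $W_\delta(\mu)\ge\prod_i\mu(\{x_i\})$ stays bounded below as $\delta\downarrow0$, and (b) measures supported in $[0,1]\setminus\bigcup_i B_\delta(x_i)$, for which $W_\delta(\mu)=0$ outright once $\delta$ is small enough that the punctured interval still carries $m$. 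I would take $K_m$ of type (a) on the retained region and of type (b) elsewhere; the denominator then receives contributions only from the retained region, so the posterior value is a $\mathbb{Q}$-average of $m_1$ over $\{m_1\ge1-\eta\}$, hence $\ge 1-\eta$. Because the type-(b) likelihood is exactly zero, the construction is insensitive to $n$, which is what yields the asserted uniformity; the only $n$-dependence, in the type-(a) normalisation, enters numerator and denominator identically and cancels. I expect the main obstacle to sit here: turning this fibrewise recipe into an honest prior requires a measurable selection, in $m$, of atomic measures realising the prescribed moments with controlled likelihood, together with a verification that punctured moment bodies still contain compact subsets of the interior for small $\delta$. This is exactly the difficulty the paper's reduction calculus is designed to finesse, so I would route the argument through Theorem~\ref{thm:shiva}, which lets one seek the extremisers among priors that are finite convex combinations of Dirac masses at finitely supported measures, replacing the selection problem by a finite-dimensional one.

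\emph{Step 3: the extremes are $0$ and $1$.} It remains to see that $\operatorname{ess\,sup}_{\mathbb{Q}}m_1=1$ and $\operatorname{ess\,inf}_{\mathbb{Q}}m_1=0$, equivalently that the retained regions carry positive $\mathbb{Q}$-mass for every $\eta>0$; this is the precise point at which absolute continuity, rather than mere interiority, is used. The moment body degenerates to the single vertices $\Psi(\delta_0)=(0,\dots,0)$ and $\Psi(\delta_1)=(1,\dots,1)$ as $m_1\to 0$ and $m_1\to 1$, yet each vertex is a genuine corner of a $k$-dimensional convex body, so every neighbourhood of it meets the interior in positive Lebesgue volume; a density that does not degenerate at these corners then forces $\mathbb{Q}(\{m_1\ge1-\eta\})>0$ and $\mathbb{Q}(\{m_1\le\eta\})>0$, with the canonical case $\mathbb{Q}=\text{uniform}$ immediate. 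Combining the three steps and letting $\eta\downarrow 0$ gives $\lim_{\delta\downarrow0}\mathcal{U}(\Pi\mid B^n_\delta)=1$ and $\lim_{\delta\downarrow0}\mathcal{L}(\Pi\mid B^n_\delta)=0$, uniformly in $n$.
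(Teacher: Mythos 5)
Your overall architecture is the paper's own: the paper proves this proposition through Example \ref{eg:shiva3}, i.e.\ by verifying the two hypotheses \eqref{eq:dto0} and \eqref{eq:djkdjehjehj33} of Theorem \ref{thm:shiva}, and your type-(a) measures (atoms at every data point, produced inside a prescribed fibre by mixing with the empirical measure) are exactly the paper's device $\mu_\epsilon=(1-\epsilon)\mu+\epsilon\mu_d$ for \eqref{eq:djkdjehjehj33}. The genuine gap is in your type-(b) construction, and it hits precisely the uniformity in $n$ that the proposition asserts. You demand fibre measures supported in $[0,1]\setminus\bigcup_{i=1}^n B_\delta(x_i)$, i.e.\ avoiding \emph{all} the data balls, and you claim this makes the construction ``insensitive to $n$''. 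It does not: for fixed $\delta$ the union of the $n$ balls has total length up to $2n\delta$ and can exhaust $[0,1]$ once $n$ is large, in which case no probability measure at all --- let alone one with prescribed moments --- lives in the complement; your threshold ``$\delta$ small enough that the punctured interval still carries $m$'' therefore depends on $n$, and the asserted uniformity does not follow from your argument. The repair, which is the one essential ingredient of the paper's proof that your proposal misses, is that the likelihood is a product, $\Dmap^n(\mu)[B^n_\delta]=\prod_{i=1}^n\mu(B_\delta(x_i))$, so it vanishes as soon as $\mu$ gives mass zero to a \emph{single} ball. The paper obtains this by reduction \cite{OSSMO:2011}: every fibre $\Psi^{-1}(q)$ (every $q$, not merely $\mathbb{Q}$-a.e.\ interior $q$) contains a measure that is a sum of at most $k+1$ Dirac masses; as soon as the data contain $k+2$ values whose pairwise distances exceed $2\delta$, one of those balls carries no atom of that purely atomic measure, and the product is exactly zero. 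The resulting threshold on $\delta$ depends only on the spacing of those $k+2$ data values, not on $n$, which is what delivers uniformity in $n$.

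A second, smaller point concerns your Step 3. Your hedge (``a density that does not degenerate at these corners'') is apt but cannot be discharged, because under bare absolute continuity the claim $\operatorname{ess\,sup}_{\mathbb{Q}}m_1=1$ is simply false: take $\mathbb{Q}$ uniform on $\{m\in\Psi(\mathcal{A})\mid m_1\le 1/2\}$; then, exactly by your Step 1, every posterior value is a reweighted $\mathbb{Q}$-average of $m_1$ and hence at most $1/2$, so $\mathcal{U}(\Pi|B^n_\delta)$ cannot tend to $1$. What is actually needed is that the corners $\Psi(\delta_0)$ and $\Psi(\delta_1)$ lie in $\supp(\mathbb{Q})$ (for instance a density that is positive on the moment body). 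The paper finesses this by constructing a specific $\mathbb{Q}$ with uniform conditional marginals, for which ``any open subset of $\Psi(\mathcal{A})$ has strictly positive $\mathbb{Q}$-measure''; so on this point your attempt and the paper's proof stand on the same footing, and the defect lies in the literal hypothesis of the statement rather than in either argument.
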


This example of brittleness is derived from
Theorem \ref{thm:shiva} (see Example \ref{eg:shiva3}), the proof of which sheds light on
the mechanism leading to brittleness in a general context and shows that
the pathology illustrated by Proposition \eqref{eq:prop1} is general and inherent to using Bayesian inference in continuous spaces (or their discretizations)  under finite information. Furthermore, although this simple example concerns the posterior mean,
 the quantity of interest in Theorem \ref{thm:shiva}  is arbitrary and the brittleness results apply to the whole posterior distribution.

\subsection{Example of Brittleness Under Infinitesimal Model Perturbations}
\label{subsecex2}

Theorem \ref{thm:shiva} (and its corollary, Theorem \ref{thm:shiva0cor}), which leads to brittleness under finite information as illustrated in the previous example, also leads to brittleness under infinitesimal model perturbations in the total variation and Prokhorov metrics. We will now illustrate one mechanism causing brittleness with a simple example.

In this example we are interested in estimating $\Phi(\mu^\dagger)=\E_{\mu^\dagger}[X]$ where $\mu^\dagger$ is an unknown distribution on the unit interval ($\mathcal{X}=[0,1]$)  based on the observation of a single data point $d_1=0.5$ up to resolution $\delta$ (i.e.\ we
observe $d_1\in B_\delta(x_1)$ with $x_1=0.5$).

Consider the following two Bayesian models (measures) $\mu^a(\theta)$ and $\mu^b(\theta)$ on the unit interval $[0,1]$, parametrized by $\theta \in (0,1)$, and with densities $f^{a}$ and $f^{b}$ given by
\[
	f^a(x,\theta)= (1-\theta) \bigl( 1+ \tfrac{1}{\theta} \bigr) (1-x)^\frac{1}{\theta} + \theta \bigl( 1 + \tfrac{1}{1-\theta} \bigr) x^\frac{1}{1-\theta},
\]
\[
	f^b(x,\theta)= \begin{cases}
	f^a(x,\theta)
	 \frac{1}{Z}\big(\one_{\{x \not\in (x_1-\frac{\delta_c}{2},x_1+\frac{\delta_c}{2})\}}+ 10^{-9} \one_{\{x \in (x_1-\frac{\delta_c}{2},x_1+\frac{\delta_c}{2})\}}\big),
	&\text{ if }\theta< 0.999,\\
	f^a(x,\theta), & \text{ if }\theta\geq 0.999,
	\end{cases}
\]
where $Z$ is a normalization constant (close to one) chosen so that $\int_{[0,1]} f^b(x,\theta)\,dx=1$.  See Figure \ref{fig:faandfb} for an illustration of these densities.

\begin{figure}[tp]
	\begin{center}
		\subfigure[$f^a(x,\theta)$]{
			\includegraphics[height=2.5cm]{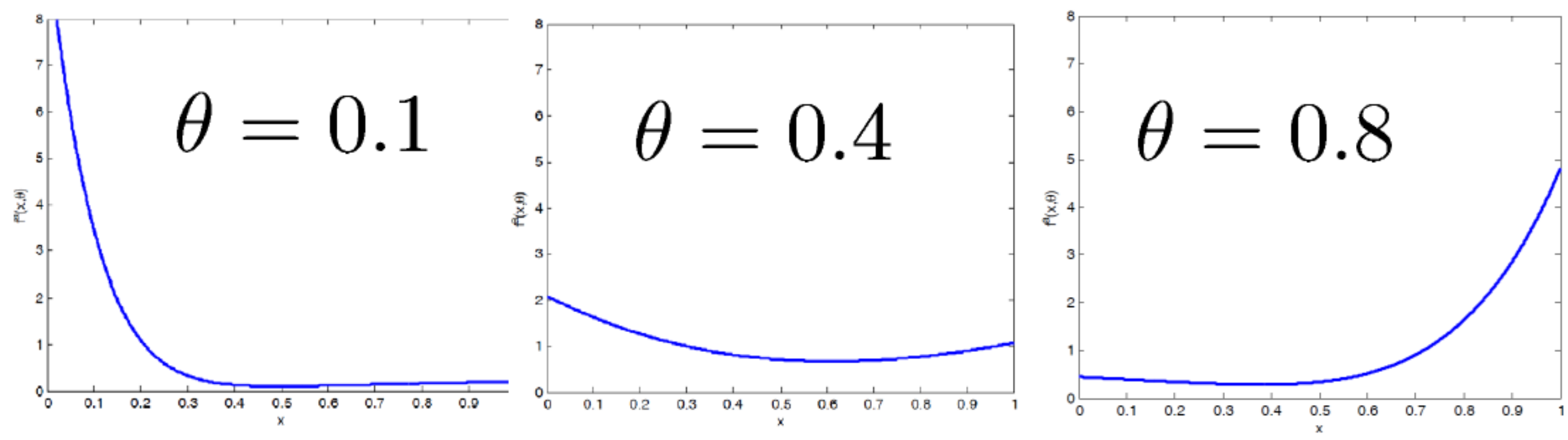}
		}
		\subfigure[$f^b(x,\theta)$]{
			\includegraphics[height=2.4cm]{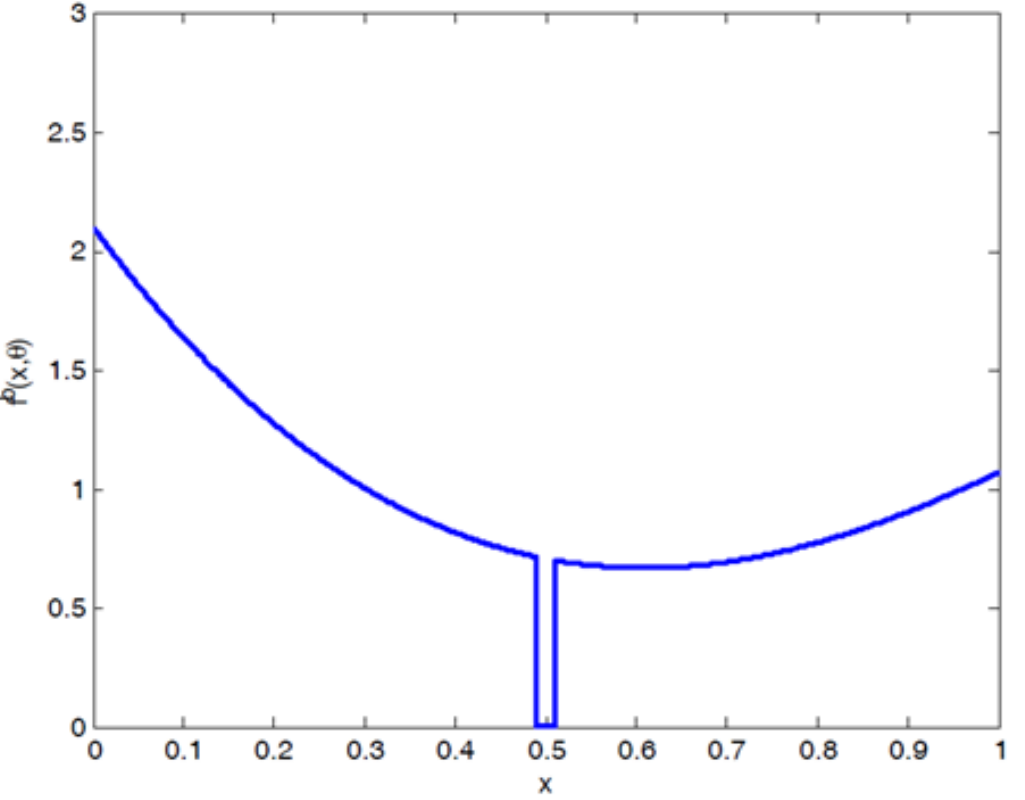}
		}
	\end{center}
	\caption{Illustration of the density $f^a(x,\theta)$ of model $a$ and $f^b(x,\theta)$ of model $b$.}
	\label{fig:faandfb}
\end{figure}

Observe that the density of model \emph{b} is that of model \emph{a} besides the small gap of width $\delta_c > 0$ created around the data point for model \emph{b} (if $\theta< 0.999$, see Figure \ref{fig:faandfb});
since the data point is fixed at $x_{1} = \tfrac{1}{2}$, the total variation distance $d_{\textup{TV}}\big(\mu^a(\theta),\mu^b(\theta)\big)$ between the two models is, uniformly over $\theta \in (0,1)$, a constant times $\delta_c$.
Assuming that the prior distribution on $\theta$ is the uniform distribution on $(0,1)$, observe that the prior value of the quantity of interest $\E_{\mu}[X]$ under both models (\emph{a} and \emph{b}) is approximately $\frac{1}{2}$.
Now, when $\theta$ is close to one (zero) then the density of model \emph{a} puts most of its mass towards one (zero). Observe also that the density of model \emph{b} behaves in a similar way, with the important exception that the probability of observing the data under model \emph{b} is infinitesimally small for $\theta<0.999$.  Therefore,  for $\delta <\delta_c$, the posterior value of  the quantity of interest $\E_{\mu}[X]$ under model \emph{a} is $\frac{1}{2}$ whereas it is close to one under model \emph{b}.
Observe also that a  perturbed model \emph{c}  analogous to  \emph{b} would lead to a posterior value close to zero.

This simple example  of brittleness under infinitesimal model perturbations is derived from the proof of
Theorem \ref{thm_localshivacor}, which shows that Bayesian posterior values are generally brittle under infinitesimal perturbations of Bayesian models in TV and in Prokhorov metrics.

$\mu^b(\theta)$ is  also a simple example of what worst priors can look like after a classical Bayesian sensitivity analysis over a class of priors specified via constraints on the TV or Prokhorov distance or the distribution of a finite number of moments.

Can we dismiss these worst priors because they depend on the data? The problem with this argument is that in the context of Bayesian sensitivity analysis worst priors always depend on (or are pre-adapted to) the data. Therefore the same argument would lead to  a dismissal of Bayesian sensitivity analysis and therefore the robust Bayesian framework. Can we dismiss these worst priors because they depend \emph{too much} on the data? The problem with this argument is that it is not a transparent task to define \emph{too much}
without introducing  the following element of circular reasoning: \emph{the degree of pre-adaptation determines the degree of brittleness, the framework is dismissed is when the degree of pre-adaptation is ``too much'', therefore the method cannot be brittle}.

Can we dismiss these worst priors because they can ``look nasty'' and make the probability of observing the data very small? The problem with this  argument is that these worst priors are not ``isolated pathologies'' but directions of instability and their number increase with the number of data points.
We will illustrate this point with another simple example by placing a uniform constraint on the probability of observing the data in the model class.
We already know that if the data is equally likely under all measures in the model class then posterior values are robust but learning is not possible (prior and posterior values are equal). The following example will show that although variations in the probability of the data in the model class make learning possible, they also lead to brittleness.

\subsection{Example of Learning vs Robustness}
\label{subsec:learningvsrobustness}

In this  example we are interested in estimating $\Phi(\mu^\dagger)=\mu^\dagger[a,1]$ for some $a\in (0,1)$, where $\mu^\dagger$ is an unknown distribution on the unit interval ($\mathcal{X}=[0,1]$)  based on the observation of $n$  data point $d_1, \ldots, d_n$ up to resolution $\delta$ (i.e.\ we
observe $d_i\in B_\delta(x_i)$ with $x_i\in [0,1]$ for $i=1,\ldots,n$).

Our purpose is to examine the sensitivity of the Bayesian answer to this problem with respect to the choice of a particular prior.
Consider the  model class
\begin{equation}\label{eqa}
\mathcal{A}:=\mathcal{M}([0,1]),
\end{equation}
and the
 class of priors
\[
		\Pi := \left\{ \pi \in \mathcal{M}(\mathcal{A}) \smid \E_{\mu \sim \pi} \bigl[ \E_{\mu}[X] \bigr] = m \right\}.
\]
Observe that $\Pi$ corresponds to the assumption that
$\mu^\dagger$ is the  realization of a random measure on $[0,1]$  whose mean
is on average $m$.

As in the previous example, the finite codimensional
class of priors $\Pi$ leads to brittleness in the sense that the least upper bound on prior values is
\begin{equation}\label{eqmua1ewer}
\mathcal{U}(\Pi)=\frac{m}{a} ,
\end{equation}
whereas, for $\delta \ll 1/n$, the least upper bound on posterior values (using Definition \ref{defn:robustness}) is the deterministic supremum of the quantity of interest (over $\mathcal{A}$), i.e.
\begin{equation}\label{eqmua1ewereses}
\mathcal{U}(\Pi|B^n_\delta)=1.
\end{equation}
Furthermore, worst priors are obtained by selecting priors for which the probability of observing the data $\mu^n[B^n_\delta]$ is
arbitrarily  close to zero except when $\Phi(\mu)$ is close to its deterministic supremum.
The bound on prior values \eqref{eqmua1ewer} is obtained from theorems \ref{thm_primred} and \ref{thm:reducpriormarg} in Examples \ref{eg:shiva} and \ref{eg:shiva2}. The bound on posterior values \eqref{eqmua1ewereses} is obtained from theorems \ref{thm:alternredded} and \ref{thm:shiva} in Examples \ref{eg:shivabis} and \ref{eg:shiva3}.

Can this brittleness be avoided by adding a uniform constraint on the probability of observing the data in the model class? To investigate this question let us introduce $\alpha \geq 1$ and a probability measure $\mu_{0}$ on $[0,1]$ with strictly positive Lebesgue density (with a prototypical example being that $\mu_0$ is itself uniform measure on $[0,1]$),
consider the (new) model class
\begin{equation}\label{eqadconstdata}
\mathcal{A}(\alpha):= \left\{\mu \in \mathcal{M}[0,1] \smid  \frac{1}{\alpha}\mu_0^n[B^n_\delta] \leq \mu^n[B^n_\delta] \leq \alpha\mu_0^n[B^n_\delta] \right\} ,
\end{equation}
and the (new) class of priors
\begin{equation}\label{eqpiddkje}
		\Pi(\alpha) := \left\{ \pi \in \mathcal{M}(\mathcal{A}(\alpha)) \smid \E_{\mu \sim \pi} \bigl[ \E_{\mu}[X] \bigr] = m \right\} .
\end{equation}

Note that,
for the model class $\mathcal{A}(\alpha)$, the probability of observing the data is uniformly bounded
below by $\frac{1}{\alpha}\mu_0^n[B^n_\delta]$ and
above by $\alpha\mu_0^n[B^n_\delta]$. Therefore, for $\alpha=1$,  the probability of observing the data is uniform in the model class, prior values are equal to posterior values, and the method is robust but learning is impossible.
If $\alpha$ slightly deviates from $1$, then the calculus developed in this paper
allows us to compute the least upper bound on posterior values and obtain that
\begin{equation}\label{eqhieuhdee}
\lim_{\delta \rightarrow 0}\mathcal{U}\big(\Pi(\alpha)|B^n_\delta\big)=\frac{1}{1+\frac{1}{\alpha^2} \frac{a-m}{m}}=\frac{m}{\frac{a}{\alpha^2}+m (1-\frac{1}{\alpha^2})} .
\end{equation}
We refer to Example \ref{eg:learnvsstab} for the derivation of  \eqref{eqhieuhdee} from
Theorem \ref{thm:alternredded}.

Note that the right hand side of \eqref{eqhieuhdee} is equal to $m/a$ for $\alpha=1$ (when the probability of the data is constant on the model class)
and \emph{quickly} converges towards $1$ as $\alpha$ increases. As a numerical application observe that for $a=\frac{3}{4}$ and $m=\frac{a}{2}=\frac{3}{8}$, we have $ \lim_{\delta \rightarrow 0} \mathcal{U}\big(\Pi(\alpha)\big)=\frac{1}{2} $ and
\[
 \lim_{\delta \rightarrow 0}\mathcal{U}\big(\Pi(\alpha)|B^n_\delta\big)=\frac{1}{1+\frac{1}{\alpha^2}}
\]
Therefore, for $\alpha=2$, we have (irrespective of the number of data points)
\[
 \lim_{\delta \rightarrow 0}\mathcal{U}\big(\Pi(2)|B^n_\delta\big)=0.8 ,
\]
and for $\alpha=10$, we have (irrespective of the number of data points)
\[
\lim_{\delta \rightarrow 0}\mathcal{U}\big(\Pi(10)|B^n_\delta\big)\approx 0.99  .
\]

Moreover, if $\alpha$ is derived by assuming the probability of each data point to be known up to some tolerance $\gamma$, i.e.\ if
the model class $\mathcal{A}(\alpha)$ is replaced by
\begin{equation}\label{eqadconstdatagt}
\mathcal{A}_\gamma:=\left\{\mu \in \mathcal{M}[0,1] \smid  \frac{1}{\gamma}\mu_0[B_\delta(x_i)] \leq \mu[B_\delta(x_i)] \leq \gamma \mu_0[B_\delta(x_i)]\text{ for }i=1,\ldots,n \right\}
\end{equation}
for some $\gamma>1$, then it can be shown that
\[
\lim_{\delta \rightarrow 0} \mathcal{U}(\Pi|B^n_\delta)=\frac{1}{1+\frac{1}{\gamma^{2n}}},
\]
 which exponentially converges towards $1$ as the number $n$ of data points goes to infinity.

In conclusion,  the effects of a uniform constraint on the probability of the
data under finite information in the model class show
that learning ability comes at the price of loss in stability in the following sense: when $\alpha=1$, the data is equiprobable under all measures in the model class, posterior values are equal to prior values, the method is robust but learning is not possible. As $\alpha$ deviates from one, the learning ability increases as  robustness decreases, and when $\alpha$ is large, learning is possible but the method is brittle.

\subsection{Missing Stability Condition for Using Bayesian Inference Under Finite Information}

The previous examples have shown that Bayesian inference can be unstable under finite information, therefore, at the very least, the question of the existence and of the nature of a stability condition for using Bayesian inference remains to be answered.
Indeed it is well known that numerical solutions of PDEs can become unstable if specific stability conditions such as the CFL stability condition are not satisfied. Although numerical schemes that do not satisfy the CFL condition may look grossly inadequate,
the existence of such perverse examples does not imply the dismissal of the necessity of a stability condition. Similarly, although one may, as in Subsection \ref{subsecex2}, exhibit grossly perverse worst priors, the existence of such priors does not invalidate the question of the missing stability condition for using Bayesian inference under finite information.

The example provided in Subsection \ref{subsec:learningvsrobustness} suggests that,
in the framework of Bayesian sensitivity analysis,
(i) such a stability condition would depend on how well the probability of the data is  known or  constrained in the model class, and
(ii) learning and robustness are antagonistic/conflicting requirements --- there is no free lunch and increased learning potential is paid for by decreased stability of posterior values.

Could this stability condition be derived from closeness in Kullback--Leibler
divergence? The problem with this approach is that closeness in Kullback--Leibler
divergence cannot be tested with discrete data and it requires the non-singularity of the data generating distribution with respect to the model, which could be a strong assumption for the certification the safety of a critical system. Indeed, when performing Bayesian analysis on function spaces, as is now increasingly popular, for
studying PDE solutions, results like the Feldman–-H{\'a}jek theorem
\cite{Feldman:1958, Hajek:1958} tell us that \emph{most} pairs of measures are mutually singular, and hence at Kullback--Leibler
\emph{distance} infinity from one another. Another problem with using Kullback--Leibler
divergence  is that a local sensitivity analysis (in the sense of Fr{\'e}chet
derivatives) of posterior values suggests infinite sensitivity as the number of data point goes to infinity  \cite{GustafsonWasserman:1995} (and this result is valid for the broader class of divergences
that includes the Hellinger distance).

A close inspection of some of the cases where Bayesian inference has been successful shows the existence of a non-Bayesian feedback loop on the evaluation of its performance \cite{Mayo:2012, MayoSpanos:2004, Senn:2011}.
Therefore one natural question is whether the missing stability condition could be derived by exiting the strict framework of Bayesian analysis/inference. According to Efron \cite{Efron:2013}, without genuine prior information
\begin{quote}
``Bayesian calculations
cannot be uncritically accepted and should be checked
by other methods, which usually means frequentistically.''
\end{quote}

\subsection{Calculus for Measures over Measures}
The results of this paper are derived from a calculus allowing us to solve/reduce optimization problems with variables corresponding to measures over measures over arbitrary Polish spaces. The following assertion of Theorem \ref{thm:reducpriormarg} is an example of this calculus.
	\begin{equation}
		\label{eq:2qrewriteiubis}
		\sup_{\pi \in \Psi^{-1}\mathfrak{Q}} \E_{\mu \sim \pi}\big[\Phi(\mu)\big] = \sup_{\mathbb{Q}\in \mathfrak{Q}}\left[\E_{q\sim \mathbb{Q}}
\Big[\sup_{\mu\in \Psi^{-1}(q)}\Phi(\mu) \Big]\right].
	\end{equation}
In \eqref{eq:2qrewriteiubis}, $\Psi$ is a measurable function mapping $\mathcal{A}$ (a Suslin subset of the set $\mathcal{M}(\mathcal{X})$ of probability measures on a Polish space $\mathcal{X}$) into
a separable  metrizable space $\mathcal{Q}$, $\mathfrak{Q}$ is a subset of $\mathcal{M}(\mathcal{Q})$, and $\Phi$ is a measurable quantity of interest defined on $\mathcal{M}(\mathcal{X})$.
Therefore, \eqref{eq:2qrewriteiubis} states that the optimization problem (in its left hand side) over $\Psi^{-1}\mathfrak{Q}$ (a subset of the set of measures of $\mathcal{A}$, i.e.\ a
subset of the set of measures of the set of measures over $\mathcal{X}$) is equal to the nesting of an optimization problem over $\Psi^{-1}(q)$ (a subset of $\mathcal{A}$, i.e.\ a
subset of the set of measures over $\mathcal{X}$) and an optimization problem over $\mathfrak{Q}$ (a subset of the set of measures over $\mathcal{Q}$).

We will now illustrate this calculus by showing how \eqref{eqmua1ewer} can be derived through a simple application of \eqref{eq:2qrewriteiubis}.
First we need to give a short reminder on  optimization  over measures via the following problem.
\begin{pb}\label{pb:3}
A child is given one pound of playdoh and the seesaw illustrated by Figure \ref{fig:playdoh}.(a). How much mass can she put above the threshold $a$ while keeping the seesaw balanced at $m$?
\end{pb}
The mathematical formulation of the question articulated in Problem \ref{pb:3} is as follows. What is the least upper bound on $\P[X\geq a]$ if $\P$ is an unknown (imperfectly known) probability measure
 on $[0,1]$ having mean $m$? The answer to this question is
\begin{equation}\label{eq:pbopinf}
\sup_{\mu \in \mathcal{A}}\mu[a,1]
\end{equation}
where $\mathcal{A}$ is the set of probability measures
on $[0,1]$ having mean $m$.
Although \eqref{eq:pbopinf} is an infinite dimensional optimization problem over measures, it is easy to see that to achieve the maximum, any mass put above $a$ should be placed exactly at $a$ to create minimum leverage towards the right hand side of the seesaw and any mass put below $a$ should be placed at $0$ to create maximum leverage towards the left hand side of the seesaw (as illustrated in Figure \ref{fig:playdoh}.(a)). This simple argument allows to reduce  \eqref{eq:pbopinf} to a simple one dimensional problem whose solution is $\frac{m}{a}$ and corresponds to Markov's inequality. This simple example of reduction calculus has a generalization to spaces of functions and measures  \cite{OSSMO:2011} and is based on a form of linear programming in spaces of measures. In particular, the calculus developed  in \cite{OSSMO:2011} uses results of Winkler \cite{Winkler:1988} --- which follow from an extension of Choquet theory
(see e.g.~\cite{Phelps:2001}) by von Weizs\"{a}cker and Winkler \cite[Corollary 3]{WeizsackerWinkler:1979} to sets of probability measures with generalized moment constraints --- and a result of Kendall \cite{Kendall:1962} characterizing cones, which are lattice cones in their own order.

	\begin{figure}[tp]
\begin{center}
	\subfigure[Problem \ref{pb:3}]{
			\includegraphics[width=0.4\textwidth]{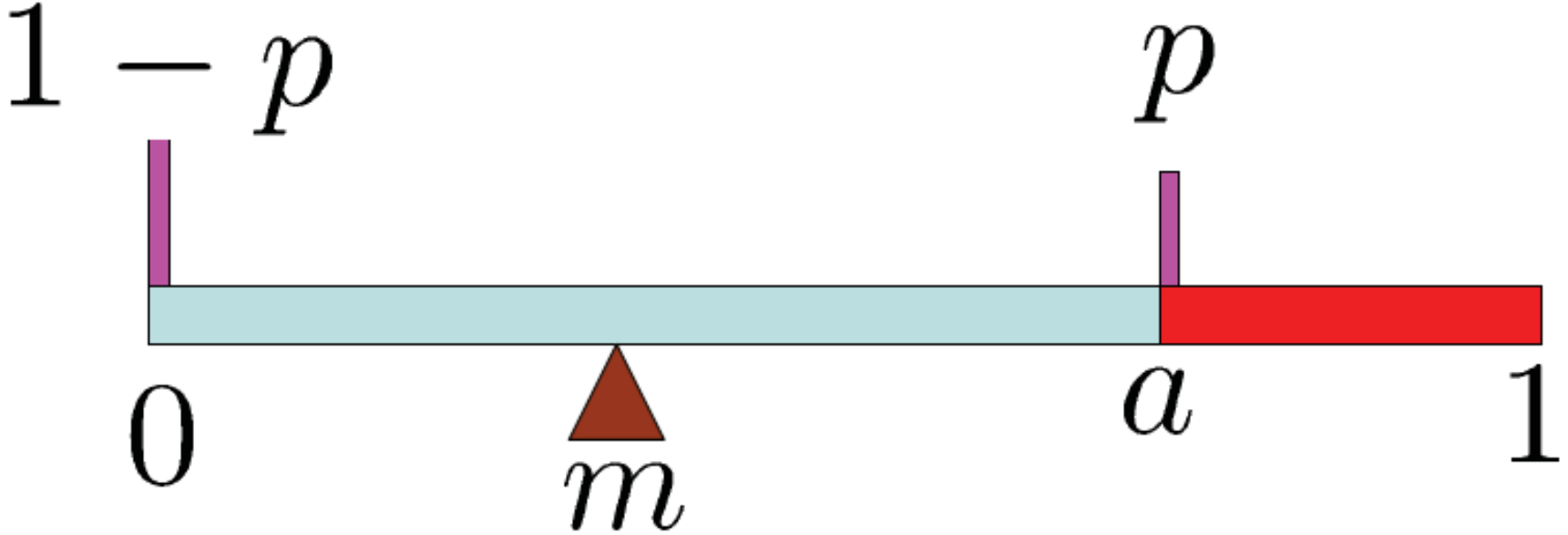}
		}
		\subfigure[Problems \ref{pb:4} and \ref{pb:5}]{
			\includegraphics[width=0.4\textwidth]{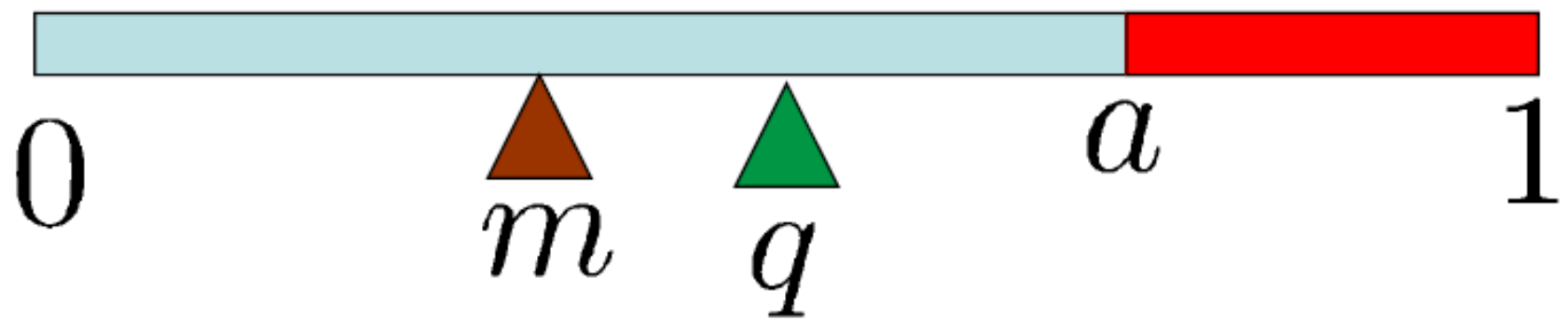}
		}
		\caption{Reduction of optimization problems over measures and over measures over measures.}
		\label{fig:playdoh}
\end{center}
	\end{figure}

We will now consider the next level of
complexity,
illustrated by the following two equivalent problems.

\begin{pb}\label{pb:4}
$10,000$ children are, each, given one pound of playdoh and a seesaw. On average, how much mass can they put above the threshold $a$ while, on average, keeping the seesaws balanced at $m$?
\end{pb}

\begin{pb}\label{pb:5}
A child is given one pound of playdoh and a seesaw. What can you say about how much mass she is putting above the threshold $a$ if all you have is the belief that she is keeping the seesaw balanced at $m$?
\end{pb}

The mathematical formulation of problems \ref{pb:4} and \ref{pb:5} is as follows (for Problem \ref{pb:4}, replace $10,000$ by $N$ and consider the asymptotic limit $N\rightarrow \infty$). What is the least upper bound on $\E_{\mu \sim \pi }\big[\mu[X\geq a]\big]$ if $\pi$ is an unknown (imperfectly known) probability measure
 on $\mathcal{M}\big([0,1]\big)$ (the set of probability distributions on $[0,1]$) such that
$\E_{\mu \sim \pi }\big[\E_{\mu}[X]\big]=m$?

 The answer to this question is
\begin{equation}\label{eq:pbopinfnextlev}
\sup_{\pi \in \Pi}\E_{\mu \sim \pi }\big[\mu[X\geq a]\big] ,
\end{equation}
where $\Pi$ is the set of measures of probability $\pi$ on the set of measures of probability on $[0,1]$ such that $\E_{\mu \sim \pi }\big[\E_{\mu}[X]\big]=m$.

Although \eqref{eq:pbopinfnextlev} is an optimization over measures over measures, the  calculus  of \eqref{eq:2qrewriteiubis} introduced in Theorem \ref{thm:reducpriormarg} allows us to reduce it to the nesting of two optimization problems over measures as follows.

\begin{equation}\label{eq:sj2jgee}
\sup_{\pi \in \Pi}\E_{\mu \sim \pi }\big[\mu[X\geq a]\big]= \sup_{\mathbb{Q}\in \mathcal{M}([0,1])\,:\, \E_\mathbb{Q}[q]=m} \mathbb{E}_{q \sim \mathbb{Q}}
 \left[ \sup_{\mu \in \mathcal{M}([0,1])\,:\, \E_\mu[X]=q } \mu[X\geq a] \right] .
\end{equation}

Observe that \eqref{eq:sj2jgee} is obtained from \eqref{eq:2qrewriteiubis} by taking $\mathcal{X}=[0,1]$, $\Psi(\mu)=\E_\mu[X]$, $\mathcal{Q}=[0,1]$ and $\mathfrak{Q}$ as the set of measures of probability on $\mathcal{Q}$ having mean $m$.
In particular, note that in \eqref{eq:sj2jgee}, the inner optimization problem involves taking a supremum over all measures $\mu$ on $[0,1]$ having mean $q$ and the outer optimization problem involves
 taking a supremum over the probability distribution of $q$, i.e.\ the
 set of distributions on $[0,1]$ having mean $m$.
Solving the inner optimization problem as described below Problem \ref{pb:3} leads to:

\[
\sup_{\pi \in \Pi}\E_{\mu \sim \pi }\big[\mu[X\geq a]\big]= \sup_{\mathbb{Q}\in \mathcal{M}([0,1])\,:\, \E_\mathbb{Q}[q]=m} \mathbb{E}_{q \sim \mathbb{Q}}
 \left[ \min(\frac{q}{a},1)\right]  ,
\]
and solving the outer optimization step gives the following solution.
\[
\sup_{\pi \in \Pi}\E_{\mu \sim \pi }\big[\mu[X\geq a]\big]= \frac{m}{a} .
\]

\subsection{Structure of the Paper and Main Results}

This paper is structured as follows:

Section~\ref{sec:Plan} incorporates Bayesian priors into the Optimal Uncertainty Quantification (OUQ) framework \cite{OSSMO:2011}.  In the OUQ framework, Uncertainty Quantification (UQ) is formulated as an optimization problem (over an infinite-dimensional set of functions and measures) corresponding to extremizing (i.e.~finding worst and best case scenarios) probabilities of failure or other quantities of interest, subject to the constraints imposed by the scenarios compatible with the assumptions and information.
In this generalization, priors are probability measures on spaces of measures, and computing optimal bounds on prior values (given a set of priors) requires solving problems in which the optimization variables are measures on spaces of measures (the results of this paper can be extended to measures over spaces of measures and functions but, for the sake of simplicity and clarity, we will limit the presentation to measures over measures).

Section~\ref{Sec:OpBoundsPriorEstimates} shows how such optimization problems can, under general conditions, be reduced to the nesting of two optimization problems over measures, where then we can apply the reduction theorems of \cite{OSSMO:2011}.

Section \ref{Sec:OpBoundsPosteriorEstimates} provides similar reduction theorems for the computation of optimal bounds on posterior values given a set of priors and the observation of the data.
These reduction theorems lead to the  brittleness results of Theorems~\ref{thm:shiva}, \ref{thm_localshivacor},
 and \ref{thm_localshiva}.

Section~\ref{sec:misspecori} reviews questions of Bayesian consistency, inconsistency, model misspecification, and robustness through a motivating analysis and interprets the results of this paper in relation to those questions.

Section~\ref{Sec:local} presents the  brittleness under local misspecification results of
Theorems~\ref{thm_localshivacor} and \ref{thm_localshiva}.
That is, given a model, Theorem~\ref{thm_localshivacor} provides optimal bounds on posterior values for priors that are at arbitrarily small distance (in the Prokhorov or total variation metrics) from a given model. Theorems~\ref{thm_localshivacor} and \ref{thm_localshiva} show that these optimal bounds on posterior values are the essential supremum and infimum of the quantity of interest irrespective of the size of data and of the size of the metric neighborhood around the model.
Finally, Sections \ref{Sec:proofs} and \ref{sec:appendix} contain the proofs.

\section{General Set-Up}
\label{sec:Plan}

\subsection{Notation and Conventions}
Throughout, for  a topological space $\mathcal{Y}$,  $\mathcal{B}(\mathcal{Y})$ will denote the Borel $\sigma$-algebra of subsets of $\mathcal{Y}$ and $\mathcal{M}(\mathcal{Y})$ will denote the space of Borel probability measures generally endowed with the weak topology and the corresponding Borel $\sigma$-algebra unless specified otherwise. For an alternative $\sigma$-algebra $\Sigma_{\mathcal{Y}}$ of subsets of $\mathcal{Y}$ the set of probability measures on the  $\sigma$-algebra $\Sigma_{\mathcal{Y}}$  will be denoted $\mathcal{M}(\Sigma_{\mathcal{Y}})$.    For a mapping between topological spaces, the term ``measurable''
will mean Borel measurable unless specified otherwise.
Moreover, suprema over the empty set will have the value $-\infty$ and infima over the empty set the value $+\infty$.

\subsection{The General Problem and the Optimal Uncertainty Quantification (OUQ) Framework}
Let $\mathcal{X}$ be Polish and $\Phi$ be a measurable function mapping $\mathcal{M}(\mathcal{X})$, the set of measures of probability on $\mathcal{X}$, onto the real line $\R$, known as the \emph{quantity of interest}.
Let $\mu^\dagger$ be an unknown or imperfectly known probability measure
on $\mathcal{X}$. The general problem guiding our presentation will be that of estimating $\Phi(\mu^\dagger)$.

Let $\mathcal{A}$ be an arbitrary subset of $ \mathcal{M}(\mathcal{X})$.  If $\mathcal{A}$ represents all that is known about $\mu^\dagger$ (in the sense that $\mu^\dagger \in \mathcal{A}$ and that any $ \mu\in \mathcal{A}$ could, a priori, be $\mu^\dagger$ given the available information) then
 \cite{OSSMO:2011} shows that the quantities
\begin{align}
	\label{eq:defma1}
	\mathcal{U}(\mathcal{A}) &:= \sup_{\mu \in \mathcal{A}} \Phi(\mu)\\
	\label{eq:defma2}
	\mathcal{L}(\mathcal{A}) &:= \inf_{ \mu\in \mathcal{A}} \Phi( \mu)
\end{align}
determine the inequality
\begin{equation}
	\label{ineq_OUQ}
	\mathcal{L}(\mathcal{A}) \leq \Phi(\mu^\dagger) \leq \mathcal{U}(\mathcal{A}),
\end{equation}
to be optimal given the available information $\mu^\dagger \in \mathcal{A}$ as follows:  It is simple to see that the inequality \eqref{ineq_OUQ}  follows from the assumption that $\mu^\dagger \in \mathcal{A}$.  Moreover, for any $\varepsilon >0$  there exists a $\mu \in \mathcal{A}$ such that
\[
	\mathcal{U}(\mathcal{A})-\varepsilon < \Phi(\mu) \leq \mathcal{U}(\mathcal{A}) .
\]
Consequently since all that we know about $\mu^\dagger$ is that $\mu^\dagger \in \mathcal{A}$, it follows that the upper bound $\Phi(\mu^\dagger) \leq \mathcal{U}(\mathcal{A})$ is the best obtainable given that information, and the lower bound is optimal in the same sense.

 Although the OUQ optimization problems \eqref{eq:defma1} and \eqref{eq:defma2} are extremely large, we have shown in \cite{OSSMO:2011}, for the more general situation where $\mathcal{A}$ is a set of functions $f$ and measures $\mu$ and $\Phi$ a function of $(f,\mu)$, that an important subclass enjoys significant and practical finite-dimensional reduction properties.  First, by \cite[Cor.~4.4]{OSSMO:2011}, although the optimization variables $(f, \mu)$ lie in a product space of functions and probability measures, for OUQ problems governed by linear inequality constraints on generalized moments, the search can be reduced to one over probability measures that are products of finite convex combinations of Dirac masses with explicit upper bounds on the number of Dirac masses.
Furthermore, in the special case that all constraints are generalized moments of functions of $f$, the dependency on the coordinate positions of the Dirac masses is eliminated by observing that the search over admissible functions reduces to a search over functions on an $m$-fold product of finite discrete spaces, and the search over $m$-fold products of finite convex combinations of Dirac masses reduces to a search over the products of probability measures on this $m$-fold product of finite discrete spaces \cite[Thm.~4.7]{OSSMO:2011}.  Finally, by \cite[Thm.~4.9]{OSSMO:2011}, using the lattice structure of the space of functions, the search over these functions can be reduced to a search over a finite set.

For the sake of clarity we will now restrict the presentations of our results to the (simpler) situation where the quantity of interest $\Phi$ is  (solely) a function of an unknown measure $\mu$.  As in \cite{OSSMO:2011}, the results of this paper can be generalized to situations where $\Phi$ is a function of $(f,\mu)$.

\begin{eg}
	\label{eg:1}
	A classic example, when $\mathcal{X}=\R$ is $\Phi(\mu):=\mu[X\geq a]$ where $a$ is a safety margin. In the certification context one is interested in showing that $\mu^\dagger[X\geq a]\leq \varepsilon$, where $\varepsilon$ is a safety certification threshold (i.e.\ the maximum acceptable $\mu^\dagger$-probability of the system exceeding the safety margin $a$).  If $\mathcal{U}(\mathcal{A}) \leq \varepsilon$, then the system associated with $\mu^\dagger$ is safe even in the worst case scenario (given the information represented by $\mathcal{A}$).  If $\mathcal{L}(\mathcal{A}) > \varepsilon$, then the system associated with $\mu^\dagger$ is unsafe even in the best case scenario (given the information represented by $\mathcal{A}$).  If $\mathcal{L}(\mathcal{A}) \leq \varepsilon < \mathcal{U}(\mathcal{A})$, then the safety of the system cannot be decided (although we could declare the system to be unsafe due to lack of information).
\end{eg}

\subsection{Bayesian Priors on the Admissible Set}

In the OUQ setting, an assumption of the form $\mu^\dagger \in \mathcal{A}$
was used to derive the optimal inequality \eqref{ineq_OUQ}.
This paper will consider the situation in which one has priors on the admissible set $\mathcal{A}$ and also information in the form of sample data.  One of our goals is to analyse the robustness (or brittleness) of Bayesian inference by obtaining optimal bounds on posterior values given local misspecifications.
In that context $\mathcal{A}$ can be viewed as a model class, and $\mu^\dagger$, as the realization of a probability measure
 (the prior) on $\mathcal{A}$.
In order to define priors on the space of admissible scenarios, $\mathcal{A}$ needs to be given the structure of a measurable space;  i.e.\ a suitable $\sigma$-algebra $\Sigma_{\mathcal{A}}$ on $\mathcal{A}$ must be provided.
From now on,  we will assume $\mathcal{A}$ to be a Borel subset of the
Polish space $\mathcal{M}(\mathcal{X})$, endowed with the Borel $\sigma$-algebra for $\mathcal{A}$.
We will also refer to a probability measure $\pi \in \mathcal{M}(\Sigma_{\mathcal{A}})$ as a \emph{prior}.

\begin{rmk}
	\label{rmk:Polish}
	The desire to have the Borel measurable structure of a Polish space might seem to be a spurious level of abstraction, but there are many good reasons for it.  The first is that, by Suslin's Theorem \cite[Thm.~14.2]{Kechris:1995},    all Borel subsets of a Polish space are Suslin, where a \emph{Suslin space} is a continuous Hausdorff image of a Polish space.  Indeed, Suslin sets are important in measurable selection theorems (see e.g.~\cite{CastaingValadier:1977}) such as those that we use in the proof of Lemma~\ref{lem:Umeasurable};  furthermore, in addition to Ulam's theorem \cite[Thm.~4.3.8]{Ash:1972} that all probability measures on a Polish space are regular (approximable from within by compact sets), Schwartz' theorem \cite{Schwartz:1974} implies that that all probability measures on a Suslin space are regular, and, therefore, \cite[Thm.~11.1]{Topsoe:1970} implies that the extreme points in the space of probability measures on a Suslin space are the Dirac measures.  Consequently, when  $\mathcal{M}(\mathcal{X})$ is Polish, any Borel subset $\mathcal{A} \subseteq  \mathcal{M}(\mathcal{X})$ is Suslin and so the extreme points of probability measures on $\mathcal{A}$ are the Dirac measures, and some powerful measurable selection theorems are available.  Moreover, when the base space is metrizable, then the space of probability measures is Polish in the weak topology if and only if the base space is Polish.

	Furthermore, since separability is equivalent to second countability for metric spaces, we have that the Borel structure of a product is  the product of Borel structures of Polish spaces.  In addition, by \cite[Thm.~10.2.2]{Dudley:2002}, regular conditional probabilities exist for observables with values in a Polish space.    Also, Polish spaces are the spaces of Descriptive Set Theory, see e.g.~Kechris \cite{Kechris:1995}.  Polish spaces
	appear to be the appropriate spaces to play topological games such as the Banach--Mazur game \cite{Oxtoby:1971}, the Sierpi{\'n}ski game, the Ulam game, the Banach game, and the Choquet game.  Moreover, a theorem of Choquet \cite[Thm.~8.18]{Kechris:1995} shows that a separable metric space is completely metrizable (and hence Polish) if and only if the second player has a winning strategy in the strong Choquet game.   For a review of topological games, see Telg\'{a}rsky's review \cite{Telgarsky:1987}, and for topological games in hyperspace see that of Zsilinszky \cite{Zsilinszky:1998}.
\end{rmk}

\subsection{Data Spaces and Maps}
\label{subsec:dataspacesanmaps}

  In practice,
  the probability measure $\mu^\dagger$ is not observed directly;  instead
  the sample data arrives in the form of (realizations of) observation random variables, the distribution of which is related to $\mu^\dagger$.  To simplify the current presentation,
  we will assume that this relation is determined by a function of $\mu^\dagger$ --- such as the case where the data  $X_1, \ldots, X_n$ are determined by $n$ independent realizations $X_i$ of the random variable $X$ determined by the possibly unknown distribution $\mu^\dagger$.  Throughout this paper we will use the following notation:  $\Dspace$ will denote the observable space (i.e.\ the space in which the sample data take values);  $\Dspace$ will be assumed to be a metrizable Suslin space and $\Drv$ will denote a $\Dspace$-valued random variable producing the observed sample data.  To represent the dependence of the observation random variable
$\Drv$ on the unknown state $\mu^\dagger \in \mathcal{A}$ we introduce a measurable function
\[
	\Dmap \colon \mathcal{A} \to \mathcal{M}(\Dspace),
\]
where $\mathcal{M}(\Dspace)$ is given the Borel structure corresponding to the weak topology, to define this relation.  The idea is that $\Dmap( \mu)$ is the probability distribution of the observed sample data $\Drv(\mu)$ if $\mu^\dagger=\mu$, and for this reason it may be called the \emph{data map} or --- even more loosely --- the \emph{observation operator}.  Often, for simplicity, we will write $\Drv$ instead of $\Drv( \mu)$.  Note that when the data comes in the form of $n$ i.i.d.\ realizations of $\mu^\dagger$ we have $\Dspace=\mathcal{X}^n$ and $\Dmap(\mu)=\mu^n$ (where $\mu^n$ is the $n$-fold tensorization of $\mu$).

We proceed with a natural generalization of the Campbell measure and Palm
distribution associated with a random measure as described in \cite{Kallenberg:1975} (see also \cite[Ch.~13]{Daley} for a more current treatment).  To that end, observe that since $\Dspace$ is metrizable, it follows from \cite[Thm.~15.13]{AliprantisBorder:2006}, that, for any $B \in \mathcal{B}(\Dspace)$, the evaluation $\nu \mapsto \nu(B)$, $\nu \in \mathcal{M}(\Dspace)$,  is measurable.  Consequently, the measurability of $\Dmap$ implies that the mapping
\[
	\widehat{\Dmap} \colon \mathcal{A} \times \mathcal{B}(\Dspace) \to \R
\]
defined by
\[
	\widehat{\Dmap}( \mu, B) := \Dmap(\mu)[B], \quad \text{for } \mu \in  \mathcal{A}, B \in
 \mathcal{B}(\Dspace)
\]
is a transition function in the sense that, for fixed $\mu \in \mathcal{A}$,  $\widehat{\Dmap}( \mu, \quark )$ is a probability measure, and, for fixed $B \in \mathcal{B}(\Dspace)$, $\widehat{\Dmap}\bigl( \quark ,B\bigr)$ is Borel measurable.
Therefore, by \cite[Thm.~10.7.2]{Bogachev2}, any $\pi \in \mathcal{M}(\mathcal{A})$, defines a probability measure
\[
	\pi \odot \Dmap \in \mathcal{M}\bigl(\mathcal{B}(\mathcal{A}) \times \mathcal{B}(\mathcal{D})\bigr)
\]
through
\begin{equation}
	\label{eq:palm}
	\pi\odot \Dmap \big[ A \times B \big] := \E_{\mu \sim \pi} \big[ \one_{A}( \mu) \Dmap( \mu)[B] \big],\quad \text{for } A \in \mathcal{B}(\mathcal{A}), B \in \mathcal{B}(\Dspace) ,
\end{equation}
where $\one_{A}$ is the indicator function of the set $A$:
\[
	\one_{A}(\mu) :=
	\begin{cases}
		1, & \text{if $\mu \in A$,} \\
		0, & \text{if $\mu \notin A$.}
	\end{cases}
\]
It is easy to see that $\pi$ is the $\mathcal{A}$-marginal of $\pi\odot \Dmap$.  Moreover, when $\mathcal{X}$ is Polish, \cite[Thm.~15.15]{AliprantisBorder:2006} implies that $\mathcal{M}(\mathcal{X})$ is Polish, and  it follows that $\mathcal{A} \subseteq \mathcal{M}(\mathcal{X})$
is second countable.  Consequently, since $\Dspace$ is Suslin and hence second countable, it follows from \cite[Prop.~4.1.7]{Dudley:2002} that
\[
	\mathcal{B}\bigl(\mathcal{A} \times \Dspace\bigr)=\mathcal{B}(\mathcal{A}) \times\mathcal{B}(\Dspace)
\]
and hence $\pi \odot \Dmap$ is a probability measure on $\mathcal{A} \times \Dspace$.  That is,
\[
	\pi\odot \Dmap \in \mathcal{M}(\mathcal{A} \times \Dspace).
\]

Let us refer to an element of $\mathcal{M}(\mathcal{A})$ as a \emph{prior} on $\mathcal{A}$.  With a prior $\pi$ on $\mathcal{A}$, the quantity of interest $\Phi(\mu)$ becomes a random variable and we will be interested in estimating its distribution conditioned on the observation $\Drv\in B$, where $B \in \mathcal{B}(\Dspace)$.

\begin{eg}
	In the context of Example~\ref{eg:1}, we are interested in estimating the probability (under the prior $\pi$) that the system is unsafe, conditioned on the observations  $\Drv\in B$, i.e.\ the conditional expectation
	\[
		(\pi\odot \Dmap)\Bigl[ \mu[X\geq a]> \epsilon \Big| \Drv\in B\Bigr] .
	\]
	If $\Drv$ corresponds to observing independent realizations of $X$, then the observation space $\Dspace$ is $\mathcal{X}^n$ and the measure $\Dmap(\mu)$ is $\mu^n$.

	If $\Drv$ is the random variable that results from observing $n$ independent realizations of $(X +\xi)$ ($X$ is observed with additive Gaussian noise $\xi \sim \mathcal{N}(0, \sigma^{2})$), then the measure $\Dmap( \mu)$ is the one associated with the random variable $\Drv = \big( X^1 +\xi^1, \dots, X^n+ \xi^n) \big)$ where the $X^i$ are independent and distributed according to $\mu$ and the $\xi^i$ are independent Gaussian random variables of mean zero and variance $\sigma^2$.
\end{eg}

\subsection{Bayes' Theorem and Conditional Expectation}
\label{subsec:condexp}

Henceforth $\mathcal{A}$ will be a Suslin space, and suppose now that we have $\pi\odot \Dmap \in \mathcal{M}(\mathcal{A} \times \Dspace)$ constructed in the above way.  Let $\pi\cdot\Dmap$ denote the corresponding  Bayes' sampling distribution defined by the $\Dspace$-marginal of $\pi\odot \Dmap$,  and note that, by \eqref{eq:palm}, we have
\begin{equation}
    \label{eq:cdotexp}
    \pi\cdot\Dmap[B]:=\E_{\mu\sim \pi}\big[\Dmap(\mu)[B]\big], \quad \text{for } B \in
\mathcal{B}(\Dspace) .
\end{equation}

Since both $\Dspace$ and $\mathcal{A}$ are Suslin it follows that the product $\mathcal{A} \times \Dspace$ is Suslin. Consequently, \cite[Cor.~10.4.6]{Bogachev2} asserts that regular conditional probabilities exist for any sub-$\sigma$-algebra of $\mathcal{B}\bigl(\mathcal{A} \times \Dspace\bigr)$. In particular, the product theorem of \cite[Thm.~10.4.11]{Bogachev2} asserts that product regular conditional probabilities
\[
	\bigl(\pi\odot \Dmap\bigr)|_{d} \in \mathcal{M}(\mathcal{A}), \quad \text{for } d \in \Dspace
\]
exist and that they are $\pi\cdot \Dmap$-a.e.\ unique.

When we consider $\pi \in \mathcal{M}(\mathcal{A})$ a prior, then this result can be interpreted as the posteriors of Bayes' theorem.  However, because such regular conditional probabilities are only uniquely defined
$\pi\cdot \Dmap$-a.e., when a data sample $d \in \Dspace$ arrives such that $\pi\cdot \Dmap[\{d\}]=0$, a  posterior $\bigl(\pi\odot \Dmap\bigr)|_{d}$ that could be \emph{any} of the $\pi\cdot \Dmap$-a.e.-equal regular conditional probabilities evaluated at $d$ appears to have dubious utility.  Indeed, the fact that the regular conditional probabilities are only uniquely defined $\pi\cdot \Dmap$-a.e.\ suggests that integrals of posteriors over subsets $B \in \mathcal{B}(\Dspace)$ such that $\pi\cdot \Dmap[B]>0$ are the more natural objects.  Moreover, the restriction that $B$ be an open set is natural for practical reasons, since conditioning on $\Drv$ lying in an open subset $B$  rather than on its  exact value is what one has to do when the sample data can only be observed after rounding error.  Furthermore, we will show in Section~\ref{Sec:OpBoundsPosteriorEstimates} that
if the data $\Ddata$ have been sampled from a probability measure $\pi^{\dagger}\cdot \Dmap$ for some $\pi^{\dagger}\in \mathcal{M}(\mathcal{A})$ (commonly called a ``true prior'' in Bayesian statistics) then with $\pi^{\dagger}\cdot \Dmap$ probability one (on the realization of $d$), the $\pi^{\dagger}\cdot \Dmap$-measure of any open set containing $d$ is strictly positive.  In other words, $\pi^{\dagger}\cdot \Dmap$-almost surely, $\pi^{\dagger}$ (the ``true prior'') belongs to the random subset of $\mathcal{M}(\mathcal{A})$ defined as the priors $\pi \in \mathcal{M}(\mathcal{A})$ such that $\pi\cdot \Dmap[B]>0$ for any open set $B$ containing the data $d$ (this subset is randomized through the realization of the data $d$).

Finally, throughout, we will find it useful to assume that
\begin{assumption}
	\label{ass_semibounded}
	$\Phi$ is semibounded
\end{assumption}
in that it is either bounded above or bounded below.  Semiboundedness is sufficient to ensure that the integral of $\Phi$ with respect to any probability  measure exists, possibly with the value $\infty$ or $-\infty$, and such integrands are sufficient for the reduction theorems of Winkler \cite{Winkler:1988} that we use.

\begin{rmk}
	Note that the assumption that $\Phi$ is semibounded is mostly for convenience since integrands which are not semibounded, like that defining the first moment, can be considered by restricting the space of measures to those measures that have well-defined
	first moments.
\end{rmk}

\subsection{Incompletely Specified Priors}
\label{sec_incompletepriors}

In  practical situations, (1) the choice of a particular prior on $\mathcal{A}$ involves a degree of arbitrariness that may be incompatible with the certification of rare/critical events, and
(2) the definition of such a prior is a non-trivial
task if $\mathcal{A}$ is infinite dimensional.
For these reasons it is necessary to consider situations in which the prior $\pi$ is imperfectly known or specified.  More precisely, the (lack of) information (or specification) on $\pi$  can be represented via the introduction of a space $\Pi$  where the  subset
$\Pi \subseteq \mathcal{M}(\mathcal{A})$ consists of the set of admissible priors $\pi$.

One of our goals in allowing incompletely specified priors is to assess the robustness of posterior Bayesian estimates with respect to the particular choice of priors. More precisely we will compute optimal bounds on $\E_{\pi}[\Phi]$ when $\pi \in \Pi$ and show how these bounds are affected by the introduction of sample data by computing optimal bounds on  $\E_{\pi \odot \Dmap}[\Phi|B]$, for $B \in \mathcal{B}(\Dspace)$.

\section{Optimal Bounds on the Prior Value}
\label{Sec:OpBoundsPriorEstimates}

Recall that for a subset $\mathcal{A}$ and a measurable quantity of interest $\Phi \colon \mathcal{A} \to \R$, that under the assumption $\mu^\dagger \in \mathcal{A}$, we have the optimal upper $\mathcal{U}(\mathcal{A})$ and lower $\mathcal{L}(\mathcal{A})$ bounds on the \emph{value} $\Phi(\mu^\dagger)$ of the quantity of interest,
defined in \eqref{eq:defma1} and \eqref{eq:defma2} by
\begin{align*}
	\mathcal{U}(\mathcal{A}) &:= \sup_{ \mu\in \mathcal{A}} \Phi( \mu)\\
	\mathcal{L}(\mathcal{A}) &:= \inf_{ \mu\in \mathcal{A}} \Phi( \mu)\, .
\end{align*}

When we put a prior $\pi$ on $\mathcal{A}$, we have to define the \emph{value $\bar{\Phi}(\pi)$ of the prior} $\pi$
corresponding to an extended quantity $\bar{\Phi} \colon \mathcal{M}(\mathcal{A}) \to \R$ of interest
corresponding to $\Phi$.  Disregarding integrability concerns, for a given $\Phi$, let us call the induced function
\begin{equation}
	\label{def_interest_can}
	\bar{\Phi}(\pi):=\E_{\pi}[\Phi], \quad \pi \in \mathcal{M}(\mathcal{A}),
\end{equation}
the canonical one associated with $\Phi$ and abuse notation by denoting the function $\bar{\Phi}$ as $\Phi$.
For such a canonical quantity of interest, we call the value $\E_{\pi}[\Phi]$ the
\emph{prior value}, and note that the values
\begin{align}
	\label{eq:UPi}
    \mathcal{U}(\Pi) &:= \sup_{\pi \in \Pi} \E_{\pi} \big[\Phi\big]\\
	\label{eq:LPi}
    \mathcal{L}(\Pi) &:= \inf_{\pi \in \Pi} \E_{\pi} \big[\Phi \big]
\end{align}
form a natural generalization of the values $\mathcal{U}(\mathcal{A})$ and  $\mathcal{L}(\mathcal{A})$.  Moreover, in the same way that $\mathcal{U}(\mathcal{A})$ and $\mathcal{L}(\mathcal{A})$ are optimal upper and lower bounds on $\Phi(\mu^\dagger)$ given the information that $(\mu^\dagger) \in \mathcal{A}$,  $\mathcal{U}(\Pi)$ and $\mathcal{L}(\Pi)$ are optimal upper and lower bounds on $\E_{\pi}\big[\Phi\big]$ given the information that $\pi \in \Pi$.  Of course, for these expressions
to be well defined, integrability concerns should be addressed.  Indeed, Assumption~\ref{ass_semibounded}
implies that $\E_{\pi} \big[\Phi\big]$ is well defined for any bounded measure $\pi$, possibly with the value $\infty$ or
$-\infty$, and therefore the quantities in \eqref{eq:UPi} and \eqref{eq:LPi} are well defined.

\begin{rmk}
	The restriction that the the extended quantity of interest corresponding to $\Phi$ be canonical is really no restriction, but is assumed only to simplify the presentation and notation.  Indeed, there are many important extended quantities of interest that are not affine as functions of the measure $\pi$.  However, all the ones that we have thought of can be handled by small modifications of the present framework, and their inclusion here would simply complicate the presentation and notation.  Moreover, note that many affine non-canonical extended quantities of interest become canonical through simple transformations.  For example, when $\Phi_{1}(\mu) := \mu[X \geq a]$ is a quantity of interest, and the extended quantity of interest is the probability that the system is unsafe, i.e.~$ \pi(\{\mu \mid \mu[X\geq a ]>\varepsilon\})$ where $\{\mu \mid \mu[X\geq a ]>\varepsilon\}$ is the set of unsafe $\mu$, then this extended quantity of interest is not canonical with respect to $\Phi_{1}$. However,  by transformation to $\Phi_{2}:=\eins_{\{r \mid r >\varepsilon \}}\circ \Phi_{1}$, the extended quantity of interest becomes canonical and $\mathcal{U}(\Pi)$ and $\mathcal{L}(\Pi)$, defined in terms of $\Phi_{2}$, are optimal upper and lower bounds on the probability that the system is unsafe given the set of priors $\Pi$.
\end{rmk}

\subsection{General Information Bounds on Prior Values}

Let $\delta \colon \mathcal{A} \to \mathcal{M}(\mathcal{A})$ be the mapping of points to unit Dirac measures, where $\delta_{\mu}$ denotes the Dirac mass at $\mu$, and, for $\Pi \subseteq \mathcal{M}(\mathcal{A})$, define
\begin{equation}
	\label{def-apiorigin}
	\mathcal{A}_{\Pi} := \delta^{-1}\Pi = \{  \mu \in \mathcal{A} \mid \delta_{\mu} \in \Pi \} .
\end{equation}
That is, $\mathcal{A}_{\Pi}$ consists of those scenarios $ \mu$ that are not only admissible in the sense that they lie in $\mathcal{A}$, but are also admissible as a prior in the sense that $\delta_{\mu}$ is an element of $\Pi$.

With the convention that $\mathcal{U}(\varnothing) := -\infty$ and $\mathcal{L}(\varnothing) := +\infty$, the following theorem shows the relationships among $\mathcal{U}(\mathcal{A})$ and $\mathcal{U}(\mathcal{A}_{\Pi})$ as defined by \eqref{eq:defma1}, $\mathcal{L}(\mathcal{A})$ and $\mathcal{L}(\mathcal{A}_{\Pi})$ as defined by \eqref{eq:defma2},  and $\mathcal{U}(\Pi)$ and $\mathcal{L}(\Pi)$ as defined by \eqref{eq:UPi} and \eqref{eq:LPi}.

\begin{thm}\label{thm-barriersorigin}
	It holds true that
	\[
		\mathcal{U}(\mathcal{A}_\Pi) \leq \mathcal{U}(\Pi) \leq \mathcal{U}(\mathcal{A})
	\]
	and
	\[
		\mathcal{L}(\mathcal{A}) \leq \mathcal{L}(\Pi) \leq \mathcal{L}(\mathcal{A}_\Pi).
	\]
	Moreover, if $\mathcal{A}_\Pi$ is non-empty, then
	\[
		\mathcal{L}(\mathcal{A}) \leq  \mathcal{L}(\Pi)\leq \mathcal{L}(\mathcal{A}_\Pi) \leq \mathcal{U}(\mathcal{A}_\Pi) \leq \mathcal{U}(\Pi) \leq \mathcal{U}(\mathcal{A}).
	\]
\end{thm}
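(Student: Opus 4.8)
The plan is to derive all six inequalities from two elementary facts about the canonical extension $\bar{\Phi}(\pi) = \E_{\pi}[\Phi]$, which by the semiboundedness Assumption~\ref{ass_semibounded} is well-defined (possibly with value $\pm\infty$) for every $\pi \in \mathcal{M}(\mathcal{A})$. The first fact is that, since $\bar{\Phi}(\pi)$ is an average of the values of $\Phi$ over $\mathcal{A}$, the pointwise bounds $\mathcal{L}(\mathcal{A}) \leq \Phi(\mu) \leq \mathcal{U}(\mathcal{A})$ valid for all $\mu \in \mathcal{A}$ integrate (by monotonicity of the integral against a probability measure) to $\mathcal{L}(\mathcal{A}) \leq \E_{\pi}[\Phi] \leq \mathcal{U}(\mathcal{A})$ for every $\pi \in \mathcal{M}(\mathcal{A})$. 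The second fact is that for each $\mu \in \mathcal{A}_{\Pi}$ the Dirac mass $\delta_{\mu}$ lies in $\Pi$ by the defining relation \eqref{def-apiorigin} of $\mathcal{A}_{\Pi}$, and $\E_{\delta_{\mu}}[\Phi] = \Phi(\mu)$.

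For the upper chain I would first take the supremum over $\pi \in \Pi$ in the inequality $\E_{\pi}[\Phi] \leq \mathcal{U}(\mathcal{A})$ to obtain $\mathcal{U}(\Pi) \leq \mathcal{U}(\mathcal{A})$. Then, using the Dirac observation, for each $\mu \in \mathcal{A}_{\Pi}$ I have $\Phi(\mu) = \E_{\delta_{\mu}}[\Phi] \leq \sup_{\pi \in \Pi} \E_{\pi}[\Phi] = \mathcal{U}(\Pi)$; taking the supremum over $\mu \in \mathcal{A}_{\Pi}$ yields $\mathcal{U}(\mathcal{A}_{\Pi}) \leq \mathcal{U}(\Pi)$. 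Concatenating gives $\mathcal{U}(\mathcal{A}_{\Pi}) \leq \mathcal{U}(\Pi) \leq \mathcal{U}(\mathcal{A})$.

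The lower chain is entirely dual: taking the infimum over $\pi \in \Pi$ in $\E_{\pi}[\Phi] \geq \mathcal{L}(\mathcal{A})$ gives $\mathcal{L}(\mathcal{A}) \leq \mathcal{L}(\Pi)$, while the Dirac observation gives $\Phi(\mu) = \E_{\delta_{\mu}}[\Phi] \geq \mathcal{L}(\Pi)$ for each $\mu \in \mathcal{A}_{\Pi}$, whence $\mathcal{L}(\Pi) \leq \mathcal{L}(\mathcal{A}_{\Pi})$ upon taking the infimum over $\mu \in \mathcal{A}_{\Pi}$.

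Finally, for the concatenated ordering I only need the missing link $\mathcal{L}(\mathcal{A}_{\Pi}) \leq \mathcal{U}(\mathcal{A}_{\Pi})$, which is immediate once $\mathcal{A}_{\Pi} \neq \varnothing$, since the infimum of a nonempty family of extended reals never exceeds its supremum; splicing this between the two chains produces the full string of inequalities. There is no deep obstacle here — the result is structural — and the only point I would treat explicitly rather than as routine calculation is the consistency of the empty-set conventions: when $\Pi = \varnothing$ or $\mathcal{A}_{\Pi} = \varnothing$, the prescribed values $\mathcal{U}(\varnothing) = -\infty$, $\mathcal{L}(\varnothing) = +\infty$, together with the $\mp\infty$ conventions for suprema and infima over the empty set, render each of the first two displays vacuously true, so that the non-emptiness hypothesis is genuinely needed only for the single comparison $\mathcal{L}(\mathcal{A}_{\Pi}) \leq \mathcal{U}(\mathcal{A}_{\Pi})$ in the concatenated statement.
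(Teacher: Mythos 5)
Your proof is correct, and it is exactly the routine argument the paper intends: the paper states Theorem \ref{thm-barriersorigin} without an explicit proof precisely because it follows from the two facts you isolate, namely that the pointwise bounds $\mathcal{L}(\mathcal{A}) \leq \Phi \leq \mathcal{U}(\mathcal{A})$ integrate against any probability measure $\pi \in \Pi$, and that each $\mu \in \mathcal{A}_{\Pi}$ yields $\delta_{\mu} \in \Pi$ with $\E_{\delta_{\mu}}[\Phi] = \Phi(\mu)$. Your explicit treatment of the empty-set conventions (so that non-emptiness of $\mathcal{A}_{\Pi}$ is needed only for the link $\mathcal{L}(\mathcal{A}_{\Pi}) \leq \mathcal{U}(\mathcal{A}_{\Pi})$) is a welcome clarification consistent with the paper's stated conventions.
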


\subsection{Priors Specified by Marginals}
\label{Sec:finitdimmarginals}

In many settings, probability measures or sets of probability measures are specified through generalized moments or other properties of marginal distributions. To analyse this case, let $\mathcal{Q}$ be a topological space and consider a measurable map $\Psi \colon \mathcal{A} \to \mathcal{Q}$.  Let us abuse notation by also denoting the corresponding pushforward of measures
$\Psi \colon \mathcal{M}(\mathcal{A})\rightarrow \mathcal{M}(\mathcal{Q})$
by the same symbol
$\Psi$. For a probability measure $\mathbb{Q} \in \mathcal{M}(\mathcal{Q})$, let
\[
    \Psi^{-1}\mathbb{Q} := \{ \pi \in \mathcal{M}(\mathcal{A}) \mid \Psi \pi=\mathbb{Q} \}
\]
be the set of probability measures $\pi \in \mathcal{M}(\mathcal{A})$ that push forward to $\mathbb{Q}$.  More generally,
for a non-empty set $\mathfrak{Q} \subseteq \mathcal{M}(\mathcal{Q})$, let
\begin{equation}
	\label{def_pullback}
	\Psi^{-1}\mathfrak{Q}:= \left\{ \pi \in \mathcal{M}(\mathcal{A})\mid \Psi\pi \in \mathfrak{Q} \right\}
\end{equation}
be the set of  probability measures $\pi \in \mathcal{M}(\mathcal{A})$ such that $\Psi\pi \in \mathfrak{Q}$.  Now, let $\mathfrak{Q} \subseteq \mathcal{M}(\mathcal{Q})$ be an admissible set of $\Psi$-marginals.  Then the corresponding admissible set of priors is $\Psi^{-1}\mathfrak{Q} \subseteq \mathcal{M}(\mathcal{A})$ and the corresponding objects to be computed are $\mathcal{U}(\Psi^{-1}\mathfrak{Q})$ and $\mathcal{L}(\Psi^{-1}\mathfrak{Q})$ according to \eqref{eq:UPi} and \eqref{eq:LPi}.

We will now demonstrate how to reduce the computation of $\mathcal{U}(\Psi^{-1}\mathfrak{Q})$ and $\mathcal{L}(\Psi^{-1}\mathfrak{Q})$ when $\mathfrak{Q}$ is specified by linear inequalities.  Later, in Section~\ref{sec-nestedreduction}, we will develop a more powerful \emph{nested} reduction which will provide the foundation for our reduction methods.

Before we begin, we need to introduce some terminology. Following Winkler \cite{Winkler:1988}, let $\mathcal{Y}$ be a topological space and let $\mathcal{M} \subseteq \mathcal{M}(\mathcal{Y})$ be a convex set
of measures. Let $\ext(\mathcal{M})$ denote the set of extreme points of $\mathcal{M}$ and let the evaluation field $\Sigma(\ext(\mathcal{M}))$ be the smallest $\sigma$-algebra of subsets of $\ext(\mathcal{M})$ such that the evaluation map $\nu \mapsto \nu(B)$ is measurable for all $B \in \mathcal{B}(\mathcal{Y})$.  Then a measure $\nu \in \mathcal{M}(\mathcal{Y})$ is said to be a \emph{barycenter} of $\mathcal{M}$ if there exists a probability measure $p$ on $\Sigma(\ext(\mathcal{M}))$ such that the \emph{barycentric formula}
\begin{equation}
	\label{def_barycenter}
	\nu(B)=\int_{\ext(\mathcal{M})}{\nu'(B) \, \mathrm{d} p(\nu')},\quad B \in \mathcal{B}(\mathcal{Y})
\end{equation}
holds.  Furthermore, the following notion of a \emph{measure affine function} is central to
Winkler's \cite{Winkler:1988} reduction theorems, which we use:

\begin{defn}
	\label{def_measureaffine}
	An extended real-valued function $F$ on $\mathcal{M} \subseteq \mathcal{M}(\mathcal{Y})$ is said to be \emph{measure affine} if, for all $\nu \in \mathcal{M}$ and all probability measures $p$ on $\Sigma(\ext(\mathcal{M}))$ for which the barycentric formula \eqref{def_barycenter} holds, $F$ is $p$-integrable and
	\[
		F(\nu) =\int_{\ext(\mathcal{M})}{F(\nu') \, \mathrm{d}p(\nu')} .
	\]
\end{defn}

A major consequence of Assumption \ref{ass_semibounded}, that $\Phi$ is semibounded, is that $\E_{\nu}[\Phi]$ exists, with possible values $\infty$ and $-\infty$, for all finite measures $\nu$.  As a consequence, by \cite[Prop.~3.1]{Winkler:1988}, the extended-real-valued function $\nu \mapsto \E_{\nu}[\Phi]$ is measure affine.

\subsubsection{Primary Reduction for Prior Values}
\label{sec-primaryreduction}

Let us consider the computation of
\begin{equation}
	\label{eq_lada}
	\mathcal{U}(\Psi^{-1}\mathfrak{Q}) =\sup_{\pi \in \Psi^{-1}\mathfrak{Q}} \E_{\pi}[\Phi]
\end{equation}
when $\mathfrak{Q}$ is specified by $n$ generalized moment inequalities determined by measurable functions $g_{1}, \dotsc, g_{n}$.
The situation for the lower bound $\mathcal{L}(\Psi^{-1}\mathfrak{Q})$ is the same.  That is, let $I_{1}, \dotsc, I_{n}$
be $n$ closed intervals, allowing semi-infinite intervals $(-\infty,q_{i}]$ and $[q_{i}, \infty)$,  and define
\[
	\mathfrak{Q} = \left\{ \mathbb{Q} \in \mathcal{M}(\mathcal{Q}) \smid \E_{\mathbb{Q}}[g_{i}] \in I_{i} \text{ for } i=1, \dotsc, n \right\} ,
\]
where implicit in the definition is that all $n$ integrals exist.  Then, by a change of variables,
$\E_{\Psi\pi}[g_{i}]=\E_{\pi}[g_{i}\circ \Psi]$ holds if either integral exists
(see e.g.~\cite[Cor.~19.2]{Bauer}), so we conclude that
\begin{align*}
	\Psi^{-1}\mathfrak{Q}
	&:=\left\{ \pi \in \mathcal{M}(\mathcal{A})\mid \Psi\pi \in \mathfrak{Q}\right\}\\
	&\phantom{:}=\left\{ \pi \in \mathcal{M}(\mathcal{A})\smid \E_{\Psi\pi}[g_{i}]\in I_{i} \text{ for } i=1,,n\right\}\\
 	&\phantom{:}=\left\{ \pi \in \mathcal{M}(\mathcal{A})\smid \E_{\pi}[g_{i} \circ \Psi]\in  I_{i} \text{ for } i=1, \dotsc, n\right\} .
\end{align*}
Hence,
$\Psi^{-1}\mathfrak{Q}$ is defined by the $n$ generalized moment inequalities corresponding to $g_{i} \circ \Psi \colon \mathcal{A} \to \R$ for  $i =1, \dotsc, n$.
Consequently, since the function
$\pi \mapsto \E_{\pi}[\Phi]$ is measure affine, it follows from the reduction theorems of \cite{OSSMO:2011} that we can reduce the supremum on the right-hand side of \eqref{eq_lada} to the convex combination of $n+1$ Dirac masses.  To state the theorem we have just proven, let
\begin{equation}
	\label{Dirac}
    \Delta(n) := \left\{\sum_{i=0}^n \alpha_i \delta_{\mu_i} \smid \mu_i\in \mathcal{A}, \alpha_i \geq 0, \text{for } i=0, \dots, n  \right\}.
\end{equation}
be  the set of non-negative combinations of $n+1$ Dirac masses.  Let  the vector $I$ of intervals have components $I_{i}$ for $i=1, \dotsc, n$, let
\[
	\Pi(I):=\Psi^{-1}\mathfrak{Q}
\]
be defined as above, and consider the subset
\begin{equation}
	\label{eq:uihue32Dirac}
	\Pi(I,n) := \Pi(I)\cap  \Delta(n)  \subseteq \Pi(I)
\end{equation}
of those measures which are the $(n+1)$-fold
convex combinations of Dirac masses.

\begin{thm}
	\label{thm:priorreduce}
    Let $\mathcal{A}$ be Suslin, let $\mathcal{Q}$ be separable and metrizable, and let $\Psi \colon \mathcal{A} \to \mathcal{Q}$ be measurable.  Moreover, for $n$ measurable functions $g_{1}, \dotsc, g_{n} \colon \mathcal{Q} \to \R$ and $n$ closed intervals $I_{1}, \dotsc, I_{n}$, let
	\[
		\mathfrak{Q}:=\left\{\mathbb{Q}\in \mathcal{M}(\mathcal{Q}) \smid \E_{\mathbb{Q}}[g_{i}]\in I_{i} \text{ for } i=1, \dotsc, n\right\}
	\]
	define the admissible set of $\Psi$-marginals.  Then,
	\[
		\mathcal{U}\big(\Pi(I)\big)=\mathcal{U}\big(\Pi(I,n)\big)
	\]
	where
    \begin{equation}
    	\label{eq:2qaltsu}
	 	\mathcal{U}\big(\Pi(I,n)\big)=
	 	\begin{cases}
	 		\sup \sum_{i=0}^n \alpha_i \Phi(\mu_i)\\
			\text{among } \mu_i\in \mathcal{A},\, \alpha_i\geq 0,\, \sum_{i=0}^n \alpha_i=1\\
			\text{such that } \sum_{i=0}^n \alpha_i g_{j}\bigl(\Psi(\mu_i)\bigr)\in I_{j} \text{ for } j=1, \dotsc, n.
		\end{cases}
    \end{equation}
\end{thm}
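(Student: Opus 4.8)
The plan is to recognize that, after a change of variables, the admissible set $\Pi(I)=\Psi^{-1}\mathfrak{Q}$ is nothing but a set of priors on $\mathcal{A}$ cut out by $n$ generalized moment inequalities, and that the objective $\pi \mapsto \E_{\pi}[\Phi]$ is measure affine; the result will then follow by a direct application of the reduction theorems of \cite{OSSMO:2011} (which rest on Winkler's \cite{Winkler:1988} analysis of extreme points under moment constraints). In effect, the bulk of the argument has already been carried out in the discussion preceding the statement, and the proof amounts to assembling those pieces and reading off the explicit Dirac form.

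First I would rewrite the constraints. For each $i$, the pushforward identity $\E_{\Psi\pi}[g_i]=\E_{\pi}[g_i\circ\Psi]$ (valid whenever either side exists, e.g.\ \cite[Cor.~19.2]{Bauer}) lets me replace the marginal condition $\Psi\pi\in\mathfrak{Q}$ by conditions directly on $\pi$, giving
\[
    \Psi^{-1}\mathfrak{Q}=\left\{\pi\in\mathcal{M}(\mathcal{A})\smid \E_{\pi}[g_i\circ\Psi]\in I_i \text{ for } i=1,\dotsc,n\right\}.
\]
Here measurability of each $g_i\circ\Psi\colon\mathcal{A}\to\R$ is automatic as a composition of measurable maps, so this genuinely presents $\Pi(I)$ as the solution set of $n$ generalized moment inequalities in the sense required by the reduction machinery.

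Second, I would assemble the hypotheses of the reduction theorem. By Assumption~\ref{ass_semibounded}, $\E_{\nu}[\Phi]$ exists for every probability measure, and by \cite[Prop.~3.1]{Winkler:1988} the functional $\pi\mapsto\E_{\pi}[\Phi]$ is measure affine in the sense of Definition~\ref{def_measureaffine}; the constraint functionals $\pi\mapsto\E_{\pi}[g_i\circ\Psi]$ are affine in the same sense. Crucially, since $\mathcal{A}$ is Suslin, Remark~\ref{rmk:Polish} (via Schwartz' theorem) tells us that the extreme points of $\mathcal{M}(\mathcal{A})$ are exactly the Dirac masses $\delta_{\mu}$, $\mu\in\mathcal{A}$. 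With these ingredients in place, the reduction theorem of \cite{OSSMO:2011} applies: the extreme points of the constrained set $\Pi(I)$ are convex combinations of at most $n+1$ Dirac masses (one more than the number of constraints), and, because a measure-affine objective attains the same supremum over a constrained set as over its extreme points, we obtain $\mathcal{U}(\Pi(I))=\mathcal{U}(\Pi(I,n))$. The explicit finite-dimensional form \eqref{eq:2qaltsu} then drops out by substituting $\pi=\sum_{i=0}^{n}\alpha_i\delta_{\mu_i}$, so that $\E_{\pi}[\Phi]=\sum_{i=0}^{n}\alpha_i\Phi(\mu_i)$ and $\E_{\pi}[g_j\circ\Psi]=\sum_{i=0}^{n}\alpha_i g_j(\Psi(\mu_i))$.

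I expect the main obstacle to be verification rather than construction: the content lies in confirming that this situation really meets every hypothesis of the cited reduction theorems. The delicate point is that the relevant space here is a space of \emph{measures over measures}, so one must be sure that treating $\mathcal{A}$ as the Suslin base space is legitimate and that its extreme measures are precisely the Diracs $\delta_{\mu}$ --- which is exactly what the Suslin assumption and Remark~\ref{rmk:Polish} supply. A secondary point is the bookkeeping around existence of the integrals defining the constraints (handled by the implicit integrability built into the definition of $\mathfrak{Q}$ together with the semiboundedness of $\Phi$), which guarantees that both the pushforward identity and the measure-affine reduction apply without integrability pathologies.
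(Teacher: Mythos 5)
Your proposal is correct and follows essentially the same route as the paper: the paper's proof is precisely the discussion preceding the theorem statement, namely the change of variables $\E_{\Psi\pi}[g_i]=\E_{\pi}[g_i\circ\Psi]$ to present $\Pi(I)$ as a set cut out by $n$ generalized moment inequalities on $\mathcal{A}$, followed by the observation that $\pi\mapsto\E_{\pi}[\Phi]$ is measure affine (semiboundedness plus \cite[Prop.~3.1]{Winkler:1988}) and an application of the reduction theorems of \cite{OSSMO:2011} to reduce to convex combinations of $n+1$ Dirac masses. Your additional verification that the Suslin hypothesis on $\mathcal{A}$ ensures the extreme points of $\mathcal{M}(\mathcal{A})$ are the Dirac masses matches the justification the paper supplies in Remark~\ref{rmk:Polish}.
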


\begin{rmk}
	\label{inequalities}
	The freedom to determine intervals $I_{i}$, $i=1, \dotsc, n$, is one way to incorporate uncertainty and maintain a reduction to $n+1$ Dirac masses. In particular, by choosing semi-infinite intervals $I_{i}:=(-\infty,q_{i}]$ we obtain a reduction to $n+1$ Dirac masses for inequality constraints of the form $\E_{\mathbb{Q}}[g_{i}] \leq q_{i}$, and by choosing point intervals $I_{i}:=[q_{i},q_{i}]$ we obtain a reduction to $n+1$ Dirac masses for equality constraints of the form $\E_{\mathbb{Q}}[g_{i}]= q_{i}$.  Moreover, by choosing the interval to be semi-infinite or point interval depending on the index $i$ we obtain a reduction to $n+1$ Dirac masses for mixed equality and inequality constraints.
\end{rmk}

Theorem \ref{thm:priorreduce} can be put into a canonical form in the following way:  by considering the modified feature map $\Psi' \colon \mathcal{A} \to \R^{n}$ with components
\[
	\Psi'_{i}:=  g_{i}\circ \Psi, \quad \text{for } i=1, \dotsc, n ,
\]
it follows from the above that
\[
	\Psi^{-1}\mathfrak{Q}= \left\{ \pi \in \mathcal{M}(\mathcal{A}) \smid \E_{\pi}[\Psi']\in I\right\} .
\]
That is, by changing from the feature map $\Psi$ to $\Psi'$ we end up with a constraint set defined by the first moment of the  vector function $\Psi'$.  Therefore, let us remove the $'$ from $\Psi'$, and require $\Psi \colon \mathcal{A} \to \R^{n}$ to be measurable. The following theorem is the canonical form of Theorem~\ref{thm:priorreduce}. It is a corollary of Theorem~\ref{thm:priorreduce} for the constraint $\E_{\pi}[\Psi]\in Z$ when $Z=I$ is a closed rectangle. However, it is true for arbitrary $Z \subseteq \R^{n}$.

\begin{thm}\label{thm_primred}
	Let $\mathcal{A}$ be Suslin, let $\Psi \colon \mathcal{A} \to \R^{n}$ be measurable, let $Z\subset\R^n$, and let
	\begin{equation}
		\label{def-firstmomentbbisw}
		\mathfrak{Q}:=\left\{\mathbb{Q} \in \mathcal{M}(\R^{n}) \smid \E_{Q\sim \mathbb{Q}} [Q] \in Z \right\}
	\end{equation}
	be the set of those measures whose first moment belongs to $Z$. Then, for
	\begin{equation}
		\label{eq:uihue32}
        \Pi(Z) := \Psi^{-1}\mathfrak{Q} = \left\{ \pi \in \mathcal{M}(\mathcal{A}) \smid \E_{\pi}[\Psi]\in Z \right\}
	\end{equation}
	and $\Pi(Z,n) := \Pi(Z)\cap \Delta(n)$,	we have
	\[
		\mathcal{U}\big(\Pi(Z)\big)=\mathcal{U}\big(\Pi(Z,n)\big)
	\]
	where
	\begin{equation}
		\label{eq:2qaltsihuihu}
		\mathcal{U}\big(\Pi(Z,n)\big)=
		\begin{cases}
			\sup \sum_{i=0}^n \alpha_i \Phi(\mu_i)\\
			\text{among } \mu_i\in \mathcal{A}, \alpha_i\geq 0, \sum_{i=0}^n \alpha_i=1 \\
			\text{such that } \sum_{i=0}^n \alpha_i \Psi(\mu_i)\in Z .
		\end{cases}
	\end{equation}
\end{thm}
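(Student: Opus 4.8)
The plan is to derive Theorem~\ref{thm_primred} from Theorem~\ref{thm:priorreduce} by a slicing argument over the level sets of the first moment map $\pi \mapsto \E_\pi[\Psi]$, reducing the general constraint set $Z \subseteq \R^n$ to a union of single-point constraints, each of which is a degenerate closed rectangle to which Theorem~\ref{thm:priorreduce} applies directly.

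First I would place the hypotheses of Theorem~\ref{thm_primred} inside those of Theorem~\ref{thm:priorreduce} by taking $\mathcal{Q} = \R^n$ and letting $g_i \colon \R^n \to \R$ be the $i$-th coordinate projection, so that $g_i \circ \Psi = \Psi_i$ and the generalized-moment constraint $\E_{\mathbb{Q}}[g_i] \in I_i$ becomes $\E_\pi[\Psi_i] \in I_i$. For a fixed point $z = (z_1, \ldots, z_n) \in \R^n$, the choice of point intervals $I_i = [z_i, z_i]$ — admissible closed intervals, as noted in Remark~\ref{inequalities} — yields the single-point constraint set $\Pi(\{z\}) := \{\pi \in \mathcal{M}(\mathcal{A}) \mid \E_\pi[\Psi] = z\}$, and Theorem~\ref{thm:priorreduce} gives the pointwise reduction $\mathcal{U}(\Pi(\{z\})) = \mathcal{U}(\Pi(\{z\}) \cap \Delta(n))$.

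Next I would dispatch the two inequalities. The bound $\mathcal{U}(\Pi(Z,n)) \leq \mathcal{U}(\Pi(Z))$ is immediate from the inclusion $\Pi(Z,n) = \Pi(Z) \cap \Delta(n) \subseteq \Pi(Z)$. For the reverse, take an arbitrary $\pi \in \Pi(Z)$ and set $z := \E_\pi[\Psi]$, which lies in $Z$ by the definition of $\Pi(Z)$ (this also guarantees the first moment exists, since membership in $\Pi(Z)$ presupposes it). As $\pi \in \Pi(\{z\})$, the point-constraint reduction gives $\E_\pi[\Phi] \leq \mathcal{U}(\Pi(\{z\})) = \mathcal{U}(\Pi(\{z\}) \cap \Delta(n))$. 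Because $\{z\} \subseteq Z$ we have $\Pi(\{z\}) \cap \Delta(n) \subseteq \Pi(Z) \cap \Delta(n) = \Pi(Z,n)$, so $\E_\pi[\Phi] \leq \mathcal{U}(\Pi(Z,n))$; taking the supremum over $\pi \in \Pi(Z)$ closes the two inequalities. The explicit formula~\eqref{eq:2qaltsihuihu} is then just the unwinding of $\E_\pi[\Phi]$, $\E_\pi[\Psi]$, and the constraint $\E_\pi[\Psi] \in Z$ for $\pi = \sum_{i=0}^n \alpha_i \delta_{\mu_i} \in \Delta(n)$.

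The step I expect to be most delicate is the clean transfer of the point-interval instance of Theorem~\ref{thm:priorreduce} to an arbitrary $Z$ without introducing measurability or integrability gaps: one must check that every $\pi \in \Pi(Z)$ has a well-defined first moment in $Z$ (handled by the definition of $\Pi(Z)$) and that the coordinate projections $g_i$ are measurable (automatic). The crucial point is that the reduction holds \emph{pointwise} in $z$, so no uniformity over $Z$ is required. This is precisely what permits an arbitrary, possibly non-rectangular or even non-measurable, $Z$: one never integrates over $Z$ but only decomposes $\Pi(Z)$ along its disjoint slices $\{\Pi(\{z\})\}_{z \in Z}$, and the conventions $\mathcal{U}(\varnothing) = -\infty$ make the degenerate slices harmless.
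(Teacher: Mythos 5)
Your proposal is correct and takes essentially the same route as the paper's own proof: the paper's ``layercake'' argument likewise decomposes $\Pi(Z)$ into the slices $\Pi(q) = \{\pi \in \mathcal{M}(\mathcal{A}) \mid \E_{\pi}[\Psi] = q\}$ for $q \in Z$, applies Theorem~\ref{thm:priorreduce} with point-interval (equality) constraints to each slice, and reassembles using the fact that a supremum over a union is the supremum of the suprema. Your two-inequality formulation is just an element-wise spelling-out of that same chain of equalities, with the same observation that no measurability of $Z$ is ever needed.
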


\begin{eg}
    \label{eg:shiva}
	Let $\mathcal{X}:=[0,1]$, $\mathcal{Q}=\R$ and consider the admissible set $\mathcal{A} := \mathcal{M}([0,1])$, the quantity of interest $\Phi(\mu):=\mu[X\geq a]$ for some $a\in (0,1)$, and the map $\Psi \colon \mathcal{A} \to \R$ defined by $\Psi(\mu):=\E_{\mu}[X]$.  Take as the set of admissible priors $\pi$ on $\mathcal{A}$ the collection
    \[
		\Pi := \left\{ \pi \in \mathcal{M}(\mathcal{A}) \smid \E_{\mu \sim \pi} \bigl[ \E_{\mu}[X] \bigr] = q \right\}
    \]
	for some fixed $q\in (0,a)$.  Then we will show that
	\begin{equation}
		\label{eq:sip1d2a}
		\mathcal{U}(\Pi)=q/a .
	\end{equation}
	To that end, observe that since $\E_{\mu \sim \pi} \bigl[ \E_{\mu}[X] \bigr] = \E_{\pi}[\Psi]$, it follows that
	\[
		\Pi = \left\{ \pi \in \mathcal{M}(\mathcal{A}) \smid \E_{\pi}[\Psi]  = q \right\} ,
	\]
	so that Theorem~\ref{thm_primred} implies that we can reduce the optimization in  $\mathcal{U}(\Pi)$ to the supremum over $\mu_1,\mu_2\in \mathcal{A}$, $\alpha \in [0,1]$ of
    \[
    	\alpha \mu_1[X\geq a]+(1-\alpha) \mu_2[X\geq a]
    \]
	subject to the constraint
    \[
        \alpha \E_{\mu_1}[X]+(1-\alpha) \E_{\mu_2}[X]=q .
    \]
	Introducing the slack variables $q_1:=\E_{\mu_1}[X]$, $q_2:=\E_{\mu_2}[X]$ and using \cite[Thm.~4.1]{OSSMO:2011} to reduce this problem further in $\mu_1,\mu_2$, we obtain that $\mathcal{U}(\Pi)$ is equal to the supremum over $\alpha \in [0,1]$ and $q_1, q_2 \in [0,1]$ of
    \[
    	\alpha \min \{ 1,\tfrac{q_1}{a} \} + (1-\alpha) \min \{ 1,\tfrac{q_2}{a} \}
    \]
    subject to the constraint $\alpha q_1 +(1-\alpha) q_2 =q$.  Observing that the supremum is achieved at $q_1,q_2 \leq a$, we conclude that $\mathcal{U}(\Pi)=q/a$, establishing \eqref{eq:sip1d2a}. 	Moreover, note that $\mathcal{U}(\Pi)=\mathcal{U}(\mathcal{A}_\Pi)$ for $\mathcal{A}_\Pi$ defined in \eqref{def-apiorigin} instead of the general inequality $\mathcal{U}(\mathcal{A}_\Pi) \leq \mathcal{U}(\Pi)$ of Theorem \ref{thm-barriersorigin}.
\end{eg}

\subsubsection{Nested Reduction for Prior Values}
\label{sec-nestedreduction}

The result of Example~\ref{eg:shiva} can also be deduced through a \emph{nested} reduction that we will find generally more useful for two reasons. The first is that, in practice, not only is it highly non-trivial to specify a prior on the space $\mathcal{A}$, since it requires quantifying information on an infinite-dimensional space, but it may also be undesirable to do so.  Indeed, if an expert does not have  a prior on the full space $\mathcal{A}$ but only on some projection $\Psi(\mathcal{A})=\mathcal{Q}$, then, rather than arbitrarily picking one particular prior on the space $\mathcal{A}$ compatible with the specified prior on $\Psi(\mathcal{A})$, it might be preferable to work with the set of priors on $\mathcal{A}$ specified through such  marginals.  Our second and main motivation is that, even when we can do the reduction on the primary space $\mathcal{M}(\mathcal{A})$, the reduced space remains so large that it may not be amenable to computation.  However with the nested reduction theorems given below, the reduced space becomes computationally manageable for finite-dimensional $\mathcal{Q}$.

\begin{eg}
	Consider $\Phi(\mu):=\mu[X\geq a]$, where $a$ is thought of as a safety margin, $\Psi(\mu)=(\E_\mu[X],\operatorname{Var}_\mu[X])$,  $\mathcal{Q}=\R^2$, and $\mathfrak{Q}=\{\mathbb{Q}\}$, where $\mathbb{Q}$ corresponds to the uniform distribution on $[-1,1]\times [3,4]$.
	In that example, the expert has only ``the prior'' that the mean of $X$ with respect to $\mu$ is uniformly distributed on $[-1,1]$ and that the variance of $X$ with respect to $\mu$ is independent of its mean and uniformly distributed on $[3,4]$. Observe that in this situation  $\mathfrak{Q}$ does not uniquely  specify a prior $\pi \in \mathcal{M}(\mathcal{A})$ but an infinite-dimensional set of priors $\Psi^{-1}(\mathfrak{Q}) \subseteq \mathcal{M}(\mathcal{A})$ and a robust approach would require assessing the safety of the system under the whole set $\Psi^{-1}(\mathfrak{Q})$ rather than under a particular element $\pi$ of that set.
\end{eg}

\paragraph{Idea of the Nested Reduction.}
Roughly, the idea of the nested reduction is as follows.  To compute \eqref{eq_lada}, consider the induced
function
\[
	\mathcal{U} \circ \Psi^{-1} \colon \mathcal{Q} \to \R
\]
defined by
\[
	\bigl(\mathcal{U} \circ \Psi^{-1}\bigr)(q) := \mathcal{U} \big(\Psi^{-1}(q) \big)=\sup_{\mu\in \Psi^{-1}(q)}\Phi(\mu), \quad \text{for } q \in \mathcal{Q},
\]
where we use the notation of \eqref{eq:defma1}.  From this it is natural to consider
\[
	\E_{\mathbb{Q}}[\mathcal{U}\circ \Psi^{-1}], \quad \text{for } \mathbb{Q} \in  \mathfrak{Q}.
\]
Let $\mathbb{Q} \in \mathfrak{Q}$.  Then, for any $\pi$ such that $\Psi\pi=\mathbb{Q}$, it follows that
\begin{align*}
	\E_{\mathbb{Q}}[\mathcal{U}\circ \Psi^{-1}]
	&=\E_{\Psi\pi}[\mathcal{U}\circ \Psi^{-1}]\\
	&=\E_{\pi}[\mathcal{U}\circ \Psi^{-1} \circ \Psi]
\end{align*}
Unfortunately, it is not true that $\mathcal{U}\circ \Psi^{-1} \circ \Psi=\Phi$;  instead it is $\bigl(\mathcal{U}\circ \Psi^{-1} \circ \Psi\bigr)(\mu) =\sup_{\mu': \Psi(\mu')=\Psi(\mu)}{\Phi(\mu')}$.
However, if  it were true, then we would obtain
\begin{align*}
	\E_{\mathbb{Q}}[\mathcal{U}\circ \Psi^{-1}]
	&= \E_{\Psi\pi}[\mathcal{U}\circ \Psi^{-1}]\\
	&= \E_{\pi}[\mathcal{U}\circ \Psi^{-1} \circ \Psi]\\
	&= \E_{\pi}[\Phi]
\end{align*}
and conclude that
\[
	\sup_{\mathbb{Q} \in \mathfrak{Q}}\E_{\mathbb{Q}}[\mathcal{U}\circ \Psi^{-1}] =
	\sup_{\pi \in \Psi^{-1}\mathfrak{Q}}\E_{\pi}[\Phi]
	= \mathcal{U}(\Psi^{-1}\mathfrak{Q}) .
\]
We will show that, despite the fact that $\mathcal{U}\circ \Psi^{-1} \circ \Psi\neq \Phi$, the conclusion
\begin{equation}
	\label{eq-nested}
	\mathcal{U}(\Psi^{-1}\mathfrak{Q})=  \sup_{\mathbb{Q} \in \mathfrak{Q}} \E_{\mathbb{Q}}[\mathcal{U}\circ \Psi^{-1}]\,
\end{equation}
is still valid, provided that it is interpreted correctly.  Heuristically, the reason for this is that the supremum $\sup_{\pi \in \Psi^{-1}\mathfrak{Q}}$ in $\mathcal{U}(\Psi^{-1}\mathfrak{Q})$ is exploring the maximum value of $\Phi$ on level sets of $\Psi$ very much like the supremum in $\bigl(\mathcal{U}\circ \Psi^{-1}\bigr)(q)= \sup_{\Psi^{-1}(q)}{\Phi}$.

If $\mathcal{A}$ is such that a reduction theorem, e.g.~from \cite{OSSMO:2011}, applies to reduce the computation of the inner supremum in $\mathcal{U}\circ \Psi^{-1}$ to the supremum over convex combinations of Dirac masses, and the admissible set $\mathfrak{Q}$ is such that a reduction theorem applies to the computation of the  outer supremum of $\sup_{\mathbb{Q} \in \mathfrak{Q}} \E_{\mathbb{Q}}[\mathcal{U}\circ \Psi^{-1}]$, then the identity \eqref{eq-nested} represents a nesting of reductions.

Let us now establish a result like \eqref{eq-nested}.  To do so will require addressing three questions:  (1) What kind of function is $\mathcal{U}\circ \Psi^{-1}$?  (2) What kind of measures $\mathbb{Q} \in \mathcal{M}(\mathcal{Q})$ can define an integral of a function with properties discovered from the answer to (1)?  (3) Can we obtain a measurable
solution operator to the optimization problem $\bigl(\mathcal{U}\circ \Psi^{-1}\bigr)(q)$, where $q \in \mathcal{Q}$?
To that end, let us first recall a definition of universally measurable functions.

\begin{defn}
	Let $(T, \mathcal{T})$ be a measurable space, and for a positive measure $\nu$ on $(T, \mathcal{T})$,  let $\mathcal{T}_\nu$ denote the $\nu$-completion of $\mathcal{T}$. Let  $\widehat{\mathcal{T}}:=\bigcap_{\nu} \mathcal{T}_\nu$, where the intersection is over all positive bounded measures $\nu$, denote the universally measurable sets. A $\widehat{\mathcal{T}}$-measurable function is said to be \emph{universally measurable}.
\end{defn}

At the heart of the commutative representation used for the nested reduction is the following optimal measurable selection lemma answering questions (1) and (3) above:

\begin{lem}
	\label{lem:Umeasurable}
	Let $\mathcal{A}$ be a Suslin space, let $\mathcal{Q}$ be a separable and metrizable space, and let $\Psi \colon \mathcal{A} \to \mathcal{Q}$ be measurable.  Then, for any subset $T \subseteq \Psi(\mathcal{A})$,
	\begin{enumerate}
		\item $\mathcal{U} \circ \Psi^{-1}$ is $\widehat{\mathcal{B}}(T)$-measurable
		\item for all $\delta > 0$, there exists a $\delta$-suboptimal
		$\widehat{\mathcal{B}}(T)$-measurable section of $\Psi$; that is, a $\widehat{\mathcal{B}}(T)$-measurable function $\psi \colon T \to \mathcal{A}$ such that $\Psi\big(\psi(q)\big)=q$ for all $q \in T$ and
		\[
			\Phi\big(\psi(q)\big) > \mathcal{U} \big(\Psi^{-1}(q) \big)-\delta, \quad \text{for all } q \in T.
		\]
\end{enumerate}
\end{lem}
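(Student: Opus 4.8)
The plan is to transport the problem to a Polish space, where the classical projection theorem and the Jankov--von Neumann uniformization theorem apply, and then push the resulting objects back to $\mathcal{A}$. Since $\mathcal{A}$ is Suslin, by the definition recalled in Remark~\ref{rmk:Polish} there is a Polish space $P$ and a continuous surjection $f \colon P \to \mathcal{A}$; embedding $\mathcal{Q}$ into its completion $\widehat{\mathcal{Q}}$, a Polish space, we may also regard $\mathcal{Q}$ as a subspace of a Polish space, which legitimizes the theorems used below and makes Suslin subsets of $\mathcal{Q}$ universally measurable. Because $\Psi,\Phi$ are Borel and $f$ is continuous, the maps $\Psi\circ f \colon P \to \mathcal{Q}$ and $\Phi\circ f \colon P \to \R$ are Borel, and surjectivity of $f$ gives, writing $v := \mathcal{U}\circ\Psi^{-1}$,
\[
	v(q) = \sup_{\mu \in \Psi^{-1}(q)} \Phi(\mu) = \sup_{p \in (\Psi\circ f)^{-1}(q)} (\Phi\circ f)(p) .
\]
Thus both the value function and the selection problem can be analysed using the Borel data $\Psi\circ f,\Phi\circ f$ on $P$; moreover, if $\tilde\psi \colon T \to P$ is a $\widehat{\mathcal{B}}(T)$-measurable section of $\Psi\circ f$, then $\psi := f\circ\tilde\psi$ is a $\widehat{\mathcal{B}}(T)$-measurable section of $\Psi$ with $\Phi(\psi(q)) = (\Phi\circ f)(\tilde\psi(q))$, since post-composition with the continuous (hence Borel) map $f$ preserves universal measurability.

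For statement~(1), I would exhibit the set $E := \{ (q,t) \in \mathcal{Q}\times\R \smid t < v(q) \}$ as the image of the Borel set $D := \{(p,t)\in P\times\R \mid (\Phi\circ f)(p) > t\}$ under the Borel map $(p,t)\mapsto ((\Psi\circ f)(p),t)$: indeed $(q,t)\in E$ iff some $p$ in the fibre $(\Psi\circ f)^{-1}(q)$ has $(\Phi\circ f)(p)>t$. As $D$ is a Borel, hence Suslin, subset of the Polish space $P\times\R$, its image under a Borel map into $\mathcal{Q}\times\R$ is again Suslin, and Suslin sets are universally measurable. For each fixed $t$ the slice $\{q : v(q)>t\} = \{q : (q,t)\in E\}$ is therefore universally measurable in $\mathcal{Q}$; letting $t$ range over $\R$ shows $v$ is $\widehat{\mathcal{B}}(\mathcal{Q})$-measurable, and intersecting the slices with $T$ yields $\widehat{\mathcal{B}}(T)$-measurability of $v|_T$. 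Note $v>-\infty$ on $T\subseteq\Psi(\mathcal{A})$, since there the fibres are non-empty and $\Phi$ is real-valued.

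For statement~(2), I would combine a staircase decomposition of $v$ with Jankov--von Neumann uniformization applied to \emph{constant} thresholds, so that every set uniformized is Suslin. Fix $\delta>0$, put $\e := \delta/2$, and, using~(1), set $T_k := \{q\in T \mid k\e \le v(q) < (k+1)\e\}$ for $k\in\mathbb{Z}$, each universally measurable. For each $k$ the set
\[
	P_k := \bigl\{ (q,p)\in \mathcal{Q}\times P \smid (\Psi\circ f)(p)=q,\ (\Phi\circ f)(p) > (k-1)\e \bigr\}
\]
is Borel, hence Suslin, and its projection to $\mathcal{Q}$ equals $\{v > (k-1)\e\}\supseteq T_k$ (on $T_k$ one has $v(q)\ge k\e > (k-1)\e$, so the fibre contains a point with $\Phi\circ f > (k-1)\e$). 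Jankov--von Neumann yields a universally measurable section $\tilde\sigma_k$ over this projection with $(\Phi\circ f)(\tilde\sigma_k(q)) > (k-1)\e$. Gluing $\tilde\psi := \tilde\sigma_k$ on $T_k$ gives a $\widehat{\mathcal{B}}(T)$-measurable section (a countable gluing of universally measurable maps over universally measurable pieces), and on $T_k$, writing $\psi=f\circ\tilde\psi$,
\[
	\Phi(\psi(q)) > (k-1)\tfrac{\delta}{2} > v(q) - \delta ,
\]
where the right inequality holds because $v(q) < (k+1)\e$ gives $v(q)-\delta < (k+1)\tfrac{\delta}{2}-\delta = (k-1)\tfrac{\delta}{2}$. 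This settles the finite-value case; the set $\{q\in T : v(q)=+\infty\}$, which can occur only when $\Phi$ is bounded below but not above, is handled identically by uniformizing $\{(q,p) : (\Psi\circ f)(p)=q,\ (\Phi\circ f)(p) > 1/\delta\}$, under the convention $+\infty-\delta=+\infty$.

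The main obstacle is statement~(2): one must produce a section that is simultaneously genuinely $\delta$-suboptimal, defined on the given (possibly non-measurable) set $T$, and universally measurable. The device reconciling all three is the staircase. Replacing the data-dependent threshold $v(q)-\delta$ --- whose associated graph is only universally measurable and hence not directly amenable to uniformization --- by the constant thresholds $(k-1)\e$ on the pieces $T_k$ reduces every uniformization to a Suslin set, for which Jankov--von Neumann delivers a $\sigma(\Sigma^1_1)$-measurable, hence universally measurable, selection. The choice $\e=\delta/2$ is exactly what absorbs the quantization error and keeps the inequality in~(2) strict.
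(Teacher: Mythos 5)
Your proof is correct, but it takes a genuinely different route from the paper's. The paper never leaves $\mathcal{A}$: it observes that $T$ is separable metrizable, shows that the graph $G=\{(q,\mu)\in T\times\mathcal{A} \mid \Psi(\mu)=q\}$ of the multifunction $\Psi^{-1}\colon T\twoheadrightarrow\mathcal{A}$ lies in $\mathcal{B}(T)\otimes\mathcal{B}(\mathcal{A})$ by writing $G$ as the zero set of the Carath\'{e}odory function $h(q,\mu):=d(\Psi(\mu),q)$ (jointly measurable by Lemma~\ref{def:caratheodorylemma}), and then invokes the Castaing--Valadier projection/selection theorem, Lemma~\ref{lemIII39CstaingValafier}, which delivers \emph{both} assertions at once --- in particular that lemma already accommodates the state-dependent threshold $m(q)-\delta$, so no discretization is needed. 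You instead lift everything through a continuous surjection $f\colon P\to\mathcal{A}$ from a Polish space, prove assertion (1) by exhibiting the strict hypograph of $v=\mathcal{U}\circ\Psi^{-1}$ as the Borel image of a Borel set (hence analytic, hence universally measurable, with analytic slices), and prove assertion (2) by the staircase decomposition into level strips $T_k$ plus Jankov--von Neumann uniformization of analytic sets with \emph{constant} thresholds, glued countably. In effect you re-prove from classical descriptive set theory exactly the instance of the Castaing--Valadier/Sainte-Beuve selection principle that the paper cites (Sainte-Beuve's Suslin-space extension of Aumann's theorem is the von Neumann uniformization in only light disguise). The paper's route buys brevity and avoids both the lifting and the threshold quantization; yours buys self-containedness, and it makes transparent exactly where universal (as opposed to Borel) measurability is forced to enter.

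One small caveat, which your write-up shares with the paper's own statement of the lemma and with its transcription of Lemma~\ref{lemIII39CstaingValafier}: when $v(q)=+\infty$ the inequality $\Phi(\psi(q))>v(q)-\delta$ cannot hold literally for real-valued $\Phi$. The correct reading (and the one in Castaing--Valadier) is that on $\{v=+\infty\}$ the conclusion should be $\Phi(\psi(q))>1/\delta$, which is precisely what your uniformization of $\{(q,p) \mid (\Psi\circ f)(p)=q,\ (\Phi\circ f)(p)>1/\delta\}$ produces; but your phrase ``under the convention $+\infty-\delta=+\infty$'' asserts the opposite of what you need and should simply be replaced by this restated conclusion.
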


To answer question (2) above, define a \emph{support} $\supp (\mathbb{Q})$ of a measure $\mathbb{Q} \in \mathcal{M}(\mathcal{Q})$, as in \cite[Ch.~12.3]{AliprantisBorder:2006}, to be a closed set such that
\begin{itemize}
	\item $\mathbb{Q}\bigl( \mathcal{Q} \setminus \supp(\mathbb{Q}) \bigr) = 0$, and
	\item if $G \subseteq \mathcal{Q}$ is open and $G \cap \supp(\mathbb{Q}) \neq \varnothing$, then $\mathbb{Q}(G \cap \supp(\mathbb{Q})) > 0$.
\end{itemize}
When $\mathcal{Q}$ is a separable and metrizable space, it follows that it is second countable and
therefore, by \cite[Thm.~12.14]{AliprantisBorder:2006}, all $\mathcal{Q}\in \mathcal{M}(\mathcal{Q})$ have a uniquely defined support.  Now consider a measure $\mathbb{Q} \in \mathcal{M}(\mathcal{Q})$ such that $\supp(\mathbb{Q}) \subseteq \Psi(\mathcal{A})$.  Then, by Lemma~\ref{lem:Umeasurable}, $\mathcal{U}\circ \Psi^{-1}$ is $\widehat{\mathcal{B}}(\supp \, \mathbb{Q})$-measurable.  Therefore, the expected value $\E_{\mathbb{Q}}[\mathcal{U}\circ \Psi^{-1}]$ can be defined
by integration with respect to the completion $\widehat{\mathbb{Q}}$:
\begin{equation}
	\label{def-expectation}
	\E_{\mathbb{Q}}[\mathcal{U}\circ \Psi^{-1}]:= \E_{\widehat{\mathbb{Q}}}[\mathcal{U}\circ \Psi^{-1}] .
\end{equation}
More generally, for any universally measurable function $f$ and any finite measure $\mathbb{Q}$, we define the expected value $\E_{\mathbb{Q}}[f]$ of $f$ by
\begin{equation}
	\label{def-expectation2}
	\E_{\mathbb{Q}}[f]:= \E_{\widehat{\mathbb{Q}}}[f]  .
\end{equation}
Such a method of defining integrals of, possibly non-Borel measurable, but universally measurable, functions brings up many questions such as:  when is it uniquely defined?; for a fixed integrand, when is the expectation operation affine in the measure?;  does it have a change a variables formula?  All such questions have nice answers and, although we are sure that this is classical, we cannot find a reference for these facts so we have included statements and proofs of the facts needed in this paper in  Section~\ref{sec-universal} of the Appendix.

We now state our nested reduction theorem of the form \eqref{eq-nested}:

\begin{thm}
	\label{thm:reducpriormarg}
	Let $\mathcal{A}$ be a Suslin space, let $\mathcal{Q}$ be a separable and metrizable space, and let	$\Psi \colon \mathcal{A} \to \mathcal{Q}$  measurable.  Moreover, let	$\mathfrak{Q} \subseteq \mathcal{M}(\mathcal{Q})$ be such that $\supp (\mathbb{Q}) \subseteq \Psi(\mathcal{A})$ for all $\mathbb{Q} \in \mathfrak{Q}$.  Then, for each $\mathbb{Q} \in \mathfrak{Q}$, $\Psi^{-1}\mathbb{Q}$ is non-empty.  Moreover, the upper bound $\mathcal{U}(\Psi^{-1}\mathfrak{Q})$, defined in \eqref{eq:UPi}, satisfies
	\begin{equation}
		\label{eq:2q}
		\mathcal{U}(\Psi^{-1}\mathfrak{Q}) = \sup_{\mathbb{Q}\in \mathfrak{Q}} \mathbb{E}_{\mathbb{Q}} \big[ \mathcal{U} \circ \Psi^{-1}  \big].
	\end{equation}
	where the expectations on the right-hand side are defined as in \eqref{def-expectation}.  Finally, the expectation operator on the right-hand side is measure affine in $\mathbb{Q}$.
\end{thm}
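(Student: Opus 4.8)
The plan is to establish the identity \eqref{eq:2q} by proving the two inequalities separately, using Lemma~\ref{lem:Umeasurable} as the engine for the harder direction and a change-of-variables argument for the easier one. First I would dispose of the non-emptiness claim: given $\mathbb{Q} \in \mathfrak{Q}$ with $\supp(\mathbb{Q}) \subseteq \Psi(\mathcal{A})$, part (2) of Lemma~\ref{lem:Umeasurable} applied to $T = \supp(\mathbb{Q})$ produces a $\widehat{\mathcal{B}}(T)$-measurable section $\psi \colon T \to \mathcal{A}$ of $\Psi$; the pushforward $\psi_{*}\widehat{\mathbb{Q}}$ is then a measure in $\mathcal{M}(\mathcal{A})$ satisfying $\Psi(\psi_{*}\widehat{\mathbb{Q}}) = \mathbb{Q}$, so $\Psi^{-1}\mathbb{Q} \neq \varnothing$. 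This same construction is exactly what powers the lower-bound half of \eqref{eq:2q}, so it is worth setting up carefully. (One technical point: because $\psi$ is only universally measurable, its pushforward must be interpreted via the completion $\widehat{\mathbb{Q}}$, and I would invoke the change-of-variables facts promised for Section~\ref{sec-universal} to check that $\psi_{*}\widehat{\mathbb{Q}}$ is a genuine Borel probability measure on $\mathcal{A}$.)

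For the inequality $\mathcal{U}(\Psi^{-1}\mathfrak{Q}) \leq \sup_{\mathbb{Q}\in\mathfrak{Q}}\E_{\mathbb{Q}}[\mathcal{U}\circ\Psi^{-1}]$, I would fix any $\pi \in \Psi^{-1}\mathfrak{Q}$, set $\mathbb{Q} := \Psi\pi \in \mathfrak{Q}$, and compare $\E_{\pi}[\Phi]$ with $\E_{\mathbb{Q}}[\mathcal{U}\circ\Psi^{-1}]$. The key pointwise bound is $\Phi(\mu) \leq \bigl(\mathcal{U}\circ\Psi^{-1}\bigr)\bigl(\Psi(\mu)\bigr)$, which is immediate from the definition of $\mathcal{U}$ as a supremum over the level set $\Psi^{-1}(\Psi(\mu))$. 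Integrating this inequality against $\pi$ and applying the change-of-variables formula $\E_{\pi}[(\mathcal{U}\circ\Psi^{-1})\circ\Psi] = \E_{\Psi\pi}[\mathcal{U}\circ\Psi^{-1}] = \E_{\mathbb{Q}}[\mathcal{U}\circ\Psi^{-1}]$ yields $\E_{\pi}[\Phi] \leq \E_{\mathbb{Q}}[\mathcal{U}\circ\Psi^{-1}]$, and taking the supremum over $\pi$ gives the desired bound. Here the change-of-variables step is nontrivial precisely because $\mathcal{U}\circ\Psi^{-1}$ is only universally measurable (Lemma~\ref{lem:Umeasurable}(1)), so I would lean on the universal-measurability change-of-variables result from the Appendix rather than the classical Borel statement.

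For the reverse inequality, I would take any $\mathbb{Q} \in \mathfrak{Q}$ and any $\delta > 0$, and use Lemma~\ref{lem:Umeasurable}(2) on $T = \supp(\mathbb{Q})$ to obtain a $\delta$-suboptimal section $\psi$ with $\Phi(\psi(q)) > \bigl(\mathcal{U}\circ\Psi^{-1}\bigr)(q) - \delta$ for all $q \in T$. Defining $\pi := \psi_{*}\widehat{\mathbb{Q}}$ as above, one has $\Psi\pi = \mathbb{Q} \in \mathfrak{Q}$, so $\pi \in \Psi^{-1}\mathfrak{Q}$, and then
\[
\E_{\pi}[\Phi] = \E_{\widehat{\mathbb{Q}}}[\Phi\circ\psi] \geq \E_{\widehat{\mathbb{Q}}}[\mathcal{U}\circ\Psi^{-1}] - \delta = \E_{\mathbb{Q}}[\mathcal{U}\circ\Psi^{-1}] - \delta,
\]
using $\Phi\circ\psi = \Phi \circ \psi$ and $\Psi\circ\psi = \mathrm{id}_{T}$ together with the change-of-variables formula. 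Since $\delta > 0$ and $\mathbb{Q} \in \mathfrak{Q}$ are arbitrary, this gives $\sup_{\mathbb{Q}}\E_{\mathbb{Q}}[\mathcal{U}\circ\Psi^{-1}] \leq \mathcal{U}(\Psi^{-1}\mathfrak{Q})$, completing \eqref{eq:2q}. I expect the main obstacle to be the measure-theoretic bookkeeping around the universally measurable section: verifying that $\psi_{*}\widehat{\mathbb{Q}}$ lands in $\mathcal{M}(\mathcal{A})$ (not merely a measure on the completion), that $\Psi\pi = \mathbb{Q}$ holds as Borel measures, and that the two change-of-variables identities are valid for universally measurable integrands. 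Finally, for the measure-affine claim, I would appeal directly to the Appendix facts on expectation of universally measurable functions, noting that $\mathbb{Q}\mapsto\E_{\mathbb{Q}}[\mathcal{U}\circ\Psi^{-1}]$ is affine in $\mathbb{Q}$ because integration against the completion preserves the affine (indeed, by semiboundedness of $\Phi$ and hence of $\mathcal{U}\circ\Psi^{-1}$, measure-affine in the sense of Definition~\ref{def_measureaffine}) structure established there.
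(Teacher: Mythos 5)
Your proposal is correct and follows essentially the same route as the paper's own proof: both directions of \eqref{eq:2q} are handled exactly as in the paper, using the pointwise bound $\Phi \leq (\mathcal{U}\circ\Psi^{-1})\circ\Psi$ with the universal-measurability change of variables for one inequality, and the $\delta$-suboptimal section from Lemma~\ref{lem:Umeasurable}(2) pushed forward through $\widehat{\mathbb{Q}}$ for the other, with non-emptiness and measure affinity (via Lemma~\ref{lem_affine}) treated identically. No gaps to report.
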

\begin{rmk}
Note that \eqref{eq:2q} can be written
	\begin{equation}
		\label{eq:2qrewrite}
		\sup_{\pi \in \Psi^{-1}\mathfrak{Q}} \E_{\mu \sim \pi}\big[\Phi(\mu)\big] = \sup_{\mathbb{Q}\in \mathfrak{Q}}\Big[\E_{q\sim \mathbb{Q}}
\big[\sup_{\mu\in \Psi^{-1}(q)}\Phi(\mu) \big]\Big].
	\end{equation}

\end{rmk}

\begin{rmk}
	Since the right-hand side is measure affine in $\mathbb{Q}$, if $\mathbb{Q}$ is specified through (multi-)linear generalized moment inequalities, then the reduction theorems of \cite{OSSMO:2011} can be applied to obtain the supremum over $\mathbb{Q}$ by reducing $\mathbb{Q}$ to a convex combination of a finite number of Dirac masses on $\mathcal{Q}$.  Moreover, if $\mathfrak{Q}$ consists of a single element, i.e.~$\mathfrak{Q}=\{\mathbb{Q}\}$,  then
	\begin{equation}
		\label{eq:2qbiis}
		\mathcal{U}(\Psi^{-1}\mathfrak{Q}) =
		\mathcal{U}(\Psi^{-1}\mathbb{Q}) =
  		\mathbb{E}_{\mathbb{Q}} \bigl[ \mathcal{U} \circ \Psi^{-1}  \bigr],
	\end{equation}
	and the right hand-side of \eqref{eq:2qbiis} can be approximately
	evaluated via Monte Carlo sampling of $q\in \mathcal{Q}$ according to the measure $\mathbb{Q}$.
\end{rmk}

\begin{rmk}
	A similar theorem can obtained for the optimal lower bound $\mathcal{L}(\Psi^{-1}\mathfrak{Q})$. Throughout this paper, results given for optimal upper bounds $\mathcal{U}$ can be translated into results for optimal lower bounds $\mathcal{L}$ by considering the negative quantity of interest $-\Phi$ and for the sake of concision we will not write those results unless necessary.
\end{rmk}

\begin{eg}
	\label{eg:shiva2}
	Consider again Example~\ref{eg:shiva}, where $\mathcal{X}:=[0,1]$, $\mathcal{Q}=\R$, the admissible set $\mathcal{A} := \mathcal{M}([0,1])$, the quantity of interest $\Phi(\mu):=\mu[X\geq a]$ for some $a\in (0,1)$, the map $\Psi \colon \mathcal{A} \to \R$ is defined by $\Psi(\mu):=\E_{\mu}[X]$, and the set of admissible priors $\pi$ on $\mathcal{A}$ is the collection
    \[
    	\Pi := \left\{ \pi \in \mathcal{M}(\mathcal{A}) \smid \E_{\mu \sim \pi} \left[ \E_{\mu}[X] \right] = q \right\}.
    \]
    for some fixed $q\in (0,a)$.  We will now demonstrate how the result $\mathcal{U}(\Pi)=q/a$ of \eqref{eq:sip1d2a} obtained by the primary reduction follows from the nested reduction theorem.  To that end, observe that since $\Psi(\mathcal{A})=[0,1] \subseteq \R$, by restricting to measures $\mathbb{Q} \in \mathcal{M}(\R)$	with support $\supp(\mathbb{Q}) \subseteq [0,1]$, Theorem~\ref{thm:reducpriormarg} implies that
	\begin{equation}
		\label{eg-la}
		\mathcal{U}(\Pi) = \sup_{\mathbb{Q}\in \mathfrak{Q}} \mathbb{E}_{q' \sim \mathbb{Q}} \left[ \sup_{\mu \in \mathcal{M}([0,1])\,:\, \E_\mu[X]=q' } \mu[X\geq a] \right],
	\end{equation}
	where $\mathfrak{Q}$ is the set of probability measures $\mathbb{Q}$ on $\R$ with support contained in $[0,1]$ such that $\E_{\mathbb{Q}}[Q]=q$.  Theorem~4.1 of \cite{OSSMO:2011} shows that the inner supremum of $\mu[X\geq a]$ can be achieved by assuming that $\mu$ is the weighted sum of two Dirac masses, i.e.
	\begin{equation}
		\label{eq:supex1d}
		\sup_{\substack{ \mu \in \mathcal{M}([0,1]) \\ \E_\mu[X]=q' }} \mu[X\geq a] =
		\sup_{\substack{ \alpha, x_1, x_2\in [0,1] \\ \alpha x_1+(1-\alpha) x_2=q' }} (\alpha \delta_{x_1}+(1-\alpha)\delta_{x_2})[X\geq a].
	\end{equation}
	For $q'> a$, the supremum in the right-hand side of \eqref{eq:supex1d} is $1$, and for $q'\leq a$, the supremum in the right-hand side of \eqref{eq:supex1d} is achieved by $x_2=0$, $x_1=a$ and $\alpha=q'/a$, and so we conclude that
	\[
		\sup_{\substack{ \mu \in \mathcal{M}([0,1]) \\ \E_\mu[X]=q' }} \mu[X\geq a] = \min \{ 1, \tfrac{q'}{a} \}.
	\]
	Hence, by identifying the measures $\mathbb{Q} \in \mathcal{M}(\R)$ with support $\supp(\mathbb{Q}) \subseteq [0,1]$ with $\mathcal{M}([0,1])$ in the obvious way, \eqref{eg-la} becomes
	\begin{equation}
		\label{eq:sip1d2}
		\mathcal{U}(\Pi) = \sup_{\substack{ \mathbb{Q}\in \mathcal{M}([0,1]) \\ \E_{\mathbb{Q}}[Q]=q }} \mathbb{E}_{q'\sim \mathbb{Q}} \left[ \min \{ 1, \tfrac{q'}{a} \} \right].
	\end{equation}
	Using \cite[Thm.~4.1]{OSSMO:2011} again, we obtain that the supremum in $\mathbb{Q}$ in the right-hand side of \eqref{eq:sip1d2} is equal to the supremum over $\alpha, q_1, q_2 \in[0,1]$, of
	\begin{equation}
		\label{eq:sip1d3}
		\alpha \min \{ 1, \tfrac{q_1}{a} \} + (1 - \alpha) \min \{ 1, \tfrac{q_2}{a} \}
	\end{equation}
	subject to the constraint that $\alpha q_1 + (1 - \alpha) q_2 = q$.  This supremum is achieved by $q_1 = a$, $q_2 = 0$ and $\alpha = \tfrac{q}{a}$, and so we obtain that $\mathcal{U}(\Pi)=q/a$, in agreement with \eqref{eq:sip1d2a}.
\end{eg}

\section{Optimal Bounds on the Posterior Value}
\label{Sec:OpBoundsPosteriorEstimates}

What happens to the optimal bounds \eqref{eq:UPi} and \eqref{eq:LPi} on the prior value $\E_{\pi}[\Phi]$, investigated in Section \ref{Sec:OpBoundsPriorEstimates}, after conditioning on the data?  Does the interval corresponding to these optimal bounds shrink down to a single point as more and more data comes in?  Does this interval shrink as the measurement noise on the data is reduced?  What happens to posterior estimates associated with two distinct but close priors, possibly sharing the same marginal distribution on a high dimensional space?  These are the questions that will be investigated in this section.  Our answers will show that: (1) optimal bounds on posterior estimates \emph{grow} as data comes in;  (2) optimal bounds on posterior estimates \emph{grow} as measurement noise is reduced (3) two priors sharing the same high-dimensional marginals can lead to \emph{diametrically opposed} posterior estimates. In some sense these results can be seen as extreme occurrences of the  dilation property observed in  robust Bayesian inference  \cite{WassermanSeidenfeld:1994}.

As discussed in Section~\ref{subsec:dataspacesanmaps}, let us now consider the case where the probability distribution of the data is a known function $\Dmap(\mu)$ of the admissible candidates $\mu \in \mathcal{A}$.
As shown in Section \ref{sec:Plan}, directly conditioning measures $\pi\odot \Dmap$ with respect to the random variable  $\Drv$ representing the observed sample data  would require manipulating regular conditional probabilities on  $\mathcal{A} \times \Dspace$.

Furthermore, in Bayesian statistics a prior $\pi$ may represent a ``subjective belief'' about reality and, in such situations, the data may be sampled from $\pi^{\dagger}\cdot \Dmap^{\dagger}$ which may be distinct from $\pi \cdot \Dmap$.  In frequentist analyses of
Bayesian statistics $\pi^{\dagger}$ is called the ``true''  prior, or ``data-generating distribution'',
and $\pi$ a ``subjective'' prior (see \cite{Berger:2006} and references therein).
Although it is known that the subjective prior $\pi$ might be distinct from the true prior $\pi^{\dagger}$, one may still try to evaluate the conditional expectation of the quantity of interest $\Phi$ using $\pi$ as the distribution on $\mathcal{A}$.  We will show here that although the observation of the sample data $\Ddata$ does not uniquely determine the true prior $\pi^{\dagger}$, it does determine a random subset of $\mathcal{M}(\mathcal{A})$ (i.e.~a random subset of priors) denoted $\mathcal{R}(\Ddata)$ such that, $\pi^{\dagger}$-a.s.,
$\pi^{\dagger}\in \mathcal{R}(\Ddata)$.
 This observation is based on the following fundamental lemma:

\begin{lem}
	\label{lem_positivemass}
	For a strongly Lindel\"{o}f space $\mathcal{Y}$ and a Borel measure $\nu$ on $\mathcal{B}(\mathcal{Y})$, define
	\[
		E := \left\{ y\in \mathcal{Y} \smid \begin{array}{c} \text{there is an open neighborhood $\mathcal{O}_{y}$} \\ \text{of $y$ such that $\nu(\mathcal{O}_{y})=0$} \end{array} \right\} .
	\]
	Then $\nu(E)=0$.
\end{lem}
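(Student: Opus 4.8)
The plan is to recognize that $E$ is precisely the union of all open $\nu$-null sets, i.e.\ the complement of the support of $\nu$, and to exploit the strongly Lindel\"{o}f hypothesis to reduce a potentially uncountable union of null sets to a countable one, whereupon countable subadditivity finishes the argument. The whole proof is short once the right structural observation about $E$ is made.

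First I would show that $E$ is open and, more precisely, that each witnessing neighborhood is already contained in $E$. Suppose $y \in E$, so there is an open $\mathcal{O}_{y}$ with $y \in \mathcal{O}_{y}$ and $\nu(\mathcal{O}_{y})=0$. For any $z \in \mathcal{O}_{y}$, the set $\mathcal{O}_{y}$ is itself an open neighborhood of $z$ of $\nu$-measure zero, so $z \in E$; hence $\mathcal{O}_{y} \subseteq E$. Taking the union over all $y \in E$ gives
\[
	E = \bigcup_{y \in E} \mathcal{O}_{y},
\]
which exhibits $E$ as open and simultaneously furnishes an open cover $\{\mathcal{O}_{y}\}_{y\in E}$ of the open set $E$ by sets of $\nu$-measure zero.

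Next I would invoke the hypothesis that $\mathcal{Y}$ is strongly Lindel\"{o}f: the open subspace $E$ is then Lindel\"{o}f, so the open cover $\{\mathcal{O}_{y}\}_{y \in E}$ admits a countable subcover $\{\mathcal{O}_{y_{n}}\}_{n \in \mathbb{N}}$ with $E = \bigcup_{n \in \mathbb{N}} \mathcal{O}_{y_{n}}$. Countable subadditivity of the Borel measure $\nu$ then yields
\[
	\nu(E) \leq \sum_{n \in \mathbb{N}} \nu(\mathcal{O}_{y_{n}}) = 0,
\]
which is the claim.

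The only genuinely load-bearing step is the appeal to the strongly Lindel\"{o}f property, and this is exactly where the main obstacle---if any---lies: without some such covering hypothesis, an uncountable union of null open sets need not be null, so one cannot pass from the statement that each $\mathcal{O}_{y}$ is null to the conclusion that $E$ is null by additivity alone. Everything else (openness of $E$, the self-containment $\mathcal{O}_{y} \subseteq E$, and countable subadditivity) is routine.
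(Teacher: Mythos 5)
Your proof is correct and follows essentially the same route as the paper's: both identify $E$ as an open set covered by $\nu$-null open sets (the paper phrases this as $E$ equaling the union of \emph{all} null open sets, you as the union of the witnessing neighborhoods, which is the same set), then apply the strongly Lindel\"{o}f property to extract a countable subcover and finish by countable subadditivity. No gaps.
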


\begin{rmk}
	Recall that a Lindel\"{o}f space is a topological space such that any open cover has a countable subcover and a strongly Lindel\"{o}f space is such that any open subset is Lindel\"{o}f.  Since $\Dspace$ is assumed to be Suslin from Section~\ref{subsec:dataspacesanmaps}, and Suslin implies strongly Lindel\"{o}f, Lemma~\ref{lem_positivemass} shows that any open neighborhood $B_{d}$ of any observed value $d \in \Dspace$ has nonzero measure with probability $1$.
\end{rmk}

\begin{rmk}
	Any separable Hilbert space, in particular the Euclidian space $\R^{k}$, is strongly Lindel\"{o}f.  In this situation, Lemma~\ref{lem_positivemass} implies that if for any observation $y$ generated by a law $\nu\in\mathcal{M}(\mathcal{Y})$ we place an open ball $B_{r(y)}(y)$ of non-zero radius $r(y)>0$ about $y$, then with $\nu$-probability $1$ we have $\nu\bigl(B_{r(y)}(y)\bigr)>0$. That is,
	\[
		\nu \bigl( \{ y\in \mathcal{Y} \mid \nu (B_{r(y)}(y)) > 0 \} \bigr)=1  .
	\]
\end{rmk}

Now suppose the data $d$ are generated according to a probability measure $\pi^{\dagger} \cdot \Dmap$ (where $\pi^{\dagger}$ is the ``true'' prior).  We conclude from Lemma~\ref{lem_positivemass} that when we observe a sample $d$, if we assume that $\pi^{\dagger} \in \mathcal{R}(d)$ where
\[
	\mathcal{R}(d):=\left\{\pi \in \mathcal{M}(\mathcal{A}) \smid \vphantom{\big|} \pi\cdot \Dmap[B]>0 \text{ for all $B$ open containing $d$}\right\},
\]
then we will be correct in this assumption with $\pi^{\dagger}\cdot \Dmap$-probability $1$.
Therefore, when the data $d$ are generated and we observe that $d\in B_d$ where $B_d$ is an open subset containing the data $d$ (to keep our notation simple, we will, later on, drop $d$ in the notation $B_d$), then we restrict our attention to priors $\pi \in \Pi$  such that $\pi \cdot \Dmap[B_d]>0$.  That is to say, we restrict our attention to the intersection of $\Pi $ with the set of priors $\pi$ such that
$\pi \in \mathcal{M}(\mathcal{A})$ and $\pi \cdot \Dmap[B_d]>0$.  We write $\Pi_{B_d}$ for this intersection, i.e.
\[
	\Pi_{B_d}:= \left\{ \pi \in \Pi \smid \pi \cdot \Dmap[B_d]>0 \right\}.
\]
If $\Pi_{B_d}$ is void,
then we assert that ``$\pi^{\dagger}$ is not contained in  $\Pi$'' and we know that this assertion is true  with $\pi^{\dagger} \cdot \Dmap$-probability $1$ on the realization of the data $d$.  Conversely, if  $\pi^{\dagger}$ is contained in  $\Pi$, then $\Pi_{B_d} $ must, with $\pi^{\dagger} \cdot \Dmap$-probability $1$ on the realization of the data $d$, still contain $\pi^{\dagger}$ (in particular it must be non-empty).

Happily, this approach also facilitates the efficient computation of the conditional expectations because now they have
a simple representation.  Indeed, consider the conditional expectation of an object of interest $\Phi$ given a prior $\pi$ and data map $\Dmap$, conditioned on a subset $B \in \mathcal{B}(\Dspace)$ such that $\pi\cdot \Dmap[B]>0$.  It follows from \eqref{eq:palm} and \eqref{eq:cdotexp} that the conditional expectation of $\Phi$ given $B$  is
\[
	\E_{\pi \odot \Dmap}\big[ \Phi\big|B\big]:=\frac{\E_{(\mu,d) \sim \pi \odot \Dmap}\big[  \Phi(\mu) \one_{B}(d) \big]}{\pi\cdot \Dmap[B]},
\]
which, using \eqref{eq:palm} and \eqref{eq:cdotexp}, leads to
\begin{equation}
	\label{eq:defcondexp}
	\E_{\pi \odot \Dmap}\big[ \Phi\big| B\big]=\frac{\E_{\mu\sim \pi}\big[  \Phi(\mu)
 \Dmap(\mu)[B] \big]}{\E_{\mu\sim\pi}\big[\Dmap(\mu)[B]\big]} .
\end{equation}
Moreover, recall that this conditional expectation is the best mean squared approximation of $\Phi$ under the measure $\pi \odot \Dmap$, given the information that $\Drv\in B$, i.e.
\begin{equation}
	\label{eq:orthproj}
	\E_{\pi \odot \Dmap}\big[ \Phi\big| B\big]=
	\argmin_{m \in \R} \E_{\pi \odot \Dmap}\Big[ \big(\Phi-m\big)^2\Big|B\Big].
\end{equation}

Consequently, for any open subset $B \subseteq \Dspace$, we define
\begin{equation}
	\label{def_B}
	\Pi_B := \left\{ \pi \in \Pi \smid (\pi \cdot \Dmap)[B]> 0 \right\}.
\end{equation}
where, by \eqref{eq:cdotexp},
\begin{equation}
        \label{eq:cdotexp2}
        \pi\cdot\Dmap[B]:=\E_{\mu\sim \pi}\big[\Dmap(\mu)[B]\big] .
\end{equation}
Then, since $(\pi \cdot \Dmap)[B]> 0$, the formula \eqref{eq:defcondexp} for conditional expectation implies that
\begin{equation}
	\label{eq:Upostes}
	\mathcal{U}(\Pi|B):=\sup_{\pi \in \Pi_B}\E_{\pi \odot \Dmap}\big[ \Phi\big| B\big]
\end{equation}
\begin{equation}
	\label{eq:Lpostes}
	\mathcal{L}(\Pi|B):=\inf_{\pi \in \Pi_B}\E_{\pi \odot \Dmap}\big[ \Phi\big| B\big]
\end{equation}
where
\begin{equation}
	\label{eq:defcondexp2}
    \E_{\pi \odot \Dmap}\big[ \Phi\big| B\big]=\frac{\E_{\mu \sim \pi}\big[  \Phi(\mu) \Dmap(\mu)[B] \big]}{\E_{\mu\sim\pi}\big[\Dmap(\mu)[B]\big]} .
\end{equation}
Finally, if $B$ is an open neighborhood containing the sample data $d$, then it follows that $\mathcal{U}(\Pi|B)$ and $\mathcal{L}(\Pi|B)$ are optimal upper and lower bounds on the posterior values $\E_{\pi\odot \Dmap} [ \Phi | B ]$, given the observation $\Drv\in B$, over all $\pi\in \Pi$ such that $\pi \cdot \Dmap[B]>0$.

\begin{eg}
	When $\Phi$ is the indicator function of the set $\{\mu \mid \mu[X\geq a ]>\epsilon\}$ (i.e.\ the set of unsafe $\mu$),  $\mathcal{U}(\Pi|B)$ and $\mathcal{L}(\Pi|B)$ are optimal upper and lower bounds on the ``posterior probability'' that the system is unsafe given the observation $\Drv\in B$ (and the set $\Pi$  of priors and observation maps respectively).
\end{eg}

\subsection{General Information Bounds on Posterior Values}

Now let $B \subseteq \Dspace$ be open and let
\begin{equation}
	\label{def-api}
	\mathcal{A}_{\Pi_B} := \left\{  \mu \in \mathcal{A} \smid \delta_{ \mu} \in \Pi \text{ and } \Dmap( \mu)[B] > 0 \right\} ,
\end{equation}
\[
	\mathcal{U}(\mathcal{A}_{\Pi_B}):=\sup_{\mu\in \mathcal{A}_{\Pi_B}}\Phi(\mu)\, ,
\]
and use $\mathcal{L}$ for the corresponding infimum.  The following theorem is a straightforward consequence of \eqref{eq:defcondexp}:

\begin{thm}
\label{thm-barriers}
	It holds true that
	\[
		\mathcal{U}(\mathcal{A}_{\Pi_B}) \leq \mathcal{U}(\Pi_B) \leq \mathcal{U}(\mathcal{A}),
	\]
	and
	\[
		\mathcal{L}(\mathcal{A}) \leq \mathcal{L}(\Pi_B) \leq  \mathcal{L}(\mathcal{A}_{\Pi_B}).
	\]
	Moreover, if $\mathcal{A}_{\Pi_B}$ is non empty, then
	\[
		\mathcal{L}(\mathcal{A}) \leq \mathcal{L}(\Pi_B) \leq  \mathcal{L}(\mathcal{A}_{\Pi_B})\leq \mathcal{U}(\mathcal{A}_{\Pi_B}) \leq \mathcal{U}(\Pi_B) \leq \mathcal{U}(\mathcal{A}).
	\]
\end{thm}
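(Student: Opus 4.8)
The plan is to exploit the fact that, by the conditional-expectation formula \eqref{eq:defcondexp}, every posterior value $\E_{\pi \odot \Dmap}\big[\Phi\big|B\big]$ is a \emph{weighted average} of the values $\Phi(\mu)$ taken with the nonnegative weights $\Dmap(\mu)[B]$, which integrate against $\pi$ to the strictly positive denominator $(\pi \cdot \Dmap)[B]$ whenever $\pi \in \Pi_B$. Every inequality in the statement will then follow either from a pointwise bound inside this average or from testing the average against Dirac priors.

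First I would establish the two outer inequalities by the averaging observation alone. For the upper bound $\mathcal{U}(\Pi_B) \leq \mathcal{U}(\mathcal{A})$, fix any $\pi \in \Pi_B$ and use $\Phi(\mu) \leq \mathcal{U}(\mathcal{A})$ pointwise in the numerator of \eqref{eq:defcondexp}; since $\Dmap(\mu)[B] \geq 0$ and the denominator is positive, this gives $\E_{\pi \odot \Dmap}\big[\Phi\big|B\big] \leq \mathcal{U}(\mathcal{A})$, and taking the supremum over $\pi \in \Pi_B$ (recalling the definition \eqref{eq:Upostes}) yields the claim. The symmetric lower bound $\mathcal{L}(\mathcal{A}) \leq \mathcal{L}(\Pi_B)$ follows identically, either by bounding $\Phi(\mu) \geq \mathcal{L}(\mathcal{A})$ below or by replacing $\Phi$ with $-\Phi$.

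The second step, and the only one requiring a concrete construction, is the inner inequality $\mathcal{U}(\mathcal{A}_{\Pi_B}) \leq \mathcal{U}(\Pi_B)$. Here I would test against Dirac priors: for $\mu \in \mathcal{A}_{\Pi_B}$, the definition \eqref{def-api} provides $\delta_{\mu} \in \Pi$ and $\Dmap(\mu)[B] > 0$, so that $(\delta_{\mu} \cdot \Dmap)[B] = \Dmap(\mu)[B] > 0$ by \eqref{eq:cdotexp2}, placing $\delta_{\mu} \in \Pi_B$. Evaluating \eqref{eq:defcondexp} at $\pi = \delta_{\mu}$ collapses both numerator and denominator to the single term at $\mu$ and yields $\E_{\delta_{\mu} \odot \Dmap}\big[\Phi\big|B\big] = \Phi(\mu)$. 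Taking the supremum over $\mu \in \mathcal{A}_{\Pi_B}$ then gives $\mathcal{U}(\mathcal{A}_{\Pi_B}) \leq \mathcal{U}(\Pi_B)$, and the analogous Dirac test produces $\mathcal{L}(\Pi_B) \leq \mathcal{L}(\mathcal{A}_{\Pi_B})$.

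Finally, for the concatenated chain I would simply combine the four inequalities already obtained with the trivial bound $\mathcal{L}(\mathcal{A}_{\Pi_B}) \leq \mathcal{U}(\mathcal{A}_{\Pi_B})$, which holds whenever $\mathcal{A}_{\Pi_B}$ is non-empty since an infimum never exceeds a supremum over a common non-empty index set. No serious obstacle arises; the only points demanding care are the bookkeeping of the empty-set conventions $\mathcal{U}(\varnothing) = -\infty$ and $\mathcal{L}(\varnothing) = +\infty$, which render the two outer chains vacuously true when $\Pi_B$ or $\mathcal{A}_{\Pi_B}$ is empty, together with the observation that the Dirac embedding $\mu \mapsto \delta_{\mu}$ forces $\mathcal{A}_{\Pi_B} \neq \varnothing \Rightarrow \Pi_B \neq \varnothing$, so that the inner inequalities remain consistent with these conventions.
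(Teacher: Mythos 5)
Your proof is correct and takes essentially the same route as the paper, which offers no separate argument but simply declares Theorem \ref{thm-barriers} ``a straightforward consequence of \eqref{eq:defcondexp}''; your elaboration --- pointwise bounds $\mathcal{L}(\mathcal{A}) \leq \Phi(\mu) \leq \mathcal{U}(\mathcal{A})$ inside the weighted average for the outer inequalities, and Dirac priors $\delta_{\mu}$ with $\mu \in \mathcal{A}_{\Pi_B}$ for the inner ones --- is exactly that intended consequence. Your handling of the empty-set conventions and of the implication $\mathcal{A}_{\Pi_B} \neq \varnothing \Rightarrow \Pi_B \neq \varnothing$ is also sound.
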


\begin{rmk}
	The dependence of $\mathcal{U}(\mathcal{A}_{\Pi_B})$ and $\mathcal{L}(\mathcal{A}_{\Pi_B})$ on the sample data is very weak.  In particular, if  $\Dmap$ corresponds to observing i.i.d.\ realizations of $(X+\xi,f^\dagger(X)+\xi')$ where $\xi$ and $\xi'$ are centered Gaussian random variables of arbitrarily small (non zero) variance, then it can be shown that $\mathcal{U}(\mathcal{A}_{\Pi_B}) = \mathcal{U}(\mathcal{A}_{\Pi})$ and $\mathcal{L}(\mathcal{A}_{\Pi_B}) = \mathcal{L}(\mathcal{A}_{\Pi})$.  In that situation, if $\mathcal{L}(\mathcal{A}_{\Pi}) < \mathcal{U}(\mathcal{A}_{\Pi})$, then  $\mathcal{U}(\mathcal{A}_{\Pi_B}) - \mathcal{L}(\mathcal{A}_{\Pi_B})$ remains bounded away from $0$ by a strictly positive constant that is independent of $\Dmaps$ and $B$, which, in particular, implies that the range of achievable posterior values cannot shrink towards $\Phi(\mu^\dagger)$ regardless of the number of observed i.i.d.\ samples.  The presence of such information bounds suggests
	that the consistency of Bayesian estimators cannot be established independently of (uniformly in)
	the choice of priors (this point will also be substantiated by Theorem \ref{thm:shiva}).
\end{rmk}

\subsection{Primary Reduction for Posterior Values}
\label{Sec:finitdimmargdeinalsalternate}

As in Section \ref{sec-primaryreduction}, when priors are specified through finite-dimensional inequalities, it is possible to provide a reduction of the computation of $\mathcal{U}(\Pi |B)$ on the primary space.  To that end, let $\mathcal{M}_+(\mathcal{A})$ denote the set of positive bounded measures on $\mathcal{A}$ and let us extend the ``expectation notation'' to mean integration with respect to a positive measure in the natural way: for a measurable function $\psi$ and a $ \pi_+ \in \mathcal{M}_+(\mathcal{A})$  define
\[
	\E_{\pi_+}[\psi] := \int_{\mathcal{A}} \psi \, \mathrm{d} \pi_+
\]
if the integral exists.

Let $\psi_0,\dotsc,\psi_n$ be real-valued measurable functions on $\mathcal{A}$ and define
\[
	\Pi_+:= \left\{\pi_+\in \mathcal{M}_+(\mathcal{A}) \smid \E_{\pi_+}[\psi_0]=1,\text{ and } \E_{\pi_+}[\psi_i]= 0\text{ for } i=1,\ldots, n \right\},
\]
where implicit in the definition is that all $n+1$ integrals exist, and let
\[
	\Pi_{+,n}:= \Pi_+ \cap \Delta(n)
\]
be the set of those measures in $\Pi_{+}$ that are non-negative sums of $n+1$ Dirac masses.  The following theorem is a  generalization of \cite[Thm.~4.1]{OSSMO:2011} to positive measures (see also \cite[Thm.~3.2]{Winkler:1988} from which the proof of \cite[Thm.~4.1]{OSSMO:2011} was derived).

\begin{thm}
	\label{thm:redpip}
	If $\mathcal{A}$ is a Suslin space, then
	\begin{equation}
		\label{eq:hsbiu3}
		\sup_{\pi_+\in \Pi_+} \E_{\pi_+}[\Phi]=\sup_{\pi_+\in \Pi_{+,n+1}} \E_{\pi_+}[\Phi] .
	\end{equation}
	Furthermore, if $\psi_0$ is non-negative on $\mathcal{A}$ and there exists a measurable function $\varphi$ such that $\Phi=\psi_0 \varphi$, then
	\begin{equation}
		\label{eq:hskjbiu3}
		\sup_{\pi_+\in \Pi_+} \E_{\pi_+}[\Phi]=\sup_{\pi_+\in \Pi_{+,n}} \E_{\pi_+}[\Phi] .
	\end{equation}
\end{thm}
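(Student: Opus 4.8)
The plan is to derive both identities from the reduction theorems of \cite{OSSMO:2011} and \cite{Winkler:1988}. Observe that $\Pi_+$ is the set of positive bounded measures on the Suslin space $\mathcal{A}$ cut out by the $n+1$ generalized moment equalities $\E_{\pi_+}[\psi_0]=1$ and $\E_{\pi_+}[\psi_i]=0$ ($i=1,\dots,n$), and that $\pi_+\mapsto\E_{\pi_+}[\Phi]$ is measure affine by Assumption~\ref{ass_semibounded} together with \cite[Prop.~3.1]{Winkler:1988}. The two parts differ only in how the constraint $\E_{\pi_+}[\psi_0]=1$ is used: in part~1 it counts as one of the $n+1$ moment constraints, whereas in part~2 the hypotheses $\psi_0\geq 0$ and $\Phi=\psi_0\varphi$ convert it into a probability normalization, which is ``free'' and thereby saves one Dirac mass.

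For \eqref{eq:hsbiu3} the inequality $\geq$ is trivial since $\Pi_{+,n+1}\subseteq\Pi_+$. For $\leq$ the obstruction is that the total mass of measures in $\Pi_+$ is not bounded a priori, so the constraint set has no compact base and the supremum of the objective need not be attained at an extreme point; one cannot simply pass to extreme points as in the probability-measure case. Instead I would fix $\pi_+\in\Pi_+$ and invoke the positive-measure moment reduction underlying \cite[Thm.~4.1]{OSSMO:2011} and \cite[Thm.~3.2]{Winkler:1988}, applied to the $n+2$ functionals $\psi_0,\psi_1,\dots,\psi_n,\Phi$: this produces a measure $\pi_+'$ supported on at most $n+2$ points with identical integrals against all of them, so that $\pi_+'\in\Pi_+\cap\Delta(n+1)=\Pi_{+,n+1}$ and $\E_{\pi_+'}[\Phi]=\E_{\pi_+}[\Phi]$. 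That $\mathcal{A}$ is Suslin guarantees the extreme points are Dirac masses and that the selection is measurable (cf.\ Remark~\ref{rmk:Polish} and Lemma~\ref{lem:Umeasurable}). Taking the supremum over $\pi_+$ yields \eqref{eq:hsbiu3}.

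For \eqref{eq:hskjbiu3} I would use the factorization to return to the probability-measure setting, where the sharper bound is available. Since $\psi_0\geq 0$ and $\E_{\pi_+}[\psi_0]=1$, the measure $\tilde\pi:=\psi_0\,\pi_+$ is a probability measure, and $\Phi=\psi_0\varphi$ gives $\E_{\pi_+}[\Phi]=\E_{\tilde\pi}[\varphi]$; moreover on $\{\psi_0>0\}$ one has $\E_{\pi_+}[\psi_i]=\E_{\tilde\pi}[\psi_i/\psi_0]$. Thus maximizing $\E_{\pi_+}[\Phi]$ over $\Pi_+$ becomes maximizing $\E_{\tilde\pi}[\varphi]$ over probability measures $\tilde\pi$ subject to the $n$ moment constraints carried by $\psi_1/\psi_0,\dots,\psi_n/\psi_0$, the normalization $\E_{\pi_+}[\psi_0]=1$ having been absorbed into the normalization of $\tilde\pi$ at no cost in atoms. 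Now \cite[Thm.~4.1]{OSSMO:2011} applies to this probability-measure problem ($n$ constraints, reduction to $n+1$ Dirac masses via the extreme points of the probability simplex), and pulling the reduced measure back through $\mathrm{d}\pi_+=\psi_0^{-1}\,\mathrm{d}\tilde\pi$ gives a representative in $\Delta(n)$, which is \eqref{eq:hskjbiu3}.

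The hard part is to perform these manipulations measurably and to control the set $\{\psi_0=0\}$ in part~2. There both $\Phi$ and $\psi_0$ vanish, so any mass $\pi_+$ places on $\{\psi_0=0\}$ is invisible to the objective and to the $\psi_0$-normalization and feeds only into the $\psi_i$-constraints; I would account for it as generalized-moment slack inside the very same reduction, so that it is absorbed by existing atoms rather than forcing a new one, thereby preserving the $n+1$-atom bound. The remaining points---existence of the $\E_{\pi_+}[\Phi]$ integrals, measure-affineness of the objective, and measurability of the optimal selection---are precisely those supplied by Assumption~\ref{ass_semibounded}, \cite[Prop.~3.1]{Winkler:1988}, and Lemma~\ref{lem:Umeasurable}.
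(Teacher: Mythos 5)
Your argument for \eqref{eq:hsbiu3} is essentially workable, though it takes a different route from the paper: you include the objective $\Phi$ as an extra constraint functional and invoke an exact moment-matching reduction for \emph{positive} measures (a Richter/Carath\'eodory-for-cones argument), whereas the paper layers $\mathcal{M}_+(\mathcal{A})=\bigcup_{r>0}\, r\mathcal{M}(\mathcal{A})$, applies \cite[Thm.~4.1]{OSSMO:2011} on each layer (probability measures, $n+1$ constraints, hence $n+2$ Dirac masses), and takes the supremum over $r$. Two cautions: the positive-measure reduction you cite is not literally contained in the referenced probability-measure theorems --- the present theorem is precisely the claimed generalization, so you must either prove that reduction (the integral vector of $(\psi_0,\dots,\psi_n,\Phi)$ lies in the convex cone of the pointwise vectors, then cone-Carath\'eodory) or pass to probability measures as the paper does; and exact matching of $\E_{\pi_+}[\Phi]$ is impossible when that integral is infinite (only semiboundedness is assumed), so that case needs a truncation argument.

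The genuine gap is in your proof of \eqref{eq:hskjbiu3}. The change of measure $\tilde\pi:=\psi_0\,\pi_+$ annihilates the restriction of $\pi_+$ to $\{\psi_0=0\}$, and that mass is not harmless: it enters the constraints $\E_{\pi_+}[\psi_i]=0$, so the transformed problem with equality constraints $\E_{\tilde\pi}[\psi_i/\psi_0]=0$ is not equivalent to the original one. For instance, if $\psi_1>0$ on $\{\psi_0>0\}$ while $\psi_1<0$ somewhere on $\{\psi_0=0\}$, then every $\pi_+\in\Pi_+$ must charge $\{\psi_0=0\}$, and your transformed feasible set is empty even though $\sup_{\pi_+\in\Pi_+}\E_{\pi_+}[\Phi]$ is finite. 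Your proposed repair --- treating this mass as ``generalized-moment slack absorbed by existing atoms'' --- does not work as stated: the slack lives on $\{\psi_0=0\}$ and the atoms of $\tilde\pi$ live on $\{\psi_0>0\}$, so the slack must carry its own atoms. Reducing $\tilde\pi$ to $n+1$ atoms (now under a constraint of the form $\E_{\tilde\pi}[\psi_i/\psi_0]\in Z$ with $Z$ a cone) and then reinstating a slack measure on $\{\psi_0=0\}$, which itself needs up to $n$ atoms, produces up to $2n+1$ atoms in total, not the $n+1$ allowed by $\Pi_{+,n}$. Obtaining a \emph{shared} budget of $n+1$ atoms requires a joint reduction of the pair of measures, which is essentially the statement you are trying to prove --- the approach is circular at exactly the point where it matters.

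The idea you are missing is the paper's homogenization in a level $\lambda$ (the standard linear-fractional-to-linear device), which handles the set $\{\psi_0=0\}$ automatically. If $\E_{\pi_+}[\Phi]>\lambda$, then $\pi:=\pi_+/\pi_+(\mathcal{A})$ is a \emph{probability} measure on all of $\mathcal{A}$ satisfying the $n$ constraints $\E_{\pi}[\psi_i]=0$ and $\E_{\pi}[\Phi-\lambda\psi_0]>0$. Applying \cite[Thm.~4.1]{OSSMO:2011} to the supremum of $\E_{\pi}[\Phi-\lambda\psi_0]$ over this constraint set (only $n$ constraints, hence $n+1$ Dirac masses) yields a probability measure $\pi^{\ast}\in\Delta(n)$ with $\E_{\pi^{\ast}}[\psi_i]=0$ and $\E_{\pi^{\ast}}[(\varphi-\lambda)\psi_0]>0$; since $\psi_0\geq 0$, the latter forces $\E_{\pi^{\ast}}[\psi_0]>0$, and $\pi^{\ast}_+:=\pi^{\ast}/\E_{\pi^{\ast}}[\psi_0]\in\Pi_{+,n}$ satisfies $\E_{\pi^{\ast}_+}[\Phi]>\lambda$. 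Any atoms $\pi^{\ast}$ places on $\{\psi_0=0\}$ play exactly the role of your slack, but they are counted inside the same $(n+1)$-atom budget from the outset.
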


Theorem \ref{thm:redpip} can be used to produce a primary reduction of $\mathcal{U}\big(\Pi\odot_B \Dspace\big)$ when  $\Pi$ is defined by a finite number of equalities.  To state the theorem, recall that, for
arbitrary $\Pi$ and $B$, the definition
\[
	\Pi_B := \left\{ \pi  \in \Pi \smid \pi \cdot \Dmap[B]> 0 \right\}
\]
of \eqref{def_B}, where by \eqref{eq:cdotexp2}
\[
	\pi\cdot\Dmap[B]:=\E_{\mu\sim \pi}\big[\Dmap(\mu)[B]\big] ;
\]
recall also the notation of \eqref{eq:Upostes}
\[
	\mathcal{U}(\Pi|B) := \sup_{\pi \in \Pi_B}\E_{\pi \odot \Dmap}\big[ \Phi\big| B\big];
\]
and recall the result \eqref{eq:defcondexp} that, for any $\pi   \in \Pi_B$,
\[
	\E_{\pi \odot \Dmap}\big[ \Phi\big| B\big]=\frac{\E_{\mu\sim \pi}\big[  \Phi(\mu) \Dmap(\mu)[B] \big]}{\E_{\mu\sim\pi}\big[\Dmap(\mu)[B]\big]} .
\]
The proof of the following theorem  is obtained by first proving the theorem for equality constraints $Z=\{q\}$,
by observing that $\mathcal{U}\big(\Pi(q)\big| B\big)$ is a linear fractional optimization problem in $\pi$ and utilizing the fact that such problems are equivalent to linear problems \cite{BoydVandenberghe:2004}, and then applying Theorem~\ref{thm:redpip}.  To extend the result to the subset $Z \subseteq \R^{n}$, one uses a layercake approach as in the proof of Theorem~\ref{thm_primred}.  As in Section~\ref{Sec:OpBoundsPriorEstimates},
the following primary reduction theorem, Theorem~\ref{thm:alternredded}, will be formulated in canonical form and the nested reduction theorem, Theorem~\ref{thm:sivahidden}, will be in the general form.

\begin{thm}
	\label{thm:alternredded}
	Let $\mathcal{A}$ be Suslin and let $\Psi \colon \mathcal{A} \to \R^{n}$ be measurable.  For $Z \subseteq \R^{n}$, let $\Pi(Z) := \{ \pi \in \mathcal{M}(\mathcal{A}) \mid \E_{\pi}[\Psi]\in Z \}$.  Then  $\mathcal{U}\big(\Pi(Z)\big|B\big)$ is equal to the supremum over  $\alpha_{i} \geq 0$, $q\in Z$ and $ \mu_{i} \in \mathcal{A}$ of
	\[
		\sum_{i = 0}^{n } \alpha_{i} \Phi(\mu_{i}) \Dmap( \mu_{i})[B]
	\]
	subject to the constraints
	\[
		\sum_{i = 0}^{n} \alpha_{i} \big(\Psi( \mu_{i})-q\big) = 0
	\]
	and
	\begin{equation}
		\label{eq:polyconslkde}
		\sum_{i = 0}^{n} \alpha_{i} \Dmap( \mu_{i})[B] = 1.
	\end{equation}
\end{thm}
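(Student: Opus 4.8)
The plan is to prove the statement first for a single equality constraint $Z=\{q\}$ and then recover an arbitrary $Z$ by a union (``layercake'') argument exactly as in the proof of Theorem~\ref{thm_primred}. Since $\E_{\pi}[\Psi]$ is a well-defined point of $\R^{n}$ for each admissible $\pi$, we have the disjoint decomposition $\Pi(Z)=\bigcup_{q\in Z}\Pi(q)$, where $\Pi(q):=\{\pi\in\mathcal{M}(\mathcal{A})\mid\E_{\pi}[\Psi]=q\}$; restricting to priors with positive data-probability preserves this, so $\Pi(Z)_{B}=\bigcup_{q\in Z}\Pi(q)_{B}$ and hence $\mathcal{U}(\Pi(Z)|B)=\sup_{q\in Z}\mathcal{U}\bigl(\Pi(q)\big|B\bigr)$. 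It therefore suffices to reduce $\mathcal{U}(\Pi(q)|B)$ for fixed $q$; re-introducing $\sup_{q\in Z}$ at the end promotes $q$ to a free optimization variable, which is precisely the form asserted. (Empty feasible sets are handled consistently on both sides by the convention $\sup\varnothing=-\infty$.)

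Fix $q$ and set $\psi_{0}(\mu):=\Dmap(\mu)[B]$. By \eqref{eq:defcondexp2}, $\mathcal{U}(\Pi(q)|B)$ is the supremum over $\pi\in\Pi(q)_{B}$ of the ratio $\E_{\mu\sim\pi}[\Phi(\mu)\psi_{0}(\mu)]\big/\E_{\mu\sim\pi}[\psi_{0}(\mu)]$. Numerator and denominator are affine in $\pi$, the constraints $\E_{\pi}[\Psi]=q$ and $\E_{\pi}[1]=1$ are linear, and by the definition of $\Pi_{B}$ the denominator is strictly positive; this is a linear-fractional program over measures, which I would linearize by the Charnes--Cooper substitution. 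To each $\pi\in\Pi(q)_{B}$ associate the positive measure $\pi_{+}:=\pi\big/\E_{\mu\sim\pi}[\psi_{0}]\in\mathcal{M}_{+}(\mathcal{A})$; a direct computation gives $\E_{\pi_{+}}[\psi_{0}]=1$, $\E_{\pi_{+}}[\Psi-q]=0$, and that the ratio equals $\E_{\pi_{+}}[\Phi\,\psi_{0}]$. Conversely, any $\pi_{+}\in\mathcal{M}_{+}(\mathcal{A})$ with $\E_{\pi_{+}}[\psi_{0}]=1$ satisfies $\E_{\pi_{+}}[1]\ge\E_{\pi_{+}}[\psi_{0}]=1>0$ (since $0\le\psi_{0}\le1$), so $\pi:=\pi_{+}\big/\E_{\pi_{+}}[1]$ is a probability measure lying in $\Pi(q)_{B}$ whenever additionally $\E_{\pi_{+}}[\Psi-q]=0$. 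This value-preserving correspondence yields
\[
	\mathcal{U}\bigl(\Pi(q)\big|B\bigr)
	=\sup\Bigl\{ \E_{\pi_{+}}[\Phi\,\psi_{0}] \;\Big|\;
	\pi_{+}\in\mathcal{M}_{+}(\mathcal{A}),\
	\E_{\pi_{+}}[\psi_{0}]=1,\
	\E_{\pi_{+}}[\Psi-q]=0 \Bigr\}.
\]

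This problem has exactly the shape required by Theorem~\ref{thm:redpip}, with $\psi_{0}$ as above and $\psi_{i}:=\Psi_{i}-q_{i}$ for $i=1,\dots,n$. Measurability of $\psi_{0}$ follows from the transition-function property of $\widehat{\Dmap}$ recorded in Section~\ref{subsec:dataspacesanmaps}, $\psi_{0}\ge0$ because it is the value of a probability measure on $B$, and $\Psi$ is measurable by hypothesis. Crucially, the objective $\Phi\,\psi_{0}$ (the quantity of interest in Theorem~\ref{thm:redpip}) factors as $\psi_{0}\varphi$ with $\varphi:=\Phi$, and it is semibounded whenever $\Phi$ is (again because $0\le\psi_{0}\le1$), so Assumption~\ref{ass_semibounded} keeps every integral well defined. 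The sharper conclusion \eqref{eq:hskjbiu3} of Theorem~\ref{thm:redpip} therefore applies and the supremum over $\mathcal{M}_{+}(\mathcal{A})$ equals the supremum over measures $\pi_{+}=\sum_{i=0}^{n}\alpha_{i}\delta_{\mu_{i}}\in\Delta(n)$ satisfying the same constraints. Substituting this form turns the objective into $\sum_{i=0}^{n}\alpha_{i}\Phi(\mu_{i})\Dmap(\mu_{i})[B]$ and the constraints into $\sum_{i=0}^{n}\alpha_{i}\Dmap(\mu_{i})[B]=1$ and $\sum_{i=0}^{n}\alpha_{i}(\Psi(\mu_{i})-q)=0$; re-introducing $\sup_{q\in Z}$ then produces exactly the claimed reduced problem.

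I expect the two load-bearing points to be: (i) making the linear-fractional/Charnes--Cooper equivalence rigorous over the infinite-dimensional cone $\mathcal{M}_{+}(\mathcal{A})$, i.e.\ verifying that the substitution is a genuine bijection onto the linearized constraint set and that it preserves objective values in the extended reals; and (ii) checking the factorization hypothesis $\Phi\,\psi_{0}=\psi_{0}\varphi$ that distinguishes \eqref{eq:hskjbiu3} from \eqref{eq:hsbiu3}, since it is precisely this structure that delivers the economical reduction to the $n+1$ Dirac masses indexed by $i=0,\dots,n$ rather than $n+2$. The remaining ingredients---measurability of $\psi_{0}$, the semiboundedness bookkeeping for $\Phi\,\psi_{0}$, and the union over $q\in Z$---are routine given the results already established.
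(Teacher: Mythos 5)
Your proof is correct and follows essentially the same route as the paper's: the layercake decomposition over $q \in Z$, the conversion of the linear-fractional conditional-expectation problem into a linear program over positive measures normalized by $\E_{\pi_+}\big[\Dmap(\mu)[B]\big]=1$ (the paper's set $\Pi_+(q)$), and the Dirac-mass reduction via Theorem~\ref{thm:redpip}. If anything, you are more explicit than the paper on two points it leaves implicit: the well-definedness of the inverse normalization $\pi:=\pi_+/\pi_+[\mathcal{A}]$, and the fact that the factorization $\Phi\,\psi_0=\psi_0\varphi$ with $\psi_0=\Dmap(\cdot)[B]$ is precisely what licenses the sharper conclusion \eqref{eq:hskjbiu3} yielding $n+1$ rather than $n+2$ Dirac masses.
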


\begin{eg}
	\label{eg:shivabis}
	Consider again Example \ref{eg:shiva} with the admissible set $\mathcal{A} := \mathcal{M}([0,1])$, the quantity of interest $\Phi(\mu):=\mu[X\geq a]$, the map $\Psi(\mu):=\E_\mu[X]$ and the set of admissible priors
	\[
		\Pi := \left\{ \pi \in \mathcal{M}(\mathcal{A}) \smid \E_{\mu \sim \pi} \big[\E_{\mu}[X]\big] = q \right\}.
	\]
	for some $q\in (0,a)$.  We saw in Example~\ref{eg:shiva} that $\mathcal{U}(\Pi) = \frac{q}{a}$.  Now suppose that we observe the random variable $\Drv := (X_1, \dots, X_n)$ corresponding to $n$ i.i.d.\ samples of $\mu^\dagger \in \mathcal{A}$.  More precisely, we observe $\Drv \in B$ where $B = B_1 \times \cdots B_n$ and $B_i$ is the ball in $(0, 1)$ of center $x_i$ and radius $\rho$, $x_i \in (0,1)$ and $0 < \rho \ll 1/n$.  Let
$\Dmap^{n}$ denote the data map corresponding to taking $n$ i.i.d.~samples,  that is, $\Dmap^{n}(\mu):=
\mu \otimes  \cdots \otimes \mu$, and observe that $\Dmap^{n}(\mu)[B]=\prod_{i=1}^n \mu[B_i]$.

	Theorem~\ref{thm:alternredded} implies that $\mathcal{U}(\Pi\odot_B\Dmap^{n})$ is equal to the supremum over $\alpha_1,\alpha_2 \geq 0$, $\mu_1,\mu_2\in \mathcal{A}$ of
	\[
		\alpha_1 \mu_1[X \geq a] \Dmap^{n}(\mu_1)[B] + \alpha_2 \mu_2[X \geq a] \Dmap^{n}(\mu_2)[B]
	\]
	subject to the constraints
	\[
		\alpha_1 (\E_{\mu_1}[X] - q) + \alpha_2 (\E_{\mu_2}[X] - q) = 0,
	\]
	\[
		\alpha_1 \Dmap^{n}(\mu_1)[B] + \alpha_2 \Dmap^{n}(\mu_2)[B] = 1,
	\]
	with $\Dmap^{n}(\mu)[B]=\prod_{i=1}^n \mu(B_i)$.  Introducing slack variables $\beta_{1,i} := \mu_1[B_i]$ and $\beta_{2,i} := \mu_2[B_i]$ as $n$ linear constraints on $\mu_1$ and $n$ linear constraints on $\mu_2$ we obtain (from \cite[Thm.~4.1]{OSSMO:2011}) that the supremum can be achieved by assuming that each $\mu_i$ is the weighted sum of at most $n+2$ Dirac masses. Assuming that the $B_i$ are non intersecting balls of radius $\rho\ll 1/n$ centered on $x_1,\dots,x_n$, $n$ of these Dirac masses will have to be put at $x_1,\dots,x_n$; for optimality, the two others will have to be put at $0$ and $a$ (with weights $p_1$ and $p_2$).  Introducing $\gamma_1=\alpha_1  \Dmap^{n}(\mu_1)[B]$  and $\gamma_2=\alpha_2  \Dmap^{n}(\mu_2)[B]$, it follows that $\mathcal{U}(\Pi\odot_B\Dmap^{n})$ is equal (as $\rho \downarrow 0$) to the supremum over $\gamma_1,\gamma_2\geq 0$, $p_1,p_2 \in [0,1]$ of
	\[
		\gamma_1 p_1+\gamma_2 p_2
	\]
	subject to the constraints
	\[
		\gamma_1 + \gamma_2 = 1,
	\]
	and
	\[
		\gamma_1 \frac{(ap_1+\sum_{i=1}^n x_i \beta_{1,i})-q}{\prod_{i=1}^n \beta_{1,i}}+ \gamma_2
		\frac{(a p_2+\sum_{i=1}^n x_i \beta_{2,i})-q}{\prod_{i=1}^n \beta_{2,i}} = 0.
	\]
	By considering $0<\beta_{i,j} \ll 1$ it is easy to obtain that $\mathcal{U}(\Pi\odot_B\Dmap^{n})=1$.
\end{eg}

\begin{eg}\label{eg:learnvsstab}
We will now use Theorem \ref{thm:alternredded} to prove equation \eqref{eqhieuhdee} of Subsection \ref{subsec:learningvsrobustness}. Let $\Phi$ be defined as in Subsection \ref{subsec:learningvsrobustness}.
Let $\mathcal{A}(\alpha)$ and $\Pi(\alpha)$ be defined as in \eqref{eqadconstdata} and  \eqref{eqpiddkje}. Then, Theorem \ref{thm:alternredded} implies that $\mathcal{U}\big(\Pi(\alpha)|B^n_\delta\big)$, the least upper bound on posterior values, is
equal to the supremum over $\alpha_1,\alpha_2 \geq 0$, $\mu_1,\mu_2\in \mathcal{A}(\alpha)$ of
	\[
		\alpha_1 \mu_1[X \geq a] \mu_1^n[B^n_\delta] + \alpha_2 \mu_2[X \geq a] \mu_2^n[B^n_\delta]
	\]
	subject to the constraints
	\[
\begin{cases}
		\alpha_1 (\E_{\mu_1}[X] - m) + \alpha_2 (\E_{\mu_2}[X] - m) = 0,\\
		\alpha_1 \mu_1^n[B^n_\delta] + \alpha_2 \mu_2^n[B^n_\delta] = 1,
\end{cases}
	\]
	where we have used the notation $\mu^n[B^n_\delta]:=\prod_{i=1}^n \mu(B_\delta(x_i))$.

Introducing $\gamma_1=\alpha_1  \mu_1^n[B^n_\delta]$  and $\gamma_2=\alpha_2  \mu_2^n[B^n_\delta]$, it follows that $\mathcal{U}\big(\Pi(\alpha)|B^n_\delta\big)$ is equal  to the supremum over $\gamma_1,\gamma_2\geq 0$, $\mu_1,\mu_2\in \mathcal{A}(\alpha)$ of

	\[
		\gamma_1  \mu_1[X \geq a]+\gamma_2  \mu_2[X \geq a]
	\]
	subject to the constraints
	\[
\begin{cases}
		\gamma_1 + \gamma_2 = 1,\\
		\gamma_1 \frac{\E_{\mu_1}[X] - m}{\mu_1^n[B^n_\delta]}+ \gamma_2
		\frac{\E_{\mu_2}[X] - m}{\mu_2^n[B^n_\delta]} = 0.
\end{cases}
	\]
which can be simplified to the supremum over $\mu_1,\mu_2\in \mathcal{A}(\alpha)$ of
	\begin{equation}\label{eq:bdbebu3dd}
		\frac{1}{1+\frac{\E_{\mu_1}[X]-m}{m-\E_{\mu_2}[X]}\frac{\mu_2^n[B^n_\delta]}{\mu_1^n[B^n_\delta]}}    \mu_1[X \geq a]+(1-\frac{1}{1+\frac{\E_{\mu_1}[X]-m}{m-\E_{\mu_2}[X]}\frac{\mu_2^n[B^n_\delta]}{\mu_1^n[B^n_\delta]}}  ) \mu_2[X \geq a]
	\end{equation}
By introducing slack variables for $m_1=\E_{\mu_1}[X]$ and $m_2=\E_{\mu_2}[X]$, maximizing \eqref{eq:bdbebu3dd} with $m_1$ and $m_2$, then taking a supremum over $m_1, m_2$, one obtains that the
supremum of \eqref{eq:bdbebu3dd} is achieved, in the limit $\delta \downarrow 0$, in the configuration where $\mu_1$ puts most of its mass on $a$, $\mu_2$ puts most of its mass on $0$, and $\frac{\mu_2^n[B^n_\delta]}{\mu_1^n[B^n_\delta]}\approx \frac{1}{\alpha^2}$ which yields
\begin{equation}\label{eqhieuhdeebis}
\lim_{\delta \rightarrow 0}\mathcal{U}\big(\Pi(\alpha)|B^n_\delta\big)=\frac{1}{1+\frac{1}{\alpha^2} \frac{a-m}{m}} .
\end{equation}
\end{eg}

\subsection{Nested Reduction for Posterior Values}
\label{Sec:finitdimmarginalspost}

Here, as in Section~\ref{Sec:finitdimmarginals}, we show how the optimization problems \eqref{eq:Upostes} and \eqref{eq:Lpostes} can be reduced to nested OUQ optimization problems (i.e.~nested problems analogous to \eqref{eq:defma1} and \eqref{eq:defma2}) when the collection $\Pi$ of admissible priors is defined by how they push forward by a measurable mapping $\Psi \colon \mathcal{A} \to \mathcal{Q}$.  That is, we specify a feature space $\mathcal{Q}$, a measurable map $\Psi \colon \mathcal{A} \to \mathcal{Q}$, a subset $\mathfrak{Q}\subseteq \mathcal{M}(\mathcal{Q})$ and define the admissible set of priors by
\[
	\Pi:=\Psi^{-1}\mathfrak{Q}=\{\pi \in \mathcal{M}(\mathcal{A}) \mid \Psi\pi \in \mathfrak{Q}\} .
\]
As before, we focus on reducing the upper bound
\begin{equation}
	\label{eq:eqUpsipost}
	\mathcal{U}\big(\Psi^{-1}\mathfrak{Q}\big| B\big):=\sup_{\pi  \in (\Psi^{-1}\mathfrak{Q})_B} \E_{\pi \odot \Dmap}[\Phi|B] .
\end{equation}

\begin{thm}
	\label{thm:sivahidden}
	Let $\mathcal{A}$ be a Suslin space, let $\mathcal{Q}$ be a separable and metrizable space, and let $\Psi \colon \mathcal{A} \to \mathcal{Q}$ be measurable.  Moreover, let $\mathfrak{Q} \subseteq \mathcal{M}(\mathcal{Q})$ be such that $\supp(\mathbb{Q}) \subseteq \Psi(\mathcal{A})$ for all $\mathbb{Q} \in \mathfrak{Q}$.  Then, for each $\mathbb{Q} \in \mathfrak{Q}$, $\Psi^{-1}\mathbb{Q}$ is non-empty.  Moreover, the upper bound  $\mathcal{U} \big(\Psi^{-1}\mathfrak{Q} \big| B\big)$, defined in \eqref{eq:eqUpsipost}, satisfies
	\begin{equation}
		\label{eq:2qbiscondihjh}
		\begin{split}
			\mathcal{U}\big(\Psi^{-1}\mathfrak{Q}\big| B\big) = \sup \left\{ \lambda \in \R \smid  \sup_{ \mathbb{Q}\in \mathfrak{Q} } \mathbb{E}_{q\sim \mathbb{Q}} \left[ \sup_{\mu\in  \Psi^{-1}(q)} \bigl(\Phi(\mu)-\lambda\bigr)\Dmap(\mu)[B] \right]>0 \right\}\, ,
		\end{split}
	\end{equation}
	where the expectations on the right-hand side are defined as in  \eqref{def-expectation2}.  Finally, the expectation operator on the right-hand side is measure affine in $\mathbb{Q}$, as defined in \eqref{def_measureaffine}.
\end{thm}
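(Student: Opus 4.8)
The plan is to convert the linear-fractional conditional-expectation optimization into a $\lambda$-parametrized family of ordinary prior-value problems, and then to invoke the nested reduction Theorem~\ref{thm:reducpriormarg} for each value of $\lambda$. Non-emptiness of $\Psi^{-1}\mathbb{Q}$ for each $\mathbb{Q}\in\mathfrak{Q}$ is immediate from Theorem~\ref{thm:reducpriormarg}, whose support hypothesis $\supp(\mathbb{Q})\subseteq\Psi(\mathcal{A})$ is assumed here verbatim.

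For the main identity, fix $\lambda\in\R$ and set $\Phi_\lambda(\mu):=\bigl(\Phi(\mu)-\lambda\bigr)\Dmap(\mu)[B]$. For any $\pi\in(\Psi^{-1}\mathfrak{Q})_B$ one has $\pi\cdot\Dmap[B]=\E_{\mu\sim\pi}\big[\Dmap(\mu)[B]\big]>0$, so the conditional-expectation formula \eqref{eq:defcondexp2} gives the equivalence
\[
	\E_{\pi\odot\Dmap}\big[\Phi\big|B\big]>\lambda \iff \E_{\mu\sim\pi}\big[\Phi_\lambda(\mu)\big]>0 .
\]
Using the elementary fact that $\sup_{\pi}g(\pi)=\sup\{\lambda\in\R\mid \exists\pi,\ g(\pi)>\lambda\}$ with $g(\pi):=\E_{\pi\odot\Dmap}[\Phi|B]$, this equivalence yields
\[
	\mathcal{U}\big(\Psi^{-1}\mathfrak{Q}\big|B\big)=\sup\Big\{\lambda\in\R \smid \sup_{\pi\in(\Psi^{-1}\mathfrak{Q})_B}\E_{\mu\sim\pi}\big[\Phi_\lambda(\mu)\big]>0\Big\}.
\]

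Next I would remove the positivity restriction defining $(\Psi^{-1}\mathfrak{Q})_B$. If $\pi\in\Psi^{-1}\mathfrak{Q}$ satisfies $\pi\cdot\Dmap[B]=0$, then $\Dmap(\mu)[B]=0$ for $\pi$-a.e.\ $\mu$, whence $\E_{\mu\sim\pi}[\Phi_\lambda(\mu)]=0$; consequently the inner supremum is strictly positive over $(\Psi^{-1}\mathfrak{Q})_B$ if and only if it is strictly positive over all of $\Psi^{-1}\mathfrak{Q}$. The inner supremum is therefore $\mathcal{U}(\Psi^{-1}\mathfrak{Q})$ computed for the quantity of interest $\Phi_\lambda$. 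To invoke Theorem~\ref{thm:reducpriormarg} for $\Phi_\lambda$ I must check its hypotheses: $\Phi_\lambda$ is measurable, since $\Phi$ is measurable and $\mu\mapsto\Dmap(\mu)[B]$ is Borel measurable by the transition-function construction of Section~\ref{subsec:dataspacesanmaps}; and $\Phi_\lambda$ is semibounded in the sense of Assumption~\ref{ass_semibounded}, because $\Dmap(\mu)[B]\in[0,1]$ forces $\Phi_\lambda$ to inherit a one-sided bound from $\Phi$ (e.g.\ if $\Phi\le M$ then $\Phi_\lambda\le\max(M-\lambda,0)$, and symmetrically if $\Phi$ is bounded below). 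Theorem~\ref{thm:reducpriormarg} then gives
\[
	\sup_{\pi\in\Psi^{-1}\mathfrak{Q}}\E_{\mu\sim\pi}\big[\Phi_\lambda(\mu)\big]=\sup_{\mathbb{Q}\in\mathfrak{Q}}\E_{q\sim\mathbb{Q}}\Big[\sup_{\mu\in\Psi^{-1}(q)}\Phi_\lambda(\mu)\Big],
\]
and substituting this into the displayed expression for $\mathcal{U}(\Psi^{-1}\mathfrak{Q}|B)$ produces exactly \eqref{eq:2qbiscondihjh}. The final measure-affine assertion is the corresponding clause of Theorem~\ref{thm:reducpriormarg} applied to $\Phi_\lambda$ for each fixed $\lambda$.

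The main obstacle is the linear-fractional passage: one must justify the equivalence converting the conditional-expectation optimization into the $\lambda$-parametrized family of affine optimizations, handle the degenerate case $\pi\cdot\Dmap[B]=0$ so that the constraint set may be enlarged to all of $\Psi^{-1}\mathfrak{Q}$, and verify that $\Phi_\lambda$ is semibounded and measurable so that the earlier nested reduction theorem applies without modification; the remainder of the argument is bookkeeping.
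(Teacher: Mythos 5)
Your proposal is correct and follows essentially the same route as the paper's proof: the same $\lambda$-parametrized equivalence between the linear-fractional conditional-expectation problem and the affine problem for $\Phi_\lambda(\mu)=(\Phi(\mu)-\lambda)\Dmap(\mu)[B]$, the same observation that $\pi\cdot\Dmap[B]=0$ forces $\E_{\pi}[\Phi_\lambda]=0$ (so the positivity constraint can be dropped), and the same application of Theorem~\ref{thm:reducpriormarg} to $\Phi_\lambda$, with measure affinity coming from the semiboundedness and measurability of $\Phi_\lambda$ exactly as in the paper.
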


\begin{rmk}
	\label{rmk:sivahiddenrmk}
	Note that Theorem \ref{thm:sivahidden} is more general than Theorem \ref{thm:alternredded} because its application does not require the assumption that $\Psi^{-1}\mathfrak{Q}$ is defined via generalized moments constraints.
\end{rmk}

The following theorem  is our main result.
It shows not only that the right-hand side of the assertion \eqref{eq:2qbiscondihjh} of Theorem \ref{thm:sivahidden} depends on the sample data in a very weak way, but also that, under very mild assumptions, the observation of this sample data leads to an increase (rather than a decrease) of the least upper bound on the quantity of interest:

\begin{thm}[Main Brittleness Theorem]
	\label{thm:shiva}
	Let $\mathcal{A}$ be a Suslin space, let $\mathcal{Q}$ be a separable and metrizable space, and let $\Psi \colon \mathcal{A} \to \mathcal{Q}$ be measurable.  Moreover, let $\mathfrak{Q} \subseteq \mathcal{M}(\mathcal{Q})$ be such that $\supp(\mathbb{Q}) \subseteq \Psi(\mathcal{A})$ for all $\mathbb{Q} \in \mathfrak{Q}$.  Suppose that, for all $\delta >0$, there exists some $\mathbb{Q}\in \mathfrak{Q}$ such that
	\begin{equation}
		\label{eq:dto0}
		\E_{q\sim \mathbb{Q}} \left[ \inf_{\mu\in  \Psi^{-1}(q)} \Dmap(\mu)[B] \right]=0
	\end{equation}
	and
	\begin{equation}
		\label{eq:djkdjehjehj33}
		\mathbb{P}_{q\sim \mathbb{Q}} \left[ \sup_{\mu\in \Psi^{-1}(q),\, \Dmap(\mu)[B]>0}\Phi(\mu) > \sup_{\mu\in \mathcal{A}}\Phi(\mu) - \delta \right]>0 .
	\end{equation}
	Then
	\begin{equation}
		\label{eq:2qbisjhjycondddihjh}
		\mathcal{U}\big(\Psi^{-1}\mathfrak{Q}\big|B\big) =\mathcal{U}(\mathcal{A}) .
	\end{equation}
\end{thm}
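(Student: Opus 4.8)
The plan is to feed the exact representation \eqref{eq:2qbiscondihjh} of Theorem~\ref{thm:sivahidden} into a sign analysis of its inner integrand, letting the two hypotheses do complementary jobs: \eqref{eq:dto0} will kill the negative part of the integrand and \eqref{eq:djkdjehjehj33} will produce a genuinely positive part. The inequality $\mathcal{U}(\Psi^{-1}\mathfrak{Q}\mid B)\le \mathcal{U}(\mathcal{A})$ is free — every conditional expectation of $\Phi$ is an average of values $\le\mathcal{U}(\mathcal{A})$, which is precisely the right-hand inequality of Theorem~\ref{thm-barriers} — so the whole task is the reverse bound. Writing
\[
	h(\lambda):=\sup_{\mathbb{Q}\in\mathfrak{Q}}\E_{q\sim\mathbb{Q}}\bigl[g_\lambda(q)\bigr],\qquad g_\lambda(q):=\sup_{\mu\in\Psi^{-1}(q)}\bigl(\Phi(\mu)-\lambda\bigr)\Dmap(\mu)[B],
\]
Theorem~\ref{thm:sivahidden} gives $\mathcal{U}(\Psi^{-1}\mathfrak{Q}\mid B)=\sup\{\lambda\mid h(\lambda)>0\}$, so I would reduce the claim to showing $h(\lambda)>0$ for every $\lambda<\mathcal{U}(\mathcal{A})$.

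Next I would fix such a $\lambda$ and set $\delta:=\mathcal{U}(\mathcal{A})-\lambda>0$, then pull from the hypotheses a single $\mathbb{Q}\in\mathfrak{Q}$ satisfying both \eqref{eq:dto0} and \eqref{eq:djkdjehjehj33} for this $\delta$, and prove $\E_{q\sim\mathbb{Q}}[g_\lambda]>0$ (which forces $h(\lambda)>0$). The argument splits into two independent claims about $g_\lambda$. The first is that $g_\lambda\ge0$ for $\mathbb{Q}$-a.e.\ $q$: since $\Dmap(\mu)[B]\ge0$, condition \eqref{eq:dto0} gives $\inf_{\mu\in\Psi^{-1}(q)}\Dmap(\mu)[B]=0$ for $\mathbb{Q}$-a.e.\ $q$, so along a sequence $\mu_k\in\Psi^{-1}(q)$ with $\Dmap(\mu_k)[B]\to0$ the (bounded) factor $\Phi(\mu_k)-\lambda$ drives $\bigl(\Phi(\mu_k)-\lambda\bigr)\Dmap(\mu_k)[B]\to0$, whence $g_\lambda(q)\ge0$ (and if some $\mu$ on the level set already has $\Phi(\mu)\ge\lambda$ the conclusion is immediate). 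The second is that $g_\lambda>0$ on a set of positive measure: because $\mathcal{U}(\mathcal{A})-\delta=\lambda$, condition \eqref{eq:djkdjehjehj33} says that for each $q$ in a set $S$ with $\mathbb{Q}(S)>0$ there is some $\mu\in\Psi^{-1}(q)$ with $\Dmap(\mu)[B]>0$ and $\Phi(\mu)>\lambda$, so that term, and hence $g_\lambda(q)$, is strictly positive. Combining, $g_\lambda\ge0$ a.e.\ and $g_\lambda>0$ on the positive-measure set $S$ makes the completed integral strictly positive; letting $\lambda\uparrow\mathcal{U}(\mathcal{A})$ then yields $\sup\{\lambda\mid h(\lambda)>0\}\ge\mathcal{U}(\mathcal{A})$ and, with the trivial bound, the equality \eqref{eq:2qbisjhjycondddihjh}.

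The hard part will be the a.e.\ nonnegativity $g_\lambda\ge0$, which hinges on the negative excursions of $\bigl(\Phi-\lambda\bigr)\Dmap(\cdot)[B]$ being annihilated as $\Dmap(\mu)[B]$ is pushed to its infimum $0$; this needs $\Phi$ bounded below on $\Psi^{-1}(q)$, for otherwise a sequence with $\Phi(\mu_k)\to-\infty$ could keep the product bounded away from $0$. This is exactly where Assumption~\ref{ass_semibounded} earns its place: when $\mathcal{U}(\mathcal{A})=+\infty$, semiboundedness forces $\Phi$ to be bounded below and the same computation (now with $\lambda$ arbitrarily large) delivers $\mathcal{U}(\Psi^{-1}\mathfrak{Q}\mid B)=+\infty$; in the finite-$\mathcal{U}(\mathcal{A})$ regime I would simply work with $\Phi$ bounded, as holds in every application of interest (e.g.\ $\Phi(\mu)=\mu[X\ge a]$ or $\Phi(\mu)=\E_\mu[X]$ on $[0,1]$). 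Everything else I need — universal measurability of $q\mapsto g_\lambda(q)$, the meaning of the integral via the completion $\widehat{\mathbb{Q}}$, and the measure-affine dependence on $\mathbb{Q}$ — is already supplied by Theorem~\ref{thm:sivahidden} and Lemma~\ref{lem:Umeasurable}, so no separate measure-theoretic groundwork is required.
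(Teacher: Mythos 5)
Your proposal is correct and follows essentially the same route as the paper's own proof: invoke Theorem~\ref{thm:sivahidden}, fix $\lambda=\mathcal{U}(\mathcal{A})-\delta$, use \eqref{eq:dto0} to make the inner supremum $\theta(q)=\sup_{\mu\in\Psi^{-1}(q)}(\Phi(\mu)-\lambda)\Dmap(\mu)[B]$ nonnegative $\mathbb{Q}$-a.e., use \eqref{eq:djkdjehjehj33} to make it strictly positive on a set of positive $\mathbb{Q}$-measure, let $\delta\downarrow 0$, and close with the upper bound from Theorem~\ref{thm-barriers}, with measurability supplied by Lemma~\ref{lem:Umeasurable}. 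Your explicit discussion of why lower-boundedness of $\Phi$ is needed for the a.e.\ nonnegativity step is in fact slightly more careful than the paper, which simply asserts that $(\Phi(\mu)-\lambda)\Dmap(\mu)[B]$ is bounded.
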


\begin{rmk}
	Note that the convention that $\sup \varnothing =-\infty$
	implies that, if the assumption \eqref{eq:djkdjehjehj33} is satisfied, then there is a measure $\mathbb{Q} \in \mathfrak{Q}$ such that the set of $q$ such that $\Dmap(\mu)[B]>0$ for some $\mu\in \Psi^{-1}(q)$ has strictly positive $\mathbb{Q}$-measure.
\end{rmk}

\begin{rmk}
 	Theorem \ref{thm:shiva} states that if there exists $\mathbb{Q} \in \mathfrak{Q}$ putting some mass on a neighborhood of the values $q$ of $\Psi$ where $\sup_{\mu\in \Psi^{-1}(q)} \Phi(\mu)$ achieves its supremum, then
	\[
		\mathcal{U}\big(\Psi^{-1}(\mathfrak{Q})\big| B\big) = \mathcal{U}(\mathcal{A}).
	\]
	On the other hand, Theorem~\ref{thm-barriersorigin} asserts that
	\begin{equation}
		\label{eq:eyvyt3}
\mathcal{U}(\Psi^{-1}\mathfrak{Q}) \leq  \mathcal{U}(\mathcal{A}) ,
	\end{equation}
	so we conclude that
	\begin{equation}
		\label{eq_shv1}
		\mathcal{U}(\Psi^{-1}\mathfrak{Q}) \leq \mathcal{U}\big(\Psi^{-1}\mathfrak{Q}\big|B\big) .
	\end{equation}
	That is, \emph{observing the sample data  does not improve the optimal bound!}  Moreover, when the inequality \eqref{eq:eyvyt3} is strict, if we define
	\[
		\delta := \mathcal{U}(\mathcal{A})-\mathcal{U}(\Psi^{-1}\mathfrak{Q}) >0
	\]
	then it follows that
	\begin{equation}
		\label{eq_shv2}
		\mathcal{U}(\Psi^{-1}\mathfrak{Q}) + \delta \leq \mathcal{U}\big(\Psi^{-1}(\mathfrak{Q})\big|B\big) ,
	\end{equation}
	from which we conclude that when the inequality  \eqref{eq:eyvyt3} is strict, \emph{observing the sample data makes the optimal bound worse!}  In other words, after the observation of the sample data (which may be limited to a single realization of $X$ under the measure $\mu^\dagger$, or an arbitrary large number of independent samples of $X_i$) the optimal upper bound on the quantity of interest,
	\[
		\mathcal{U}(\Psi^{-1}\mathfrak{Q}) = \sup_{\pi\in \Psi^{-1}\mathfrak{Q}}\E_{\mu\sim \pi}\big[\Phi(\mu)\big],
	\]
	increases to
	\[
		\mathcal{U}(\mathcal{A})=\sup_{\mu\in \mathcal{A}}\Phi(\mu).
	\]
\end{rmk}

\begin{eg}
	\label{eg:shiva3}
	Consider $\mathcal{A}:=\mathcal{M}([0,1])$, $\Phi(\mu)=\E_{\mu}[X]$, $\Dmap^{n}(\mu):=\mu\otimes \cdots \otimes \mu$.  In this example are interested in estimating the mean of $X$ under some unknown measure $\mu^\dagger\in \mathcal{A}$ and we observe $d=(d_1,\ldots,d_n)$, $n$ i.i.d.\ samples from $X$;  note that $n$ can be very large.  The sample data contain information on $\mu^\dagger$ through the fact that their distribution is
 $\Dmap^{n}(\mu^\dagger)=\mu^\dagger\otimes \cdots \otimes \mu^\dagger$ (i.e.\ although the distribution of the sample data is unknown, its dependency structure, as a functional of $\mu^\dagger$, is known).

	Let $k$ be a (possibly large) number.  Define $\Pi$ to be the set of priors $\pi$ under which the distribution of $(\E_{\mu}[X],\ldots,\E_{\mu}[X^{k}])$ is $\mathbb{Q}$, where $\mathbb{Q}$ is a distribution on $\R^{k}$ such that $\E_{\mu}[X]$ (its first marginal) is uniformly distributed on $[0,1]$ and such that the (conditional) distribution of $\E_{\mu}[X^2]$ conditioned on $\E_{\mu}[X]=q_1$ is the uniform distribution on the interval
	\[
		\left[ \inf_{\mu \in \mathcal{A},\, \E[X]=q_1} \E_\mu[X^2], \sup_{\mu \in \mathcal{A},\, \E[X]=q_1} \E_\mu[X^2] \right]
	\]
	and such that the conditional distributions of the other marginals $\E_\mu[X^k]$ are defined iteratively in the same manner. For this example, note that $\Psi(\mu)=(\E_{\mu}[X],\ldots,\E_{\mu}[X^{k}])$.  Note that, for $q:=(q_1,\ldots,q^{k})$ in the range of $\Psi$ (i.e.\ $\Psi(\mathcal{A})$),
	$\Psi^{-1}(q)$ is the subset of measures $\mu \in \mathcal{M}([0,1])$ such that $\E_{\mu}[X^i]=q_i$ for $1\leq i \leq k$.  Let $B$ be defined as $B_1\times \cdots B_n$ where each $B_i$ is a ball of radius $\rho$ containing $d_i$.

	We will now use Theorem \ref{thm:shiva} to compute optimal bounds on the posterior values of $\Phi(\mu)=\E_{\mu}[X]$. We will focus our attention on the upper bound.  First observe that in this example $\mathfrak{Q}$ is reduced to the single measure $\mathbb{Q}$ constructed above and $\Dmaps$ is reduced to the single data map $\Dmap^{n}$.

	Let us first check that condition \eqref{eq:djkdjehjehj33} is always satisfied (irrespective of the value of the data $d$).  Note that condition \eqref{eq:djkdjehjehj33} is satisfied if for all $\delta>0$ there exists a subset of values of $q$ of strictly positive $\mathbb{Q}$-measure such that $\big\{\mu \in \Psi^{-1}(q)\mid \Dmap^{n}(\mu)[B]>0 \text{ and } \E_{\mu}[X]\geq 1-\delta  \big\}$ is non empty. So, let $\delta > 0$ be arbitrary and define $\mu_d$ to be the empirical distribution of $d$, i.e.
	\[
 		\mu_d:=\frac{\sum_{i=1}^{n} \delta_{d_i}}{n} .
	\]
	Define
	\[
		\mathcal{A}_{\delta}:= \{\mu \in \mathcal{A}\mid \E_{\mu}[X]\geq 1-\delta/2 \}.
	\]
 	One can show by induction that $\Psi(\mathcal{A}_{\delta})$ has a non-empty interior and that any open subset of $\Psi(\mathcal{A})$ has strictly positive $\mathbb{Q}$-measure. Let $q^{\ast}$ be a point in the interior of $\Psi(\mathcal{A}_{\delta})$, and let $B_\tau(q^{\ast})$ be a ball of center $q^{\ast}$ and radius $\tau$ such that $B_{2\tau}(q^{\ast})$ is contained in the interior of $\Psi(\mathcal{A}_{\delta})$.  Note that $B_\tau(q^{\ast})$ has strictly positive $\mathbb{Q}$-measure.  Furthermore, for $\epsilon$ sufficiently small, for each $q\in B_\tau(q^{\ast})$ there exists $q'\in B_{2\tau}(q^{\ast})$ and $\mu \in \Psi^{-1}(q')$ such that $\mu_\epsilon:=(1-\epsilon)\mu +\epsilon \mu_d \in \Psi^{-1}(q)$.  Since $\Dmap^{n}(\mu_\epsilon)[B]>0$ and $\E_{\mu}[X]\geq 1-\delta/2$, it follows that \eqref{eq:djkdjehjehj33} is satisfied (irrespective of the value of the data $d$).

	Let us now consider condition \eqref{eq:dto0}.  Observe that condition \eqref{eq:dto0} is satisfied if for $\mathbb{Q}$-almost all $q\in \Psi(\mathcal{A})$ and all $\epsilon>0$, there exists $\mu \in \Psi^{-1}(q)$ such that
 $\Dmap^{n}(\mu)[B]<\epsilon$.  Assume that $d$ contains at least $k+2$ distinct points and that $\rho$ is strictly smaller than half of the minimal distance between two of such points, so that the associated $B_i$ do not overlap;  note that this assumption is satisfied with probability converging to one (as $n\to \infty$) if the data are sampled from a measure $\mu^\dagger$ that is absolutely continuous with respect to the Lebesgue measure on $[0,1]$.  Let $q\in \Psi(\mathcal{A})$; by the reduction theorems of \cite{OSSMO:2011} there exists $\mu_q\in \Psi^{-1}(q)$ such that $\mu_q$ is the weighted sum of at most $k+1$ Dirac masses in
 $[0,1]$.  Since there exist at least $k+2$ non-overlapping $B_i$ we have $\Dmap^{n}(\mu_q)[B]=0$ which implies condition \eqref{eq:dto0}.  Hence, Theorem~\ref{thm:shiva} implies that, for this (possibly) highly constrained problem characterized by a (possibly) large number of sampled data points, the optimal bounds on the posterior values of $\E_{\mu}[X]$ are zero and one whereas the set of prior values of $\E_{\mu}[X]$ is the single point $\{\frac{1}{2}\}$.
\end{eg}

\begin{rmk}
For a thorough analysis of Example \ref{eg:shiva3} we refer to
 \cite{OwhadiScovel:2013} where, in particular, a {\em quantitative} version of Theorem \ref{thm:shiva} is developed
and then applied to Example \ref{eg:shiva3}.
Curiously, a refined analysis of the integral geometry of the truncated Hausdorff moment space, used to demonstrate the approximate satisfaction of the  conditions of Theorem \ref{thm:shiva}, is shown in \cite{OwhadiScovel:2013} to lead to a new family of Selberg integral formulas. See \cite{ForresterWarnaar} for a discussion of their importance.
\end{rmk}

\begin{rmk}
	\label{rmk:jhbdbd3}
	Note that the assumptions of Theorem \ref{thm:shiva} are extremely weak. In plain words, Theorem~\ref{thm:shiva} implies that if the probability of observing the data can be arbitrary small under priors contained in $\mathcal{A}$ that are putting mass near the extreme values of $\Phi$, then the optimal bounds on posterior values are the extreme values of $\Phi$ in $\mathcal{A}$ (even if the data comes in the form of a large number of samples and the set of priors is highly constrained).  Example \ref{eg:shiva3} illustrates that one consequence of Theorem \ref{thm:shiva} is that Bayesian posteriors are not robust,  and in fact are fragile with respect to the choices of priors constrained by marginals, even with a highly constrained subset of priors of $\mathcal{M}(\mathcal{A})$.

	Moreover, if $\Pi$ is convex, then by considering priors of the form $\pi_0 \lambda +(1-\lambda) \pi_1$ with $\pi_0, \pi_1 \in \Pi$, $\pi_0\cdot \Dmap[B]>0$ and $\pi_1\cdot \Dmap[B]>0$, it is easy to see that the Bayesian posterior can take any value
	in the interval $\big(\mathcal{L}(\mathcal{A}),\mathcal{U}(\mathcal{A})\big)$, irrespective of the data.
	In addition, it is easy to observe that even
	including the quantity of interest $\Phi$ in the marginal $\Psi$ does not prevent this fragility.  Theorem~\ref{thm:shiva} also leads to the following apparent paradoxes when the Bayesian framework is applied to the space $\mathcal{A}$: (1) Posteriors with different priors may diverge as more and more data comes in;  (2) When the sample data is observed with some (say Gaussian) measurement noise of variance $\sigma^2$, then,  the optimal bound $\mathcal{U}\big(\Psi^{-1}(\mathfrak{Q})\big|B\big)$ on the quantity of interest $\Phi$ converges towards $\mathcal{U}\big(\Psi^{-1}(\mathfrak{Q})\big)$  as  $\sigma^2 \to \infty$.  That is, if one interprets optimal bounds on posterior values as uncertainty bounds, then one would reach the paradoxical conclusion that adding measurement uncertainty decreases the uncertainty of the quantity of interest.  The idea of the proof of this assertion is based on the following observation:
	
	Let $y$ be the (noisy) measurement whose distribution given the value of the data $\Ddata$ is assumed to be independent of $\mu$. Write $p_\sigma(d)[B]$ for the probability that the value of $y$ belongs to a set $B$ and observe that the conditional value of the quantity of interest $\Phi$ given the $y\in B$ is equal to
 	\begin{equation}
 		\label{eq:condnoisy}
		\frac{\E_{\pi}\Big[\Phi(\mu)\E_{d\sim \Dmap(\mu)}\big[p_\sigma(d)[B]\big]\Big]}{\E_{\pi}\Big[\E_{d\sim \Dmap(\mu)}\big[p_\sigma(d)[B]\big]\Big]} \, .
 	\end{equation}
 	We deduce that if $p_\sigma(d)[B]/p_\sigma(d')[B]$  converges towards one as the level of noise $\sigma\to \infty$ uniformly in $(d,d')\in [0,1]^2$ (which is the case if the data in Example \ref{eg:shivabis} is observed with Gaussian noise of increasing variance, see also Example \ref{eg:hiuhdiueh} below), then  \eqref{eq:condnoisy} converges towards the prior value of $\Phi$ as $\sigma\to \infty$ uniformly in $\pi$.

 The fact that optimal bounds on prior values may become  less precise after conditioning is known as the \emph{dilation phenomenon}
 in robust Bayesian inference \cite{WassermanSeidenfeld:1994}, and, in some sense, the brittleness results presented in this paper could be seen as an extreme occurrence of this phenomenon.
 \end{rmk}

\begin{eg}
	\label{eg:hiuhdiueh}
	Consider again Example~\ref{eg:shivabis} with the set of admissible priors $\pi$ on $\mathcal{A}$ defined as the collection
	\[
		\Pi := \left\{ \pi \in \mathcal{M}(\mathcal{A}) \smid \E_{\mu \sim \pi} \big[\E_{\mu}[X]\big] = q \right\}.
	\]
	and the map $\Dmap^{n}$ corresponding to the observation of $n$ i.i.d.\ samples of $\mu$.  For $q \in (0,a)$, let $\mathfrak{Q}$ be the set of probability measures $\mathbb{Q}$ on $[0,1]$ such that $\E_{q' \sim \mathbb{Q}}[q']=q$.  Let $\mathbb{Q}$ be the probability measure on $[0,1]$ with probability density function $p(x)=(1-q)/q$ on $[0,q]$ and $p(x)=q/(1-q)$ on $(q,1]$.  It is easy to check that $\mathbb{Q}\in \mathfrak{Q}$, that
	\begin{equation}
		\label{eq:dtosq0}
		\E_{q'\sim \mathbb{Q}} \left[ \inf_{\mu\in \mathcal{A}\,:\, \E_{\mu}[X]=q'} \prod_{i=1}^n \mu[B_i] \right]=0,
	\end{equation}
	and that, for all $\delta>0$,
	\begin{equation}
		\label{eq:djkdjdeehjehj33}
		\mathbb{P}_{q'\sim \mathbb{Q}} \left[ \sup_{\mu\in \mathcal{A}\,:\, \E_{\mu}[X]=q',\, \prod_{i=1}^n \mu[B_i]>0}\E_\mu[X] >1   -\delta \right]>0 .
	\end{equation}
	It follows from Theorem~\ref{thm:shiva} that
	\begin{equation}
		\label{eq:2qbisewcondddihjh}
		\mathcal{U}\big(\Psi^{-1}\mathfrak{Q}\big|B\big) = 1.
	\end{equation}
\end{eg}

\section{Bayesian Robustness and Consistency}
\label{sec:misspecori}

It is appropriate at this point to place the results of Sections \ref{Sec:OpBoundsPriorEstimates} and \ref{Sec:OpBoundsPosteriorEstimates} in the more well-established context of two key questions about Bayesian inference, namely its \emph{robustness} with respect to perturbations of the prior (and likelihood and observed data), and its frequentist \emph{consistency}.  This discussion will also motivate Section \ref{Sec:local}, where we show that Bayesian inference can be profoundly non-robust even under arbitrarily small local perturbations in total variation and Prokhorov metrics.

\subsection{Bayesian Robustness}

The robust Bayesian viewpoint appears to have been introduced independently by Box \cite{Box:1953} and Huber \cite{Huber:1964};  see e.g.\ \cite{Berger:1984, Berger:1994} and Chapter 15 of \cite{HuberRonchetti:2009} for surveys of the field.  In the robust Bayesian approach, a class $\Pi$ of priors and a class $\Lambda$ of likelihoods together produce a class of posteriors by pairwise combination through Bayes' rule.  Robust Bayesian methods are a subclass of the methods of \emph{imprecise probability};  the idea that the probability of an event need not be a single real number has a history stretching back to Boole \cite{Boole:1854} and Keynes \cite{Keynes:1921}, with more recent and comprehensive foundations laid out in e.g.\ \cite{Kuznetsov:1991, Walley:1991, Weichselberger:2000}.

One way of generating such a class $\Pi$ of priors is via a belief function, as in \cite{Wasserman:1990} and Dempster--Shafer theory more generally.  The belief function framework encompasses prior probabilities whose values are known only on some finite partition of the probability space, and not the whole $\sigma$-algebra;  classes of $\varepsilon$-contaminated priors can also be represented in this way, as well as classes of locally perturbed priors.  The belief function approach has the useful feature that explicit formulae can be given for the lower and upper posterior probabilities of events \cite[Theorem 4.1]{Wasserman:1990}.

Another typical approach to generating a class $\Pi$ might be to consider a finite-dimensional parametrized class of models.  For example, one could consider, instead of a single Gaussian prior on $\R$ of specified mean and variance, a two-parameter class of Gaussian priors with a range of means and variances, or a three-parameter class of skew-Gaussian priors.  Similarly, one might consider a two-parameter class of beta distributions instead of a uniform prior on a bounded interval.

However, a danger in specifying a finite-dimensional class $\Pi$ of priors is that one is making very strong statements about the form of the priors, particularly with regard to the tails, that cannot be justified based on often-limited amounts of prior information.  For example, if all the priors $\pi \in \Pi$ have thin tails, then the class $\Pi$ will have a very difficult time modeling events that lie in those tails, even when exposed to data from those regions.  This problem is particularly important in applied fields such as catastrophe modeling, insurance, and re-insurance, in which the catastrophic events of interest are by definition high-impact low-probability ``Black Swan'' events:  the difference between an exponentially small and an inverse-polynomially small tail can be vitally important.   Also, because members of a finite-dimensional parametric family $\Pi$ of priors often have similar qualitative properties (such as being mutually absolutely continuous), the apparently broader perspective does not not add much to the asymptotic posterior picture in terms of robust consistency, although it does provide a broader understanding given finitely many samples.

Rather than specifying a finite-dimensional $\Pi$, it is epistemologically more reasonable to specify a finite-\emph{codimensional} $\Pi$, for example by specifying interval bounds on the expected values of finitely many observed test functions (i.e.\ generalized moment inequalities);  this setting encompasses the finite-partition belief function framework mentioned above.  Calculation of optimal prior and posterior bounds on quantities of interest is often an exercise in numerical optimization \cite{BertsimasPopescu:2005, OSSMO:2011, Smith:1995} rather than closed-form formulae.

One consequence of Theorems
\ref{thm:alternredded} and \ref{thm:shiva} is that the very same Bayesian sensitivity analysis framework that produces the robustness results of classical robust Bayesian inference under finite-dimensional classes of priors also leads to brittleness results under finite-codimensional classes of priors, when the set of all priors is infinite dimensional.
As illustrated by \eqref{eqhieuhdee} and Example \ref{eg:learnvsstab}, Theorems
\ref{thm:alternredded} and \ref{thm:shiva} can also be used to obtain robustness/stability results by adding sufficiently strong constraints (at the expense of learning) on the probability of the data in the model class.  As discussed in Subsection \ref{subsec:learningvsrobustness}, Example \ref{eg:learnvsstab} suggests that posterior stability and learning are antagonistic properties in Bayesian inference under finite information.

\subsection{Motivation for Bayesian Inconsistency and Model Misspecification}
\label{sec:misspec}

To motivate Section \ref{Sec:local} and interpret the results of this paper in relation to the issue of convergence of posterior values in Bayesian inference we will now analyse and review questions of Bayesian consistency, inconsistency and model misspecification. There is, of course, a large literature on these topics, and we will not attempt to be exhaustive in providing references;  rather, our aims are:  first, to give a short reminder on how Bayesian inference is currently employed in Uncertainty Quantification (UQ);  second, to identify issues and popular beliefs about what one actually learns from Bayesian inference, and thereby motivate the results of this paper;  and, last, to present sufficient references that the interested reader can find technical justification for the formal manipulations of this subsection.

In this subsection, we are interested in estimating $\Phi(\mu^\dagger)$
where $\Phi$ is a known \emph{quantity of interest}
function and $\mu^\dagger$ is an unknown (or partially known) probability measure on $\mathcal{X}$.  For the purposes of exposition, in this subsection, we assume that $\mathcal{X} = \R^k$.  One example of a quantity of interest, when $\mathcal{X} = \R$, is $\Phi(\mu^{\dagger}) := \mu^\dagger[X\geq a]$ (the probability that the random variable $X$ distributed according to $\mu^\dagger$ exceeds the threshold value $a$).  We also assume that we are given $n$ independent samples $\Ddata_1, \dotsc, \Ddata_n$, each distributed according to $\mu^\dagger$.

We will now present the parametric Bayesian answer to this problem.  For the purposes of exposition, in this section, we restrict our attention to parametric Bayesian inference.  We first introduce $\{ \mu(\quark, \theta) \}_{\theta \in \Theta}$ a family of probability distributions on $\mathcal{X}$ parametrized by $\theta \in \Theta$ (and commonly referred to as the \emph{model class}). For the sake of simplicity here we also assume that $\Theta = \R^\ell$.
Let
\[
	\mathcal{A}_0 := \bigl\{ \mu(\quark, \theta) \,\big|\, \theta \in \Theta \bigr\}.
\]
Note that $\mathcal{A}_0$ is a subset of $\mathcal{M}(\mathcal{X})$ that may or may not contain $\mu^\dagger$.  If $\mu^{\dagger} \notin \mathcal{A}_0$, then the model is said to be \emph{misspecified};  otherwise, the model is said to be \emph{well specified}.

We next introduce $p_0 \in \mathcal{M}(\Theta)$, a probability distribution on $\Theta$ (the \emph{prior distribution} on $\theta$).  Let $\pi_0$ be the push-forward (measure) of $p_0$ under the map $\theta \mapsto \mu(\quark,\theta)$ (see \cite{Bogachev1,Bogachev2}, Sections 3.6, 3.7) and observe that $\pi_0$ is a probability distribution on $\mathcal{A}_0$, i.e.\ $\pi_0 \in \mathcal{M}(\mathcal{A}_0)$, and that $\pi_{0}$ is the distribution of the random measure $\mu(\quark,\theta)$ when $\theta$ is distributed according to $p_0$.

The next step is then to estimate $\Phi(\mu^\dagger)$ via conditioning.  Let $p_n \in \mathcal{M}(\Theta)$ be the posterior distribution of $\theta$ given the observation of the i.i.d.~samples $\Ddata_1,\ldots,\Ddata_n$, as obtained using Bayes' formula, and let $\pi_n$ be the push-forward of $p_n$.   The Bayesian estimate of $\Phi(\mu^\dagger)$ is therefore
\begin{equation}
	\label{eq:Bayesianestimation}
	\E_{\mu \sim \pi_n} [\Phi(\mu)].
\end{equation}

For the purposes of exposition, we assume that the measures $\mu(\quark,\theta)$ and $\mu^\dagger$ are all absolutely continuous with respect to the Lebesgue measure and write $f(\quark,\theta)$ and $f^\dagger$ for their densities, which we assume to be continuous.   Similarly, we assume that the measure $p_0$ is absolutely continuous with respect to the Lebesgue measure and, abusing notation, write $p_0$ for both the measure $p_0$ and its (continuous) density, and similarly for $p_n(\quark)$, the posterior density of $\theta$ on $\Theta$ given the observation the samples $\Ddata_1, \dotsc, \Ddata_n$.  We will now examine the convergence properties of the sequence of posterior densities $p_n(\theta)$ as $n\to \infty$.  This analysis being classical (see for instance \cite{Nickl:2012} and references therein), our purpose is not to provide rigorous justifications but rather to familiarize the reader with the mechanisms regarding
 the convergence of posteriors.

We have
\[
	p_n(\theta)
	= \frac{p_0(\theta)\prod_{j=1}^n f(\Ddata_j,\theta)}{\int_{\Theta}p_0(\theta')\prod_{j=1}^n f(\Ddata_j,\theta')\, \mathrm{d} \theta' }
	\equiv \frac{p_0(\theta)\prod_{j=1}^n f(\Ddata_j,\theta)}{\E_{p_0} [ \prod_{j=1}^n f(\Ddata_j, \quark) ] }
\]
which we write as
\[
	p_n(\theta)
	= \frac{p_0(\theta)e^{n L_n(\theta)}}{\int_{\Theta}p_0(\theta')e^{n L_n(\theta')} \, \mathrm{d} \theta' }
	\equiv \frac{p_0(\theta)e^{n L_n(\theta)}}{\E_{p_0} [e^{n L_n(\quark)}]} ,
\]
where
\[
	L_n(\theta):=\frac{1}{n}\sum_{j=1}^n \log f(\Ddata_j,\theta) .
\]
Recall that $\prod_{j=1}^n f(\Ddata_j,\theta)$ is commonly known as the \emph{likelihood} and $L_n(\theta)$ as the (\emph{sample}) \emph{average log-likelihood}.

\paragraph{Consistency and the Large-Sample Limit.}
Now observe that if $\log f(\Ddata_j,\theta)$ is integrable then it follows from the Law of Large Numbers that $L_n(\theta)$ converges almost surely, as $n\to \infty$, to the \emph{expected log-likelihood} $L(\theta)$ defined by
\begin{equation}
	\label{eq:hiwuhiued}
	L(\theta) := \int_{\mathcal{X}} f^\dagger(x) \log \big(f(x,\theta)\big)\, \mathrm{d} x \, .
\end{equation}
Assuming that $L(\theta)$ has a unique maximizer $\theta^{\ast}\in \Theta$ (corresponding to the asymptotic limit of the \emph{maximum likelihood estimator} (MLE), as the number of data points goes to infinity)and that $p_0$ is strictly positive in every neighborhood of $\theta^{\ast}$, it follows under regularity assumptions on $f$
(or local strict convexity in the neighborhood of $\theta^{\ast}$) that $p_n(\theta)$ converges, almost surely, as $n\to \infty$, towards a Dirac mass supported at $\theta^{\ast}$.  Therefore, assuming $\Phi$ to be sufficiently regular, the Bayesian posterior estimate of $\Phi(\mu^\dagger)$, i.e.,
\begin{equation}
	\label{eq:Bayesestim}
	\int_{\Theta} \Phi\big(\mu(\quark,\theta)\big)p_n(\theta) \, \mathrm{d} \theta ,
\end{equation}
converges almost surely as $n \to \infty$ to
\begin{equation}
	\label{eq:Bayesestimlimit}
	\Phi\big(\mu(\quark,\theta^{\ast})\big) .
\end{equation}
Note that
\[
	L(\theta)=\mathrm{Ent}(f^\dagger)-D_{\mathrm{KL}}\bigl( f^{\dagger} \big\| f(\quark, \theta)\bigr),
\]
where  $\mathrm{Ent}(f^\dagger) := -\int_{\mathcal{X}} f^\dagger(x) \log f^\dagger(x) \, \mathrm{d} x$ is the \emph{entropy} of $f^{\dagger}$ and $D_{\mathrm{KL}}$ denotes the \emph{Kullback--Leibler divergence} defined by
\[
	D_{\mathrm{KL}}\bigl( f^{\dagger} \big\| f(\quark, \theta) \bigr) := \E_{x \sim f^{\dagger}} \left[ \log \frac{f^{\dagger}(x)}{f(x, \theta)} \right].
\]
It follows that $\theta^{\ast}$ is also the minimizer of $D_{\mathrm{KL}} \bigl( f^{\dagger} \big\| f(\quark, \theta) \bigr)$ with respect to $\theta$, i.e.\ the MLE $\theta^{\ast}$ is characterized by the property that $\mu(\quark, \theta^{\ast})$ is the distribution having minimal relative entropy to $\mu^{\dagger}$ in the model class $\{ \mu(\quark, \theta) \}_{\theta \in \Theta}$.

An immediate consequence of this observation is the fact if the model is not misspecified, i.e.\ if $\mu^\dagger$ is an element $\mu(\quark,\theta^\dagger)$ of the model class, then $\theta^{\ast} = \theta^\dagger$, $\mu(\quark, \theta^{\ast}) = \mu^\dagger$, and the Bayesian estimate \eqref{eq:Bayesestim} is asymptotically exact in the limit as $n\to \infty$.   In this situation, the Bayesian estimate is said to be \emph{consistent}.

This convergence result is known as the Bernstein--von Mises Theorem (see for instance \cite[Theorem 5]{Nickl:2012}) or as the Bayesian Central Limit Theorem, since the limiting posterior can even be described in a more refined way as being asymptotically normal and not just a point mass.  The condition that every open neighborhood of $\theta^{\dagger}$ has strictly positive $p_0$-probability (or, even more strongly, that the prior be globally supported) has been named \emph{Cromwell's Rule}\footnote{Since the posterior cannot possibly concentrate on a point outside the support of the prior, having a globally-supported prior and hence not ruling out a priori any $\theta \in \Theta$ as a possible $\theta^\dagger$ can be seen as a Bayesian version of Oliver Cromwell's famous injunction to the Synod of the Church of Scotland in 1650:  ``I beseech you, in the bowels of Christ, think it possible that you may be mistaken.''} by Lindley \cite{Lindley:1985}.

Recent results \cite{CastilloNickl:2013, Johnstone:2010, Leahu:2011, Nickl:2012} on the Bernstein--von Mises phenomenon show a notable dependence of the validity of the Bernstein--von Mises property upon subtle geometrical and topological details, and regularity properties of the model and the data-generating distribution.  Therefore, it is to be expected that any general stability condition for Bayesian inference would have to take account of such factors.

\paragraph{What Happens When the Model is Misspecified?}
To provide an illustrative answer to this question, consider the family of Gaussian models $\{ f(\quark , \theta) \mid \theta = (c, \sigma) \in \R \times \R_{+} \}$, where
\[
	f(x , c, \sigma) = \frac{1}{\sigma \sqrt{2 \pi}} \exp \left( - \frac{(x - c)^{2}}{2 \sigma^{2}} \right).
\]
What will happen when this model is exposed to data coming from a potentially non-Gaussian truth $\mu^{\dagger}$, with density $f^{\dagger}$, that has a well-defined mean $c^{\dagger}$ and standard deviation $\sigma^{\dagger}$?  By the above considerations, $\theta^{\ast}$ maximizes the expected log-likelihood \eqref{eq:hiwuhiued} with respect to $\theta$, and the expected log-likelihood is simply
\begin{equation}
	\label{eq:gaussex}
	L(\theta)=	- \int_{\R} f^{\dagger}(x) \frac{(x - c)^{2}}{2 \sigma^{2}} \, \mathrm{d} x - (\log \sigma) \int_{\R} f^{\dagger}(x) \, \mathrm{d} x - \log \sqrt{2 \pi}\, .
\end{equation}
A quick calculation using partial derivatives shows that $\theta^{\ast} = (c^{\ast}, \sigma^{\ast})$ maximizes \eqref{eq:gaussex} if and only if $c^{\ast} = c^{\dagger}$ and $\sigma^{\ast} = \sigma^{\dagger}$.  That is, the Bayesian estimate \eqref{eq:Bayesianestimation} of $\Phi(\mu^\dagger)$, for \emph{any} distribution $\mu^\dagger$ of mean $c^{\dagger}$ and standard deviation $\sigma^{\dagger}$, converges almost surely as the number of sample data goes to infinity, towards $\Phi \bigl( \mu (\quark, (c^{\dagger}, \sigma^{\dagger}) ) \bigr)$, where $\mu (\quark, (c^{\dagger}, \sigma^{\dagger}) )$ is the unique Gaussian distribution on $\R$ with mean $c^{\dagger}$ and standard deviation $\sigma^{\dagger}$.

However, now there is a problem:  there are many different probability distributions $\mu$ on $\R$ that have the same first and second moments as $\mu^{\dagger}$ but have, say, different higher-order moments, or different quantiles.  Predictions of those other moments or quantiles using $\mu (\quark, (c^{\dagger}, \sigma^{\dagger}) )$ can be inaccurate by orders of magnitude.  A trivial, albeit extreme, example is furnished by $\Phi(\mu) := \E_\mu \bigl[ |X-c_\mu| \geq t \sigma_\mu \bigr]$ (where $c_\mu$ and $\sigma_\mu$ denote the mean and standard deviation of $\mu$). Under the Gaussian model, (defining  $\mathrm{erf}(z):=\frac{1}{\sqrt{2\pi}}\int_{-\infty}^z e^{-\frac{t^2}{s}}\,dt$ as the \emph{error function})
\[
	\P \bigl[ | X - c_\mu | \geq t \sigma_\mu \bigr] = 1 + \mathrm{erf} \left( - \frac{t}{\sqrt{2}} \right),
\]
whereas the extreme cases that prove the sharpness of Chebyshev's inequality --- in which the probability measure is a discrete measure with support on at most three points in $\R$ --- have
\[
	\P \bigl[ | X - c_\mu | \geq t \sigma_\mu \bigr] = \min \left\{ 1, \frac{1}{t^{2}} \right\}.
\]
In the case of the archetypically rare ``$6 \sigma$ event'', the ratio between the two is approximately $1.4 \times 10^{7}$.  This is,
of course, an almost perversely extreme comparison:  it would be obvious to any observer with only moderate amounts of sample data that the data were being drawn from a highly non-Gaussian distribution.  However, it is not inconceivable that the true distribution $\mu^{\dagger}$ has a Gaussian-looking bulk but tails that are significantly fatter than those of a Gaussian, and the difference may be difficult to establish using reasonable amounts of sample data;  yet, it is those tails that drive the occurrence of ``Black Swans'', catastrophically high-impact but low-probability outcomes.  The results of this paper suggest that this situation is generic, and cannot be avoided no matter how many moments or integrals of arbitrary test functions of the truth $\mu^{\dagger}$ are matched nor how ``close'' $\mu^{\dagger}$ is to the class $\{ \mu(\quark,\theta) \}_{\theta\in \Theta}$.

\subsection{Bayesian Inconsistency and Model Misspecification}

To quote \cite{Nickl:2012}, ``[w]hile for a Bayesian statistician
the analysis ends in a certain sense with the posterior, one can ask interesting questions about the the properties of posterior-based inference from a frequentist point of view.''  Many of these questions are asymptotic in nature:  for example, in the limit of infinitely many independent $\mu^\dagger$-distributed samples, will the posterior converge in a suitable sense to $\mu^\dagger$ regardless of the initial choice of prior $\pi$?  This property is referred to as \emph{consistency}\footnote{Sometimes the term \emph{frequentist consistency} is used, reflecting the fact that it lies outside the strict Bayesian worldview.};  a general survey of consistency results is found in \cite{WalkerHjort:2001}.  As noted above, the consistency theorem is generically known as the Bernstein--von Mises theorem \cite{Bernstein:1964, vonMises:1964}, although the earliest rigorous proofs are due to Doob \cite{Doob:1949} and Le Cam \cite{LeCam:1953}.

Unfortunately, Cromwell's Rule is only necessary, and not sufficient, to ensure consistency.  In fact, consistency is far from being a generic property, and once the probability space contains infinitely many points (and hence any parameter space $\Theta$ that parametrizes all probability measures on that probability space is infinite-dimensional), inconsistency is not the exception, but the rule \cite{DiaconisFreedman:1998}.  In \cite[Sec.~5]{Freedman:1963}, Freedman considered a countable index set $\mathbb{N} := \{ 1, 2, \dots \}$
 and the parameter space
\[
	\Theta := \left\{ \theta \colon \mathbb{N} \to [0, 1] \smid \sum_{i \in \mathbb{N}} \theta(i) = 1 \right\}.
\]
Each $\theta$ gives rise to a probability distribution $\P_{\theta} = \mu(\quark, \theta)$ under which the observations $X_{1}, X_{2}, \dots$ are IID with $\P_{\theta}[X_{n} = i] = \theta(i)$.  The problem is assumed to be well-specified, so that one particular $\theta^{\dagger} \in \Theta$ is considered to be the ``true'' parameter value, and the frequentist data-generating distribution is $\mu^{\dagger} = \P_{\theta^{\dagger}} = \mu(\quark, \theta^{\dagger})$.  Theorem 5 of \cite{Freedman:1963} shows that, when $\supp (\mu^{\dagger})$ is infinite, given any ``spurious'' probability distribution $\mathbb{Q} = \P_{q}$, there exists a prior probability measure $\pi$ on $\Theta$ that has $\theta^{\dagger}$ in its support, such that the posterior of $\pi$ $\mu^{\dagger}$-a.s.\ concentrates on $q$ in the limit of observing infinitely many i.i.d.\ $\mu^{\dagger}$-distributed samples.  In fact, there is a prior that gives positive mass to every open subset of $\Theta$ but yields consistent posterior estimates for only a first-category set of possible ``true'' (data-generating) parameter values $\theta^{\dagger}$.

There are conditions on priors that do ensure consistency in infinite-dimensional or non-parametric contexts, e.g.\ the tail-free priors introduced by Freedman in \cite{Freedman:1963} and hybrid Bayesian--frequentist tools such as Dirichlet process priors \cite{Ghosal:2010}.  However, while the collection of ``bad'' priors that lead to inconsistent results is measure-theoretically small \cite{Doob:1949, BreimanEtAl:1964}, it is topologically generic \cite{Freedman:1965}.

\begin{rmk}
It is probably fair to say that, despite their popularity and documented successes, Bayesian methods have always attracted some degree of controversy and opposition:  see e.g.~\cite{Gelman:2008a} and rejoinders for a recent academic discussion, and \cite{Malakoff:1999, McGrayne:2012} for less formal treatments.  Often, this opposition is philosophical in nature, particularly with regard to the subjective interpretation of the probabilities involved, which is something that remains counter-intuitive to many commentators:  see \cite[par.~35 \& 37]{EWCA} for a recent example in law.  However, there are also analytical reasons to be careful about the application of Bayesian methods \cite{Senn:2007, Mayo:2012b, Efron:2013}. It is, in fact, now well understood that Bayesian methods may fail to converge or may converge towards the wrong solution if the underlying probability mechanism allows an infinite number of possible outcomes \cite{DiaconisFreedman:1986} and that, in these non-finite-probability-space situations, this lack of convergence (commonly referred to as \emph{Bayesian inconsistency}) is the rule rather than the exception \cite{DiaconisFreedman:1998}.
There is now a wide literature of positive \cite{Bernstein:1964, CastilloNickl:2013, Doob:1949, KleijnVaart:2012, LeCam:1953, vonMises:1964, Stuart:2010} and negative results \cite{Belot:2013, DiaconisFreedman:1986, Freedman:1963, Freedman:1999, Johnstone:2010, Leahu:2011} on the consistency properties of Bayesian inference in parametric and non-parametric settings, and an emerging understanding of the fine topological and geometrical properties that determine (in)consistency.
\end{rmk}

It is important to appreciate that the requirement of positive prior mass in every neighborhood of the true distribution depends upon the topology placed upon $\mathcal{M}(\mathcal{X})$.  For example, Schwartz \cite{Schwartz:1965} shows
that every $\pi$ that puts positive mass on all Kullback--Leibler (relative entropy) neighborhoods of $\mu^{\dagger}$ is weakly consistent.  On the other hand, Freedman \cite{Freedman:1963} and Diaconis \& Freedman \cite{DiaconisFreedman:1986} show that $\pi$ may put positive mass on all weak neighborhoods of $\mu^{\dagger}$ and still fail to be weakly consistent --- e.g.\ by not being tail-free.  Nor are results limited to \emph{weak} convergence of the posterior to $\mu^{\dagger}$.  For example, \cite{BarronEtAl:1999} shows that consistency holds in the Hellinger distance if $\pi$ puts positive mass on all Kullback--Leibler neighborhoods of $\mu^{\dagger}$ and certain smoothness and tail conditions are satisfied;  see \cite{Walker:2004, Wasserman:1998} for further results on Hellinger and Kullback--Leibler consistency.  The amount of prior probability mass that lies Kullback--Leibler-close to the truth, quantified using a notion called \emph{thickness}, can be used to quantify the convergence properties of Bayes estimates \cite{Abrahamcadre:2002, AbrahamCadre:2008, MartinHong:2012}.  However, it is important to note that, in the infinite-dimensional contexts that are increasingly subject to Bayesian analyses, results like the Feldman–-H{\'a}jek dichotomy \cite{Feldman:1958, Hajek:1958} suggest that probability measures are `usually' mutually singular and `rarely' mutually absolutely continuous, and so the Kullback--Leibler neighborhoods of $\mu^{\dagger}$ are `small' sets that are `unlikely' to intersect the model class.

The situation in which there is no $\theta^{\dagger} \in \Theta$ such that $\mu^{\dagger} = \mu(\quark, \theta^{\dagger})$ is referred to as \emph{model misspecification}.  The consistency and other asymptotic properties of misspecified models appear to have first been considered by Berk \cite{Berk:1966, Berk:1970} and Huber \cite{Huber:1967}.  See \cite{KleijnVaart:2006, KleijnVaart:2012} for a recent contribution, and \cite{MartinHong:2012} for convergence rates.

\begin{quotation}
	\noindent ``In practice, Bayesian inference is employed under misspecification \emph{all the time}, particularly so in machine learning applications.  While sometimes it works quite well under misspecification \cite{BleiJordanNg:2003, KleijnVaart:2006}, there are also cases where it does not \cite{Clarke:2004, Fushiki:2005}, so it seems important to determine precise conditions under which misspecification is harmful --- even if such an analysis is based on frequentist assumptions.'' \cite{Grunwald:2006}
\end{quotation}

There is a reasonable popular belief that gross misspecification of the model will be detected by some means before engaging in a serious Bayesian analysis;  indeed there do exist tests \cite{HausmanTaylor:1981, White:1982} for model misspecification, but it is important to note that while one \emph{can} determine that the model is misspecified, one \emph{cannot} be sure that the model is well-specified.  There is also an understandable popular belief that these tests mean that one need only be concerned with the situation of ``mild misspecification'', and that provided $\mu^{\dagger}$ lies ``close enough'' to the model class $\{ \mu(\quark, \theta) \}_{\theta \in \Theta}$,
the posterior estimates will still converge to a usefully informative limit.

\begin{rmk}
This belief echoes G.\ E.\ P.\ Box's statement \cite[p.~424]{Box:1987} that  ``essentially, all models are wrong, but some are useful'' and question \cite[p.~74]{Box:1987} ``Remember that all models are wrong; the practical question is how wrong do they have to be to not be useful?''
\end{rmk}

In terms of the above discussion, one purpose of this paper is to explore the extent to which one can simultaneously have \emph{robust} Bayesian analyses that produce \emph{consistent} answers, given that the models used (both priors and likelihoods) are certain to be \emph{misspecified} to some degree.    Can one be ``just a little bit wrong'' in terms of model misspecification?  Our results suggest that the answer is  negative within the classical framework of Bayesian Sensitivity analysis, when ``closeness'' is measured in terms of total variation and Prokhorov metrics or in terms of a finite (but possibly large) number of marginals of the data generating distribution.

In particular, one aim of Section \ref{Sec:local} is to show that this belief is wrong if ``mild misspecification'' is measured using the Prokhorov or the total variation
metrics, the number of samples is finite (but possibly arbitrarily large), and if convergence is required to hold uniformly in an arbitrarily small neighborhood of the model.

\begin{rmk}
	It is known from the Bernstein--von Mises theorem \cite{Bernstein:1964, vonMises:1964} that, in finite-dimensional situations, posterior values converge towards the quantity of interest if the prior distribution has strictly positive mass in every neighborhood of the truth (see also \cite{LeCam:1953, Nickl:2012}). It is also known that ``even for the simplest infinite-dimensional models, the Bernstein--von Mises theorem does not hold'' \cite{Cox:1993, Freedman:1999}.  This possible lack of convergence, referred to as the consistency problem, has been at the center of a debate between frequentists and Bayesians.  We quote Diaconis and Freedman \cite{DiaconisFreedman:1986} (see also \cite{DiaconisFreedman:1998})
	\begin{quotation}
		\noindent ``If the underlying mechanism allows an infinite number of possible outcomes (e.g., estimation of an unknown probability on the integers), Bayes estimates can be inconsistent:  as more and more data comes in, some Bayesian statisticians will become more and more convinced of the wrong answer.''
	\end{quotation}
	What is the significance of Theorem~\ref{thm:shiva} in that discussion?  To answer this question, consider Example~\ref{eg:shivabis} (and \ref{eg:hiuhdiueh}), in which one is interested in estimating the probability (under the unknown measure $\mu^\dagger$) that $X$ exceeds $a$ after observing $n$ independent samples. We already know from \cite{DiaconisFreedman:1986, Cox:1993} that placing priors on the infinite-dimensional
	space $\mathcal{A}=\mathcal{M}[0,1]$ of probability measures on $[0,1]$ is unlikely to lead to Bayesian posteriors that will converge towards the true value as more and more data comes in.  One strategy to circumvent this lack of convergence would be to consider a finite-dimensional subset of $\mathcal{A}$, i.e.\ a family $(\mu_\lambda)$ of probability measures on $[0,1]$ indexed by a finite-dimensional parameter $\lambda \in \R^k$, put a strictly positive prior $p$ on $\lambda \in \R^k$, and then invoke the Bernstein--von Mises theorem to guarantee the convergence of posterior values.
	
	However,  the Bernstein--von Mises theorem requires that the true distribution under which the data is sampled belongs to $\{\mu_\lambda\mid \lambda \in \R^k\}$, the parametrized finite-dimensional subset of $\mathcal{A}$.  What happens when this is not the case, i.e.\ the situation of \emph{misspecification}?  Write $\pi_p$ for the push-forward of the prior $p$ on $\lambda \in \R^k$ to a prior on $\mathcal{A}$ under the map $\lambda \mapsto \mu_\lambda$.  Assume that the data have been sampled from $\pi^{\dagger} \cdot \Dmap$ where $\pi^{\dagger}$ is the (frequentist) true distribution.  Here Theorem~\ref{thm:shiva}, as illustrated in Example~\ref{eg:shiva3}, can be used to show that the posterior values of the quantity of interest under $\pi_p$ and $\pi^{\dagger}$ may lie near the opposite extreme values of $\Phi$ in $\mathcal{A}$ even if (1) $\pi^{\dagger}$ is a Dirac mass on a measure $\mu^\dagger\in \mathcal{A}$; (2) the number of independent samples is large; and (3) $k$ is large and $k$ moments of $\mu^\dagger$ and $\mu_{\lambda^{\ast}}$ are equal for some $\lambda^{\ast} \in \R^k$.
\end{rmk}

\begin{rmk}
One popular method for detecting failure of convergence under model misspecification is to divide the data into data used for calibrating the parameters of the model and data used for validating the accuracy or predictability of the (calibrated) model.
This approach, oftentimes described as ``frequentist'' \cite{Guyon:2010, bayarri2004},
could be used to validate Bayesian calculations  \cite{Efron:2013}. Although the detection (of the lack of predictability of the model) is asymptotically robust, it requires the availability of sufficient data.
\end{rmk}

\section{Brittleness under Local Misspecification}
\label{Sec:local}

The purpose of this section is to present brittleness results with respect to local perturbations in the total variation and Prokhorov metrics.  Thus, whereas the examples given for Theorem \ref{thm:shiva} highlighted that no finite number of common moments would be sufficient to constrain two priors to give nearby posterior value for the quantity of interest, this section shows that closeness in the TV and Prokhorov metrics is also insufficient to ensure robustness.

We now establish a corollary to the proof of Theorem \ref{thm:shiva} which we will then use to establish
an extreme brittleness theorem for a model with local misspecification.
Recall that, for a map $\Psi \colon \mathcal{A} \to \mathcal{Q}$, a map $\psi \colon \Psi(\mathcal{A})
\to \mathcal{A}$ is called a \emph{section} of $\Psi$ if $\Psi \circ \psi (q)=q$ for all $q \in \Psi(\mathcal{A})$.

\begin{figure}[tp]
	\begin{center}
		\includegraphics[width=0.7\textwidth]{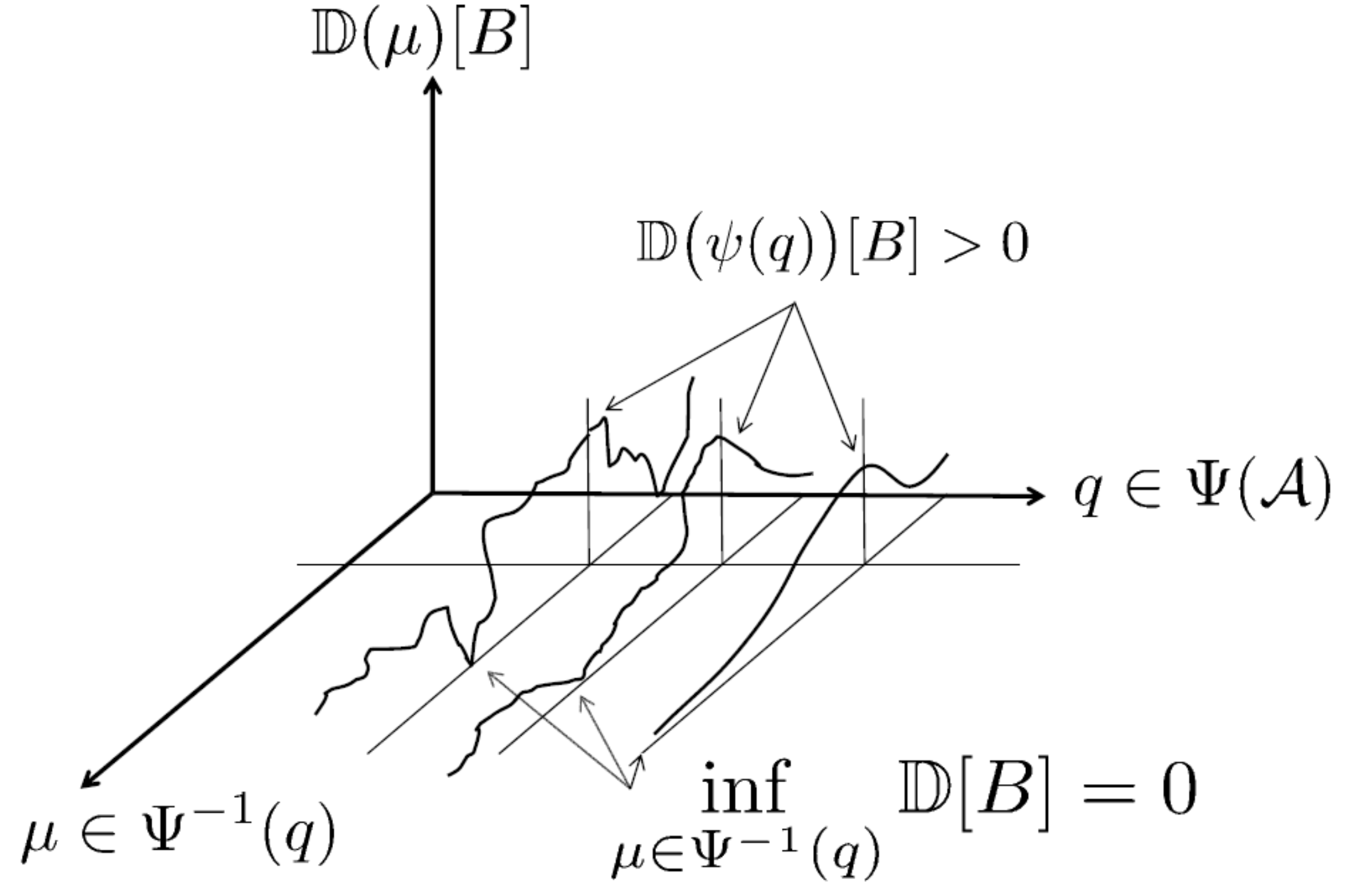}
	\end{center}
	\caption{Illustration of Conditions \eqref{eq:B0} and \eqref{eq:Bp} of Theorem \ref{thm:shiva0cor}.  If, for some data map $\Dmap \in \Dmaps$, all level sets of $\Psi$ go to zero (i.e.\ for
	all $q\in \Psi(\mathcal{A})$, $\inf_{\mu\in  \Psi^{-1}(q)} \Dmap(\mu)[B]=0$), then, for any positive section $\psi$ of $\Psi$ (i.e.\ $\Psi\circ\psi(q)=q$ and
	$ \Dmap(\psi(q))[B]>0$ for $q \in \Psi(\mathcal{A})$), the least upper bound on posterior values is bounded from below by the essential supremum of $\Phi\circ\psi$.}
	\label{fig:localshiva}
\end{figure}

\begin{thm}
	\label{thm:shiva0cor}
	Let $\mathcal{A}$ be a Suslin space, let $\Phi \colon \mathcal{A} \to \R$ be measurable, let  $\mathcal{Q}$ be a separable and metrizable space, and let $\Psi \colon \mathcal{A} \to \mathcal{Q}$  measurable. Let $\mathfrak{Q} \subseteq \mathcal{M}(\mathcal{Q})$ be such that $\supp(\mathbb{Q}) \subseteq \Psi(\mathcal{A})$ for all $\mathbb{Q} \in \mathfrak{Q}$.  Let the data space $\mathcal{D}$ be metrizable and consider $B \in\mathcal{B}(\mathcal{D})$.  Assume that $\Dmap $ is such that all the level sets of $\Psi$ go to zero, in the sense that
	\begin{equation}
		\label{eq:B0}
		\inf_{\mu\in  \Psi^{-1}(q)} \Dmap(\mu)[B]=0, \quad \text{for all }q \in \Psi(\mathcal{A}) .
	\end{equation}
	Then for any positive measurable section $\psi$ of $\Psi$, positive in the sense that
	\begin{equation}
		\label{eq:Bp}
		\Dmap(\psi(q))[B]>0,\quad \text{for all } q \in \Psi(\mathcal{A}) ,
	\end{equation}
	it follows that
    \begin{equation}
    	\label{eq:2qbisjhjycondddihjhlocal}
		\mathcal{U}\big(\Psi^{-1}\mathfrak{Q}\big|B\big) \geq \mathfrak{Q}^{\infty}(\Phi\circ\psi) .
    \end{equation}
	where $\mathfrak{Q}^{\infty}(\Phi\circ\psi)$ is the essential supremum
	\begin{equation}
		\label{eq_esssupmathfrakQ}
		\mathfrak{Q}^{\infty}(\Phi\circ\psi):= \sup_{\mathbb{Q}\in \mathfrak{Q}}  \inf{\bigl\{r \in \R: \mathbb{Q}[\Phi\circ\psi >r]=0\bigr\}} .
	\end{equation}
\end{thm}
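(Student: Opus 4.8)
The plan is to derive the bound directly from the variational representation of $\mathcal{U}(\Psi^{-1}\mathfrak{Q}|B)$ furnished by Theorem~\ref{thm:sivahidden}. That theorem expresses $\mathcal{U}(\Psi^{-1}\mathfrak{Q}|B)$ as the supremum of those $\lambda \in \R$ for which $\sup_{\mathbb{Q}\in\mathfrak{Q}}\E_{q\sim\mathbb{Q}}\bigl[g_\lambda(q)\bigr]>0$, where I abbreviate the integrand
\[
g_\lambda(q):=\sup_{\mu\in\Psi^{-1}(q)}\bigl(\Phi(\mu)-\lambda\bigr)\Dmap(\mu)[B] ,
\]
which is universally measurable by Lemma~\ref{lem:Umeasurable} applied to the modified quantity of interest $\mu\mapsto(\Phi(\mu)-\lambda)\Dmap(\mu)[B]$. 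Since the right-hand side of \eqref{eq_esssupmathfrakQ} is a supremum over $\mathbb{Q}\in\mathfrak{Q}$ of the $\mathbb{Q}$-essential supremum $s_\mathbb{Q}:=\inf\{r\in\R:\mathbb{Q}[\Phi\circ\psi>r]=0\}$, it suffices to fix an arbitrary $\mathbb{Q}\in\mathfrak{Q}$ and show $\mathcal{U}(\Psi^{-1}\mathfrak{Q}|B)\ge s_\mathbb{Q}$; the claim follows by taking the supremum over $\mathbb{Q}$. For this I would fix any $\lambda<s_\mathbb{Q}$ and prove $\E_{q\sim\mathbb{Q}}[g_\lambda(q)]>0$, since a single $\mathbb{Q}$ with positive expectation already places $\lambda$ in the set appearing in Theorem~\ref{thm:sivahidden}, whence $\mathcal{U}(\Psi^{-1}\mathfrak{Q}|B)\ge\lambda$; letting $\lambda\uparrow s_\mathbb{Q}$ finishes the reduction.

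The heart of the argument is a pointwise sign analysis of $g_\lambda$. First I would show $g_\lambda(q)\ge0$ for every $q\in\Psi(\mathcal{A})$. By the level-set collapse hypothesis \eqref{eq:B0}, there is a sequence $\mu_n\in\Psi^{-1}(q)$ with $\Dmap(\mu_n)[B]\downarrow0$. If $\Phi(\mu_n)\ge\lambda$ for some $n$ then that candidate contributes a nonnegative value and $g_\lambda(q)\ge0$ immediately; otherwise $\Phi(\mu_n)-\lambda<0$ for all large $n$, and using semiboundedness (Assumption~\ref{ass_semibounded}) to bound $\Phi(\mu_n)-\lambda$ from below, the products $\bigl(\Phi(\mu_n)-\lambda\bigr)\Dmap(\mu_n)[B]$ are negative but tend to $0$, so their supremum---and hence $g_\lambda(q)$---is at least $0$. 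Second, on the set $G:=\{q\in\Psi(\mathcal{A}):(\Phi\circ\psi)(q)>\lambda\}$ I would use the positive section $\psi$: since $\psi(q)\in\Psi^{-1}(q)$ and $\Dmap(\psi(q))[B]>0$ by \eqref{eq:Bp}, the admissible choice $\mu=\psi(q)$ yields
\[
g_\lambda(q)\ge\bigl((\Phi\circ\psi)(q)-\lambda\bigr)\Dmap(\psi(q))[B]>0 ,\quad q\in G .
\]
Because $\lambda<s_\mathbb{Q}$, the definition of the essential supremum guarantees $\mathbb{Q}(G)=\mathbb{Q}[\Phi\circ\psi>\lambda]>0$, and $G$ is measurable since $\Phi\circ\psi$ is measurable (as $\psi$ and $\Phi$ are).

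Combining these two facts, the (completion-)integrable function $g_\lambda$ is nonnegative $\mathbb{Q}$-almost everywhere and strictly positive on the positive-measure set $G$, so $\E_{q\sim\mathbb{Q}}[g_\lambda(q)]>0$, as required. Taking $\lambda\uparrow s_\mathbb{Q}$ and then the supremum over $\mathbb{Q}\in\mathfrak{Q}$ yields $\mathcal{U}(\Psi^{-1}\mathfrak{Q}|B)\ge\mathfrak{Q}^\infty(\Phi\circ\psi)$.

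I expect the main obstacle to be the first sign claim, $g_\lambda\ge0$ everywhere: it is precisely here that condition \eqref{eq:B0} must be married to the (mere) semiboundedness of $\Phi$, since an unbounded $\Phi$ could in principle blow up along the very sequences $\mu_n$ that \eqref{eq:B0} forces to have vanishing data-probability, and one must verify that the escape-to-zero of $\Dmap(\mu_n)[B]$ dominates. A secondary technical point is ensuring that the universal measurability of $g_\lambda$ from Lemma~\ref{lem:Umeasurable} lets the expectation be taken with respect to the completion as in \eqref{def-expectation2}, so that strict positivity on the positive-measure set $G$ genuinely forces a strictly positive integral and Theorem~\ref{thm:sivahidden} applies without modification.
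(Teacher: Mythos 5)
Your proposal is correct and follows essentially the same route as the paper's own proof: both invoke the threshold representation of Theorem~\ref{thm:sivahidden}, define the same integrand $\theta(q)=\sup_{\mu\in\Psi^{-1}(q)}\bigl(\Phi(\mu)-\lambda\bigr)\Dmap(\mu)[B]$ (universally measurable by Lemma~\ref{lem:Umeasurable}), use \eqref{eq:B0} for its non-negativity and \eqref{eq:Bp} together with the definition of the essential supremum for strict positivity on a set of positive $\mathbb{Q}$-measure, and then let $\lambda$ increase to the essential supremum. The only cosmetic difference is that you fix $\mathbb{Q}$ and take $\lambda<s_{\mathbb{Q}}$ before supremizing over $\mathbb{Q}$, whereas the paper sets $\lambda=\mathfrak{Q}^{\infty}(\Phi\circ\psi)-\delta$ directly; your more explicit sign analysis of $\theta$ (via sequences with $\Dmap(\mu_n)[B]\downarrow 0$) is exactly what the paper's one-line non-negativity claim tacitly relies on.
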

\noindent See Figure~\ref{fig:localshiva} for an illustration of Theorem~\ref{thm:shiva0cor}.

We now use Theorem \ref{thm:shiva0cor} to develop a brittleness theorem for a model with local misspecification.  To that end, let $\mathcal{X}$ be a Polish space so that, by \cite[Thm.15.15]{AliprantisBorder:2006}, $\mathcal{M}(\mathcal{X})$ endowed with the weak
topology is Polish. Moreover, by \cite[Thm.~11.3.3]{Dudley:2002}, we know that if we select a complete consistent metric $d$ for $\mathcal{X}$, then the Prokhorov metric $d_{\mathcal{M}}$ defined by
\[
	d_{\mathcal{M}}(\mu_{1},\mu_{2}) := \inf \left\{ \varepsilon > 0 \smid \mu_{1}(A) \leq \mu_{2}(A^{\varepsilon})+\varepsilon \text{ for all } A \in  \mathcal{B}(\mathcal{X}) \right\},
\]
where
\[
	A^{\varepsilon}:= \left\{ x \in \mathcal{X} \smid d(x,x')<\varepsilon  \text{ for some } x' \in A \right\}
\]
is the $\varepsilon$ neighborhood of $A$, metrizes the weak topology on $\mathcal{M}(\mathcal{X})$. Moreover, Prokhorov's theorem
\cite[Cor.~11.5.5]{Dudley:2002} asserts that the Prokhorov metric $d_{\mathcal{M}}$ is a complete metric for the Polish space $\mathcal{M}(\mathcal{X})$.  For $\alpha>0$, $\mu \in \mathcal{M}(\mathcal{X})$, let $B_{\alpha}(\mu):=\{\mu'\in \mathcal{M}(\mathcal{X}) \mid d_{\mathcal{M}}(\mu,\mu') < \alpha\}$ be the open ball of Prokhorov radius $\alpha$ about $\mu$.

Let $\Theta$ be a Polish space and let the model define a map
\[
	\mathcal{P} \colon \Theta \to \mathcal{M}(\mathcal{X}) .
\]
As in Section \ref{sec:misspec}, the image  $\mathcal{P}(\Theta)$ is referred to as the
  (Bayesian) \emph{model class}.

\begin{rmk}
When $\mathcal{P}$ is continuous, it follows from the definition \cite[Sec.~3.2]{Arveson1976} of an analytic set that the the image $\mathcal{P}(\Theta) \subseteq \mathcal{M}(\mathcal{X})$ is analytic, and since the range space $\mathcal{M}(\mathcal{X})$ is Polish it follows that $\mathcal{P}(\Theta)$ is Suslin.  Actually, continuity is not required, since \cite[Thm.~3.3.4]{Arveson1976} implies that if  $\mathcal{P}$ is measurable, then the image $\mathcal{P}(\Theta)$ is Suslin.  If, in addition,  $\mathcal{P}$ is injective, then Suslin's Theorem \cite[Thm.~3.2.3]{Arveson1976} implies that  $\mathcal{P}(\Theta)$ is Borel.
\end{rmk}

Assume that $\mathcal{P}$ is measurable and denote its image by $\mathcal{A}_{0}:=\mathcal{P}(\Theta)$.
Let $\pi_\Theta \in \mathcal{M}(\Theta)$ be a prior distribution on $\Theta$ and let
 $\pi_0:= \mathcal{P}\pi_{\Theta} \in \mathcal{M}(\mathcal{A}_{0})$ be
its pushforward.
 Let $\Phi_{0} \colon \mathcal{M}(\mathcal{X})\to \R$ be a measurable quantity of interest. We are interested in estimating $\Phi_0$ using the prior $\pi_0$ and our purpose is to show the extreme brittleness of this estimation under arbitrarily small perturbations of the model class $\mathcal{A}_0$  in both the Prokhorov and total variation
metrics.

For conditioning on observations, let the data space be $\mathcal{D}:=\mathcal{X}^{n}$, and
consider the $n$-i.i.d.~sample data map $\Dmap^{n}_{0} \colon \mathcal{M}(\mathcal{X}) \to \mathcal{M}(\mathcal{X}^{n})$ defined by
\begin{equation}\label{eq:dmapiid}
	\Dmap^{n}_{0}\mu:=\mu^{n}, \quad \mu \in \mathcal{M}(\mathcal{X})\, .
\end{equation}
For  $x^{n}=(x_{1},\dots,x_{n}) \in \mathcal{X}^{n}$, dropping the notational dependence, denote
the rectangle about $x^{n}$ by
\begin{equation}\label{eq:Bndelta}
	B^{n}_{\delta}:=\prod_{i=1}^{n}{B_{\delta}(x_{i})},
\end{equation}
where $B_{\delta}(x_{i})$ is the open ball of radius $\delta$ about $x_{i}$.
Observe that the prior value of $\Phi_0$ under $\pi_0$ is $\E_{\pi_0}[\Phi_0]$ and its posterior value under the observation $\Ddata \in B^{n}_{\delta}$
is $\E_{\pi_0\odot_{B^{n}_{\delta}} \Dmap^{n}_{0}}[\Phi_0]$.

\begin{figure}[t!]
	\begin{center}
		\includegraphics[width=0.6\textwidth]{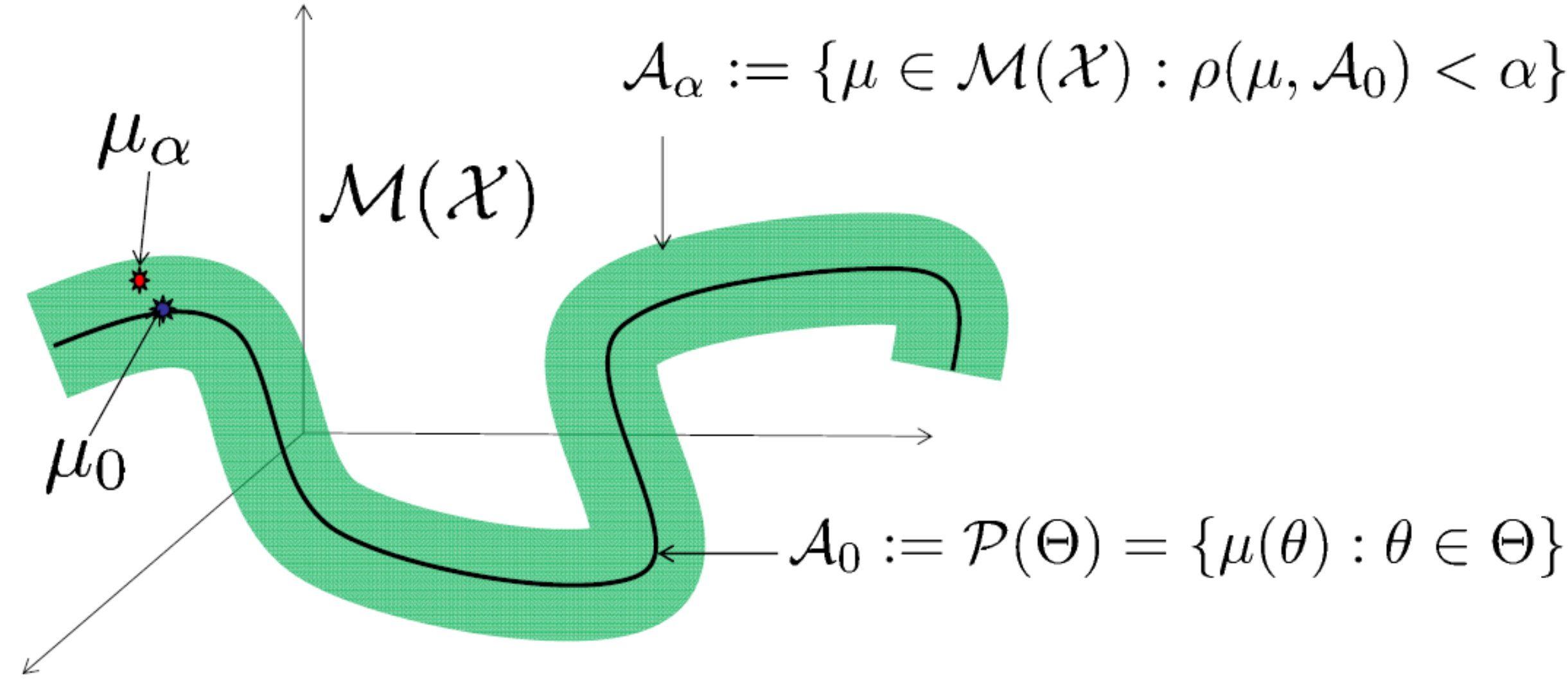}
	\end{center}
	\caption{The original model class $\mathcal{A}_{0}$ (black curve) is enlarged to its metric neighborhood $\mathcal{A}_{\alpha}$ (shaded).  This procedure determines perturbations $\mu_{\alpha} \in \mathcal{A}_{\alpha}$ of the original random measure $\mu_{0} \in \mathcal{A}_{0}$.}
	\label{fig:perturbation}
\end{figure}

To define $\alpha$-perturbations of  the model class $\mathcal{A}_0$ in Prokhorov metric, we introduce, for $\alpha >0$ the
$\alpha$-neighborhood  $\mathcal{A}_{\alpha} \subseteq \mathcal{M}(\mathcal{X})$ of $\mathcal{A}_{0}$ defined by (see Figure \ref{fig:perturbation})
\begin{equation}\label{eq:Aalpha}
	\mathcal{A}_{\alpha}:=\bigcup_{\mu \in \mathcal{A}_{0}} B_{\alpha}(\mu) .
\end{equation}
It is easy to see that the ball fibration (see Remark \ref{rmk:afibration})
\begin{equation}\label{eq:Aup}
	\mathcal{A}:= \left\{ (\mu_{1},\mu_{2}) \in \mathcal{M}(\mathcal{X}) \times \mathcal{M}(\mathcal{X}) \smid \mu_{1}\in \mathcal{A}_{0}, \mu_{2} \in B_{\alpha}(\mu_{1}) \right\}
\end{equation}
of the  set of balls about points of $\mathcal{A}_{0}$ projects to
\begin{align}
	\label{P0}
	P_{0}\mathcal{A}&=\mathcal{A}_{0}\\
	\label{Pa}
	P_{\alpha}\mathcal{A}&=\mathcal{A}_{\alpha}
\end{align}
where $P_{0} \colon \mathcal{M}(\mathcal{X}) \times \mathcal{M}(\mathcal{X})\to \mathcal{M}(\mathcal{X}) $ is the projection onto the first component and $P_{\alpha}$ the projection onto the second.
The  naturally induced set of priors corresponding to   $\pi_0\in \mathcal{M}(\mathcal{A}_0)$ is therefore the set $\Pi_{\alpha} \subset  \mathcal{M}(\mathcal{A}_\alpha)$ defined by
\begin{equation}\label{eq:palphadef}
\Pi_{\alpha}:=\big\{\pi_\alpha \in \mathcal{M}(\mathcal{A}_\alpha) \big| \exists \pi \in \mathcal{M}(\mathcal{A})\text{ with } P_0\pi=\pi_0\text{ and }P_\alpha\pi=\pi_\alpha\big\}\, .
\end{equation}
\begin{rmk}\label{rmk:interpretpialpha}
Observe that each element $\pi_\alpha \in \Pi_{\alpha}$ is the distribution of  a random measure $\mu_2$ on $\mathcal{A}_\alpha$ such that: (i) there exists a random measure $\mu_1\in \mathcal{A}_0$ with distribution $\pi_0$ (that of the model); (ii) $(\mu_1,\mu_2)$ is jointly measurable; and
(iii) with probability one the Prokhorov distance from $\mu_2$ to $\mu_1$ is less than $\alpha$, i.e.\ $d_{\mathcal{M}}(\mu_1,\mu_2)< \alpha$.
Observe in particular that $\pi_0\in \Pi_\alpha$.
\end{rmk}

Our main result is provided in Theorem \ref{thm_localshiva} but for the sake of clarity we will first give this result in the following (simpler) form.

\begin{thm}
	\label{thm_localshivacor}
Using the notations introduced above and the data map \eqref{eq:dmapiid}, let $\Pi_{\alpha}$ be defined as in \eqref{eq:palphadef}.
If
\begin{equation}\label{eq:2jhkjhjkekj3d}
	\lim_{\delta \downarrow 0} \sup_{x \in \mathcal{X}} \sup_{\theta\in \Theta} \mathcal{P}(\theta)[B_{\delta}(x)]=0,
\end{equation}
then, for all $\alpha>0$ there exists $\delta_c(\alpha)>0$ such that for all $0<\delta<\delta_c(\alpha)$, all $n \in \mathbb{N}$, and all $(x_1,\ldots,x_n)\in \mathcal{X}^n$
	\[
		\mathcal{U}(\Pi_{\alpha}|B^{n}_{\delta})\geq \operatorname{ess\,sup}_{\pi_0} (\Phi_{0}) ,
	\]
where
\[
	\operatorname{ess\,sup}_{\pi_0} (\Phi_{0}):=\inf\{r>0\mid \pi_0 [\phi_0>r]=0\} ,
\]
and with similar expressions for the lower bounds $\mathcal{L}$.
\end{thm}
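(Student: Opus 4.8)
The plan is to obtain Theorem~\ref{thm_localshivacor} as a consequence of Theorem~\ref{thm:shiva0cor} applied to the ball fibration $\mathcal{A}$ of \eqref{eq:Aup}. I take the Suslin ambient space to be $\mathcal{A}$ itself, the feature map to be the first projection $\Psi:=P_0\colon\mathcal{A}\to\mathcal{M}(\mathcal{X})=:\mathcal{Q}$, the admissible marginals to be the singleton $\mathfrak{Q}:=\{\pi_0\}$, the quantity of interest to be $\Phi:=\Phi_0\circ P_\alpha$, and the data map to be $\Dmap:=\Dmap^{n}_{0}\circ P_\alpha$ (so that $\Dmap(\mu_1,\mu_2)=\mu_2^{n}$), conditioning on $B:=B^{n}_{\delta}$. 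First I would record that $\mathcal{A}$ is Suslin, being the intersection of the Suslin set $\mathcal{A}_0\times\mathcal{M}(\mathcal{X})$ with the open set $\{d_{\mathcal{M}}<\alpha\}$ (cf.\ Remark~\ref{rmk:afibration}); that $\Phi$, $\Psi$ and $\Dmap$ are measurable since $P_0$, $P_\alpha$, tensorisation and $\Phi_0$ are; and that $\supp(\pi_0)\subseteq\mathcal{A}_0=\Psi(\mathcal{A})$, so all hypotheses on $\mathcal{A},\mathcal{Q},\Psi,\mathfrak{Q}$ are met.

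The second step is to identify the two optimisation problems. For any $\pi\in\mathcal{M}(\mathcal{A})$ with $P_0\pi=\pi_0$ and its second marginal $\pi_\alpha:=P_\alpha\pi\in\Pi_\alpha$, the conditional-expectation formula \eqref{eq:defcondexp2} gives
\[
\E_{\pi_\alpha\odot\Dmap^{n}_{0}}[\Phi_0\mid B^{n}_{\delta}]=\frac{\E_{\mu\sim\pi_\alpha}[\Phi_0(\mu)\,\mu^{n}[B^{n}_{\delta}]]}{\E_{\mu\sim\pi_\alpha}[\mu^{n}[B^{n}_{\delta}]]}=\frac{\E_{\pi}[\Phi(\mu_1,\mu_2)\,\Dmap(\mu_1,\mu_2)[B]]}{\E_{\pi}[\Dmap(\mu_1,\mu_2)[B]]}=\E_{\pi\odot\Dmap}[\Phi\mid B],
\]
and the feasibility conditions $\pi_\alpha\cdot\Dmap^{n}_{0}[B^{n}_{\delta}]>0$ and $\pi\cdot\Dmap[B]>0$ coincide. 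Since \eqref{eq:palphadef} says that $\Pi_\alpha$ is exactly the set of such second marginals, the achievable posterior values agree, whence $\mathcal{U}(\Pi_\alpha\mid B^{n}_{\delta})=\mathcal{U}(\Psi^{-1}\mathfrak{Q}\mid B^{n}_{\delta})$.

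Next I would verify the level-set hypothesis \eqref{eq:B0}. Using \eqref{eq:2jhkjhjkekj3d} I choose $\delta_c(\alpha)>0$ so that $\sup_{x\in\mathcal{X}}\sup_{\theta\in\Theta}\mathcal{P}(\theta)[B_\delta(x)]<\alpha$ for all $0<\delta<\delta_c(\alpha)$. Fix such a $\delta$, any $n$, any $(x_1,\dots,x_n)$, and any $\mu_1=\mathcal{P}(\theta)\in\mathcal{A}_0$; then the mass $m:=\mu_1[B_\delta(x_1)]$ satisfies $m<\alpha$. Transporting this mass off the open ball $B_\delta(x_1)$ (possible since $m<1$ forces $\mathcal{X}\setminus B_\delta(x_1)\neq\varnothing$) produces a measure $\mu_2$ with $\mu_2[B_\delta(x_1)]=0$, hence $\mu_2^{n}[B^{n}_{\delta}]=0$, while a coupling leaving the remaining mass fixed shows $d_{\mathcal{M}}(\mu_1,\mu_2)\le m<\alpha$, i.e.\ $\mu_2\in B_\alpha(\mu_1)$. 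Thus $\inf_{\mu\in\Psi^{-1}(\mu_1)}\Dmap(\mu)[B]=\inf_{\mu_2\in B_\alpha(\mu_1)}\mu_2^{n}[B^{n}_{\delta}]=0$ for every $\mu_1\in\Psi(\mathcal{A})$. Crucially, only one coordinate is altered and the Prokhorov bound depends on $\delta$ only through $\sup_{x,\theta}\mathcal{P}(\theta)[B_\delta(x)]$, which is precisely why $\delta_c(\alpha)$ can be taken independent of $n$ and of $(x_1,\dots,x_n)$, giving the asserted uniformity.

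It then remains to exhibit a positive measurable section $\psi$ of $\Psi$ (positive in the sense of \eqref{eq:Bp}) for which $\mathfrak{Q}^{\infty}(\Phi\circ\psi)=\operatorname{ess\,sup}_{\pi_0}\bigl(\mu_1\mapsto\Phi_0(\psi_2(\mu_1))\bigr)\ge\operatorname{ess\,sup}_{\pi_0}(\Phi_0)=:M$, after which Theorem~\ref{thm:shiva0cor} and \eqref{eq:2qbisjhjycondddihjhlocal} close the argument; this section construction is the main obstacle. For each $\epsilon>0$ the set $S_\epsilon:=\{\mu_1\in\mathcal{A}_0\mid\Phi_0(\mu_1)>M-\epsilon\}$ has positive $\pi_0$-measure, and the aim is a positive section with $\Phi_0\circ\psi_2>M-\epsilon$ on a $\pi_0$-positive subset of $S_\epsilon$. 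Where $\mu_1^{n}[B^{n}_{\delta}]>0$ one simply takes $\psi_2(\mu_1)=\mu_1$; the difficulty is the complementary set, where positivity of the data probability \emph{forces} a genuine perturbation, e.g.\ the mixture $\psi_2(\mu_1)=(1-\beta)\mu_1+\beta\mu_d$ with $\mu_d:=\tfrac1n\sum_{i=1}^n\delta_{x_i}$ and $\beta<\alpha$, which guarantees $\psi_2(\mu_1)^{n}[B^{n}_{\delta}]>0$ and $d_{\mathcal{M}}(\mu_1,\psi_2(\mu_1))\le\beta<\alpha$. One must ensure that this perturbation does not drop $\Phi_0$ below $M-\epsilon$, which I would secure by a $\widehat{\mathcal{B}}$-measurable selection in the spirit of Lemma~\ref{lem:Umeasurable}, choosing $\psi_2(\mu_1)$ to nearly maximise $\Phi_0$ among the admissible perturbations in $B_\alpha(\mu_1)$. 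I emphasise that measurability of $\Phi_0$ alone is insufficient here — a quantity concentrated on an isolated measure starving the data would defeat the bound — so the retention of the value $\Phi_0(\mu_1)$ up to $\epsilon$ rests on a regularity property of the quantity of interest (e.g.\ that mixing toward the data lowers $\Phi_0$ by at most the mixing weight, as holds for the means and exceedance probabilities of the motivating examples). Letting $\epsilon\downarrow0$ then yields $\mathfrak{Q}^{\infty}(\Phi\circ\psi)\ge M$, and the corresponding statement for $\mathcal{L}$ follows by replacing $\Phi_0$ with $-\Phi_0$.
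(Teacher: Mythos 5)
Your architecture coincides with the paper's up to the last step: the paper also proves this theorem by applying Theorem \ref{thm:shiva0cor} to the ball fibration $\mathcal{A}$ of \eqref{eq:Aup} with $\Psi=P_{0}$, $\Phi=\Phi_{0}\circ P_{\alpha}$, $\Dmap^{n}=\Dmap^{n}_{0}\circ P_{\alpha}$; your marginalization identity $\mathcal{U}(\Pi_{\alpha}|B^{n}_{\delta})=\mathcal{U}(\Psi^{-1}\mathfrak{Q}|B^{n}_{\delta})$ is exactly \eqref{eq_uuuu}; and your verification of \eqref{eq:B0} by moving the mass $\mu_{1}[B_{\delta}(x_{1})]<\alpha$ off the ball around the first data point is the paper's argument with the conditional measure $\mu_{0}|_{B_{\delta}^{c}}$ (Proposition \ref{prop_TV} together with $d_{\mathcal{M}}\leq d_{\mathrm{TV}}$), including the observation that $\delta_{c}(\alpha)$ is uniform in $n$ and in $(x_{1},\dots,x_{n})$. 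The genuine gap is precisely where you flagged it: the positive section. The paper does not construct one by perturbing toward the data; it takes the diagonal $\Delta(\mu):=(\mu,\mu)$, restricted to $\mathcal{A}_{0}$, as its section of $P_{0}$, and the positivity condition \eqref{eq:Bp} for $\Delta$ is supplied by a standing hypothesis that your proof never invokes: the model $\mathcal{P}$ is assumed \emph{positive}, i.e.\ $\mu[B_{\delta}(x)]>0$ for every $\mu\in\mathcal{A}_{0}$, $x\in\mathcal{X}$, $\delta>0$. (This assumption is stated in the paragraph immediately preceding Theorem \ref{thm_localshiva}, after the statement of Theorem \ref{thm_localshivacor}, so overlooking it is understandable --- but it is part of the ambient assumptions under which the paper derives the corollary.) With the diagonal section, $\Phi\circ\Delta=\Phi_{0}\circ P_{\alpha}\circ\Delta=\Phi_{0}$ identically, so $\mathfrak{Q}^{\infty}(\Phi\circ\Delta)=\operatorname{ess\,sup}_{\pi_{0}}(\Phi_{0})$ on the nose, and no regularity of $\Phi_{0}$ beyond measurability is needed.

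Your diagnosis that mere measurability of $\Phi_{0}$ cannot support your mixture construction is correct, and in fact no construction can repair it: without model positivity the statement is false, so the ``regularity property'' you defer to is not a technicality but a missing hypothesis. Concretely, take $\mathcal{X}=[0,1]$, a single model measure $\mu_{1}$ equal to the uniform distribution on $[0,\tfrac12]$ (so \eqref{eq:2jhkjhjkekj3d} holds), $\pi_{0}=\delta_{\mu_{1}}$, $n=1$, $x_{1}=\tfrac78$, $\delta<\tfrac18$, and the measurable, bounded quantity of interest $\Phi_{0}(\mu):=\mu\bigl[0,\tfrac12\bigr]\cdot\one\bigl\{\mu\bigl[(\tfrac34,1]\bigr]=0\bigr\}$. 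Then $\operatorname{ess\,sup}_{\pi_{0}}(\Phi_{0})=\Phi_{0}(\mu_{1})=1$, yet any $\mu_{2}$ with $\mu_{2}[B_{\delta}(x_{1})]>0$ has $\mu_{2}\bigl[(\tfrac34,1]\bigr]>0$ and hence $\Phi_{0}(\mu_{2})=0$; consequently the numerator $\E_{\pi_{\alpha}}\bigl[\Phi_{0}(\mu_{2})\,\mu_{2}[B_{\delta}(x_{1})]\bigr]$ vanishes for every admissible $\pi_{\alpha}$, and $\mathcal{U}(\Pi_{\alpha}|B_{\delta})=0<1$. This is exactly your ``quantity concentrated on measures starving the data'' scenario, and it shows that your case split --- diagonal where $\mu_{1}^{n}[B^{n}_{\delta}]>0$, perturbation elsewhere --- cannot be completed in the generality you set up. Under the intended positivity hypothesis your first case applies everywhere, the second case is empty, and your proof closes immediately, coinciding with the paper's.
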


\begin{rmk}
Theorem \ref{thm_localshivacor} implies the extreme brittleness of Bayesian inference under local misspecification.
Indeed, assume that the model class $\mathcal{A}_0$ is well specified (i.e.\ it
contains the truth $\mu^\dagger$) and that, therefore, the Bayesian estimator described by $\pi_0$ is consistent.
One may believe that a model $\mathcal{A}_1$ lying in a `small enough'
neighborhood of $\mathcal{A}_{0}$ should have good convergence properties, Theorem \ref{thm_localshivacor} and
Remark \ref{rmk:interpretpialpha} invalidate this belief, at least as far as the TV and Prokhorov notions of `small enough' are concerned.
Using the notations of Remark \ref{rmk:interpretpialpha},
observe in particular that an unscrupulous practitioner may design a model corresponding to a random measure  $\mu_2$ such that the distance between $\mu_1$ (the well specified model) and $\mu_2$ is a.s.~at most $\alpha$ (where $\alpha$ is arbitrarily small) and the posterior value using the random measure $\mu_2$ is as distant as possible from the posterior value using $\mu_1$ irrespective of the sample size $n$.
\end{rmk}

\begin{rmk}
Observe that the condition \eqref{eq:2jhkjhjkekj3d} is extremely weak and satisfied for most Bayesian models. This condition can in fact be made weaker by
replacing it with the assumption that  for $n$ sufficiently large it holds true that for all $\theta$, $\mathcal{P}(\theta)$ does not contain a Dirac mass
in each ball $B_{\delta}(x_{i})$ (i.e.\ on
the sample data when $\delta \downarrow 0$). We also note that the proof of  Theorem \ref{thm_localshivacor} does not require the samples to be i.i.d., in particular, the same results can be obtained with coupled samples, if, for instance, the data map $\Dmap^{n}_{0}$ is replaced by a data map $\Dmap$ such that
$C_1^n \prod_{i=1}^n \mu(A_i) \leq  \Dmap(\mu)[A_1\times \cdots\times A_n] \leq C_2^n \prod_{i=1}^n \mu(A_i)$ for strictly positive constants $C_1$ and $C_2$.
\end{rmk}

\begin{rmk}\label{rmk:tvrmk}
	Theorem \ref{thm_localshivacor} is a corollary  of Theorem \ref{thm_localshiva} and the proof of Theorem \ref{thm_localshiva} shows that, if  $\Theta$ is compact and $\mathcal{P}$ is continuous and  $\Phi(\mu):=\mu(A)$ for some fixed $A \in \mathcal{B}(\mathcal{X})$, then the result of Theorem \ref{thm_localshivacor} also holds when using the total variation distance $d_{\mathrm{TV}}$ instead of the Prokhorov distance, which produces a much smaller neighborhood.
	
	However, in this metric $\mathcal{M}(\mathcal{X})$ in general is not separable and this introduces measurability difficulties.  These difficulties can be overcome somewhat when $\Theta$ is compact and $\mathcal{P}$ is continuous, since the image of a compact set under a continuous map is compact and therefore measurable. Moreover, validation or certification type quantities of interest defined by $\Phi(\mu):=\mu(A)$ for some fixed $A \in \mathcal{B}(\mathcal{X})$ are easily seen to be continuous and therefore measurable. Moreover, because of continuity,
	\[
		\Pi_{0}^{\infty}(\Phi_{0})\approx\Pi_{\alpha}^{\infty}(\Phi_{0})  .
	\]
	Our motivation in working mainly with the Prokhorov metric lies in the fact that we also seek to lay down measurability foundations for the scientific computation of optimal statistical estimators where  the unknown quantities are products of functions and measures and for such spaces the total variation metric is too strong for the measurability of standard quantities of interest.
\end{rmk}

We will now give a more general version of Theorem \ref{thm_localshivacor} and elaborate on the objects entering in its
formulation.
We start with $\Pi_{\Theta}\subseteq \mathcal{M}(\Theta)$, a set of admissible priors and let
\[
	\Pi_{0}:=\mathcal{P}\Pi_{\Theta} \subseteq \mathcal{M}(\mathcal{A}_{0})
\]
denote the push-forward by the model $\mathcal{P}$.
We consider the pull-back $\Phi_{\Theta}:=\Phi_{0} \circ \mathcal{P}$, of the measurable quantity of interest  $\Phi_{0} \colon \mathcal{M}(\mathcal{X})\to \R$, to a measurable quantity of interest $\Phi_{\Theta} \colon \Theta\to \R$.  Then the change of variables formula \cite[Thm.~4.1.11]{Dudley:2002} implies that, for $\pi_{\Theta}\in \mathcal{M}(\Theta)$,
\[
	\E_{\pi_{\Theta}}[\Phi_{\Theta}]=\E_{\pi_{\Theta}}[\Phi_{0} \circ \mathcal{P}] = \E_{\mathcal{P}\pi_{\Theta}}[\Phi_{0}]
\]
whenever either side is well defined.  Therefore, taking  suprema and infima,
we obtain
\begin{align*}
	\mathcal{U}(\Pi_{\Theta})&= \mathcal{U}(\Pi_{0}),\\
	\mathcal{L}(\Pi_{\Theta})&= \mathcal{L}(\Pi_{0}),
\end{align*}
where we note that the quantity of interest implicit in these definitions is determined by the
argument. For $\alpha>0$, define $\mathcal{A}_{\alpha}$, $\mathcal{A}$, $P_0$   and $P_\alpha$ as in \eqref{eq:Aalpha}, \eqref{eq:Aup}, \eqref{P0} and \eqref{Pa}.
\begin{rmk}\label{rmk:afibration}
Using the affine convexity of $\mathcal{M}(\mathcal{X})$, one can show that $\mathcal{A}$ is indeed a Hurewicz fibration, in that it has the homotopy lifting property, see e.g.~\cite[p.~66]{Spanier}.  Let
\[
	d_{\mathcal{M}}^{-1}(< \alpha) := \{(\mu_{1},\mu_{2}) \mid d_{\mathcal{M}}(\mu_{1},\mu_{2})< \alpha\}
\]
denote the set of all pairs of measures at Prokhorov distance at most $\alpha$ from one another.
Since $d_{\mathcal{M}} \colon \mathcal{M}(\mathcal{X})\times\mathcal{M}(\mathcal{X}) \to \R$ is continuous, it follows that $d_{\mathcal{M}}^{-1}(< \alpha)$ is open and therefore Borel. In addition, since $\mathcal{A}_{0} \subseteq \mathcal{M}(\mathcal{X})$ is Suslin it follows that $\mathcal{A}_{0} \times  \mathcal{M}(\mathcal{X}) \subseteq \mathcal{M}(\mathcal{X}) \times \mathcal{M}(\mathcal{X})$ is Suslin. Therefore, since $\mathcal{A}=d_{\mathcal{M}}^{-1}(<\alpha) \cap \bigl(\mathcal{A}_{0} \times  \mathcal{M}(\mathcal{X})\bigr)$,
it follows that $\mathcal{A}$ is Suslin.
\end{rmk}

Observe that the measurable quantity of interest $\Phi_{0} \colon \mathcal{M}(\mathcal{X}) \to \R$ acting on the second component of $\mathcal{M}(\mathcal{X})\times\mathcal{M}(\mathcal{X})$, naturally pulls back
to the quantity of interest $\Phi \colon \mathcal{M}(\mathcal{X}) \times \mathcal{M}(\mathcal{X}) \to \R$ by $\Phi:=\Phi_{0}\circ P_{\alpha}$, and we have $\sup_{\mathcal{A}_{\alpha}}{\Phi_{0}}= \sup_{\mathcal{A}}{\Phi}$ and $\inf_{\mathcal{A}_{\alpha}}{\Phi_{0}}= \inf_{\mathcal{A}}{\Phi}$,
i.e.
\begin{align*}
	\mathcal{U}(\mathcal{A}_{\alpha})&= \mathcal{U}(\mathcal{A}) ,\\
	\mathcal{L}(\mathcal{A}_{\alpha})&= \mathcal{L}(\mathcal{A}) .
\end{align*}

For a subset $\Pi_{0} \subseteq \mathcal{M}(\mathcal{A}_{0})$, the projection identity \eqref{P0}
implies that the set $\Pi:=P_{0}^{-1}\Pi_{0}$ defined by $P_{0}^{-1}\Pi_{0} :=\{\pi \in \mathcal{M}(\mathcal{A}) \mid P_{0}\pi \in \Pi_{0}\}$ is the induced set of probability measures on $\mathcal{A}$.  Moreover, for  $\pi \in \Pi$, the change of variables formula
\[
	\E_{\pi}[\Phi]=\E_{\pi}[\Phi_{0} \circ P_{\alpha}]=\E_{P_{\alpha}\pi}[\Phi_{0}]
\]
implies that
\begin{align*}
 	\sup_{\pi \in \Pi}{\E_{\pi}[\Phi]}
 	&= \sup_{\pi_{\alpha} \in P_{\alpha}\Pi}{\E_{\pi_{\alpha}}[\Phi_{0}]},\\
	\inf_{\pi \in \Pi}{\E_{\pi}[\Phi]}
	&= \inf_{\pi_{\alpha} \in P_{\alpha}\Pi}{\E_{\pi}[\Phi_{0}]},
\end{align*}
so that
\[
	P_{\alpha}\Pi=P_{\alpha}P_{0}^{-1}\Pi_{0} \subseteq \mathcal{M}(\mathcal{A}_{\alpha})
\]
is the induced set of probability measures on $\mathcal{A}_{\alpha}$.  Let us denote this induced set by
\begin{equation}\label{eq:palphadef2}
	\Pi_{\alpha}:=P_{\alpha}P_{0}^{-1}\Pi_{0} ,
\end{equation}
so that these equalities become
\begin{align*}
	\mathcal{U}(\Pi)&= \mathcal{U}(\Pi_{\alpha}) ,\\
	\mathcal{L}(\Pi)&= \mathcal{L}(\Pi_{\alpha}) .
\end{align*}

For conditioning on observations, define $\Dmap^{n}_{0}$ as in \eqref{eq:dmapiid}
and pull it back to the data map $\Dmap^{n} \colon \mathcal{M}(\mathcal{X}) \times \mathcal{M}(\mathcal{X}) \to \mathcal{M}(\mathcal{X}^{n})$ defined by $\Dmap^{n}:=\Dmap^{n}_{0}\circ P_{\alpha}$. Define $B^{n}_{\delta}$ as in \eqref{eq:Bndelta} and recall
the definition \eqref{eq:defcondexp}
\[
	\E_{\pi \odot \Dmap^{n}}\big[ \Phi\big| B^{n}_{\delta}\big]=\frac{\E_{(\mu_{1},\mu_{2})\sim \pi}\big[  \Phi(\mu_{1},\mu_{2}) \Dmap^{n}(\mu_{1},\mu_{2})[B^{n}_{\delta}] \big]}{\E_{(\mu_{1},\mu_{2})\sim\pi}\big[\Dmap^{n}(\mu_{1},\mu_{2})[B^{n}_{\delta}]\big]} .
\]
of the conditional expectation and the corresponding \eqref{eq:Upostes} upper value
\[
	\mathcal{U}(\Pi|B^{n}_{\delta}):=\sup_{\pi\odot \Dmap^{n} \in \Pi\odot_{B^{n}_{
\d}}\Dmap^{n}}{ \E_{\pi \odot \Dmap^{n}} \big[ \Phi\big| B^{n}_{\delta}\big]}
\]
in terms of the admissible set \eqref{def_B}
\[
	\Pi_{B^{n}_{\delta}} := \Bigl\{  \pi\in  \Pi : (\pi \cdot \Dmap^{n})[B^{n}_{\delta}]> 0 \Bigr\}\,
\]
of product measures, where the marginal is defined by
\[
	(\pi \cdot \Dmap^{n})[B^{n}_{\delta}] := \E_{(\mu_{1},\mu_{2})\sim\pi}\big[\Dmap^{n}(\mu_{1},\mu_{2})[B^{n}_{\delta}]\bigr] .
\]

Let us indicate the dependence on some measure $\underline{\pi}$ of the essential supremum of some quantity of interest $\underline{\Phi}$
by
\[
	\underline{\pi}^{\infty}(\underline{\Phi}) := \inf \left\{r \in \R \smid \underline{\pi}\{\underline{\Phi} >r\}=0 \right\}
\]
and, for a set $\underline{\Pi}$ of measures, let
\begin{equation}
	\label{eq_esssup}
	\underline{\Pi}^{\infty}(\underline{\Phi}):= \sup_{\underline{\pi} \in \underline{\Pi}} \underline{\pi}^{\infty}(\underline{\Phi}) .
\end{equation}
For $\pi_{\alpha}=P_{\alpha}\pi$ with $\pi \in \Pi$, we have
\begin{align*}
	\pi_{\alpha}[\Phi_{0} >r]
	&= \bigl(P_{\alpha}\pi\bigr)[\Phi_{0} >r]\\
	&= \pi[\Phi_{0}\circ P_{\alpha} >r]\\
	&= \pi[\Phi >r]
\end{align*}
so that we conclude that
\[
	\Pi^{\infty}(\Phi) = \Pi_{\alpha}^{\infty}(\Phi_{0}) .
\]

Let us now quantify a type of regularity for the model $\mathcal{P}$.  For $x \in \mathcal{X}$, let $B_{0}(x):=\{x\}$ and define
\[
	\mathcal{P}_{\infty}(\delta) := \sup_{x \in \mathcal{X}} \sup_{\theta\in \Theta} \mathcal{P}(\theta)[B_{\delta}(x)], \quad \text{for } \delta \geq 0 .
\]
It is clear that $\mathcal{P}_{\infty} \colon \R^{+} \to [0,1]$ is an increasing function. Moreover, for most parametric families, it is easy to show that $\mathcal{P}_{\infty}$ is continuous and $\mathcal{P}_{\infty}(0)=0$, and for many of them not difficult to find useful upper bounds.

Finally, let us assume that the model $\mathcal{P}$ is positive, in that $\mu(B_{\delta}(x)) > 0$ for all $\mu \in \mathcal{A}_{0}$, $x \in\mathcal{X}$,  and $\delta >0$.
Theorem \ref{thm_localshivacor} is a  direct consequence of the following theorem.

\begin{thm}[Brittleness under Local Misspecification]
	\label{thm_localshiva}
	With the notation and assumptions above, let $\Pi_{\alpha}$ be defined as in \eqref{eq:palphadef2}, and let $\delta>0 $ and $0 < \alpha < 1$  satisfy
	\[
		\mathcal{P}_{\infty}(\d) < \alpha .
	\]
	Then, using $\Dmap^{n}_{0}$ for the distribution of the data, for all integers $n \geq 1$,
	\[
		\mathcal{U}(\Pi_{\alpha}|B^{n}_{\delta})\geq \Pi_{0}^{\infty}(\Phi_{0})
	\]
	with similar expressions for the lower bounds $\mathcal{L}$.
\end{thm}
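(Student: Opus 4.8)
The plan is to deduce the theorem from Theorem~\ref{thm:shiva0cor} applied on the ball fibration $\mathcal{A}$ of \eqref{eq:Aup}, which is Suslin by Remark~\ref{rmk:afibration}. I would take the feature map to be the first projection $\Psi:=P_0\colon\mathcal{A}\to\mathcal{A}_0$, so that $\mathcal{Q}:=\mathcal{A}_0$ is separable and metrizable (being a subspace of the Polish space $\mathcal{M}(\mathcal{X})$) and $\Psi(\mathcal{A})=\mathcal{A}_0=\mathcal{Q}$; with this choice the support hypothesis $\supp(\mathbb{Q})\subseteq\Psi(\mathcal{A})$ holds automatically for every $\mathbb{Q}\in\mathfrak{Q}:=\Pi_0$. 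Since $\Pi=P_0^{-1}\Pi_0=\Psi^{-1}\mathfrak{Q}$, the quantity of interest is $\Phi=\Phi_0\circ P_\alpha$, the data map is $\Dmap^n=\Dmap^n_0\circ P_\alpha$, and the conditioning set is $B=B^n_\delta$, Theorem~\ref{thm:shiva0cor} will yield a lower bound on $\mathcal{U}(\Pi\,|\,B^n_\delta)$ once its two hypotheses are checked.

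The heart of the argument, and the step I expect to be the main obstacle, is verifying that all level sets of $\Psi$ go to zero, i.e.\ condition \eqref{eq:B0}: for each $\mu_1\in\mathcal{A}_0$ I must exhibit measures in the fiber $\Psi^{-1}(\mu_1)=\{(\mu_1,\mu_2)\mid\mu_2\in B_\alpha(\mu_1)\}$ on which $\Dmap^n(\mu_1,\mu_2)[B^n_\delta]=\prod_{i=1}^n\mu_2[B_\delta(x_i)]$ is arbitrarily small. Here the hypothesis $\mathcal{P}_{\infty}(\delta)<\alpha$ enters decisively: since $\mu_1\in\mathcal{A}_0$ we have $\mu_1[B_\delta(x_1)]\le\mathcal{P}_{\infty}(\delta)<\alpha<1$, so $\mathcal{X}\setminus B_\delta(x_1)\ne\varnothing$ and I may define $\mu_2$ by deleting the mass of $\mu_1$ inside $B_\delta(x_1)$ and reassigning it to an arbitrary probability measure supported off $B_\delta(x_1)$. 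Then $d_{\mathrm{TV}}(\mu_1,\mu_2)\le\mu_1[B_\delta(x_1)]<\alpha$, and since $d_{\mathcal{M}}\le d_{\mathrm{TV}}$ (because $A\subseteq A^{\varepsilon}$ forces $\mu_1(A)-\mu_2(A^{\varepsilon})\le\mu_1(A)-\mu_2(A)$) we get $\mu_2\in B_\alpha(\mu_1)$, whence $(\mu_1,\mu_2)\in\mathcal{A}$. By construction $\mu_2[B_\delta(x_1)]=0$, so $\Dmap^n(\mu_1,\mu_2)[B^n_\delta]=0$ and the infimum in \eqref{eq:B0} vanishes. This is exactly the gap-creating mechanism of the worst measures $\mu^b$ of Subsection~\ref{subsecex2}, now realised uniformly over the fibre.

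For the second hypothesis I would take the diagonal section $\psi(\mu_1):=(\mu_1,\mu_1)$, which is (continuous, hence) measurable and is a genuine section of $P_0$ since $d_{\mathcal{M}}(\mu_1,\mu_1)=0<\alpha$ gives $(\mu_1,\mu_1)\in\mathcal{A}$. Positivity \eqref{eq:Bp} is precisely where the standing assumption that $\mathcal{P}$ is positive is used: $\Dmap^n(\psi(\mu_1))[B^n_\delta]=\mu_1^n[B^n_\delta]=\prod_{i=1}^n\mu_1[B_\delta(x_i)]>0$ for every $\mu_1\in\mathcal{A}_0$. Moreover $\Phi\circ\psi(\mu_1)=\Phi_0\bigl(P_\alpha(\mu_1,\mu_1)\bigr)=\Phi_0(\mu_1)$, so $\Phi\circ\psi=\Phi_0$ on $\mathcal{A}_0$ and therefore $\mathfrak{Q}^{\infty}(\Phi\circ\psi)=\sup_{\mathbb{Q}\in\Pi_0}\mathbb{Q}^{\infty}(\Phi_0)=\Pi_0^{\infty}(\Phi_0)$ by \eqref{eq_esssupmathfrakQ} and \eqref{eq_esssup}. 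Theorem~\ref{thm:shiva0cor} then gives $\mathcal{U}(\Pi\,|\,B^n_\delta)\ge\Pi_0^{\infty}(\Phi_0)$.

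Finally I would transfer this bound from $\mathcal{A}$ to $\mathcal{A}_\alpha$. Writing any $\pi\in\Pi$ and its pushforward $\pi_\alpha:=P_\alpha\pi\in\Pi_\alpha$, the change-of-variables identity for $P_\alpha$ together with $\Phi=\Phi_0\circ P_\alpha$ and $\Dmap^n=\Dmap^n_0\circ P_\alpha$ shows that the conditional expectations $\E_{\pi\odot\Dmap^n}[\Phi\mid B^n_\delta]$ and $\E_{\pi_\alpha\odot\Dmap^n_0}[\Phi_0\mid B^n_\delta]$ coincide, as do the admissibility constraints $\pi\cdot\Dmap^n[B^n_\delta]>0$ and $\pi_\alpha\cdot\Dmap^n_0[B^n_\delta]>0$; since $\Pi_\alpha=P_\alpha\Pi$, taking suprema gives $\mathcal{U}(\Pi_\alpha\,|\,B^n_\delta)=\mathcal{U}(\Pi\,|\,B^n_\delta)\ge\Pi_0^{\infty}(\Phi_0)$, which is the claim; the statement for $\mathcal{L}$ follows by applying the same argument to $-\Phi_0$. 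The only delicate point beyond bookkeeping is the Prokhorov estimate of the second paragraph, and in particular checking that $d_{\mathcal{M}}\le d_{\mathrm{TV}}$ suffices, so that no transport of the deleted mass to a prescribed location is required.
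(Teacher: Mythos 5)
Your proposal is correct and follows essentially the same route as the paper's own proof: the same application of Theorem~\ref{thm:shiva0cor} to the ball fibration with $\Psi=P_{0}$ and $\mathcal{Q}=\mathcal{A}_{0}$, condition \eqref{eq:B0} verified by deleting the mass of $\mu_{1}$ in $B_{\delta}(x_{1})$ (the paper's version uses the conditional measure $\mu_{0}|_{B_{\delta}(x_{1})^{c}}$ via Proposition~\ref{prop_TV} together with $d_{\mathcal{M}}\leq d_{\mathrm{TV}}$, which is just the special case of your redistribution), the diagonal section for \eqref{eq:Bp}, and the pull-back identity $\mathcal{U}(\Pi|B^{n}_{\delta})=\mathcal{U}(\Pi_{\alpha}|B^{n}_{\delta})$. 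The only differences are cosmetic: the order in which the transfer to $\Pi_{\alpha}$ is performed and the slightly more general reassignment of the deleted mass.
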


\begin{rmk}
When Cromwell's rule (see  Section \ref{sec:misspec}) is implemented (i.e.\ if
the prior measure of every non-empty neighborhood is strictly positive), it follows that $\Pi_{0}^{\infty}(\Phi_{0})=\mathcal{U}(\mathcal{A}_{0})$ so that the
conclusion of Theorem \ref{thm_localshiva} becomes
 \[
                \mathcal{U}(\Pi_{\alpha}|B^{n}_{\delta})\geq \mathcal{U}(\mathcal{A}_{0})\, .
        \]

\end{rmk}
\begin{rmk}
	Theorem \ref{thm_localshiva} provides conditions sufficient to guarantee how bad things can get regardless of how many samples are taken. One might hope that when these conditions are not satisfied, that more samples may prove beneficial.  However, when the condition
	\[
		\inf_{(\mu,\mu') \in  \Psi^{-1}\mu} \Dmap^{n}(\mu,\mu')[B^{n}_{\delta}]
=  0, \quad \mu \in \mathcal{A}_{0}
	\]
	of Theorem \ref{thm:shiva0cor}  is only approximately satisfied, the inequality
	\[
		\Dmap^{n}(\mu,\mu')[B^{n}_{\delta}] =\bigl(\mu'\bigr)^{n}[B^{n}_{\delta}]=\prod_{i=1}^{n}{\mu'[B_{\delta}(x_{i})]}
	\]
	and the quantitative version of Theorem \ref{thm:shiva} (given in \cite[Thm.~3.1]{OwhadiScovel:2013}, see also \cite[Rmk.~3.2]{OwhadiScovel:2013})
	imply that things actually get `worse'
	with more samples.

\end{rmk}

\section{Conclusions and Further Developments}\label{sec:conclusions}

In this paper, we have looked at the robustness of Bayesian Inference in the classical framework of Bayesian Sensitivity Analysis.
In that (classical) framework, the data is fixed, and one computes optimal bounds on (i.e.~the sensitivity of) posterior values with respect to variations of the prior in a given class of priors.
Although robustness is already well established  when the class of priors is finite dimensional,
  we  observe that, under general conditions,  when the class of priors is finite codimensional, the optimal bounds on
 posterior values are as large as possible, no matter the number of data points.
Our motivation for specifying a finite codimensional class of priors is to look at what classical Bayesian sensitivity analysis would conclude under finite information, and the best way to understand this notion of ``brittleness under finite information''  is through the simple example provided in Subsection \ref{subsecex1}.

The mechanism causing this ``brittleness'' has its origin in the fact that, in classical Bayesian sensitivity analysis, optimal bounds on posterior values are computed  after  the observation of the specific value of the data, and that the probability of observing the data under some feasible prior may be arbitrarily small (the example given in Subsection \ref{subsecex2} provides an illustration of this phenomenon). This data dependence of {\em worst priors} is inherent to this classical framework and the resulting brittleness under finite information can be seen as an extreme occurrence of the dilation phenomenon (the fact that optimal bounds on prior values may become less precise after conditioning) observed in classical robust Bayesian inference \cite{WassermanSeidenfeld:1994}.
Although these worst priors do depend on the data, ``look nasty'', and make the probability of observing the data very small,
they are not ``isolated pathologies'' but directions of instability (of Bayesian conditioning) and their number increase with the number of data points.
The example given in Subsection \ref{subsec:learningvsrobustness} provides an illustration of this point and also suggests that learning and robustness are, to some degree, antagonistic properties: a strong constraint on the probability of the data makes the method robust but learning impossible and, as the constraint is relaxed, learning becomes possible but posterior values become brittle.

Since ``brittleness under finite information'' appears to be inherent to classical Bayesian Sensitivity Analysis (in which worst priors are computed given the specific value of the data), one may ask whether robustness could be established under finite information by exiting the strict framework of Robust Bayesian Inference and computing the sensitivity of posterior conclusions independently of the specific value of the data.
To investigate this question, Hampel and Cuevas'  notion of {\em qualitative robustness} has been generalized in \cite{OwhadiScovel:2014}
   to Bayesian inference based on the quantification of the {\em sensitivity of the distribution of the posterior distribution} with respect to perturbations of the prior and the data generating distribution, in the limit when the number of data points grows towards infinity. Note that, contrary to classical Bayesian Sensitivity Analysis considered here, in the qualitative formulation the data is not fixed
 and posterior values are therefore analyzed as dynamical systems randomized through the distribution of the data.
To express finite information,  the total variation, Prokhorov, and Ky Fan metrics have been used  to quantify perturbations and sensitivities.

Since this notion of {\em qualitative robustness} is established in the limit when the number of data points grows towards infinity, it is natural to expect that the notion of {\em consistency} (i.e.~the property that posterior distributions convergence towards the data generating distribution) will play an important role.
Although consistency is primarily a frequentist notion, it is also equivalent to {\em intersubjective agreement}
which means that two Bayesians will ultimately have very close predictive distributions. Therefore, it also has importance for Bayesians.
Fortunately,
 not only are there  mild conditions which
 guarantee consistency, but the  Bernstein--von Mises theorem goes further in providing mild conditions under which the posterior
is asymptotically normal. The most famous of these are Doob \cite{Doob:1949}, Le Cam and Schwartz \cite{Lecam_necessary},
 and Schwartz \cite[Thm.~6.1]{Schwartz:1965}.
 Moreover, the assumptions needed for this consistency are so mild that one can be lead to the conclusion that the prior does not really matter once there is enough data. For example, we quote
 Edwards, Lindeman and Savage \cite{Edwards_bayesian}:
\begin{quotation}
        \noindent
``Frequently,
the data so completely
control your posterior opinion that
there is no practical need to attend to
the details of your prior opinion.''
\end{quotation}

To some, the consistency results appeared to generate more confidence than possibly they should.
 We quote A.~W.~F.~Edwards \cite[p.~60]{Edwards}:
\begin{quotation}
        \noindent
 ``It is sometimes said, in defence of the Bayesian concept, that the choice of prior distribution is unimportant in practice,
 because it hardly influences the posterior distribution at all when there are moderate amounts of data.
 The less said about this 'defence' the better.''
\end{quotation}

\cite{OwhadiScovel:2014} shows that the {\em Edwards defence}  is essentially what
 produces  non \emph{qualitative robustness} in Bayesian inference. In particular, the assumptions required for consistency (e.g.~the assumption that the prior has Kullback--Leibler support at the parameter value generating the data) are such that arbitrarily small local perturbations of the prior distribution (near the data generating distribution) results in consistency or non-consistency, and therefore have large impacts on the asymptotic behavior of posterior distributions.
 These mechanisms  are different and complementary to those discovered by Hampel   and developed by Cuevas, and they suggest that consistency and robustness are, to some degree, antagonistic requirements (a careful selection of the prior is important if both properties, or their approximations, are to be achieved) and also indicate that misspecification generates non {\em qualitative robustness}.\\

In conclusion, the exploration of Bayesian inference in a continuous world has revealed both positive and negative results. However, positive results regarding the classical or qualitative robustness
of Bayesian inference under finite information have yet to be obtained. To that end,
 observe that the example provided in Subsection  \ref{subsec:learningvsrobustness} suggests that there may be a missing stability condition for Bayesian inference in a continuous world under finite information akin to the CFL condition for the stability of a discrete numerical scheme used to approximate a continuous PDE. Although numerical schemes that do not satisfy the CFL condition may look grossly inadequate, the existence of such perverse examples certainly does not imply the dismissal of the necessity of a stability condition. Similarly, although one may, as in the example provided in Subsection \ref{subsecex2}, exhibit grossly perverse worst priors, the existence of such priors does not invalidate the need for a study of stability conditions for using Bayesian Inference under finite information. The example of Subsection \ref{subsec:learningvsrobustness} suggests that, in the framework of Bayesian Sensitivity Analysis, under finite information, such a stability condition would strongly depend on how well the probability of the data is  known or  constrained in the  model class in addition to the class of priors and the resolution of the measurements.
 It is natural to expect that such robustness and stability questions will increase in importance
as Bayesian methods increase in popularity due to the availability of computational methodologies and environments to compute the posteriors.
Indeed, when posterior distributions are approximated using such methods, the robustness analysis   naturally
 includes not only quantifying sensitivities with respect to the data generating distribution and the choice of prior, but also the analysis of convergence and stability of the computational method. This is particularly true in Bayesian updating where Bayes' rule is applied iteratively and computed  posterior distributions become prior distributions for the next iteration.
 Oftentimes these  posterior distributions (which are then treated as prior distributions) are only approximated (e.g. via MCMC methods) and the Brittleness results discussed here and in \cite{OwhadiScovel:2014} suggest that having strong convergence (of these MCMC methods) in TV would not be enough to ensure stability. At a higher level, these results appear to suggest that robust inference (in a continuous world under finite information) should be done with reduced/coarse models rather than highly sophisticated/complex models (and the level of ``coarseness/reduction'' would depend on the available ``finite information'').

\section{Proofs}
\label{Sec:proofs}

\subsection{Proof of Theorem \ref{thm_primred}}

For $q \in \R^{n}$, define
\[
	\Pi(q) := \Psi^{-1}\mathfrak{Q}= \left\{ \pi \in \mathcal{M}(\mathcal{A}) \smid \E_{\pi}[\Psi]=q \right\}
\]
and let $\Pi(q,n):= \Pi(q) \cap \Delta(n) \subseteq \Pi(q)$ be the subset consisting of $(n+1)$-fold convex combinations of Dirac masses. Using a `layercake'
approach, we use the fact that
\[
	\Pi(Z) = \bigcup_{q \in Z}  \Pi(q) \quad \text{and}\quad  \Pi(Z,n) =\bigcup_{q \in Z} \Pi(q,n) ,
\]
while applying Theorem~\ref{thm:priorreduce} with equality constraints $\Pi(q), q \in \R^{n}$,
and the fact that the supremum over a union is a supremum of suprema to obtain a reduction as follows:
\begin{align*}
	\mathcal{U}\bigl( \Pi(Z) \bigr)
	&= \mathcal{U} \left( \bigcup_{q\in Z}\Pi(q) \right)\\
	&= \sup_{q \in Z} \mathcal{U}\big(\Pi(q)\big) \\
	&= \sup_{q \in Z} \mathcal{U}\big(\Pi(q,n)\big) \\
	&= \mathcal{U}\left(\bigcup_{q\in Z} \Pi(q,n)\right)\\
	&= \mathcal{U}\big(\Pi(Z,n)\bigr),
\end{align*}
which completes the proof.  \hfill\qedsymbol

\subsection{Proof of Lemma \ref{lem:Umeasurable}}

Since $T \subset \mathcal{Q}$ is a subset of a separable metrizable space, \cite[Cor.~3.5]{AliprantisBorder:2006} implies that it is itself separable and metrizable. Consider the set-valued map with non-empty values $\Psi^{-1} \colon T \twoheadrightarrow \mathcal{A}$ with graph $G$ defined by
\begin{equation}
	\label{eq:graphG}
	G := \left\{ (q,\mu)\in T \times \mathcal{A} \smid \Psi(\mu)=q \right\} .
\end{equation}
Let $d$ be a metric that generated the topology of $T$ and define $h \colon T \times \mathcal{A} \to \R$ by
$h(q,\mu):=d(\Psi(\mu),q)$.  Then, since $d$ is continuous in each of its arguments,
it follows that $h$ is a Carath\'{e}odory function, as defined in Definition~\ref{def:caratheodory}.  Since
$T$ is separable and metrizable, Lemma~\ref{def:caratheodorylemma} implies that $h$ is $\mathcal{B}(T) \otimes \mathcal{B}(\mathcal{A})$-measurable. Rewriting Equation \eqref{eq:graphG} as
\[
	G:= \left\{(q,\mu)\in T \times \mathcal{A} \smid h(q,\mu)=0 \right\}
\]
yields that $G$  belongs to $\mathcal{B}(T)\otimes \mathcal{B}(\mathcal{A})$.  Lemma~\ref{lemIII39CstaingValafier} (through the identification $S=\mathcal{A}$, $s=\mu$, $\varphi(t, s)=\Phi(\mu)$)  implies that the function $\mathcal{U}\circ \Psi^{-1} \colon T \to \R$ defined for $q \in T$ by $q \mapsto \sup_{\mu\in \Psi^{-1}(q)} \Phi(\mu)$ is $\widehat{\mathcal{B}}(T)$-measurable, thereby establishing the first assertion.  The second assertion then follows from the second part of Lemma~\ref{lemIII39CstaingValafier}.  \hfill\qedsymbol

\subsection{Proof of Theorem \ref{thm:reducpriormarg}}

For the first assertion, consider $\mathbb{Q} \in \mathfrak{Q}$.  Then, by the second assertion of Lemma~\ref{lem:Umeasurable}, there exists a $\widehat{\mathcal{B}}(\supp \mathbb{Q})$-measurable section $\psi$ of $\Psi$, i.e.~a $\widehat{\mathcal{B}}(\supp(\mathbb{Q}))$-measurable function $\psi \colon \supp(\mathbb{Q})
\to \mathcal{A}$ such that $\Psi(\psi(q)) = q$ for all $q \in \supp(\mathbb{Q})$.  Let $\mathbb{Q}$ also denote its restriction to its support and $\widehat{\mathbb{Q}}$ its completion.  Let $\pi:=\psi\widehat{\mathbb{Q}} \in \mathcal{M}(\mathcal{A})$, so that, for all $A \in \mathcal{B}(\supp(\mathbb{Q}))$,
\begin{align*}
	\bigl(\Psi\pi\bigr)(A)
	&= \bigl(\Psi\psi\widehat{\mathbb{Q}}\bigr)(A)\\
	&= \bigl((\Psi\circ \psi)\widehat{\mathbb{Q}}\bigr)(A)\\
	&= \widehat{\mathbb{Q}}(A)\\
	&= \mathbb{Q}(A).
\end{align*}
Hence, $\Psi\pi=\mathbb{Q}$, establishing the first assertion.

For the main assertion, observe that, for all $\mu \in \mathcal{A}$,
\begin{equation}
	\label{eq-pull}
	\bigl(\mathcal{U}\circ \Psi^{-1} \circ \Psi\bigr)(\mu) =\sup_{\mu': \Psi(\mu')=\Psi(\mu)} \Phi(\mu') \geq \Phi(\mu)  .
\end{equation}
Consequently, for $\mathbb{Q} \in \mathfrak{Q}$, the first assertion shows that there is a  $\pi$  such that $\Psi\pi=\mathbb{Q}$, so that a change of variables (Proposition~\ref{prop_change}) and the monotonicity properties
(Proposition~\ref{prop_E}) of these integrals, together with the inequality \eqref{eq-pull}, imply that
\begin{align*}
	\E_{\mathbb{Q}}[\mathcal{U}\circ \Psi^{-1}]
	&= \E_{\Psi\pi}[\mathcal{U}\circ \Psi^{-1}]\\
	&= \E_{\pi}[\mathcal{U}\circ \Psi^{-1} \circ \Psi]\\
	&\geq \E_{\pi}[\Phi] ,
\end{align*}
and therefore
\[
	\E_{\mathbb{Q}}[\mathcal{U}\circ \Psi^{-1}] \geq  \sup_{\pi \in \Psi^{-1}\mathbb{Q}} \E_{\pi}[\Phi] .
\]
Consequently,
\[
	\sup_{\mathbb{Q} \in \mathfrak{Q}} \E_{\mathbb{Q}}[\mathcal{U}\circ \Psi^{-1}] \geq \sup_{\pi \in \Psi^{-1}\mathfrak{Q}} \E_{\pi}[\Phi] = \mathcal{U}(\Psi^{-1}\mathfrak{Q})
\]
and, in particular,
\begin{equation}
	\label{eq-e1}
	\sup_{\mathbb{Q} \in \mathfrak{Q}} \E_{\mathbb{Q}}[\mathcal{U}\circ \Psi^{-1}] \geq \mathcal{U}(\Psi^{-1}\mathfrak{Q}) .
\end{equation}

To obtain the reverse inequality, for $\delta>0$  consider $\mathbb{Q} \in \mathfrak{Q}$ and apply Lemma \ref{lem:Umeasurable} to conclude that there exists a $\delta$-optimal $\widehat{\mathcal{B}}(\supp(\mathbb{Q}))$-measurable section of $\Psi$; that is,  a $\widehat{\mathcal{B}}(\supp(\mathbb{Q}))$-measurable function $\psi \colon \supp(\mathbb{Q}) \to\mathcal{A}$ such that $\Psi\big(\psi(q)\big)=q$ for all $q \in  \supp(\mathbb{Q})$ and $\bigl(\Phi \circ \psi \bigr)(q) > \bigl(\mathcal{U}\circ \Psi^{-1}\bigr)(q) -\delta$ for all $q \in \supp(\mathbb{Q})$.  Now let $\pi:=\psi\widehat{\mathbb{Q}} \in \mathcal{M}(\mathcal{A})$, and observe from the proof  of the first assertion that $\Psi\pi=\mathbb{Q}$, and therefore $\pi \in \Psi^{-1}\mathbb{Q}$.  Therefore, by a change of variables,
\begin{align*}
	\E_{\pi}[\Phi]
	&= \E_{\psi\widehat{\mathbb{Q}}}[\Phi]\\
	&= \E_{\widehat{\mathbb{Q}}}[\Phi\circ \psi]\\
 	&> \E_{\widehat{\mathbb{Q}}}[\mathcal{U}\circ \Psi^{-1}] -\delta.
\end{align*}
Since, by definition, $\E_{\mathbb{Q}}[\mathcal{U}\circ \Psi^{-1}]:=\E_{\widehat{\mathbb{Q}}}[\mathcal{U}\circ \Psi^{-1}]$,
it follows that
\begin{align*}
	\mathcal{U}(\Psi^{-1}\mathfrak{Q})
	&= \sup_{\pi \in \Psi^{-1}\mathfrak{Q}}\E_{\pi}[\Phi]\\
	&\geq \sup_{\mathbb{Q} \in \mathfrak{Q}}\E_{\mathbb{Q}}[\mathcal{U}\circ \Psi^{-1}] -\delta.
\end{align*}
Since $\delta > 0$ was arbitrary, it follows that
\[
	\mathcal{U}(\Psi^{-1}\mathfrak{Q}) \geq \sup_{\mathbb{Q} \in \mathfrak{Q}}\E_{\mathbb{Q}}[\mathcal{U}\circ \Psi^{-1}] .
\]
Recalling the reverse inequality \eqref{eq-e1}, we obtain the main assertion.

The assertion of measure affinity follows from Lemma \ref{lem_affine}.

For the assertion  \eqref{eq:hskjbiu3}, define
\[
	\Pi:=\left\{\pi\in \mathcal{M}(\mathcal{A}) \smid  \E_{\pi}[\psi_i]= 0\text{ for } i=1,\ldots, n \right\}.
\]
Let $\epsilon>0$.  Assume that $\sup_{\pi_+\in \Pi_+} \E_{\pi_+}\big[\Phi\big]>\lambda$ and that $\pi_+ \in \Pi_+$ is such that $\E_{\pi_+}\big[\Phi\big]>\lambda$.  Observe that $\pi:=\pi_+/\pi_+(\mathcal{A})$ is an element of $\Pi$ that satisfies $\E_{\pi}\big[\Phi-\lambda \psi_0\big]>0$.  Define $\Pi_n$ as in \eqref{eq:uihue32Dirac} and apply
\cite[Thm.~4.1]{OSSMO:2011} to $\sup_{\pi \in \Pi}\E_{\pi}\big[\Phi-\lambda \psi_0\big]$ to conclude that there exists $\pi^{\ast} \in \Pi_n$ such that $\E_{\pi^{\ast}}\big[\Phi-\lambda \psi_0\big]>0$.  Since $\Phi-\lambda \psi_0=(\varphi-\lambda)\psi_0$ and $\psi_0$ is positive, it also follows that $\E_{\pi^{\ast}}\big[\psi_0\big]>0$.  Writing $\pi_+^{\ast}:=\pi^{\ast}/\E_{\pi^{\ast}}[\psi_0]$ we obtain that $\pi_+^{\ast} \in \Pi_{+,n}$ and $\E_{\pi_+^{\ast}}\big[\Phi\big]> \lambda$, which concludes the proof of \eqref{eq:hskjbiu3}.  \hfill\qedsymbol

\subsection{Proof of Lemma \ref{lem_positivemass}}

Consider the set
\[
	\mathcal{Y}':=\bigcup \left\{ \mathcal{O}_{y} \smid \mathcal{O}_{y} \subseteq \mathcal{Y} \text{ is open and } \nu(\mathcal{O}_{y}) = 0 \right\}.
\]
First let us show that $E=\mathcal{Y}'$.  To see this, first observe that trivially we have $E \subseteq \mathcal{Y}'$.  Now suppose that $y \in \mathcal{Y}'$.  Then there exists a $y'\in  \mathcal{Y}$ and an open $\mathcal{O}_{y'} \ni y'$ such that $y \in \mathcal{O}_{y'}$ and $\nu(\mathcal{O}_{y'} )=0$.  Therefore, $y\in E$ and hence $E=\mathcal{Y}'$.

Now, since $\mathcal{Y}'$ is a union of open sets, it is open and therefore measurable.
Moreover, since $\mathcal{Y}$ is strongly Lindel\"{o}f, it follows that $\mathcal{Y}'$ is Lindel\"{o}f
and that the open cover of $\mathcal{Y}'$ by $\nu$-null open sets used in the definition of $\mathcal{Y}'$ has a countable subcover, so that
\[
	\mathcal{Y}' = \bigcup_{i \in \mathbb{N}} \mathcal{O}_{y_{i}}
\]
where each $\mathcal{O}_{y_{i}}$ is open and has $\nu(\mathcal{O}_{y_{i}})=0$.  It follows that
\[
	\nu(E)=\nu(\mathcal{Y}') \leq \sum_{i \in  \mathbb{N}} \nu(\mathcal{O}_{y_{i}}) = 0
\]
and the proof is finished.  \hfill\qedsymbol

\subsection{Proof of Theorem \ref{thm:redpip}}

The first assertion, \eqref{eq:hsbiu3}, follows by layering the set of positive measures of finite total mass as
$\bigcup_{r \in \R_{+}} \{r\mathcal{M}(\mathcal{A})\}$, using the fact that the supremum over a union is a supremum of suprema, and applying the reduction theorem \cite[Thm.~4.1]{OSSMO:2011} in $r\mathcal{M}(\mathcal{A})$ separately.

For the second assertion, \eqref{eq:hskjbiu3}, define
\[
	\Pi:= \left\{\pi\in \mathcal{M}(\mathcal{A}) \smid  \E_{\pi}[\psi_i]= 0\text{ for } i=1,\ldots, n\right\}
\]
Let $\epsilon>0$.  Assume that $\sup_{\pi_+\in \Pi_+} \E_{\pi_+}\big[\Phi\big]>\lambda$ and that $\pi_+ \in \Pi_+$ is such that $\E_{\pi_+}\big[\Phi\big]>\lambda$.  Observe that $\pi:=\pi_+/\pi_+(\mathcal{A})$ is an element of $\Pi$ that satisfies $\E_{\pi}\big[\Phi-\lambda \psi_0\big]>0$.  Defining $\Pi_n$ as in \eqref{eq:uihue32Dirac} and applying \cite[Thm.~4.1]{OSSMO:2011} to $\sup_{\pi \in \Pi} \E_{\pi}\big[\Phi-\lambda \psi_0\big]$, we deduce that there exists $\pi^{\ast} \in \Pi_n$ such that $\E_{\pi^{\ast}}\big[\Phi-\lambda \psi_0\big]>0$.  Since $\Phi-\lambda \psi_0=(\varphi-\lambda)\psi_0$ and $\psi_0$ is positive, it also follows that $\E_{\pi^{\ast}}\big[\psi_0\big]>0$. Let $\pi_+^{\ast}:=\pi^{\ast}/\E_{\pi^{\ast}}[\psi_0]$ to obtain that $\pi_+^{\ast} \in \Pi_{+,n}$ and $\E_{\pi_+^{\ast}}\big[\Phi\big]> \lambda$, which concludes the proof of \eqref{eq:hskjbiu3}.  \hfill\qedsymbol

\subsection{Proof of Theorem \ref{thm:alternredded}}

 First, we prove that
\begin{equation}
    \label{eq:intdvhge}
	\mathcal{U}(\Pi(q)|B)= \sup_{\pi_+\in \Pi_+(q)} \E_{\mu \sim \pi_+}\big[  \Phi(\mu) \Dmap(\mu)[B] \big],
\end{equation}
where $\Pi_+(q)$ is the set of positive finite measures $\pi_+$ on $\mathcal{A}$ such that $\E_{\pi_+}\big[\Psi(\mu)-q\big]=0$ and $\E_{\pi_+ }\big[ \Dmap(\mu)[B] \big]=1$.
To that end, first observe that
\[
	\mathcal{U}(\Pi(q)|B)= \sup_{\pi \in \Pi(q): \pi \odot \Dmap[B]>0}\E_{\pi\odot \Dmap}\big[ \Phi\big|  B\big]
\]
and that, for any $\pi \in \Pi(q)$ such that $ \pi \odot \Dmap[B]>0$,
\begin{equation}
	\label{eq:intedegy}
    \E_{\pi \odot \Dmap}\big[ \Phi\big| B\big]=\frac{\E_{\mu\sim \pi} \big[  \Phi(\mu)
 \Dmap(\mu)[B] \big]}{\E_{\mu\sim\pi} \big[\Dmap(\mu)[B]\big]}.
\end{equation}
Now consider  $\pi\in \Pi(q)$ such that $\pi \odot \Dmap[B]>0$.  Then $\pi_+:=\pi/\E_{\pi}\big[ \Dmap(\mu)[B] \big]$ is an element of $\Pi_+(q)$ and \eqref{eq:intedegy} implies that
\[
	\E_{\pi\odot  \Dmap}\big[ \Phi\big|B\big]=\E_{\mu \sim \pi_+}\big[  \Phi(\mu) \Dmap(\mu)[B] \big] .
\]
Conversely, if $\pi_+\in \Pi_+(q)$, then $\pi:=\pi_+/\pi_+[\mathcal{A}]$ is an element of $\Pi(q)$  such that  $\pi \odot \Dmap[B]>0$ and
\[
	\E_{\mu \sim \pi_+}\big[  \Phi(\mu) \Dmap(\mu)[B] \big]= \frac{\E_{\mu \sim \pi}\big[  \Phi(\mu) \Dmap(\mu)[B] \big]}{\E_{\mu \sim \pi}\big[ \Dmap(\mu)[B] \big]}.
\]
Since the above argument also shows that $\Pi(q)\odot_B\Dmap$ is nonempty if and only if
$\Pi_+(q)$ is nonempty, \eqref{eq:intdvhge} follows.  The right hand side of \eqref{eq:intdvhge} is a linear program in $\pi_+$, so Theorem~\ref{thm:redpip} implies that the supremum in $\pi_+$ can be achieved by assuming $\pi_+$ to be the weighted sum of at most $n+1$ Dirac masses, i.e.~by assuming that
\begin{equation}
	\label{eq:polycondeesde}
	\pi_+=\sum_{i = 0}^{n} \alpha_{i} \delta_{\mu_{i}}.
\end{equation}
This finishes the proof of Theorem~\ref{thm:alternredded}.  \hfill\qedsymbol

\subsection{Proof of Theorem \ref{thm:sivahidden}}

First let us show that, for $\lambda \in \R$, the statement that
\begin{equation}
	\label{eq:e1}
	\E_{\pi\odot \Dmap}[\Phi| B] >\lambda, \quad  \pi\in  (\Psi^{-1}(\mathfrak{Q}))_B
\end{equation}
is equivalent to the statement that
\begin{equation}
	\label{eq:e2}
	\E_{\mu \sim \pi}\Big[\big(\Phi(\mu)-\lambda\big)\Dmap(\mu)[B]\Big]>0  .
\end{equation}
To that end, assume \eqref{eq:e1} and observe that the definition \eqref{def_B} of $(\Psi^{-1}(\mathfrak{Q}))_B$ implies that $\pi \cdot \Dmap[B]> 0$, where, by \eqref{eq:cdotexp2},
\begin{equation}
	\label{eq:cdotexp3}
    \pi\cdot\Dmap[B]:=\E_{\mu\sim \pi}\big[\Dmap(\mu)[B]\big] .
\end{equation}
Consequently, by \eqref{eq:defcondexp},
\[
	\E_{\pi \odot \Dmap}\big[ \Phi\big| B\big]=\frac{\E_{\mu\sim \pi}\big[  \Phi(\mu) \Dmap(\mu)[B] \big]}{\E_{\mu\sim\pi}\big[\Dmap(\mu)[B]\big]} >\lambda ,
\]
and the denominator is strictly positive.  Therefore,
\begin{align*}
	& \E_{\mu\sim \pi}\big[ \bigl(\Phi(\mu)-\lambda) \Dmap(\mu)[B] \big] \\
	& \quad = \E_{\mu\sim \pi}\big[  \Phi(\mu) \Dmap(\mu)[B] \big] -\lambda \E_{\mu\sim\pi}\big[\Dmap(\mu)[B]\big] \\
	& \quad >0 ,
\end{align*}
and \eqref{eq:e2} follows.  Conversely, assume \eqref{eq:e2} and observe that $\pi \cdot \Dmap[B]> 0$.  To see this, observe that, if $\pi \cdot \Dmap[B]= 0$, then \eqref{eq:cdotexp3} implies that
$\Dmap(\mu)[B]=0 $ $\pi$-a.s.\ and so
\[
	\E_{\mu\sim \pi}\big[  \bigl(\Phi(\mu)-\lambda) \Dmap(\mu)[B] \big] =0,
\]
which is a contradiction. Consequently, $\pi \cdot \Dmap[B] >0$ and dividing the assumption
\begin{align*}
	& \E_{\mu\sim \pi}\big[ \bigl(\Phi(\mu)-\lambda) \Dmap(\mu)[B] \big] \\
	& \quad = \E_{\mu\sim \pi}\big[  \Phi(\mu) \Dmap(\mu)[B] \big] -\lambda \E_{\mu\sim\pi}\big[\Dmap(\mu)[B]\big]  \\
	& \quad > 0
\end{align*}
by $\pi\cdot\Dmap[B]:=\E_{\mu\sim \pi}\big[\Dmap(\mu)[B]\big]$ throughout yields
\eqref{eq:e1} and the equivalence is  established.

The  main assertion now follows from a direct application of Theorem~\ref{thm:reducpriormarg}.
Finally, since $\Phi$ is semibounded, it follows that $\mu \mapsto \Phi(\mu)\Dmap(\mu)[B]$ is semibounded
and measurable, and the measure-affinity assertion follows from Lemma~\ref{lem_affine}.  \hfill\qedsymbol

\subsection{Proof of Theorem \ref{thm:shiva}}

Let us first establish that the assumptions of the theorem are well defined. To that end, note that Lemma~\ref{lem:Umeasurable} implies that $q \mapsto \inf_{\mu\in \Psi^{-1}(q)} \Dmap(\mu)[B]$ is $\widehat{\mathcal{B}}(\supp(\mathbb{Q}))$-measurable and hence \eqref{eq:dto0} is well defined. Similarly \eqref{eq:djkdjehjehj33} is well defined.

For the proof of the theorem, fix $\delta>0$,  let $\mathbb{Q}\in \mathfrak{Q}$ and $\Dmap\in \Dmaps$ satisfy the assumptions, and define $\lambda:=\mathcal{U}(\mathcal{A})-\delta$. Since $(\Phi(\mu)-\lambda)\Dmap(\mu)[B]$ is bounded and measurable, Lemma~\ref{lem:Umeasurable} implies that the function $q \mapsto \theta(q):= \sup_{\mu\in \Psi^{-1}(q)} (\Phi(\mu)-\lambda)\Dmap(\mu)[B]$ is $\widehat{\mathcal{B}}(\supp(\mathbb{Q}))$-measurable.  Moreover, \eqref{eq:dto0} implies that the function $\theta$ is non-negative with $\mathbb{Q}$-probability
one and \eqref{eq:djkdjehjehj33} implies that $\theta$ is strictly positive on a subset of strictly positive $\mathbb{Q}$-measure.  Hence,
\[
	E_{q\sim \mathbb{Q}} \left[\sup_{\mu\in \Psi^{-1}(q)} (\Phi(\mu)-\lambda)\Dmap(\mu)[B] \right] = \E_{\mathbb{Q}}[\theta] > 0,
\]
and, therefore,
\[
	\sup_{Q \in \mathfrak{Q}} \E_{q\sim \mathbb{Q}} \left[\sup_{\mu\in \Psi^{-1}(q)} (\Phi(\mu)-\lambda)\Dmap(\mu)[B] \right] > 0 .
\]
It then follows from Theorem~\ref{thm:sivahidden} that $\mathcal{U}(\Psi^{-1}\mathfrak{Q}|B) \geq \lambda=\mathcal{U}(\mathcal{A})-\delta$.  Since $\delta >0$ was arbitrary, it follows that $\mathcal{U}(\Psi^{-1}\mathfrak{Q}|B) \geq \mathcal{U}(\mathcal{A})$.  Theorem~\ref{thm-barriers} implies that
\[
	\mathcal{U}(\Psi^{-1}\mathfrak{Q} |B) \leq  \mathcal{U}(\mathcal{A})
\]
and the theorem follows.  \hfill\qedsymbol

\subsection{Proof of Theorem \ref{thm:shiva0cor}}

We will need the following notation: for an admissible set $\Pi \subseteq \mathcal{M}(\mathcal{A})$ of priors,
an observation map $\Dmap$, and an open subset $B \subseteq \Dspace$, let $\Pi \odot_B \Dmap$ be the set of probability distributions $\pi\odot \Dmap$ on $\mathcal{A}\times \Dspace$ generated by $\pi \in \Pi$:
\[
	\Pi\odot_B \Dmap := \{ \pi \odot \Dmap \mid \pi \in \Pi \text{ and }(\pi \cdot \Dmap)[B]> 0\}.
\]
We also define
\[
	\mathcal{U}(\Pi\odot_B\Dmap):=\sup_{\pi\odot \Dmap \in \Pi\odot_B\Dmap}\E_{\pi \odot \Dmap}\big[ \Phi\big| B\big].
\]

The proof follows from the proof of Theorem \ref{thm:shiva} as follows.  Let $\delta > 0$, and let  a measurable section $\psi$ satisfy the assumptions.  Define $\lambda:=\mathfrak{Q}^{\infty}(\Phi\circ\psi)-\delta$, and the universally measurable function $q \mapsto \theta(q):= \sup_{\mu\in \Psi^{-1}(q)} (\Phi(\mu)-\lambda)\Dmap(\mu)[B]$. Then assumption \eqref{eq:B0} implies that the function $\theta$ is non-negative.  It follows from the definition \eqref{eq_esssup} of $\mathfrak{Q}^{\infty}(\Phi\circ\psi)$, and $\lambda <\mathfrak{Q}^{\infty}(\Phi\circ\psi)$, that there is a $\mathbb{Q} \in \mathfrak{Q}$ such that $\Phi\circ\psi >\lambda $ with nonzero $\mathbb{Q}$-measure. Since
\begin{align*}
	\theta(q)
	&= \sup_{\mu\in \Psi^{-1}(q)} (\Phi(\mu)-\lambda)\Dmap(\mu)[B]\\
	&\geq  (\Phi \circ \psi(q)-\lambda)\Dmap(\psi(q))[B] ,
\end{align*}
the positivity assumption \eqref{eq:Bp} implies that $\theta$ is positive on a subset of positive $\mathbb{Q}$-measure. Hence,
\[
	\E_{q\sim \mathbb{Q}} \left[ \sup_{\mu\in \Psi^{-1}(q)} (\Phi(\mu)-\lambda)\Dmap(\mu)[B]\right] = \E_{\mathbb{Q}}[\theta] >0
\]
and, therefore,
\[
	\sup_{\mathbb{Q} \in \mathfrak{Q}} \E_{q\sim \mathbb{Q}} \left[ \sup_{\mu\in \Psi^{-1}(q)} (\Phi(\mu)-\lambda)\Dmap(\mu)[B] \right] > 0 .
\]
It then follows from Theorem \ref{thm:sivahidden} that
\[
	\mathcal{U}(\Psi^{-1}\mathfrak{Q}|B) \geq \lambda=\mathfrak{Q}^{\infty}(\Phi\circ\psi)-\delta.
\]
Since $\delta >0$ was arbitrary, the assertion is proved.  \hfill\qedsymbol

\subsection{Proof of Theorem \ref{thm_localshiva}}

We appeal to the corollary, Theorem \ref{thm:shiva0cor}, to Theorem \ref{thm:shiva}. To that end, let $\mathcal{A}$ be defined as in \eqref{eq:Aup}, and let $\mathcal{Q}:=\mathcal{A}_{0}$, $\Psi:=P_{0}$, $\Dmaps:=\{\Dmap^{n}\}$.

Since $\Dmap^{n}=\Dmap^{n}_{0}\circ P_{\alpha}$ is a pull-back,
\begin{align*}
	(\pi \cdot \Dmap^{n})[B^{n}_{\delta}]
	&= \E_{(\mu_{1},\mu_{2})\sim\pi} \big[\Dmap^{n}(\mu_{1},\mu_{2})[B^{n}_{\delta}]\big]\\
	&= \E_{(\mu_{1},\mu_{2})\sim\pi} \big[ \Dmap^{n}_{0}\circ P_{\alpha}(\mu_{1},\mu_{2})[B^{n}_{\delta}] \big]\\
	&= \E_{\mu_{2}\sim P_{\alpha}\pi}\big[\Dmap^{n}_{0}(\mu_{2})[B^{n}_{\delta}]\big]\\
	&= (P_{\alpha}\pi \cdot \Dmap^{n}_{0})[B^{n}_{\delta}] ,
\end{align*}
from which we conclude that $(P_{\alpha}\pi \cdot \Dmap^{n}_{0})[B^{n}_{\delta}]>0$ if and only if $(\pi \cdot \Dmap^{n})[B^{n}_{\delta}]>0$, and so conclude
\[
	\Pi_{\alpha}\odot_{B^{n}_{\delta}} \Dmap^{n}_{0}=P_{\alpha}\bigl(\Pi \odot_{B^{n}_{\delta}} \Dmap^{n}\bigr) ,
\]
where $P_{\alpha}$ acts on each component in the natural way.  Moreover since $\Phi=\Phi_{0}\circ P_{\alpha}$ is also a pull-back, for $\pi \in \Pi$, we have
\begin{align*}
	\E_{\pi \odot \Dmap^{n}}\big[ \Phi\big| B^{n}_{\delta}\big]
	&= \frac{\E_{(\mu_{1},\mu_{2})\sim \pi}\big[  \Phi(\mu_{1},\mu_{2}) \Dmap^{n}(\mu_{1},\mu_{2})[B^{n}_{\delta}] \big]}{\E_{(\mu_{1},\mu_{2})\sim\pi} \big[\Dmap^{n}(\mu_{1},\mu_{2})[B^{n}_{\delta}]\big]} \\
	&= \frac{\E_{(\mu_{1},\mu_{2})\sim \pi}\big[ \Phi_{0}\circ P_{\alpha}(\mu_{1},\mu_{2}) \cdot \Dmap^{n}_{0}\circ P_{\alpha}(\mu_{1},\mu_{2})[B^{n}_{\delta}] \big]}{\E_{(\mu_{1},\mu_{2})\sim\pi}\big[\Dmap^{n}_{0} \circ P_{\alpha}(\mu_{1},\mu_{2})[B^{n}_{\delta}]\big]} \\
	&= \frac{\E_{\mu_{2}\sim P_{\alpha}\pi}\big[  \Phi_{0}(\mu_{2}) \Dmap^{n}_{0}(\mu_{2})[B^{n}_{\delta}] \big]}{\E_{\mu_{2}\sim P_{\alpha}\pi}\big[\Dmap^{n}_{0}(\mu_{2})[B^{n}_{\delta}]\big]}\\
	&= \E_{P_{\alpha}\pi \odot \Dmap^{n}_{0}}\big[ \Phi_{0}\big| B^{n}_{\delta}\big]
\end{align*}
and so we conclude that
\begin{equation}
	\label{eq_uuuu}
	\mathcal{U}(\Pi\odot_{B^{n}_{\delta}}\Dmap^{n})=\mathcal{U}(\Pi_{\alpha}\odot_{B^{n}_{\delta}}\Dmap^{n}_{0}) .
\end{equation}

We will now need the following proposition
\begin{prop}
	\label{prop_TV}
	Define the total variation metric $d_{\mathrm{TV}}$ on $\mathcal{M}(\mathcal{X})$ by
	\[
		d_{\mathrm{TV}}(\mu_{1},\mu_{2}):=\sup_{A \in \mathcal{B}(\mathcal{X})} |\mu_{1}(A)-\mu_{2}(A)| .
	\]
	Consider $B \in \mathcal{B}(\mathcal{X})$. Then for $\mu \in \mathcal{M}(\mathcal{X})$ such that $\mu(B) < 1$, we have
	\[
		d_{\mathrm{TV}}(\mu, \mu|_{B^{c}}) \leq \mu(B) .
	\]
\end{prop}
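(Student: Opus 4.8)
The plan is to reduce everything to a direct estimate on the set function $A \mapsto \mu(A) - \mu|_{B^{c}}(A)$, where I read $\mu|_{B^{c}}$ as the conditioned probability measure $\mu|_{B^{c}}(A) := \mu(A \cap B^{c})/\mu(B^{c})$; this is well defined precisely because the hypothesis $\mu(B) < 1$ guarantees $\mu(B^{c}) = 1 - \mu(B) > 0$. First I would fix an arbitrary $A \in \mathcal{B}(\mathcal{X})$ and split $\mu(A) = \mu(A \cap B) + \mu(A \cap B^{c})$, so that the difference becomes
\[
	\mu(A) - \mu|_{B^{c}}(A) = \mu(A \cap B) + \mu(A \cap B^{c}) - \frac{\mu(A \cap B^{c})}{1 - \mu(B)} .
\]
Collecting the two $\mu(A \cap B^{c})$ terms and using $1 - \tfrac{1}{1 - \mu(B)} = -\tfrac{\mu(B)}{1 - \mu(B)}$ then yields the clean expression
\[
	\mu(A) - \mu|_{B^{c}}(A) = \mu(A \cap B) - \frac{\mu(B)}{1 - \mu(B)}\,\mu(A \cap B^{c}) .
\]

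The key observation is that both terms on the right-hand side lie in the interval $[0, \mu(B)]$. Indeed $0 \leq \mu(A \cap B) \leq \mu(B)$, and since $0 \leq \mu(A \cap B^{c}) \leq \mu(B^{c}) = 1 - \mu(B)$ we get $0 \leq \tfrac{\mu(B)}{1 - \mu(B)}\,\mu(A \cap B^{c}) \leq \mu(B)$. A difference of two numbers each lying in $[0, \mu(B)]$ has absolute value at most $\mu(B)$, so $|\mu(A) - \mu|_{B^{c}}(A)| \leq \mu(B)$ for every $A \in \mathcal{B}(\mathcal{X})$. Taking the supremum over $A$ gives $d_{\mathrm{TV}}(\mu, \mu|_{B^{c}}) \leq \mu(B)$, which is the claim. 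In fact equality holds, witnessed by $A = B$ (for which $\mu|_{B^{c}}(B) = 0$), but only the inequality is needed downstream.

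There is essentially no obstacle here: the statement is an elementary computation, and the only point requiring a moment's attention is the algebraic simplification together with the uniform bound $\tfrac{\mu(B)}{1 - \mu(B)}\,\mu(A \cap B^{c}) \leq \mu(B)$, both of which are immediate. The single place where the hypothesis $\mu(B) < 1$ is genuinely used is in making the conditional measure $\mu|_{B^{c}}$ (equivalently, the normalization by $\mu(B^{c})$) well defined; every other step is uniform in $A$ and requires no regularity or topological assumption on $\mathcal{X}$ or on the set $B$.
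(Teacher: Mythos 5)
Your proof is correct and follows essentially the same route as the paper: the identical decomposition $\mu(A) = \mu(A \cap B) + \mu(A \cap B^{c})$, the same algebraic identity $\mu(A) - \mu|_{B^{c}}(A) = \mu(A\cap B) - \tfrac{\mu(B)}{1-\mu(B)}\,\mu(A\cap B^{c})$, and the same two bounds (the paper states them as separate upper and lower estimates, you package them as a difference of two quantities in $[0,\mu(B)]$, which is the same thing). Nothing is missing; your extra remark that equality is attained at $A=B$ is a correct bonus.
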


\begin{proof}
	For $A \in \mathcal{B}(\mathcal{X})$, we have
	\begin{align*}
		\mu(A)-\mu|_{B^{c}}(A)
		&= \mu(A)-\frac{\mu(A \cap B^{c})}{\mu(B^{c})}\\
		&= \mu(A\cap B)+ \mu(A\cap B^{c})-\frac{\mu(A \cap B^{c})}{\mu(B^{c})}\\
		&= \mu(A\cap B)-\frac{\mu(B)}{1-\mu(B)} \mu(A\cap B^{c})
	\end{align*}
	and therefore
	\[
		\mu(A)-\mu|_{B^{c}}(A) \leq \mu(A\cap B) \leq \mu(B)
	\]
	and
	\begin{align*}
		\mu(A)-\mu|_{B^{c}}(A)
		& \geq -\frac{\mu(B)}{1-\mu(B)} \mu(A\cap B^{c})\\
		& \geq -\frac{\mu(B)}{1-\mu(B)} \mu(B^{c})\\
		& = -\mu(B)\, ,
	\end{align*}
	thus establishing the assertion.
\end{proof}

For $B \in \mathcal{B}(\mathcal{X})$ and $\mu \in \mathcal{M}(\mathcal{X})$ such that $\mu(B) < 1$, the conditional measure $\mu|_{B^{c}} \in \mathcal{M}(\mathcal{X})$ is defined by
\[
	\mu|_{B^{c}}(A):=\frac{\mu(A \cap B^{c})}{\mu(B^{c})}, \quad A \in \mathcal{B}(\mathcal{X}) .
\]

It follows from Proposition \ref{prop_TV} that $d_{\mathrm{TV}}(\mu, \mu|_{B^{c}}) \leq \mu(B)$
and since $d_{\mathcal{M}} \leq d_{\mathrm{TV}}$ (see e.g.~\cite[Eq.~2.24]{HuberRonchetti:2009}), we conclude that
\begin{equation}
	\label{prokhorov}
	d_{\mathcal{M}}(\mu, \mu|_{B^{c}}) \leq \mu(B) .
\end{equation}

Let $B_{\delta}:=B_{\delta}(x_{1})$ denote the ball about the first sample of $x^{n}=(x_{1},\dots,x_{n})$. Then, for $\mu_{0} \in \mathcal{A}_{0}$, it follows from the assumptions that
\begin{align*}
	d_{\mathcal{M}}(\mu_{0}, \mu_{0}|_{B_{\delta}^{c}})
	&\leq \mu_{0}(B_{\delta})\\
	&\leq \mathcal{P}^{\infty}(\delta)\\
	&< \alpha
\end{align*}
and therefore
\[
	\bigl(\mu_{0}, \mu_{0}|_{B_{\delta}^{c}}\bigr) \in \Psi^{-1}\mu_{0} .
\]
Moreover, since
\begin{align*}
	\Dmap^{n}_{(\mu_{0}, \mu_{0}|_{B_{\delta}^{c}})}[B^{n}_{\delta}]
	&= \bigl(\mu_{0}|_{B_{\delta}^{c}}\bigr)^{n}[B^{n}_{\delta}]\\
	&\leq \mu_{0}|_{B_{\delta}^{c}}[B_{\delta}]\\
	&= 0 ,
\end{align*}
we conclude that the condition \eqref{eq:B0}
\[
	\inf_{(\mu_{0},\mu_{0}')\in  \Psi^{-1}\mu_{0}} \Dmap^{n}(\mu_{0},\mu_{0}')[B^{n}_{\delta}] =  0
\]
of Theorem \ref{thm:shiva0cor} is satisfied for all $\mu_{0}\in\mathcal{A}_{0}$.

Now consider the diagonal map $\Delta \colon \mathcal{M}(\mathcal{X}) \to \mathcal{M}(\mathcal{X}) \times \mathcal{M}(\mathcal{X})$ defined by
\[
	\Delta(\mu):=(\mu,\mu),\quad \mu\in \mathcal{M}(\mathcal{X})  .
\]
Since
\[
	\Psi \circ \Delta(\mu)=P_{0}\circ \Delta(\mu)=\mu, \quad \text{for all } \mu\in \mathcal{M}(\mathcal{X}),
\]
it follows, if we define $\Delta$ on the first component of the product $\mathcal{M}(\mathcal{X}) \times \mathcal{M}(\mathcal{X})$ and then restrict to $\mathcal{A}_{0}$, that $\Delta$ is a section of $\Psi=P_{0}$. It is clearly measurable, but also satisfies
\[
	P_{\alpha} \circ \Delta(\mu)=\mu,\quad \text{for all }\mu\in \mathcal{M}(\mathcal{X}),
\]
that is, $P_{\alpha} \circ \Delta $ is the identity map from the first component of $\mathcal{M}(\mathcal{X})\times \mathcal{M}(\mathcal{X})$ to the second.  Then, for $\mu_{0} \in \mathcal{A}_{0}$, the positivity of the model $\mathcal{P}$ implies that
\begin{align*}
	\Dmap^{n}\bigl(\Delta(\mu_{0})\bigr)[B_{\delta}^{n}]
	&= \Dmap^{n}_{0}\circ P_{\alpha}\bigl(\Delta(\mu_{0})\bigr)[B_{\delta}^{n}]\\
	&= \Dmap^{n}_{0}(\mu_{0})[B_{\delta}^{n}]\\
	&= (\mu_{0})^{n}[B_{\delta}^{n}]\\
	&= \prod_{i=1}^{n}{\mu_{0}[B_{\delta}(x_{i})]}\\
	&> 0
\end{align*}
so that the second condition \eqref{eq:Bp} of Theorem \ref{thm:shiva0cor} is satisfied for all $\mu_{0}\in \mathcal{A}_{0}$.  Theorem \ref{thm:shiva0cor} then asserts that
\[
	\mathcal{U}(\Psi^{-1}\Pi_{0}\odot_{B^{n}_{\delta}}\Dmap^{n}) \geq \Pi_{0}^{\infty}(\Phi\circ \Delta).
\]
Moreover, since
\[
	\Phi\circ \Delta=\Phi_{0}\circ P_{\alpha} \circ \Delta=\Phi_{0},
\]
now as a function on the first component of $\mathcal{M}(\mathcal{X}) \times \mathcal{M}(\mathcal{X})$, and
\[
	\Psi^{-1}\Pi_{0}=P_{0}^{-1}\Pi_{0}=\Pi,
\]
we conclude that
\[
	\mathcal{U}(\Pi\odot_{B^{n}_{\delta}}\Dmap^{n}) \geq \Pi_{0}^{\infty}(\Phi_{0}).
\]
The identity $\mathcal{U}(\Pi \odot_{B^{n}_{\delta}} \Dmap^{n}) = \mathcal{U}(\Pi_{\alpha}\odot_{B^{n}_{\delta}}\Dmap^{n}_{0})$ of \eqref{eq_uuuu} then implies the assertion.  \hfill\qedsymbol

\section{Appendix}
\label{sec:appendix}

The following lemma is Lemma III.39  p. 86 of \cite{CastaingValadier:1977}. We also refer to p. 87 of \cite{CastaingValadier:1977} for the existence of the measurable selection $\eta$ (which is also derived from Theorem III.38 p.85 of \cite{CastaingValadier:1977}).
These results are related to Aumann's measurable section principle \cite{Aumann:1967}  (the extension to Suslin space is due to Sainte-Beuve \cite{SainteBeuve1974}).

\begin{lem}
	\label{lemIII39CstaingValafier}
	Let $(T,\mathcal{T})$ be a measurable space, $S$ a Suslin space. $\varphi \colon T \times S \to \bar{R}$ a $\mathcal{T}\otimes \mathcal{B}(S)$ measurable function and $\Gamma$ a multifunction (i.e.~a set-valued map) from $T$ to non-empty subsets of $S$ whose graph $G$ belongs to $\mathcal{T}\times \mathcal{B}(S)$. Then
	\begin{enumerate}
		\item the function
		\[
			m(t):=\sup\{\phi(t,x)\mid x\in \Gamma(t)\}
		\]
		is a $\widehat{\mathcal{T}}$-measurable function of $t$.
		\item for $\delta>0$, there exists $\eta$, a $\widehat{\mathcal{T}}$-measurable
		function of $t$, such that $\eta(t)\in \Gamma(t)$ and $\varphi(t,\eta(t))>m(t)-\delta$.
	\end{enumerate}
\end{lem}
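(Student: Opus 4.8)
The plan is to derive both assertions from two descriptive--set--theoretic ingredients: the measurable projection theorem for products $T \times S$ with $S$ Suslin (von Neumann's theorem in the form extended to Suslin spaces by Sainte-Beuve \cite{SainteBeuve1974}), which states that the $T$-projection of any set in $\mathcal{T}\otimes \mathcal{B}(S)$ lies in the universal completion $\widehat{\mathcal{T}}$, together with the companion von Neumann--Aumann measurable selection theorem \cite{Aumann:1967, CastaingValadier:1977} (Theorem~III.38 of \cite{CastaingValadier:1977}), which furnishes a $\widehat{\mathcal{T}}$-measurable selection of any such set whose projection is full. Everything else amounts to reducing the supremum and the near-optimal selection to projections of explicitly product-measurable sets.

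For assertion (1), I would establish universal measurability of $m$ by checking its super-level sets. For $c \in \R$ one has the pointwise equivalence that $m(t) > c$ holds iff there exists $x \in \Gamma(t)$ with $\varphi(t,x) > c$ (this is precisely the statement that a supremum exceeds $c$ iff some admissible value does). Hence
\[
	\{t \mid m(t) > c\} = \pi_{T}\big( G \cap \varphi^{-1}((c,\infty]) \big),
\]
where $\pi_{T}$ denotes projection onto $T$. The set $G \cap \varphi^{-1}((c,\infty])$ belongs to $\mathcal{T}\otimes \mathcal{B}(S)$ because $G \in \mathcal{T}\otimes \mathcal{B}(S)$ by hypothesis and $\varphi$ is $\mathcal{T}\otimes \mathcal{B}(S)$-measurable. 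The projection theorem then places $\{t \mid m(t) > c\}$ in $\widehat{\mathcal{T}}$; since $c$ is arbitrary and $\widehat{\mathcal{T}}$ is a $\sigma$-algebra, $m$ is $\widehat{\mathcal{T}}$-measurable.

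For assertion (2), I would fix $\delta > 0$ and form the near-optimal graph
\[
	H := \big\{ (t,x) \in G \mid \varphi(t,x) > m(t)-\delta \big\}.
\]
By part (1), $m$ is $\widehat{\mathcal{T}}$-measurable, so $H \in \widehat{\mathcal{T}}\otimes \mathcal{B}(S)$; and by the defining property of the supremum together with the non-emptiness of each $\Gamma(t)$, the section $H_{t}$ is non-empty for every $t$, i.e.\ $\pi_{T}(H) = T$. Applying the von Neumann--Aumann selection theorem to $H$ over the universally completed base $\sigma$-algebra yields a $\widehat{\mathcal{T}}$-measurable map $\eta \colon T \to S$ with $(t,\eta(t)) \in H$ for all $t$; unwinding the definition of $H$ gives $\eta(t) \in \Gamma(t)$ and $\varphi(t,\eta(t)) > m(t)-\delta$, as required. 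The only subtlety is the $\bar{\R}$-valued edge case $m(t) = -\infty$, handled by the convention that $\varphi(t,x) > -\infty$ is vacuously arrangeable on a non-empty section, or by restricting attention to the (universally measurable) set on which $m$ is finite.

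The main obstacle, and the genuinely deep input, is the measurable projection theorem in the Suslin setting: that projecting a product-measurable set along the Suslin factor preserves universal measurability. This is where the analytic-set machinery enters, since the projection need not be Borel, and it is exactly the point at which one is forced to pass to $\widehat{\mathcal{T}}$ rather than $\mathcal{T}$; the Sainte-Beuve extension \cite{SainteBeuve1974} of von Neumann's theorem to Suslin $S$ is what licenses both the super-level-set computation in (1) and the selection in (2). Granting that theorem, the two assertions are routine.
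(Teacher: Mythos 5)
The paper gives no proof of this lemma: it is quoted verbatim from Castaing--Valadier (Lemma III.39, p.~86, with the selection part from Theorem III.38, p.~85, and p.~87 --- i.e.\ the von Neumann--Aumann theorem in Sainte-Beuve's Suslin form), and your reconstruction --- super-level sets of $m$ written as projections $\{t \mid m(t)>c\} = \pi_{T}\bigl(G \cap \varphi^{-1}((c,\infty])\bigr)$ of product-measurable sets, then a measurable selection from the near-optimal graph $H$ over the universally completed base, using idempotency of the universal completion --- is exactly the argument behind that citation, so in substance you have supplied the proof the paper delegates to the literature. Part (1) is correct as written. In part (2), however, the extended-real edge case you did not address is the problematic one: on $\{t \mid m(t) = +\infty\}$ (a universally measurable set, by part (1)) the section $H_{t} = \{x \in \Gamma(t) \mid \varphi(t,x) > m(t)-\delta\}$ is \emph{empty}, since no extended-real value exceeds $+\infty$, so $\pi_{T}(H) \neq T$ and the selection theorem cannot be applied to $H$ as you defined it; symmetrically, on $\{t \mid m(t) = -\infty\}$ the strict inequality is not ``vacuously arrangeable'' but fails for \emph{every} selection, since there $\varphi(t,\cdot) \equiv -\infty$ on $\Gamma(t)$. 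The standard repair --- and the form in which Castaing--Valadier actually phrase the conclusion --- is to require $\varphi(t,\eta(t)) > 1/\delta$ on $\{m = +\infty\}$, take any $\widehat{\mathcal{T}}$-measurable selection of $\Gamma$ (obtained from $G$ itself) on $\{m = -\infty\}$, run your argument verbatim on the piece where $m$ is finite, and glue the three selections along these universally measurable sets; with that short patch your proof is complete.
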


The following definition is Definition 4.50 in \cite{AliprantisBorder:2006}:

\begin{defn}
	\label{def:caratheodory}
	Let $(S,\Sigma)$ be a measurable space, and let $X$ and $Y$ be topological spaces. A function $h\colon S\times X \to Y$ is a \emph{Carath\'{e}odory function} if:
	\begin{enumerate}
		\item for each $x\in X$, the function $h^x=h(.,x) \colon S\to Y$
		is $\big(\Sigma, \mathcal{B}(Y)\big)$-measurable; and
		\item for each $s\in S$, the function $h_s=h(s,.) \colon X\to Y$
		is continuous.
	\end{enumerate}
\end{defn}

The following lemma is Lemma 4.51 in \cite{AliprantisBorder:2006}  (see also \cite[p.~70]{CastaingValadier:1977}):

\begin{lem}
	\label{def:caratheodorylemma}
	Let $(S,\Sigma)$ be a measurable space, $X$ a separable metrizable space, and $Y$ a metrizable space. Then every Carath\'{e}odory function $h\colon S\times X \to Y$ is jointly measurable.
\end{lem}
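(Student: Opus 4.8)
The plan is to prove joint measurability by approximating $h$ pointwise by a sequence $h_n$ of functions that are \emph{piecewise constant in the $X$-variable}, each of which is manifestly $\Sigma\otimes\mathcal{B}(X)$-measurable, and then to pass to the limit using the fact that a pointwise limit of measurable maps into a metric space is measurable. Fix a compatible metric $d_X$ on the separable metrizable space $X$, a compatible metric $d_Y$ on $Y$, and a countable dense set $\{x_j\}_{j\in\mathbb{N}}\subseteq X$. The continuity of $h$ in its second argument (condition (2) of a Carath\'{e}odory function) will be what drives the convergence, while the measurability in the first argument (condition (1)) will be what gives measurability of the approximants.

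First I would construct, for each $n$, a countable Borel partition of $X$ by setting $A_{n,1}:=B(x_1,1/n)$ and $A_{n,j}:=B(x_j,1/n)\setminus\bigcup_{i<j}B(x_i,1/n)$; density of $\{x_j\}$ guarantees $\bigcup_j A_{n,j}=X$, and each $A_{n,j}\in\mathcal{B}(X)$. Define $h_n\colon S\times X\to Y$ by $h_n(s,x):=h(s,x_j)$ whenever $x\in A_{n,j}$. The crucial point is to argue joint measurability of $h_n$ \emph{directly from the partition}, not by viewing $h_n$ as a composition $h(s,\phi_n(x))$ through some Borel map $\phi_n$ — that route is circular, since it presupposes the joint measurability of $h$ that we are trying to establish. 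Instead, for any Borel $V\subseteq Y$ I would write $h_n^{-1}(V)=\bigcup_{j}\big(g_j^{-1}(V)\times A_{n,j}\big)$, where $g_j:=h(\cdot,x_j)\colon S\to Y$ is $\Sigma$-measurable by condition (1). Each $g_j^{-1}(V)\times A_{n,j}$ lies in $\Sigma\otimes\mathcal{B}(X)$, and a countable union of such rectangles stays in $\Sigma\otimes\mathcal{B}(X)$, so $h_n$ is jointly measurable.

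Next I would verify pointwise convergence $h_n(s,x)\to h(s,x)$. For fixed $(s,x)$, the representative point used in $h_n$ is some $x_{j}$ with $x\in A_{n,j}\subseteq B(x_j,1/n)$, hence $d_X(x,x_j)<1/n$; continuity of $h(s,\cdot)$ then yields $h_n(s,x)=h(s,x_j)\to h(s,x)$. Finally I would invoke (and, since the ambient $Y$ is merely metrizable, spell out) the standard fact that a pointwise limit of $(\Sigma\otimes\mathcal{B}(X),\mathcal{B}(Y))$-measurable maps into a metric space is again measurable: for closed $F\subseteq Y$ one has
\[
h^{-1}(F)=\bigcap_{k\in\mathbb{N}}\bigcup_{N\in\mathbb{N}}\bigcap_{n\geq N}h_n^{-1}\big(F^{1/k}\big),
\]
where $F^{1/k}:=\{y\in Y\mid d_Y(y,F)<1/k\}$ is open; the inclusion ``$\subseteq$'' uses $h_n\to h$, and ``$\supseteq$'' uses that $d_Y(h(s,x),F)\leq 1/k$ for all $k$ forces $h(s,x)\in\bar F=F$. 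Since each $h_n^{-1}(F^{1/k})\in\Sigma\otimes\mathcal{B}(X)$, so is $h^{-1}(F)$, and as closed sets generate $\mathcal{B}(Y)$ this gives joint measurability of $h$.

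The main obstacle, and the place requiring genuine care rather than routine bookkeeping, is the joint measurability of each approximant $h_n$: one must resist the tempting composition argument and instead exploit the explicit Borel partition of $X$ together with the rectangle decomposition above, which is exactly where separability of $X$ (to obtain a countable cover by small balls) is indispensable. The convergence step and the metric-space limit lemma are comparatively standard, though the latter does need the preimage-of-closed-sets formula written out because $Y$ is only assumed metrizable rather than, say, $\sigma$-compact or Polish.
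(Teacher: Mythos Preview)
Your proof is correct and is essentially the standard argument (the one given in Aliprantis--Border, Lemma~4.51). The paper itself does not prove this lemma; it simply cites \cite[Lem.~4.51]{AliprantisBorder:2006} and \cite[p.~70]{CastaingValadier:1977}, so your write-up supplies what the paper omits.
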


\subsection{Universally Measurable Functions}
\label{sec-universal}

For a topological space $T$ let $\widehat{\mathcal{B}}(T)$ denote the $\sigma$-algebra of universally measurable sets. For a measure $\mu$, let $\widehat{\mu}$ denote its completion.  Here we state the following proposition that allows us to define the expected value of $\widehat{\mathcal{B}}(T)$ measurable functions with respect to Borel measures. In all statements in the following proposition,  the assertions follow when the integrals involved exist, in particular for
semibounded functions.  The proof is straightforward but tedious and follows from e.g.
\cite[Thm.~pg.~37]{Doob:1994}, the English version of
\cite[Ch.~2, pg.~49]{DellacherieMeyer:1975}, and \cite{CastaingValadier:1977}.

\begin{prop}
	\label{prop_E}
	Let $T$ be a  topological space. Then we have
	\begin{itemize}
		\item For a measurable function $f$ we have $\E_{\widehat{\mu}}f=\E_{\mu}f$
		\item Let $f$ be $\widehat{\mathcal{B}}(T)$-measurable. Then there exist two measurable functions $\underline{f}$ and $\overline{f}$ such that
		\[
			\underline{f} \leq f \leq \overline{f},\quad  \mu(\underline{f}\neq \overline{f})=0
		\]
		and, for any such functions, we have
		\[
			\E_{\mu}[\underline{f}] = \E_{\widehat{\mu}}[f] = \E_{\mu}[\overline{f}]
		\]
		\item For a fixed $\mu$, $f \mapsto \E_{\widehat{\mu}}[f]$   defines an affine function on the cone of non-negative $\widehat{\mathcal{B}}(T)$-measurable functions
		\item For a fixed $\widehat{\mathcal{B}}(T)$-measurable function $f$, the function $\mathcal{M}(T) \ni \mu \mapsto \E_{\widehat{\mu}}[f]$ is affine.
		\item  Suppose that $f_{1},f_{2}$ are $\widehat{\mathcal{B}}(T)$-measurable non-negative functions such that $f_{1}\leq f_{2}$. Then $\E_{\widehat{\mu}}[f_{1}] \leq E_{\widehat{\mu}}[f_{2}] $ for all $\mu \in \mathcal{M}(T)$.
\end{itemize}
\end{prop}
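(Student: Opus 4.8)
The plan is to reduce every assertion to the classical theory of the completion of a \emph{single} measure space, exploiting the elementary but crucial observation that, for each fixed bounded positive Borel measure $\mu$ on $T$, the defining intersection $\widehat{\mathcal{B}}(T)=\bigcap_\nu \mathcal{B}(T)_\nu$ gives $\widehat{\mathcal{B}}(T)\subseteq \mathcal{B}(T)_\mu$, the $\mu$-completion of the Borel $\sigma$-algebra. Consequently every $\widehat{\mathcal{B}}(T)$-measurable function is $\mathcal{B}(T)_\mu$-measurable, hence integrable against the completed measure $\widehat\mu$ whenever the integral exists (in particular for semibounded integrands), and $\E_{\widehat\mu}[f]$ is nothing but the ordinary integral of $f$ on the completed space $(T,\mathcal{B}(T)_\mu,\widehat\mu)$. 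This immediately yields the first assertion: if $f$ is Borel measurable then it is already $\mathcal{B}(T)$-measurable, the completed measure agrees with $\mu$ on $\mathcal{B}(T)$, and so $\E_{\widehat\mu}[f]=\E_\mu[f]$ by the standard restriction property of completions.

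For the second assertion I would invoke the standard sandwiching lemma for completions: every $\mathcal{B}(T)_\mu$-measurable $f$ admits Borel functions $\underline f\leq f\leq \overline f$ with $\mu(\underline f\neq \overline f)=0$. One builds these first for simple functions (each completed-measurable set $E$ satisfies $A\subseteq E\subseteq B$ with $A,B$ Borel and $\mu(B\setminus A)=0$) and then passes to the limit along an approximating sequence by taking pointwise suprema and infima; this is the tedious-but-routine step alluded to before the statement. The identities $\E_\mu[\underline f]=\E_{\widehat\mu}[f]=\E_\mu[\overline f]$ then follow because $\underline f$ and $\overline f$ are $\mu$-a.e.\ equal, so their $\mu$-integrals coincide, while $\widehat\mu$-integration is monotone and agrees with $\mu$-integration on Borel functions. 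Assertions three and five are now immediate: affinity of $f\mapsto \E_{\widehat\mu}[f]$ on the cone of non-negative integrands is just linearity of the completed integral, and the monotonicity $\E_{\widehat\mu}[f_1]\leq\E_{\widehat\mu}[f_2]$ for $f_1\leq f_2$ is the usual monotonicity of integration against the genuine measure $\widehat\mu$ on $\mathcal{B}(T)_\mu$.

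The one step requiring genuine care --- and the main obstacle --- is the fourth assertion, affinity of $\mu\mapsto\E_{\widehat\mu}[f]$ for a fixed universally measurable $f$, because it couples \emph{three distinct} completions: those of $\mu_1$, $\mu_2$, and $\mu:=\lambda\mu_1+(1-\lambda)\mu_2$ with $\lambda\in(0,1)$. The idea is to produce a \emph{single} pair of Borel sandwich functions that serves all three measures at once. Applying the second assertion to the combined measure $\mu$, I obtain Borel $\underline f\leq f\leq\overline f$ with $\mu(\underline f\neq\overline f)=0$. Since $\lambda,1-\lambda>0$ force $\mu_1\ll\mu$ and $\mu_2\ll\mu$, the set $\{\underline f\neq\overline f\}$ is also $\mu_1$- and $\mu_2$-null, so the \emph{same} $\underline f,\overline f$ sandwich $f$ with respect to $\mu_1$ and $\mu_2$; the second assertion then gives $\E_{\widehat{\mu_i}}[f]=\E_{\mu_i}[\underline f]$ for $i=1,2$ together with $\E_{\widehat\mu}[f]=\E_\mu[\underline f]$. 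Finally, because $\underline f$ is an honest Borel function, integration against it is affine in the measure in the elementary sense $\E_{\lambda\mu_1+(1-\lambda)\mu_2}[\underline f]=\lambda\E_{\mu_1}[\underline f]+(1-\lambda)\E_{\mu_2}[\underline f]$, and stringing these equalities together produces $\E_{\widehat\mu}[f]=\lambda\E_{\widehat{\mu_1}}[f]+(1-\lambda)\E_{\widehat{\mu_2}}[f]$, the claimed affinity.
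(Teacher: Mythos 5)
Your proposal is correct. Note, however, that the paper does not actually write out a proof of Proposition \ref{prop_E}: it declares the proof ``straightforward but tedious'' and defers to Doob, Dellacherie--Meyer, and Castaing--Valadier, so there is no step-by-step argument to compare against; what you have written is precisely the standard completion-theoretic argument that those references contain. Your reductions are sound: the inclusion $\widehat{\mathcal{B}}(T)\subseteq\mathcal{B}(T)_{\mu}$ makes $\E_{\widehat{\mu}}[f]$ an honest integral on the completed space, the Borel sandwich lemma (sets, then simple functions, then monotone limits) gives the second bullet, and linearity/monotonicity of the completed integral gives the third and fifth. You also correctly identified the only delicate point, the fourth bullet, where three different completions interact, and your resolution --- build one sandwich pair for $\mu=\lambda\mu_{1}+(1-\lambda)\mu_{2}$ and push it down to $\mu_{1},\mu_{2}$ using $\lambda\mu_{i}\leq\mu$ --- is exactly right; it is in fact the same device the paper uses in its written proof of Lemma \ref{lem_affine} (there with a barycentric integral over $\ext(H)$ in place of a finite convex combination), so your argument is consistent with, and in effect a finite-dimensional instance of, the technique the authors themselves rely on. The only caveat worth keeping explicit is the integrability bookkeeping: the assertions are claimed only when the integrals exist (e.g.\ semibounded $f$), and in the second bullet one should check that the sandwich functions can themselves be taken semibounded (truncate from below by the same bound), so that all three integrals in $\E_{\mu}[\underline{f}]=\E_{\widehat{\mu}}[f]=\E_{\mu}[\overline{f}]$ are well defined; this is a one-line fix, not a gap.
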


Proposition \ref{prop_E} leads to the following definition for the expectation  of
$\widehat{\mathcal{B}}(T)$-measurable functions with respect to Borel probability measures on $T$:

\begin{defn}
	\label{def_E}
	For a Borel probability measure $\mu \in \mathcal{M}(T)$, we define the integral of a $\widehat{\mathcal{B}}(T)$-measurable function $f$ by
	\[
		\E_{\mu}[f]:= \E_{\widehat{\mu}}[f]
	\]
	when the latter exists, where $\widehat{\mu}$ is the completion of the measure $\mu$ as described in \cite[p.~37]{Doob:1994}.
\end{defn}

Recall that a \emph{carrier} $T$ for a probability measure $\mathbb{Q} \in \mathcal{M}(\mathcal{Q})$ is a set $T \in \mathcal{B}(\mathcal{Q})$ such that $\mathbb{Q}(T)=1$. For a carrier $T$, since $T \in \mathcal{B}(\mathcal{Q})$, it follows that
$\mathcal{B}(T)=\mathcal{B}(\mathcal{Q})\cap T$ and we can define the trace measure
$\mathbb{Q}_{T} \in \mathcal{M}(T)$ by $\mathbb{Q}_{T}(A):=\mathbb{Q}(A), A \in \mathcal{B}(\mathcal{Q})\cap T$.  The following proposition shows that the expectation of a function can be defined with respect to measures which possess carriers upon which the function is universally measurable:

\begin{prop}
	\label{prop_Ecarrier}
Let $S$ be a topological space.
	Suppose that $f$ is $\widehat{\mathcal{B}}(T)$-measurable for all measurable $T \subseteq S$.
  Suppose also that $\mathbb{Q} \in \mathcal{M}(S)$ has a carrier $T \subseteq S$. Then, using  Definition~\ref{def_E}, any such carrier $T$ defines an expectation
	\[
		\E_{\mathbb{Q}_{T}}[f]:=\E_{\widehat{\mathbb{Q}}_{T}}[f],
	\]
	and this definition is independent of the carrier; that is, if $T' \subset S$ is another carrier, then
	\[
		\E_{\widehat{\mathbb{Q}}_{T'}}[f]= \E_{\widehat{\mathbb{Q}}_{T}}[f].
	\]
	Moreover, this expectation satisfies the assertions of affinity and monotonicity of Proposition~\ref{prop_E}.
\end{prop}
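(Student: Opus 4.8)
\emph{Proof plan.}
The first assertion requires almost nothing: since a carrier $T$ lies in $\mathcal{B}(S)$ it is a measurable subset of $S$, so by hypothesis $f$ is $\widehat{\mathcal{B}}(T)$-measurable, and $\mathbb{Q}_{T}$ is a genuine Borel probability measure on $T$; Definition~\ref{def_E} then applies verbatim to produce $\E_{\widehat{\mathbb{Q}}_{T}}[f]$ whenever the integral exists. The substance of the proposition is the independence of this value from the carrier, and the plan is to reduce the general comparison of two carriers $T$ and $T'$ to the nested case. Indeed $T\cap T' \in \mathcal{B}(S)$ and $\mathbb{Q}\bigl(S\setminus(T\cap T')\bigr)\leq \mathbb{Q}(S\setminus T)+\mathbb{Q}(S\setminus T')=0$, so $T\cap T'$ is again a carrier contained in both $T$ and $T'$; it therefore suffices to establish $\E_{\widehat{\mathbb{Q}}_{T}}[f]=\E_{\widehat{\mathbb{Q}}_{T''}}[f]$ whenever $T''\subseteq T$ are both carriers, and then to chain the two equalities through $T\cap T'$.

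For the nested case I would invoke the sandwich description of Proposition~\ref{prop_E}. Applying its second bullet on $T$ yields $\mathcal{B}(T)$-measurable functions $\underline{f},\overline{f}$ with $\underline{f}\le f\le \overline{f}$ on $T$, with $\mathbb{Q}_{T}(\underline{f}\neq\overline{f})=0$, and with $\E_{\mathbb{Q}_{T}}[\underline{f}]=\E_{\widehat{\mathbb{Q}}_{T}}[f]=\E_{\mathbb{Q}_{T}}[\overline{f}]$. The key observation is that the restrictions $\underline{f}|_{T''}$ and $\overline{f}|_{T''}$ are themselves admissible sandwich functions for $f$ on $T''$: they are Borel on $T''$, they still bracket $f$, and since $\mathbb{Q}(T\setminus T'')=0$ (because $T''$ is a carrier) we have $\mathbb{Q}_{T''}(\underline{f}\neq\overline{f})\le \mathbb{Q}(\{\underline{f}\neq\overline{f}\})=0$. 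The ``for any such functions'' clause of Proposition~\ref{prop_E} then gives $\E_{\widehat{\mathbb{Q}}_{T''}}[f]=\E_{\mathbb{Q}_{T''}}[\underline{f}|_{T''}]$. Finally, because $T\setminus T''$ is $\mathbb{Q}$-null and the integral of $\underline{f}$ exists, $\int_{T''}\underline{f}\,\mathrm{d}\mathbb{Q}=\int_{T}\underline{f}\,\mathrm{d}\mathbb{Q}$, i.e.\ $\E_{\mathbb{Q}_{T''}}[\underline{f}|_{T''}]=\E_{\mathbb{Q}_{T}}[\underline{f}]=\E_{\widehat{\mathbb{Q}}_{T}}[f]$, which closes the nested case.

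For affinity and monotonicity, the plan is to transport the corresponding statements of Proposition~\ref{prop_E} through the carrier-independence just established. Monotonicity and affinity in $f$ (for fixed $\mathbb{Q}$, on the cone of non-negative functions) are immediate on a single carrier. For affinity in $\mathbb{Q}$, given $\mathbb{Q}_1,\mathbb{Q}_2\in\mathcal{M}(S)$ with carriers $T_1,T_2$ and $\lambda\in[0,1]$, the union $T:=T_1\cup T_2$ is a common carrier of $\mathbb{Q}_1$, $\mathbb{Q}_2$ and $\lambda\mathbb{Q}_1+(1-\lambda)\mathbb{Q}_2$, and the trace operation is linear, so $(\lambda\mathbb{Q}_1+(1-\lambda)\mathbb{Q}_2)_T=\lambda(\mathbb{Q}_1)_T+(1-\lambda)(\mathbb{Q}_2)_T$; carrier-independence lets me evaluate all three expectations on $T$, and the affinity bullet of Proposition~\ref{prop_E} finishes the argument. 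The one point demanding genuine care --- and the main obstacle --- is the nested-carrier step: one must be certain that universal measurability of $f$ is available on every measurable subset (which is exactly the hypothesis), that the larger sandwich functions remain valid sandwich functions after restriction, and that the discarded piece $T\setminus T''$ contributes nothing to the integral; everything else is routine bookkeeping on null sets.
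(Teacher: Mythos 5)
Your proposal is correct and follows essentially the same route as the paper's proof: both compare each carrier to the common carrier $T \cap T'$, restrict the sandwich functions supplied by Proposition~\ref{prop_E} to the smaller carrier (noting they remain valid sandwiches), and use the fact that the discarded piece is $\mathbb{Q}$-null so the integral is unchanged. Your explicit reduction to the nested case and your union-carrier argument for affinity in $\mathbb{Q}$ are simply a more detailed write-up of what the paper treats as ``similarly straightforward.''
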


We  also need a change of variables formula for expectations of universally measurable functions.

\begin{prop}
	\label{prop_change}
	Let $X$ and $Y$  be topological spaces, $\Psi \colon X\to Y$ a measurable map and suppose that $f \colon Y\to \R$ is $\widehat{\mathcal{B}}(Y)$ measurable.  Then $f \circ \Psi \colon X \to \R$ is $\widehat{\mathcal{B}}(X)$-measurable and, for $\pi\in \mathcal{M}(X)$,
	\[
		\E_{\Psi\pi}[f]=\E_{\pi}[f\circ \Psi].
	\]
\end{prop}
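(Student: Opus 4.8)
The plan is to prove the two assertions in sequence: first that $f \circ \Psi$ is universally measurable on $X$, and then the change of variables identity, which will follow by sandwiching $f$ between Borel functions and invoking the classical Borel change of variables formula.

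For the measurability assertion, I would fix an arbitrary bounded positive measure $\mu$ on $X$ and show that $f \circ \Psi$ is measurable with respect to the $\mu$-completion $\mathcal{B}(X)_\mu$; since $\mu$ is arbitrary this yields $\widehat{\mathcal{B}}(X)$-measurability, because $\widehat{\mathcal{B}}(X) = \bigcap_\mu \mathcal{B}(X)_\mu$. The key observation is that the pushforward $\Psi\mu$ is again a bounded positive measure on $Y$, so universal measurability of $f$ gives $f^{-1}(A) \in \mathcal{B}(Y)_{\Psi\mu}$ for every Borel $A \subseteq \R$. Choosing Borel sets $B_1 \subseteq f^{-1}(A) \subseteq B_2$ with $(\Psi\mu)(B_2 \setminus B_1) = 0$ and pulling them back through the measurable map $\Psi$, one obtains Borel sets $\Psi^{-1}(B_1) \subseteq (f\circ\Psi)^{-1}(A) \subseteq \Psi^{-1}(B_2)$ with $\mu(\Psi^{-1}(B_2)\setminus\Psi^{-1}(B_1)) = (\Psi\mu)(B_2\setminus B_1) = 0$; hence $(f\circ\Psi)^{-1}(A) \in \mathcal{B}(X)_\mu$, as required.

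For the change of variables identity, recall from Definition~\ref{def_E} that $\E_{\Psi\pi}[f] := \E_{\widehat{\Psi\pi}}[f]$ and $\E_\pi[f\circ\Psi] := \E_{\widehat\pi}[f\circ\Psi]$. Applying the second bullet of Proposition~\ref{prop_E} to the measure $\Psi\pi$ yields Borel functions $\underline f \le f \le \overline f$ on $Y$ with $(\Psi\pi)(\underline f \ne \overline f) = 0$ and $\E_{\Psi\pi}[\underline f] = \E_{\widehat{\Psi\pi}}[f] = \E_{\Psi\pi}[\overline f]$. Composing with $\Psi$ gives Borel functions $\underline f \circ \Psi \le f\circ\Psi \le \overline f \circ \Psi$ on $X$ which agree $\pi$-almost everywhere, since $\pi(\{\underline f\circ\Psi \ne \overline f\circ\Psi\}) = (\Psi\pi)(\underline f \ne \overline f) = 0$. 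By the ``for any such functions'' clause of Proposition~\ref{prop_E}, applied to $\pi$ and the universally measurable integrand $f\circ\Psi$ established above, these sandwiching Borel functions compute the completed integral: $\E_\pi[\underline f\circ\Psi] = \E_{\widehat\pi}[f\circ\Psi] = \E_\pi[\overline f\circ\Psi]$. Finally, the classical Borel change of variables formula \cite[Thm.~4.1.11]{Dudley:2002} applied to the Borel function $\underline f$ gives $\E_{\Psi\pi}[\underline f] = \E_\pi[\underline f \circ \Psi]$, and chaining the three displayed equalities yields $\E_{\widehat{\Psi\pi}}[f] = \E_{\widehat\pi}[f\circ\Psi]$, which is precisely the claim.

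The main obstacle is the measurability step rather than the integral identity: one must verify that completions interact correctly with pullback, namely that a $\Psi\mu$-null set on $Y$ pulls back to a $\mu$-null set on $X$, and that this holds uniformly over all bounded measures $\mu$ so that one genuinely lands in $\widehat{\mathcal{B}}(X)$ and not merely in some single completion. Once universal measurability of $f\circ\Psi$ is secured, the identity follows mechanically from the Borel sandwiching together with the classical change of variables, with no new difficulty beyond the bookkeeping of completions, and the standing semiboundedness convention (as in the discussion preceding Proposition~\ref{prop_E}) ensures all integrals involved exist.
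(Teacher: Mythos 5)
Your proposal is correct and follows essentially the same route as the paper's proof: both arguments push $\pi$ (or a bounded measure $\mu$) forward through $\Psi$, sandwich $f$ between Borel functions agreeing $\Psi\pi$-a.e.\ via Proposition~\ref{prop_E}, pull the sandwich back (noting that null sets pull back to null sets) to get universal measurability of $f\circ\Psi$, and then chain the completed-integral identities with the classical Borel change of variables. The only cosmetic difference is that you run the measurability step at the level of sets in the completion rather than at the level of sandwiching functions, which amounts to the same computation.
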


For Suslin space $\mathcal{X}$ and a subset $M \subset \mathcal{M}(\mathcal{X})$ let $\Sigma(M)$ denote the smallest $\sigma$-subalgebra  of subsets of $M$ for which the the evaluation map $\nu \mapsto \nu(B)$ is measurable for all $B \in \mathcal{B}(\mathcal{X})$.  The following version of a result of von Weizsacker \& Winkler
\cite{WeizsackerWinkler:1979} as stated in \cite[Thm.~3.1]{Winkler:1988} will be useful to us:

\begin{thm}
	\label{thm_winkler}
	Consider a Suslin space $\mathcal{X}$, measurable functions $f_{1}, \dots, f_{n} \colon \mathcal{X}\to R$, constants $c_{1}, \dots, c_{n} \in \R$,
	and define
	\[
		H:=\bigl\{\nu \in  \mathcal{M}(\mathcal{X}) \,\big|\, f_{i}\, \text{is $\nu$-integrable and } \E_{\nu}[f_{i}] \leq c_{i}, \text{ for } i=1,\dots,n \bigr\}
	\]
	Then, for each $\nu \in H$, there is a probability measure $p$ on $\Sigma(\ext(H))$ such that
	\begin{equation}
		\label{eq_barycenter}
		\nu(B)=\int_{\ext(H)} \nu'(B) \, \mathrm{d}p(\nu'),\quad  \text{for all } B \in \mathcal{B}(\mathcal{X}) .
	\end{equation}
\end{thm}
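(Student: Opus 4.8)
The statement is precisely Theorem~3.1 of \cite{Winkler:1988}, which is in turn a specialization of Corollary~3 of \cite{WeizsackerWinkler:1979}; accordingly the plan is to verify that the present hypotheses place $H$ inside the abstract framework of those papers and then to indicate the structure of the representation argument they supply, rather than to re-derive that machinery. Two structural facts are needed at the outset. First, since $\mathcal{X}$ is Suslin, Schwartz' theorem \cite{Schwartz:1974} together with \cite[Thm.~11.1]{Topsoe:1970} (both recalled in Remark~\ref{rmk:Polish}) guarantees that every $\nu \in \mathcal{M}(\mathcal{X})$ is regular and that the extreme points of $\mathcal{M}(\mathcal{X})$ are exactly the Dirac measures $\{\delta_{x} \mid x \in \mathcal{X}\}$. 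Second, each constraint functional $\nu \mapsto \E_{\nu}[f_{i}]$ is affine and measurable with respect to the evaluation field, so that $H$ is the intersection of the measure-convex set $\mathcal{M}(\mathcal{X})$ with the $n$ measurable half-spaces $\{\E_{\nu}[f_{i}] \leq c_{i}\}$; this is exactly the form of a generalized moment set to which the von Weizs\"{a}cker--Winkler theory applies.

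Granting this, the representation is built in two stages. The base case is the representation of $\mathcal{M}(\mathcal{X})$ itself over its Dirac extreme points: the map $x \mapsto \delta_{x}$ identifies $\mathcal{X}$ with $\ext(\mathcal{M}(\mathcal{X}))$, and the pushforward of $\nu$ under this map is a probability measure on $\Sigma(\ext(\mathcal{M}(\mathcal{X})))$ whose barycenter is $\nu$, by the evaluation identity $\nu(B)=\int_{\mathcal{X}} \delta_{x}(B)\,\mathrm{d}\nu(x)$. The content of the theorem is to replace $\ext(\mathcal{M}(\mathcal{X}))$ by $\ext(H)$ when $\nu$ is constrained to lie in $H$. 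For this one constructs the representing measure $p$ on $\Sigma(\ext(H))$ by a measurable disintegration: conditioning successively on the values of the finitely many moment functionals and on a countable field generating $\mathcal{B}(\mathcal{X})$ produces a martingale of finitely-supported conditional representations, whose almost-sure limit concentrates on the extreme points of $H$. The existence of the required measurable sections at each stage is exactly where the Suslin hypothesis enters, through the measurable selection principle of \cite{CastaingValadier:1977} recorded as Lemma~\ref{lemIII39CstaingValafier}.

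The main obstacle, and the reason the classical Choquet theorem cannot simply be quoted, is the absence of compactness. Because the $f_{i}$ may be unbounded and $\mathcal{X}$ need not be compact, $H$ is in general neither weakly compact nor even weakly closed, so the usual route through a compact convex metrizable subset of a locally convex space is unavailable. The von Weizs\"{a}cker--Winkler framework circumvents this by working entirely measure-theoretically: the phrase ``supported on $\ext(H)$'' is interpreted through the evaluation $\sigma$-algebra $\Sigma(\ext(H))$ rather than through topological support, and the existence of $p$ is obtained from measurable selection together with martingale convergence in place of a weak-$\ast$ compactness argument. Once this machinery is invoked, with the two structural facts above verified, the barycentric identity \eqref{eq_barycenter} follows and the statement reduces to \cite[Thm.~3.1]{Winkler:1988}.
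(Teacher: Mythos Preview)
Your identification is correct, and in fact the paper does not prove this statement at all: it is quoted verbatim as \cite[Thm.~3.1]{Winkler:1988}, introduced by the sentence ``The following version of a result of von Weizs\"{a}cker \& Winkler \cite{WeizsackerWinkler:1979} as stated in \cite[Thm.~3.1]{Winkler:1988} will be useful to us,'' and left unproved. So there is no paper proof to compare against; your citation to \cite{Winkler:1988} and \cite{WeizsackerWinkler:1979} is exactly what the paper does, and the sketch you add on top of that is consistent with how those references obtain the result.
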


\cite[Prop.~3.1]{Winkler:1988} shows that if a measurable function $f \colon \mathcal{X}\to \R$ is integrable with respect to all measures in $H$ (allowing the values $\infty$ and $-\infty$), then integration
\[
	F(\nu):=\int_{\mathcal{X}} f \, \mathrm{d} \nu
\]
is measure affine per Definition \ref{def_measureaffine}.  We need a slightly more general result:

\begin{lem}
	\label{lem_affine}
	Consider the situation of Theorem \ref{thm_winkler}, let $f \colon \mathcal{X}\to \R$ be a semibounded universally measurable function.  Then
	\[
		F(\nu):=\E_{\widehat{\nu}}[f], \quad \text{for }\nu \in H,
	\]
	is measure affine per Definition \ref{def_measureaffine}.
\end{lem}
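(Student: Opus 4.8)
The plan is to reduce the universally measurable integrand $f$ to honest Borel sandwiches and then quote Winkler's measure-affinity result \cite[Prop.~3.1]{Winkler:1988} for those. First I would fix $\nu \in H$ together with a probability measure $p$ on $\Sigma(\ext(H))$ for which the barycentric formula \eqref{eq_barycenter} holds. By the second bullet of Proposition~\ref{prop_E}, choose Borel measurable functions $\underline{f} \le f \le \overline{f}$ with $N := \{\underline{f} \ne \overline{f}\} \in \mathcal{B}(\mathcal{X})$ satisfying $\nu(N)=0$ and $\E_{\nu}[\underline{f}] = \E_{\widehat{\nu}}[f] = \E_{\nu}[\overline{f}]$. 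Since $f$ is semibounded, a truncation (which only shrinks $N$) lets me take $\underline{f}$ and $\overline{f}$ semibounded too, so both are integrable against every measure in $H$ in the extended sense allowing the values $\pm\infty$.

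The crucial observation is that the $\nu$-negligibility of $N$ propagates $p$-almost everywhere through the barycentric formula: applying \eqref{eq_barycenter} with $B=N$ gives $0 = \nu(N) = \int_{\ext(H)} \nu'(N)\,\mathrm{d}p(\nu')$, and since $\nu'(N) \ge 0$ I conclude $\nu'(N)=0$ for $p$-a.e.\ $\nu'$. For any such $\nu'$ the same pair $\underline{f}, \overline{f}$ is again a Borel sandwich of $f$ with $\nu'$-null discrepancy set, so Proposition~\ref{prop_E} yields $\E_{\widehat{\nu'}}[f] = \E_{\nu'}[\underline{f}] = \E_{\nu'}[\overline{f}]$. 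In particular $\nu' \mapsto F(\nu') = \E_{\widehat{\nu'}}[f]$ agrees $p$-a.e.\ with the measurable map $\nu' \mapsto \E_{\nu'}[\underline{f}]$, which settles both its $p$-measurability and its $p$-integrability.

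It then remains to chain the equalities. Because $\underline{f}$ is measurable and semibounded, Winkler's \cite[Prop.~3.1]{Winkler:1988} asserts that $\nu \mapsto \E_{\nu}[\underline{f}]$ is measure affine, hence $\E_{\nu}[\underline{f}] = \int_{\ext(H)} \E_{\nu'}[\underline{f}]\,\mathrm{d}p(\nu')$. Combining this with the previous step gives
\[
F(\nu) = \E_{\widehat{\nu}}[f] = \E_{\nu}[\underline{f}] = \int_{\ext(H)} \E_{\nu'}[\underline{f}]\,\mathrm{d}p(\nu') = \int_{\ext(H)} \E_{\widehat{\nu'}}[f]\,\mathrm{d}p(\nu') = \int_{\ext(H)} F(\nu')\,\mathrm{d}p(\nu'),
\]
which is exactly the measure-affinity identity of Definition~\ref{def_measureaffine}, the $p$-integrability of $F$ having been established above.

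The main (and essentially only) subtlety is that the Borel sandwich $\underline{f}, \overline{f}$ is produced relative to the single fixed barycenter $\nu$ and a priori controls $f$ only $\nu$-almost everywhere; the argument therefore hinges entirely on pushing its null set through the barycentric formula to see it remains null for $p$-almost every extreme point $\nu'$. This is what allows one $\nu$-adapted sandwich to simultaneously compute $\E_{\widehat{\nu'}}[f]$ for $p$-a.e.\ $\nu'$ and thereby transfer Winkler's result for Borel integrands to the universally measurable $f$. Semiboundedness of $f$ guarantees that every integral appearing above exists (possibly with value $\pm\infty$), so no separate integrability hypothesis beyond that of Theorem~\ref{thm_winkler} is needed.
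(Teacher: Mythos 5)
Your proof is correct, and its skeleton coincides with the paper's own: fix $\nu$ and $p$, produce a semibounded Borel sandwich $\underline{f} \le f \le \overline{f}$ from Proposition~\ref{prop_E}, show that the discrepancy set remains null for $p$-almost every extreme point, and then chain the equalities through Winkler's \cite[Prop.~3.1]{Winkler:1988} applied to the Borel minorant. The one step you handle differently is the null-set propagation. The paper invokes measure affinity of $\nu \mapsto \E_{\nu}[f_i]$ for \emph{both} sandwich functions and integrates the non-negative Borel function $f_2 - f_1$ (which satisfies $\E_{\nu}[f_2 - f_1] = 0$) over $\ext(H)$ to conclude $\nu'[f_1 \neq f_2] = 0$ for $p$-a.e.\ $\nu'$; you instead apply the barycentric formula \eqref{eq_barycenter} directly to the Borel set $N = \{\underline{f} \neq \overline{f}\}$, using that $\nu' \mapsto \nu'(N)$ is $\Sigma(\ext(H))$-measurable by definition of the evaluation $\sigma$-algebra. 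Your variant is slightly leaner: it needs Winkler's result only for $\underline{f}$, it works with the bounded integrand $\one_N$ rather than the possibly extended-valued difference $f_2 - f_1$ (thereby sidestepping any worry about extended-value arithmetic), and it makes transparent that the propagation is nothing more than the barycentric identity evaluated on a single null set. Both arguments then finish identically, using the additivity and null-set insensitivity in Proposition~\ref{prop_E} to pass between $f$ and $\underline{f}$ on each side of the affinity identity, and both rest on the same (implicit) convention that $p$-a.e.\ agreement with a $p$-integrable function suffices for the $p$-integrability required in Definition~\ref{def_measureaffine}.
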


The next lemma extends \cite[Thm.~2.1]{Winkler:1988} to the case where the constraint functions
$f_{i}$, for $i=1,\dots,n$, are universally measurable:

\begin{lem}
	\label{lem_ext}
	Let $\mathcal{X}$  be Suslin, and fix universally measurable real-valued functions $f_{1}, \dots, f_{n}$ and constants $c_{1}, \dots, c_{n}$. Then
	\begin{equation}
		\label{def_contraints-univ}
		H:=\bigl\{\nu \in \mathcal{M}(\mathcal{X}) \,\big|\, f_{i}\, \text{is $\widehat{\nu}$-integrable and } \E_{\widehat{\nu}}[f_{i}] \leq c_{i} \text{ for } i=1,\dots,n \bigr\}
	\end{equation}
	 is convex and
	\[
		\ext(H)=\Bigl\{\nu \in H \,\Big|\, \nu =\sum_{i=1}^{m}\alpha_{i}\d_{x_{i}}, \alpha_{i}\geq 0, x_{i}\in \mathcal{X},
i=1,\dots,m,   \sum_{i=1}^{m}\alpha_{i}=1, 1\leq m \leq n+1,
	\]
	\[
		\text{the vectors} \bigl(f_{1}(x_{i}),f_{2}(x_{i}),\dots,f_{n}(x_{i}),1\bigr), 1\leq i \leq m \,\, \text{are linearly independent}\, \Bigr\}\, .
	\]
\end{lem}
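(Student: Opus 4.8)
The plan is to extend Winkler's characterization \cite[Thm.~2.1]{Winkler:1988} of the extreme points of a generalized-moment constraint set from Borel-measurable constraint functions $f_i$ to universally measurable ones. The key observation is that universal measurability is precisely the structure needed to make the argument go through unchanged, because by Proposition~\ref{prop_E} each $f_i$ admits Borel-measurable envelopes $\underline{f_i} \leq f_i \leq \overline{f_i}$ that agree $\widehat{\nu}$-a.e.\ for \emph{every} $\nu$, and the expectation $\E_{\widehat{\nu}}[f_i]$ is affine in $\nu$. Thus the first step is to record that $H$ is convex: if $\nu = \lambda\nu_1 + (1-\lambda)\nu_2$ with $\nu_1,\nu_2 \in H$, then affinity of $\nu \mapsto \E_{\widehat{\nu}}[f_i]$ (Proposition~\ref{prop_E}) gives $\E_{\widehat{\nu}}[f_i] = \lambda \E_{\widehat{\nu_1}}[f_i] + (1-\lambda)\E_{\widehat{\nu_2}}[f_i] \leq c_i$, so $\nu \in H$.

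\textbf{Reduction to the Borel case.} The heart of the plan is to reduce the extreme-point computation to the already-established Borel version. I would proceed by first fixing a measure $\nu$ and replacing each $f_i$ by a Borel-measurable $\overline{f_i}$ (or by choosing representatives adapted to the finitely many measures in play). The subtlety is that the envelopes depend on the measure, so one cannot naively substitute a single Borel function globally. Instead, the argument for Winkler's theorem only ever evaluates the $f_i$ either under integration (where $\E_{\widehat{\nu}}[f_i] = \E_{\nu}[\underline{f_i}]$ replaces integrals, by Proposition~\ref{prop_E}) or \emph{pointwise} at the atoms $x_i$ of a purely atomic extremal measure. For the integration side, the convexity and the barycentric decomposition of Theorem~\ref{thm_winkler} apply verbatim once we integrate Borel envelopes; the measure-affinity established in Lemma~\ref{lem_affine} guarantees that the constraint functionals behave affinely on barycenters, which is exactly the hypothesis Winkler's proof needs.

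\textbf{Extremality and the linear-independence condition.} For the characterization of $\ext(H)$, the forward direction (an extreme point must be a convex combination of at most $n+1$ Diracs with the stated independence of the vectors $(f_1(x_i),\dots,f_n(x_i),1)$) follows the same linear-algebra dimension count as in the Borel case: if an extremal $\nu$ had more than $n+1$ atoms, or if the vectors were linearly dependent, one could perturb the weights within the kernel of the $(n+1)$ linear constraint functionals $\nu \mapsto (\E_{\widehat{\nu}}[f_1],\dots,\E_{\widehat{\nu}}[f_n], \nu(\mathcal{X}))$ to write $\nu$ as a nontrivial convex combination inside $H$, contradicting extremality. Here the only point requiring care is that the perturbation must keep the measure inside $H$, i.e.\ preserve the inequalities $\E_{\widehat{\nu}}[f_i] \leq c_i$; since a Dirac $\delta_{x}$ has $\E_{\widehat{\delta_x}}[f_i] = f_i(x)$ (pointwise evaluation, which is legitimate because $f_i$ is defined everywhere and universally measurable), the relevant functional on weight-vectors is genuinely linear in the $\alpha_i$, and the standard argument applies. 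The converse direction (a purely atomic measure with linearly independent vectors is extremal) is again a direct linear-independence argument.

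\textbf{Main obstacle.} The step I expect to be most delicate is ensuring that the substitution of Borel envelopes for the universally measurable $f_i$ is done consistently across all the measures appearing in a putative convex decomposition $\nu = \lambda \nu_1 + (1-\lambda)\nu_2$ simultaneously, since the null sets $\{\underline{f_i} \neq \overline{f_i}\}$ are $\nu$-dependent. The clean way around this is to observe that for the finitely many measures $\nu, \nu_1, \nu_2$ (or, in the barycentric argument, the fixed $\nu$ together with $p$-a.e.\ extreme point), one may take the \emph{measure} $\nu$ itself — or the relevant dominating measure — and use that $\E_{\widehat{\nu}}[f_i]$ is unambiguously defined by Proposition~\ref{prop_Ecarrier} independently of representative; the affinity in Lemma~\ref{lem_affine} then transfers every integral identity Winkler uses. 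Once this bookkeeping is in place, the proof is a faithful transcription of \cite[Thm.~2.1]{Winkler:1988}, and I would simply cite that theorem's structure while flagging only the two places (affinity of the constraint functionals and pointwise evaluation at Diracs) where universal measurability rather than Borel measurability is invoked.
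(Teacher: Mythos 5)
Your proposal has a genuine gap at its core: you treat the characterization of $\ext(H)$ as a finite-dimensional perturbation argument (``if an extremal $\nu$ had more than $n+1$ atoms, or if the vectors were linearly dependent, one could perturb the weights within the kernel\dots''), but that argument presupposes that an extreme point of $H$ is already a finitely supported atomic measure. The substantive content of Winkler's theorem is precisely that an extreme point of a moment-constrained set of measures \emph{cannot} be diffuse --- e.g.\ that a Lebesgue-type measure lying in $H$ is never extreme --- and this is not a linear-algebra fact about weight vectors; it rests on the Choquet-type theory of von Weizs\"{a}cker--Winkler and Kendall's characterization of cones that are lattice cones in their own ordering. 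Consequently, the two places you flag as the only ones where universal measurability enters (affinity of the constraint functionals, and pointwise evaluation at Diracs) are not where the actual work lies, and your claim that the rest is a ``faithful transcription'' of \cite[Thm.~2.1]{Winkler:1988} skips over exactly the hypotheses that must be re-checked.

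The paper's proof identifies those hypotheses explicitly: following the structure of Winkler's proof, it suffices to show (i) that the pure-integrability set $K := \{\nu \in \mathcal{M}(\mathcal{X}) \mid f_{i} \text{ is } \widehat{\nu}\text{-integrable}, \ i=1,\dots,n\}$ satisfies $\ext(K)=\{\delta_{x} \mid x \in \mathcal{X}\}$, and (ii) that $\R_{+}K$ is a hereditary subcone of $\R_{+}\mathcal{M}(\mathcal{X})$, hence by \cite[Lem.~10.4]{Phelps:2001} a lattice cone in its own ordering. Both verifications hinge on a tool entirely absent from your proposal: the additivity of completion, $\widehat{\nu_{1}+\nu_{2}} = \widehat{\nu_{1}}+\widehat{\nu_{2}}$ and $\widehat{\alpha\nu}=\alpha\widehat{\nu}$, which is what allows $\widehat{\nu}$-integrability to pass to the components of a convex decomposition (for (i)) and to the subtrahend of a difference (for (ii)). Your alternative idea --- fixing $\nu$, choosing Borel envelopes adapted to it, and exploiting that every measure in a convex decomposition of $\nu$ is absolutely continuous with respect to $\nu$, so that membership in $H$ and in the envelope-defined Borel set coincide for all measures in play --- could plausibly be developed into a genuine reduction to the Borel case, and would be an interesting different route; but as written it is deferred to a remark about ``bookkeeping'' and Proposition~\ref{prop_Ecarrier} and is never assembled into a proof of either inclusion of the extreme-point identity. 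To repair the proposal you must either carry out that domination argument in full, or follow the paper and verify (i) and (ii) directly via completion additivity.
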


\subsection{Proofs}

\subsubsection{Proof of Proposition \ref{prop_Ecarrier}}
Let $T$ and $T'$ be two carriers for $\mathbb{Q}\in \mathcal{M}(S)$  and $f$ a function such that
$f_{T}$ and $f_{T'}$ are $\widehat{\mathcal{B}}(t)$- and $\widehat{\mathcal{B}}(T')$-measurable respectively.
 Then Proposition \ref{prop_E} implies that there are functions
$f_{1},f_{2}$ measurable on $T$ and $f'_{1},f'_{2}$ measurable on $T'$
such that
\begin{align*}
f_{1} \leq  f_{T}  \leq f_{2} && \mathbb{Q}_{T}(f_{1} \neq  f_{2})=0\\
f'_{1}  \leq  f_{T'}  \leq f'_{2} && \mathbb{Q}_{T'}(f'_{1} \neq  f'_{2})=0
\end{align*}
so that
\begin{align*}
\E_{\widehat{\mathbb{Q}}_{T}}[f_{T}]&=\E_{\mathbb{Q}_{T}}[f_{1}]\\
\E_{\widehat{\mathbb{Q}}_{T'}}[f_{T'}]&=\E_{\mathbb{Q}_{T'}}[f'_{1}]
\end{align*}

Now, it is easy to see that $T\cap T'$ is also a carrier and that we have
\[f_{1}(x) \leq f(x) \leq f_{2}(x),\quad x \in T\cap T'\]
and
\[\mathbb{Q}_{T \cap T'}(f_{1}\neq f_{2}) \leq \mathbb{Q}_{T}(f_{1}\neq f_{2})=0\]
so that we conclude from Proposition \ref{prop_E} that
\begin{align*}
 \E_{\widehat{\mathbb{Q}}_{T \cap T'}}[f]&=\E_{\mathbb{Q}_{T \cap T'}}[f_{1}]\\
&=\E_{\mathbb{Q}_{T}}[f_{1}]-\E_{\mathbb{Q}_{T \setminus T'}}[f_{1}]\\
&=\E_{\mathbb{Q}_{T}}[f_{1}]\\
&=\E_{\widehat{\mathbb{Q}}_{T}}[f]
\end{align*}
and so conclude that
\[ \E_{\widehat{\mathbb{Q}}_{T \cap T'}}[f]=\E_{\widehat{\mathbb{Q}}_{T}}[f].\]
By the same argument on $T'$ we conclude that
$ \E_{\widehat{\mathbb{Q}}_{T \cap T'}}[f]=\E_{\widehat{\mathbb{Q}}_{T'}}[f]$ and therefore
the first assertion is proved. The assertions of affinity and monotonicity are similarly straightforward.  \hfill\qedsymbol

\subsubsection{Proof of Proposition \ref{prop_change}}

Consider $\pi \in \mathcal{M}(X)$ and its pushforward $\nu:=\Psi\pi$.
 By Proposition \ref{prop_E} and the assumptions,
 there exists two measurable functions
$\underline{f}$ and $\overline{f}$ such that
\[
	\underline{f} \leq f \leq \overline{f},\quad  \nu(\underline{f}\neq \overline{f})=0
\]
from which we conclude that
\[
        \underline{f}\circ \Psi \leq   f\circ \Psi \leq  \overline{f}\circ \Psi
\]
and
\begin{align*}
	0
	&= \nu[\underline{f} \neq \overline{f}]\\
	&= \Psi\pi[\underline{f} \neq \overline{f}]\\
	&= \pi[\Psi^{-1}\{\underline{f} \neq \overline{f}\}]\\
	&= \pi[\underline{f}\circ \Psi \neq \overline{f}\circ\Psi]
\end{align*}
so that we obtain
\[\pi[\underline{f}\circ \Psi \neq \overline{f}\circ\Psi]=0\, .\]
Since $\pi$ was arbitrary, it follows that $f\circ\Psi$ is $\widehat{\mathcal{B}}(X)$-measurable.  To obtain the change of variables formula, compute
\begin{align*}
	\E_{\pi}[f\circ \Psi]
	&:= \E_{\widehat{\pi}}[f\circ \Psi]\\
	&\phantom{:}= \E_{\pi}[\overline{f}\circ \Psi]\\
	&\phantom{:}= \E_{\Psi\pi}[\overline{f}]
\end{align*}
and
\begin{align*}
	\E_{\Psi\pi}[f]
	&:= \E_{\widehat{\Psi\pi}}[f]\\
	&\phantom{:}= \E_{\Psi\pi}[\overline{f}]
\end{align*}
from which we conclude the change of variables formula
\[
	\E_{\pi}[f\circ \Psi] = \E_{\Psi\pi} [f],
\]
which completes the proof.  \hfill\qedsymbol

\subsubsection{Proof of Lemma \ref{lem_affine}}

Fix $\nu \in H$ and a probability measure $p$ such that the barycentric formula  \eqref{eq_barycenter} holds.  Proposition \ref{prop_E}  asserts that there are measurable functions $f_{1}\leq f\leq f_{2}$ such that $\nu(f_{1}\neq f_{2})=0$.  Therefore, $f_{2}-f\geq 0$, $\E_{\widehat{\nu}}(f_{2}-f)=0$, $f-f_{1}\geq 0$, and $\E_{\widehat{\nu}}(f-f_{1})=0$.  Moreover, it is easy to see then we can make both $f_{1}$ and $f_{2}$ semibounded.  Therefore $F$ is a well defined extended real valued function.  Moreover, \cite[Prop.~3.1]{Winkler:1988}  asserts that the function
$\nu \mapsto \E_{\nu}[f_{i}]$ is measure affine for $i=1,2$, and so
\[
	\E_{\nu}[f_{i}]=\int_{\ext(H)} \E_{\nu'}[f_{i}] \, \mathrm{d}p(\nu'), \quad \text{for } i=1,2 .
\]
Consequently, since $\nu[f_{1}\neq f_{2}]=0$, it follows that $\E_{\nu}[f_{2}-f_{1}]=0$ so that
\[
	0=\E_{\nu}[f_{2}-f_{1}]=\int_{\ext(H)} \E_{\nu'}[f_{2}-f_{1}] \, \mathrm{d}p(\nu'), \quad \text{for } i=1,2,
\]
and since $f_{2}-f_{1}\geq 0$ it follows that
\[
	\nu'[f_{2}\neq f_{1}]=0, \quad p\text{-a.e.}
\]
and therefore
\[
	\widehat{\nu'}[f\neq f_{1}]=0, \quad p\text{-a.e.}
\]
Therefore we conclude that
\begin{align*}
	F(\nu)
	&:= \E_{\widehat{\nu}}[f]\\
	&\phantom{:}= \E_{\widehat{\nu}}[f_{1}]+\E_{\widehat{\nu}}[f-f_{1}]\\
	&\phantom{:}= \E_{\widehat{\nu}}[f_{1}]\\
	&\phantom{:}= \E_{\nu}[f_{1}]\\
	&\phantom{:}= \int_{\ext(H)} \E_{\nu'}[f_{1}]\, \mathrm{d}p(\nu') \\
	&\phantom{:}= \int_{\ext(H)} \E_{\widehat{\nu'}}[f_{1}]\, \mathrm{d}p(\nu')\\
	&\phantom{:}= \int_{\ext(H)} \E_{\widehat{\nu'}}[f_{1}]\, \mathrm{d}p(\nu') + \int_{\ext(H)} \E_{\widehat{\nu'}}[f-f_{1}]\, \mathrm{d}p(\nu') \\
	&\phantom{:}= \int_{\ext(H)} \E_{\widehat{\nu'}}[f]\, \mathrm{d}p(\nu') \\
	&\phantom{:}= \int_{\ext(H)} F(\nu')\, \mathrm{d}p(\nu') ,
\end{align*}
and the assertion is proved.  \hfill\qedsymbol

\subsubsection{Proof of Lemma \ref{lem_ext}}

Let us first establish that
\begin{align}
	\label{compl_add}
	\widehat{\nu_{1}+\nu_{2}} &= \widehat{\nu_{1}}+\widehat{\nu_{2}}, \quad \text{for all }\nu_{1}, \nu_{2} \in \mathcal{M}(\mathcal{X}), \\
	\widehat{\alpha\nu} &= \alpha\widehat{\nu}, \quad \text{for all}\, \nu \in \mathcal{M}(\mathcal{X}) .
\end{align}
This follows from the fact that $(\nu_{1}+\nu_{2})(N)=0$ if and only if $\nu_{j}(N)=0$ for $j=1,2$ and the characterization of the completion $\widehat{\nu}$ by
\[
	\widehat{\nu}(B \cup S):=\nu(B), \quad  B \in \mathcal{B}(\mathcal{X}), \, S \subset N,\,  \nu(N)=0
\]
as found, for example, in \cite[p.~18]{Ash:1972}.  For then, for such $B$ and $S$,
\begin{align*}
	\widehat{\nu_{1}+\nu_{2}}(B \cup S)
	&= (\nu_{1}+\nu_{2})(B)\\
	&= \nu_{1}(B)+\nu_{2}(B)\\
	&= \widehat{\nu_{1}}(B\cup S)+\widehat{\nu_{2}}(B \cup S)
\end{align*}

Now for the proof of the main assertion.  Following the proof of \cite[Thm.~2.1]{Winkler:1988}, it is sufficient to  show that for
\[
	K:= \bigl\{\nu \in \mathcal{M}(\mathcal{X}) \,\big|\, f_{i} \text{ is $\widehat{\nu}$-integrable for } i=1,\dots,n \bigr\},
\]
we have
\begin{equation}
	\label{extK}
	\ext(K):=\{\d_{x}, x \in \mathcal{X}\},
\end{equation}
and that $\R_{+}K \subset \R_{+} \mathcal{M}(\mathcal{X}) $ is a lattice cone in its own ordering.  For the first, observe that since  $\ext\bigl(\mathcal{M}(\mathcal{X})\bigr)=\{\delta_{x} \mid x \in \mathcal{X}\}$ and that $f_{i}$ are $\delta_{x}$-integrable for all $i=1,\dots,n$, $x \in \mathcal{X}$, it follows that
\[
	\{\delta_{x} \mid x \in \mathcal{X}\} \subseteq \ext(K).
\]
Now suppose that $\nu \in \ext(K)$ is not a Dirac mass.  Then, as in the proof that the extreme points
of $\mathcal{M}(\mathcal{X}) $ are the Dirac masses, see e.g.~\cite[Thm.~15.9]{AliprantisBorder:2006}, and using the fact that the support of $\nu$ must contain $2$ or more points, we can decompose $\nu$ as a  convex combination
$\nu=\alpha\nu_{1}+(1-\alpha)\nu_{2}$ where $\nu_{1}\neq \nu_{2}$ and $\alpha \in (0,1)$.  Moreover, from
\[
	\widehat{\nu}=\alpha\widehat{\nu_{1}}+(1-\alpha)\widehat{\nu_{2}},
\]
we conclude that $f_{i}$ being $\widehat{\nu}$-integrable implies that $f_{i}$ is $\widehat{\nu_{j}}$-integrable for $j=1,2$ and $i=1,\dots,n$.  Consequently, $\nu_{j}\in K$ for $j=1,2$.  Since $\nu$ was an extreme point we conclude that $\nu_{1}=\nu_{2}$ which is a contradiction, and \eqref{extK} follows.

Now let us demonstrate that $\R_{+}K$ is a lattice cone in its own ordering.  To that end, note that by \cite[Lem.~10.4]{Phelps:2001}, it suffices to show that $\R_{+}K \subset \R_{+} \mathcal{M}(\mathcal{X}) $ is a hereditary subcone, in that $\nu_{1} \in \R_{+}K $, $ \nu_{2} \in \R_{+} \mathcal{M}(\mathcal{X}) $ and $\nu_{1}-\nu_{2} \in \R_{+}K$ together imply that $\nu_{2} \in \R_{+}K$.  To that end, consider such $\nu_{1}$ and $\nu_{2}$. Then \eqref{compl_add} implies that $\widehat{(\nu_{1}-\nu_{2})} = \widehat{\nu_{1}}-\widehat{\nu_{2}}$ and so we conclude that
\[
	0 \leq \E_{\widehat{(\nu_{1}-\nu_{2})}} [|f_{i}|] = \E_{\widehat{\nu_{1}}}[|f_{i}|] - \E_{\widehat{\nu_{2}}}[|f_{i}|]
\]
and therefore
\[
	\E_{\widehat{\nu_{2}}}[|f_{i}|] \leq \E_{\widehat{\nu_{1}}}[|f_{i}|] < \infty,
\]
from which we conclude that $\nu_{2} \in \R_{+}K$.  Hence, $\R_{+}K$ is a hereditary subcone, and the assertion then follows as in the proof of \cite[Thm.~2.1]{Winkler:1988}.  \hfill\qedsymbol

\section*{Acknowledgments}
The authors gratefully acknowledge  support for this work from the Air Force Office of Scientific Research under Award FA9550-12-1-0389 (Scientific Computation of Optimal Statistical Estimators).
We thank P.\ Diaconis, D.\ Mayo, P.\ Stark, and L.\ Wasserman for stimulating discussions and relevant references and pointers.
We thank the anonymous referees for detailed comments and suggestions.

\addcontentsline{toc}{section}{Acknowledgments}

\newpage

\addcontentsline{toc}{section}{References}
\bibliographystyle{plain}
\bibliography{./refs}

\end{document}